\definecolor{lred}{RGB}{226, 106, 106}
\definecolor{nred}{RGB}{237, 28, 36}
\definecolor{lblue}{RGB}{52, 152, 219}
\definecolor{nblue}{RGB}{0, 174, 239}
\definecolor{lyellow}{RGB}{232, 197, 91}
\definecolor{dgreen}{RGB}{0, 148, 68}
\definecolor{l1yellow}{RGB}{217, 224, 33}
\definecolor{lgrey}{RGB}{179, 179, 179}
\definecolor{indigo}{rgb}{0.29, 0.0, 0.51}  
\tikzset{
  symbol/.style={
    draw=none,
    every to/.append style={
      edge node={node [sloped, allow upside down, auto=false]{$#1$}}
    },
  },
}
\newtheorem{thm}{Theorem}[section]
\newtheorem{lem}[thm]{Lemma}
\theoremstyle{definition}
\newtheorem{definition}[thm]{Definition}
\newtheorem{lemma}[thm]{Lemma}
\newtheorem{proposition}[thm]{Proposition}
\theoremstyle{remark}
\newtheorem{notation}[thm]{Notation}
\newtheorem{remark}[thm]{Remark}
\newtheorem{con}[thm]{Construction}
\numberwithin{equation}{section}
\newcommand{\R}{\mathbb{R}}
\newcommand{\Z}{\mathbb{Z}}
\newcommand{\RP}{\mathbb{RP}}
\newcommand{\sm}{\setminus}
\newcommand{\ol}{\overline}
\newcommand{\sbullet}{%
  \hbox{\fontfamily{lmr}\fontsize{.4\dimexpr(\f@size pt)}{0}\selectfont\textbullet}}
\DeclareMathOperator\Id{Id}
\DeclareMathOperator\pt{pt}
\DeclareMathOperator\Homeo{Homeo}
\DeclareMathOperator\Diff{Diff}
\DeclareMathOperator\TOP{TOP}
\DeclareMathOperator\PL{PL}
\DeclareMathOperator\GL{GL}
\DeclareMathOperator\Image{Im}
\DeclareMathOperator\CAT{CAT}
\DeclareMathOperator\DIFF{DIFF}
\DeclareMathOperator\constant{constant}
\DeclareMathOperator\Wh{Wh}
\DeclareMathOperator\St{St}
\DeclareMathOperator\pr{pr}
\DeclareMathOperator\Dic{Dic}
\title{Pseudo-isotopy versus isotopy for homeomorphisms of 4-manifolds\\  
}
\author[Daniel Galvin]{Daniel Galvin}
\address{Max Planck Institute for Mathematics, Bonn, Germany}
\email{galvin@mpim-bonn.mpg.de}
\author[Isacco Nonino]{Isacco Nonino}
\address{School of Mathematics and Statistics, University of Glasgow}
\email{Isacco.Nonino@glasgow.ac.uk} 
\begin{document}
	
	\begin{abstract}
		We define obstructions which obstruct topological pseudo-isotopies from being isotopic to isotopies in dimension four.  These match the smooth obstructions of Hatcher-Wagoner for smooth pseudo-isotopies, and accordingly are valued in certain Whitehead groups.  We show that our obstructions are fully realisable, and we use these realisations to build homeomorphisms of $Y\times S^1$ for many 3-manifolds $Y$ that are pseudo-isotopic to the identity but not isotopic to the identity.
	\end{abstract}
	
	\maketitle
	
	\section{Introduction}
 
	Understanding the space of pseudo-isotopies, denoted $ \mathcal{P}^{\CAT}(X)$, of a $\CAT$ manifold $X^n$ has been a relevant topic in algebraic topology for many years (where $\CAT$ stands for either the smooth or topological category, denoted $\DIFF$ or $\TOP$). Roughly speaking, \emph{pseudo-isotopies} are $\CAT$ isomorphisms $M \times I \to M \times I$ that are not necessarily level preserving (see \cref{definition:pseudo_isotopy}).
 
 Work of Cerf \cite{Cerf} showed that if $M$ is smooth and simply connected, $n \ge 5$, then a smooth pseudo-isotopy is always smoothly isotopic to a smooth isotopy. In particular, if $f$ is smoothly pseudo-isotopic to $\Id$, then it is also smoothly isotopic to $\Id$.  Hatcher and Wagoner \cite{hatcher_wagoner_1973} and Igusa \cite{Igusa} extended Cerf's method to a two-stage obstruction theory, given by maps $\Sigma$ and $\Theta$, that in the non simply connected case ($n \ge 6$) decide whether a smooth pseudo-isotopy can be smoothly isotoped to an isotopy.  We extend these obstructions to the topological category in dimension four, building on the work of Burghelea-Lashof and Pedersen \cite{BergheleaLashofBook, pedersen_1977}, who extended the obstructions to the topological category in high dimensions.

\begin{thm}[Topological invariants]\label{thm:main}
    Let $X$ be a compact, topological 4-manifold.  Then there exists homomorphisms \[\Sigma^{\TOP} \colon \pi_0(\mathcal{P}^{\TOP}(X,\partial X))\to \Wh_2(\pi_1(X))\] and \[\Theta^{\TOP}\colon \ker\Sigma^{\TOP}\to \Wh_1(\pi_1(X); \Z/2 \times \pi_2(X))/\chi\]
    
    such that $\Sigma^{\TOP}$ and $\Theta^{\TOP}$ vanish on pseudo-isotopies topologically isotopic to isotopies, and such that the following diagrams commute.
    \[
    \begin{tikzcd}[cramped]
        \pi_0(\mathcal{P}^{\TOP}(X,\partial X)) \arrow[r,"\Sigma^{\TOP}"] & \Wh_2(\pi_1(X)) &\ker\Sigma^{\TOP} \arrow[r,"\Theta^{\TOP}"] & \Wh_1(\pi_1(X); \Z/2 \times \pi_2(X))/\chi \\
        \pi_0(\mathcal{P}^{\DIFF}(X,\partial X)) \arrow[ur,"\Sigma"'] \arrow[u] & & \ker\Sigma \arrow[ur,"\Theta"'] \arrow[u] &
    \end{tikzcd}
    \]
    Here the vertical maps are the maps induced by forgetting the smooth structure, and $\chi$ denotes the image of the map $\chi\colon K_3(\Z [\pi_1X]) \to \Wh_1(\pi_1(X);\Z/2\times \pi_2(X))$.
\end{thm}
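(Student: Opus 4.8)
The plan is to transplant the construction of $\Sigma$ and $\Theta$ of Hatcher--Wagoner \cite{hatcher_wagoner_1973} and Igusa \cite{Igusa} --- in the form later adapted to the topological category in high dimensions by Burghelea--Lashof \cite{BergheleaLashofBook} and Pedersen \cite{pedersen_1977} --- into dimension $4$, carrying out all the handle and function calculus in the $5$-manifold $W = X \times I$ (and, for the uniqueness statements, in $W \times I$). The guiding principle is that every geometric move used to \emph{define} these invariants is either a ``cheap'' move --- a handle slide, the birth or death of a cancelling pair, or an isotopy or finger move of a submanifold of dimension $\leq 1$ or $\geq 3$ inside a $4$-dimensional level set --- all available topologically in these dimensions by topological transversality (Kirby--Siebenmann, Quinn) and topological handle theory in dimension $5$ (with Kirby--Siebenmann in dimensions $\geq 6$ for the arguments on $W \times I$); or else it is precisely the cancellation of an index-$2$ handle against an index-$3$ handle in $W$, which requires an embedded Whitney disc in a $4$-manifold and is exactly the move whose obstruction $\Sigma$, and then $\Theta$, are built to record. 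Hence $\Sigma^{\TOP}$ and $\Theta^{\TOP}$ never call for the latter move and can be defined for arbitrary $\pi_1(X)$.

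In more detail: a pseudo-isotopy $F$ determines the regular function $f = \pi_I \circ F \colon W \to I$, equal to the projection $\pi_I$ near $X \times 0 \cup \partial X \times I$. A generic path from $\pi_I$ to $f$ in the space of functions $W \to I$ rel these conditions has a Cerf graphic; after normalising by cheap moves one arranges that it involves only index-$2$ and index-$3$ critical points, and the incidence data of the resulting one-parameter families of $2$- and $3$-handles yields a word in the Steinberg group $\mathrm{St}(\Z[\pi_1 X])$ whose image $\Sigma^{\TOP}(F) \in \Wh_2(\pi_1 X)$ is, by the usual two-parameter argument (again realised by cheap moves on $W \times I$), independent of all choices and invariant under isotopy of pseudo-isotopies; concatenation of graphics gives the homomorphism property. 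When $\Sigma^{\TOP}(F) = 0$ one simplifies the word further --- still by cheap moves, this being the point where one must verify that the algebraic vanishing of $\Sigma^{\TOP}$ is exactly what licenses the simplification without cancelling any $2$-handle against a $3$-handle --- reaching a normal form from which one reads off $\Theta^{\TOP}(F)$ as a count of the index-$2$/index-$3$ ``eyes'', weighted by signs in $\Z/2$ and by monodromy classes in $\pi_2(X)$, landing in $\Wh_1(\pi_1 X; \Z/2 \times \pi_2 X)/\chi$, with the $\chi$-indeterminacy absorbing the $K_3$-ambiguities in the choices.

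The remaining items are then formal. If $F$ is topologically isotopic to a level-preserving homeomorphism, then $f$ can be joined to $\pi_I$ through regular functions, the graphic is empty, and $\Sigma^{\TOP}(F) = 0$, $\Theta^{\TOP}(F) = 0$. If $F$ is smooth, one chooses the generic path to be smooth, so that the topological data agree on the nose with the Hatcher--Wagoner data; this gives the two commuting triangles with the forgetful maps. The homomorphism and well-definedness assertions for $\Theta^{\TOP}$ on $\ker \Sigma^{\TOP}$, and the precise form of the $\chi$-indeterminacy, are imported from the smooth theory of \cite{hatcher_wagoner_1973, Igusa}.

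The main obstacle I anticipate is the groundwork rather than the bookkeeping: one must develop enough topological Cerf theory for functions on a $5$-manifold whose level sets are $4$-manifolds --- existence and genericity of one- and two-parameter families, the local normal forms at births, deaths and swallowtails, and the family versions of handle trading and cancellation, using Quinn's topological handle theory together with smoothing theory in neighbourhoods of the handles of index $\leq 3$ and (dually) of index $\geq 2$ --- and, crucially, do so while keeping a clean separation between the cheap moves, valid for every fundamental group, and the single middle-dimensional Whitney move that the invariants exist to obstruct. Establishing this carefully in dimension $4$, where the level sets sit right at the dimension in which the Whitney trick fails, is where the real work lies; once it is in place, the definitions of $\Sigma^{\TOP}$ and $\Theta^{\TOP}$ and the verification of their stated properties follow the high-dimensional template of \cite{hatcher_wagoner_1973, Igusa, BergheleaLashofBook, pedersen_1977}.
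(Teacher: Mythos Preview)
Your approach is genuinely different from the paper's, and the gap you yourself identify --- the need to develop topological Cerf theory for functions on a $5$-manifold with $4$-dimensional level sets --- is not ``groundwork'' but the whole difficulty, and the paper explicitly declines to develop any such theory. No general existence or genericity results for one- and two-parameter families of topological generalised Morse functions in these dimensions are available, and the paper states outright (Section~5) that ``we are not describing anything that could be described as \emph{topological Cerf theory} and, at the current time, such a thing does not exist in the literature.'' Your proposal therefore rests on a foundation that is not known to exist.

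The paper sidesteps this entirely. Rather than working on $X\times I$, it \emph{suspends twice} to the $6$-manifold $X\times J^2$, where the Burghelea--Lashof--Pedersen machinery applies in $\TOP$: the inclusion of a neighbourhood $N$ of the $3$-handle-skeleton induces an isomorphism on $\pi_0\mathcal{P}^{\TOP}$ (their Lemma~3.4, via the $\TOP$ version of Theorem~3.1$'$). Smoothing theory then puts a smooth structure on $N$ (the obstructions vanish on a $3$-skeleton of a $6$-manifold), and Burghelea--Lashof give $\pi_0\mathcal{P}^{\DIFF}(N_{\mathcal{S}})\cong\pi_0\mathcal{P}^{\TOP}(N)$. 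One then simply \emph{applies the smooth Hatcher--Wagoner invariants} to the smoothed pseudo-isotopy: $\Sigma^{\TOP}:=\Sigma\circ\mathfrak{f}_{\mathcal{S}}\circ\mathfrak{i}\circ S^2$. Independence of the smoothing $\mathcal{S}$ is obtained by a comparison with the $2$-handle-skeleton, which has a \emph{unique} smooth structure, and compatibility with the smooth $\Sigma$, $\Theta$ (the commuting triangles) is then a separate diagram chase using naturality of the smooth invariants under codimension-zero inclusions and under suspension. No topological function theory on $X\times I$ is ever invoked.
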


The invariants are valued in corresponding K-theoretic Whitehead groups, whose definitions are recalled in \Cref{sbs:def_sigma} and \Cref{sbs:def_theta}, along with the definitions of the original smooth Hatcher-Wagoner invariants.  The definition of our topological invariants requires careful investigation of the differences between the $\DIFF$ and $\TOP$ categories in dimension four, together with several ad hoc steps to make sure $\Sigma^{\TOP}$ and $\Theta^{\TOP}$ are compatible with their smooth counterparts.  Furthermore, we show that these invariants are realisable in the strongest possible sense.  Here we follow the work of Singh \cite{singh2022pseudoisotopies}, but we obtain a stronger realisation result due to working in the topological category.

\begin{thm}[Realisation]\label{thm:realisation}
    Let $X$ be a compact, topological 4-manifold with good fundamental group.  Then given $ x\in \Wh_2(\pi_1(X))$ or $y\in \Wh_1(\pi_1(X); \Z/2 \times \pi_2(X))/\chi(K_3\mathbb{Z}[\pi_1X])$ there exists a pseudo-isotopy $F\colon X\times I\to X\times I$ with $\Sigma^{\TOP}(F)=x$ or $\Sigma^{\TOP}(F)=0$ and $\Theta^{\TOP}(F)=y$.
\end{thm}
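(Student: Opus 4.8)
\emph{Proof plan.}  The plan is to adapt the realisation constructions of Hatcher--Wagoner and of Singh \cite{singh2022pseudoisotopies} to the topological category.  Recall that a pseudo-isotopy of $X$ is the same data as a function $f\colon X\times I\to I$ without critical points satisfying the standard boundary conditions, and that $\Sigma^{\TOP}$ and $\Theta^{\TOP}$ are read off from a generic path of functions from $f$ back to the projection $\pr_I$ --- a path that is forced to pass through functions with cancelling pairs of critical points.  The realisation problem is therefore to write down, for a prescribed target class, a one-parameter family of functions starting and ending without critical points whose associated Cerf-theoretic algebra is that class; the entire family can be taken supported in a thickened neighbourhood of a finite graph in $X$ carrying the required $\pi_1(X)$-data.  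The one genuinely new point over the smooth four-dimensional situation of \cite{singh2022pseudoisotopies} is that the handle manipulations realising this family take place in a four-dimensional level set of $X\times I$, where the smooth Whitney trick fails but the topological disk embedding theorem of Freedman--Quinn applies because $\pi_1(X)$ is good.

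To realise $\Sigma^{\TOP}$, I would start with $x\in\Wh_2(\pi_1(X))$, lift it to $K_2(\Z[\pi_1(X)])$, and --- using the presentation of $K_2$ as $\ker(\mathrm{St}\to E)$ --- represent the lift by a word in Steinberg generators $x_{ij}(g)$, $g\in\pi_1(X)$, that becomes trivial in $E(\Z[\pi_1(X)])$.  Beginning from $\pr_I$, I would introduce independent birth--death pairs producing $2$- and $3$-handles in the relevant level set, perform the loop of handle slides encoded by this word, and then recancel the handles; because the word is trivial in $E$ the recancellation is possible, and the discrepancy between the two cancellation patterns is exactly $x$.  Each handle slide is carried out along an embedded framed Whitney disk between $2$-spheres in the four-dimensional level, provided by the disk embedding theorem.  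The endpoint function has no critical points and hence determines a topological pseudo-isotopy $F$ rel $\partial$, and evaluating the formula defining $\Sigma^{\TOP}$ returns $\Sigma^{\TOP}(F)=x$.  (When $x\neq 0$ the invariant $\Theta^{\TOP}(F)$ is undefined, and the statement makes no further claim in this case.)

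To realise $\Theta^{\TOP}$, I need $F$ with $\Sigma^{\TOP}(F)=0$ and $\Theta^{\TOP}(F)=y$ for a given $y\in\Wh_1(\pi_1(X);\Z/2\times\pi_2(X))/\chi$.  Here I would use families of ``two-index type'' whose graphic contains no $K_2$-pattern, so that $\Sigma^{\TOP}$ vanishes for free; the $2$- and $3$-handles are algebraically cancelling, and $\Theta^{\TOP}$ records precisely the $\Z/2$ (framing) and $\pi_2(X)$ (choice of sheet) decorations of the Whitney disks used to cancel them geometrically, accumulated over $\pi_1(X)$.  Representing $y$ by such decorated Whitney data and building the corresponding family realises $y$, the indeterminacy by $\chi(K_3\Z[\pi_1X])$ being exactly the ambiguity inherent in that choice, so the class is realised in the quotient.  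Once more the disk embeddings and Whitney moves live in a four-dimensional level and are supplied by Freedman--Quinn theory.

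The hard part will be this middle-level geometry: turning the algebraically prescribed handle slides and cancellations into genuinely \emph{embedded} moves requires the full strength of the topological disk embedding theorem for good groups, and one must check carefully that the framing and $\pi_2$ bookkeeping agrees with the definition of $\Theta^{\TOP}$, that no spurious $\Sigma^{\TOP}$ contribution is created, and that the sign and indeterminacy conventions match up.  Further technical points I anticipate: arranging the one-parameter families relative to $\partial X$ and closing them up to honest pseudo-isotopies; handling the fact that $X\times I$ need not be smoothable, either by using topological handle decompositions in dimension five (Quinn) or by working smoothly away from the Kirby--Siebenmann point; and confirming that the values of $\Sigma^{\TOP}$ and $\Theta^{\TOP}$ on the constructed $F$ match those computed in the model in which the construction is performed, using the comparison with the smooth invariants in \Cref{thm:main}.
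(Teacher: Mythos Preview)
Your overall strategy is right and matches the paper: build an allowed one-parameter family of handle decompositions that ``looks like'' it realises the given class, using the disc embedding theorem in the middle level to close the eyes, then extract the pseudo-isotopy at the far end.  You also correctly flag most of the technical difficulties.  But there is one genuine gap, and it is exactly the step you treat most lightly.

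You write that after building the family one simply ``evaluates the formula defining $\Sigma^{\TOP}$'' and reads off $x$.  This is the step that does \emph{not} work as stated.  Recall how $\Sigma^{\TOP}$ is defined in the paper: suspend twice, restrict to a neighbourhood of the $3$-handle skeleton, smooth, and then apply the \emph{smooth} Hatcher--Wagoner $\Sigma$.  There is no formula that reads $\Sigma^{\TOP}$ directly off a topological one-parameter family, and the paper explicitly says there is no topological Cerf theory.  So the constructed family tells you morally what the answer should be, but proving $\Sigma^{\TOP}(F_x)=x$ requires an independent argument.  The paper's solution is to invoke the stable surface smoothing theorem of Cha--Kim: the topologically embedded Whitney discs produced by the disc embedding theorem become, after finitely many $S^2\times S^2$ stabilisations, isotopic rel boundary to smoothly embedded discs.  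This exhibits a stable topological isotopy from $F_x$ to a genuinely smooth pseudo-isotopy $G_x$ on $X\# k(S^2\times S^2)$ with $\Sigma(G_x)=x$ (this is a reworking of Singh's stable surjectivity).  One then uses the naturality of $\Sigma^{\TOP}$ under codimension-zero inclusions (\Cref{lemma:proposition_naturality_topological}) together with the comparison $\Sigma^{\TOP}=\Sigma$ on smooth pseudo-isotopies (\Cref{lemma:smooth_is_equal_top}) to transport the computation back to $X$.  The same device handles $\Theta^{\TOP}$.  Without something like Cha--Kim you have no bridge between your topological construction and the definition of the invariant.

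Two smaller corrections.  First, the handle \emph{slides} are not where the disc embedding theorem enters; births and slides are performed smoothly (on a smoothable model of $X$, or on a smoothed neighbourhood of the $3$-skeleton in the non-smoothable case), and the topological input is needed only at the \emph{deaths}, to put the $2$/$3$-handle pair into cancelling position.  Second, your opening sentence identifying a pseudo-isotopy with a function without critical points is a smooth statement; the paper replaces this with an ad hoc notion of ``allowed one-parameter family of topological handle decompositions'', and is careful to stress that this is a one-way construction (families produce pseudo-isotopies, not conversely).
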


By `good' in \Cref{thm:realisation} we mean in the sense of Freedman-Quinn (see \cite[Chapter 2.9]{freedman_quinn_1990} or \cite[\S19]{behrens_kalmar_kim_powell_ray_2021} for a definition).  It is known that the set of good groups includes elementary amenable groups, as well as groups of sub-exponential growth \cite{freedman_teichner_1995,krushkal_quinn_2000}.  These are the fundamental groups for which the extension of Freedman's disc embedding theorem apply (see \cite[Chapter 5]{freedman_quinn_1990} and/or \cite[\S19]{behrens_kalmar_kim_powell_ray_2021}).

We also show that these invariants satisfy certain properties, which are analogues of the properties that the smooth Hatcher-Wagoner invariants possess.  Firstly, our invariants satisfy naturality for certain inclusions of codimension zero submanifolds, which we make precise now.

\begin{proposition}
 \label{lemma:proposition_naturality_topological}
    Let $X = Y \cup_{W} Z$, where $W$ is a (connected) codimension-0 submanifold of $\partial Y$. Let $F$ be a pseudo-isotopy of $X$ that satisfies $F|_{Z \times I}=\Id$.  Let $i_{Y,X}\colon Y\to X$ be the inclusion map. Then $\Sigma^{\TOP}(F)=(i_{Y,X})_*\Sigma^{\TOP}(F|_Y)$. If $F$ lies in the kernel of $\Sigma^{\TOP}$, then we have $\Theta^{\TOP}(F)=(i_{Y,X})_*\Theta^{\TOP}(F|_Y)$.  
\end{proposition}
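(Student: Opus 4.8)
The plan is to run the constructions of $\Sigma^{\TOP}$ and $\Theta^{\TOP}$ from \Cref{sbs:def_sigma} and \Cref{sbs:def_theta} for $F$ using only auxiliary data that is the product on $Z\times I$, and then to observe that the data one is left with is precisely the data computing the corresponding invariant of $F|_Y$, pushed forward along $i_{Y,X}$. First I would check that $F|_{Y\times I}$ really is a pseudo-isotopy of $(Y,\partial Y)$: since $F$ is a homeomorphism of $X\times I$ that is the identity on $Z\times I\supseteq W\times I$, it maps $\operatorname{int}(Z\times I)$ onto itself and hence carries its complement $Y\times I$ onto itself, and it is the identity on $(\partial Y\setminus W)\times I\subseteq\partial X\times I$ and on $W\times I$. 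Absorbing into $Y$ a small collar of $W$ taken inside $Z$ (on which $F$ is the identity), one obtains from $F|_{Y\times I}$ an element of $\mathcal{P}^{\TOP}(Y,\partial Y)$, well defined up to the homeomorphism collapsing the collar; this is the $F|_Y$ of the statement. I would also record at this point that $(i_{Y,X})_*$ denotes the map on Whitehead groups induced by $\pi_1(i_{Y,X})$ --- and, in the $\Theta^{\TOP}$ case, also by $\pi_2(i_{Y,X})$ --- and that it descends to the quotient by $\chi(K_3\Z[\pi_1-])$ by naturality of $\chi$.

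For $\Sigma^{\TOP}$: recall that $\Sigma^{\TOP}(F)$ is extracted from a generic one-parameter family of functions joining $\pr_I\circ F$ to $\pr_I$ on $X\times I$, together with gradient-like vector fields and the other auxiliary choices of \Cref{sbs:def_sigma}, after the family has been put into the prescribed normal form by a sequence of births, deaths, handle slides, handle trades, and the ad hoc adjustments ensuring compatibility with the smooth invariant. The key point is that every one of these moves is supported in the region where the family differs from the product $\pr_I$. Because $F|_{Z\times I}=\Id$ we have $\pr_I\circ F=\pr_I$ on $Z\times I$, so I would start from the family that is constantly $\pr_I$ on $Z\times I$ and keep every subsequent datum constant there; then all critical points, ascending and descending spheres, connecting arcs and $\pi_1$-labels lie in $\operatorname{int}(Y\times I)$ and are recorded over $\Z[\pi_1 Y]$. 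The restriction of the family to $Y\times I$ is then exactly a family of the kind used to compute $\Sigma^{\TOP}(F|_Y)$, and the algebraic invariant it produces maps, under the change of rings $\Z[\pi_1 Y]\to\Z[\pi_1 X]$, to the one produced on $X\times I$. Since $\Sigma^{\TOP}$ is independent of all such choices by \Cref{thm:main}, this yields $\Sigma^{\TOP}(F)=(i_{Y,X})_*\Sigma^{\TOP}(F|_Y)$.

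For $\Theta^{\TOP}$, assume $F\in\ker\Sigma^{\TOP}$. The previous paragraph shows $\Sigma^{\TOP}(F|_Y)\in\ker(i_{Y,X})_*$; in the situations in which the proposition is applied this forces $\Sigma^{\TOP}(F|_Y)=0$, so that $\Theta^{\TOP}(F|_Y)$ is defined (otherwise the assertion should be read with the additional standing hypothesis $F|_Y\in\ker\Sigma^{\TOP}$). I would then rerun the localisation argument using the normal form defining $\Theta^{\TOP}$ in \Cref{sbs:def_theta}: choosing the normal-form family and all auxiliary data for $F$ constant on $Z\times I$, its restriction to $Y\times I$ is a valid normal-form family for $F|_Y$ which in particular witnesses $\Sigma^{\TOP}(F|_Y)=0$, and the $\Z/2\times\pi_2$-decorated incidence datum attached to $F$ is the image under $(\pi_1(i_{Y,X}),\pi_2(i_{Y,X}))$ of the one attached to $F|_Y$. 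Reducing modulo $\chi(K_3\Z[\pi_1-])$ then gives $\Theta^{\TOP}(F)=(i_{Y,X})_*\Theta^{\TOP}(F|_Y)$.

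The hard part is making the localisation rigorous: one has to go through the \emph{entire} construction of $\Sigma^{\TOP}$ and $\Theta^{\TOP}$ in \Cref{sbs:def_sigma}--\Cref{sbs:def_theta} --- in particular the topological handle-trading inputs and the several ad hoc steps inserted there to force agreement with the Hatcher--Wagoner invariants --- and verify that each step can be performed relative to $Z\times I$. I expect this to come down to inspecting these steps one at a time and noting that each is carried out in a neighbourhood of the critical locus and of finitely many arcs, and hence may be kept disjoint from $Z\times I$; but this is the place where care is needed, since it is exactly where the precise shape of the construction enters.
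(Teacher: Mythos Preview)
Your proposal rests on a mistaken picture of how $\Sigma^{\TOP}$ and $\Theta^{\TOP}$ are defined. You write that ``$\Sigma^{\TOP}(F)$ is extracted from a generic one-parameter family of functions joining $\pr_I\circ F$ to $\pr_I$ on $X\times I$'' and repeatedly cite \S\ref{sbs:def_sigma}--\S\ref{sbs:def_theta}. Those subsections define the \emph{smooth} invariants $\Sigma$, $\Theta$; the topological invariants are defined in \Cref{def:top_hw} as the composite
\[
\Sigma^{\TOP}=\Sigma\circ\mathfrak{f}_{\mathcal{S}}\circ\mathfrak{i}\circ S^2,
\]
which first double-suspends to a $6$-manifold, then passes to a neighbourhood $N$ of the $3$-handle-skeleton of $X\times J^2$, then smooths $N$, and only then applies the smooth Hatcher--Wagoner map. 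There is no Cerf/one-parameter family description available on the topological $4$-manifold $X\times I$ itself (the paper stresses this), so the localisation argument you sketch --- ``keep the family constant on $Z\times I$'' --- has no home: there is no such family in the definition of $\Sigma^{\TOP}$.

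What the paper actually does is to check naturality one step at a time along the composite above: it builds a ladder diagram comparing the $Y$-side and the $X$-side through suspension, restriction to compatible $3$-handle-skeleta $N'\subset N$ (chosen so that the handle decomposition of $Y\times J^2$ extends to that of $X\times J^2$), and the smoothing step, and then invokes the \emph{smooth} naturality statement \Cref{lemma:commutativity_after_extending} at the bottom. Each square is checked by noting that ``extend by the identity'' commutes with the relevant map. Your intuition that everything should localise to $Y$ is morally right, but it has to be implemented at the level of these auxiliary manifolds $N'\subset N\subset X\times J^2$, not on $X\times I$; in particular the nontrivial input is choosing the handle decompositions compatibly so that the inclusion $N'\hookrightarrow N$ exists, and then invoking \Cref{lemma:commutativity_after_extending} for the smooth $\Sigma$ and $\Theta$ on $N'\subset N$.
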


We also prove a duality formula for pseudo-isotopies.  This mirrors the duality formula that the Hatcher-Wagoner invariants satisfy \cite[Part I Chapter VIII; Part II 4.4]{hatcher_wagoner_1973}.

\begin{thm}\label{thm:duality}
    Let $X$ be a compact, topological 4-manifold with $k_1(X)=0$, and let $F$ be a pseudo-isotopy.  Then
    \[
    \Sigma^{\TOP}(F)= \ol{\Sigma^{\TOP}(F)}
    \]
    and, if $F\in \ker\Sigma^{\TOP}$, then
    \[
    \Theta^{\TOP}(F)= \ol{\Theta^{\TOP}(F)},
    \]
    where the overline denotes the standard involution on the respective Whitehead group (see \Cref{sec:duality}).
\end{thm}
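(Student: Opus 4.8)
The plan is to reduce \Cref{thm:duality} to the smooth duality formula of Hatcher–Wagoner by exploiting the same two-step strategy used to construct $\Sigma^{\TOP}$ and $\Theta^{\TOP}$: first establish the formula for pseudo-isotopies that come from smooth ones (or from smoothable data on a smoothed complement of a point), then show that the general topological case reduces to this via handle smoothing away from a point. Concretely, I would first recall the definition of the involution on $\Wh_2(\pi_1 X)$ and on $\Wh_1(\pi_1 X;\Z/2\times\pi_2 X)/\chi$; the condition $k_1(X)=0$ is exactly what makes the $\pi_2$-coefficient module have a well-defined $w_1$-twisted involution, so this is where that hypothesis enters. Then I would observe that both $\Sigma^{\TOP}$ and $\Theta^{\TOP}$ are built, via \Cref{thm:main}, to agree with $\Sigma$ and $\Theta$ on the image of $\pi_0(\mathcal{P}^{\DIFF}(X,\partial X))$, and that by the realisation and smoothing results underlying \Cref{thm:main} every class in the target groups is hit by a pseudo-isotopy which is smooth on the complement of a $4$-ball. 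So the key reduction is: the duality involution is natural under the forgetful map $\pi_0(\mathcal{P}^{\DIFF})\to\pi_0(\mathcal{P}^{\TOP})$ and under inclusions (using \Cref{lemma:proposition_naturality_topological}), hence it suffices to check the identities $\Sigma^{\DIFF}(F)=\ol{\Sigma^{\DIFF}(F)}$ and $\Theta^{\DIFF}(F)=\ol{\Theta^{\DIFF}(F)}$ for smooth $F$ on a punctured $X$, which is precisely \cite[Part I Ch.\ VIII; Part II 4.4]{hatcher_wagoner_1973}.

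In more detail, the steps I would carry out are as follows. \textbf{Step 1:} Write down the geometric origin of the duality involution. A pseudo-isotopy $F\colon X\times I\to X\times I$ can be turned "upside down" by the homeomorphism of $X\times I$ that flips the $I$-coordinate, producing a pseudo-isotopy $\bar F$; the Hatcher–Wagoner functions (the graphic, the one- and two-parameter families of gradient-like vector fields) of $\bar F$ are obtained from those of $F$ by reversing the Morse function, which on the algebraic side induces the standard $*$-involution on the relevant $K$-groups (transpose-conjugate of matrices, with the $w_1$-twist coming from orientation behaviour of the ascending/descending spheres). \textbf{Step 2:} Check that this flip operation is compatible with the forgetful map and with the construction of $\Sigma^{\TOP},\Theta^{\TOP}$ from the excerpt — i.e., that $\overline{(\,\cdot\,)}$ commutes with the vertical maps in the diagram of \Cref{thm:main}. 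This is essentially formal: the flip is defined identically in $\DIFF$ and $\TOP$, and the smoothing-away-from-a-point procedure is equivariant under it. \textbf{Step 3:} Invoke the smooth duality formula of Hatcher–Wagoner on the smooth piece, and use \Cref{lemma:proposition_naturality_topological} together with the surjectivity statement of \Cref{thm:realisation} to transport the identity from the smooth-on-a-punctured-$X$ model to arbitrary topological $F$.

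The main obstacle I anticipate is \textbf{Step 2} in the $\Theta$-case, where one must track the $\Z/2\times\pi_2(X)$-coefficients through the flip and verify that the quotient by $\chi(K_3\Z[\pi_1 X])$ is respected — the subtlety is that the second Hatcher–Wagoner obstruction is only well-defined modulo this image and modulo $\chi$, and the involution must be shown to descend to the quotient; this requires knowing that $\chi(K_3)$ is an involution-invariant subgroup and that the $\pi_2$-valued part of $\Theta^{\TOP}$ was constructed in a way that does not secretly break the symmetry when passing from $\DIFF$ to $\TOP$ (for instance, the ad hoc choices of framings or basepaths in \Cref{sbs:def_theta} must be shown to contribute symmetrically, or the difference must be absorbed into $\chi$). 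A secondary technical point is confirming that the hypothesis $k_1(X)=0$ is genuinely needed only to make the target involution well-defined (so that $\ol{\Theta^{\TOP}(F)}$ even makes sense), and is not hiding an additional correction term — I would address this by comparing directly with the role of $k_1$ in the smooth statement of \cite{hatcher_wagoner_1973}.
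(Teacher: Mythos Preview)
First, note that the displayed formula in \Cref{thm:duality} contains a typo: the intended statement (and what the paper actually proves in \Cref{proposition:duality_inertial}) is $\Sigma^{\TOP}(\ol{F})=\ol{\Sigma^{\TOP}(F)}$, where $\ol{F}$ is the dual pseudo-isotopy of \Cref{def:dual_pseudo_isotopy}, not the self-duality $\Sigma^{\TOP}(F)=\ol{\Sigma^{\TOP}(F)}$. Your Step~1 correctly identifies the flip $F\mapsto\ol F$, but Step~3 then reverts to the literal (mistyped) formula and claims that Hatcher--Wagoner prove $\Sigma^{\DIFF}(F)=\ol{\Sigma^{\DIFF}(F)}$; they do not. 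This is not fatal on its own, but it signals that the reduction you propose is not quite lined up with the target statement.

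The genuine gap is in Step~3. You want to transport the identity from smooth pseudo-isotopies (on a punctured $X$) to an \emph{arbitrary} topological $F$ using surjectivity of $\Sigma^{\TOP}$ from \Cref{thm:realisation}. But surjectivity onto $\Wh_2$ only tells you that for each value $x$ there is \emph{some} $F_0$ (which you can take smooth) with $\Sigma^{\TOP}(F_0)=x$ and for which the formula holds. It does not tell you that $\Sigma^{\TOP}(\ol F)=\Sigma^{\TOP}(\ol{F_0})$ for another $F$ with $\Sigma^{\TOP}(F)=x$: in dimension~4 the forgetful map $\pi_0\mathcal{P}^{\DIFF}\to\pi_0\mathcal{P}^{\TOP}$ is not known to be surjective, so a given topological $[F]$ need not lie in the image of any smooth class, and you cannot invoke \Cref{thm:main} for it. Naturality under inclusions (\Cref{lemma:proposition_naturality_topological}) does not help here either, since it compares different ambient manifolds, not different pseudo-isotopies of the same one.

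The paper avoids this by working \emph{through the definition} of $\Sigma^{\TOP}=\Sigma\circ\mathfrak f_{\mathcal S}\circ\mathfrak i\circ S^2$ rather than through the compatibility diagram of \Cref{thm:main}. One checks that the involution $F\mapsto\ol F$ commutes (up to sign) with each constituent map: with suspension via Hatcher's lemma $\ol{S^+(F)}=-S^+(\ol F)$ in $\pi_0\mathcal P$ (\Cref{lem:suspension_involution}), with restriction to the $3$-skeleton neighbourhood $N$ and with the forgetful map trivially, and finally one invokes the smooth duality formula for $\Sigma$ on the smooth $6$-manifold $N_{\mathcal S}$. The point is that after suspending twice and passing to $N$, the forgetful map $\pi_0\mathcal P^{\DIFF}(N)\to\pi_0\mathcal P^{\TOP}(N)$ \emph{is} an isomorphism, so every topological class becomes smooth and the Hatcher--Wagoner formula applies directly. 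Your proposal never engages with suspension or with Hatcher's lemma on how $\ol{\cdot}$ interacts with $S^+$, and that is the missing ingredient.
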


Using \Cref{thm:realisation} and the properties listed above, we use our invariants to produce homeomorphisms of 4-manifolds which are pseudo-isotopic to the identity but not isotopic to the identity.  In particular, the homeomorphisms produced are homotopic to the identity.

\begin{thm}\label{thm:homeos}
    Let $Y^3$ be a 3-manifold whose first $k$-invariant $k_1(Y)$ is trivial and with $\pi_1(Y)$ good and not ambivalent.  Then there exists a homeomorphism $f\colon Y\times S^1\to Y\times S^1$ which is pseudo-isotopic to the identity but not isotopic to the identity.  In particular, $f$ is homotopic but not isotopic to the identity.
\end{thm}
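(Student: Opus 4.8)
The plan is to feed \cref{thm:realisation} a suitable nonzero class, obtain a pseudo-isotopy $\mathcal F$ of $X:=Y\times S^1$ with nonvanishing second obstruction, and take for $f$ the restriction of $\mathcal F$ to $X\times\{1\}$. Then $f$ is automatically pseudo-isotopic to the identity and homotopic to the identity (compose the pseudo-isotopy with the projection $X\times I\to X$ to get a homotopy from $\Id$ to $f$), so the real content is to show that $f$ is \emph{not} isotopic to the identity. First observe that $\pi_1(X)\cong\pi_1(Y)\times\Z$ is good, since $\pi_1(Y)$ is good and the class of good groups is closed under extension by $\Z$; hence \cref{thm:realisation} applies to $X$.

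Next I would check that the target $\Wh_1(\pi_1(X);\Z/2\times\pi_2(X))/\chi(K_3\Z[\pi_1X])$ is nonzero, which is where the hypotheses on $Y$ are used. Since $\pi_1(Y)$ is not ambivalent, neither is $\pi_1(X)$: if $g\in\pi_1(Y)$ is not conjugate to $g^{-1}$, then $(g,0)\in\pi_1(X)$ is not conjugate to its inverse, as conjugation preserves the $\Z$-coordinate. Unwinding the definition of $\Wh_1(\pi;\Z/2\times\pi_2)$ as a quotient of $(\Z/2\times\pi_2)[\pi]$ by the unit summand and the conjugation/involution relations, the image of $\chi$ is carried by the conjugacy classes $\{h\}$ with $\{h\}=\{h^{-1}\}$, so the summand indexed by $\{(g,0)\}$ descends to a nonzero class $y$ in the quotient; here $k_1(Y)=0$ (equivalently $k_1(X)=0$) is used to pin down the $\Z[\pi_1]$-module structure on $\pi_2$ and to make \cref{thm:duality} available for $X$. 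Fix by \cref{thm:realisation} a pseudo-isotopy $\mathcal F$ of $X$ with $\Sigma^{\TOP}(\mathcal F)=0$ and $\Theta^{\TOP}(\mathcal F)=y$, and set $f=\mathcal F|_{X\times\{1\}}$.

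The crux is showing $f$ is not isotopic to the identity. Suppose it were, via an isotopy from $\Id$ to $f$; the associated level-preserving pseudo-isotopy $H$ is in particular a pseudo-isotopy isotopic to an isotopy, so $\Sigma^{\TOP}(H)=\Theta^{\TOP}(H)=0$ by \cref{thm:main}. Then $\mathcal G:=\mathcal F\circ H^{-1}$ is a \emph{closed} pseudo-isotopy of $X$ — the identity on both $X\times\{0\}$ and $X\times\{1\}$ — and since $\Sigma^{\TOP},\Theta^{\TOP}$ are homomorphisms, $\Sigma^{\TOP}(\mathcal G)=0$ and $\Theta^{\TOP}(\mathcal G)=y\neq 0$. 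It therefore suffices to show that on closed pseudo-isotopies of $Y\times S^1$ the invariant $\Theta^{\TOP}$ takes values in a proper subgroup missing $y$ (ideally: vanishes outright). For this I would wrap up in the $S^1$-direction: after squeezing $\mathcal G$ (up to isotopy) into a slab $Y\times J\times I$ over an arc $J\subset S^1$ — here \cref{lemma:proposition_naturality_topological} identifies the restriction, so that $\Theta^{\TOP}(\mathcal G)$ lands in the image of $i_*$ — and unrolling the $\Z$-cover, $\mathcal G$ becomes an infinite $\Z$-periodic string of translates of one compactly supported piece; an Eilenberg-swindle/squeezing argument (the torus trick, legitimate because $\pi_1(Y)$ is good) then forces the contribution of that piece, hence the associated bounded $\Wh_1$-invariant and hence $\Theta^{\TOP}(\mathcal G)$, into the subgroup coming from conjugacy classes $\{h\}$ with $\{h\}=\{h^{-1}\}$, which does not contain $y$. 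This contradicts $\Theta^{\TOP}(\mathcal G)=y\neq 0$, so $f$ is not isotopic to the identity.

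The main obstacle is precisely this last step: making the wrapping-up and Eilenberg-swindle rigorous for topological pseudo-isotopies of $Y\times S^1$ in dimension four and matching it to the algebra of $\Wh_1(\pi_1(X);\Z/2\times\pi_2)/\chi$, i.e.\ pinning down the image of $\Theta^{\TOP}$ on closed pseudo-isotopies. The remaining ingredients — goodness of $\pi_1(X)$, nonvanishing of the target via non-ambivalence, and the bookkeeping with \cref{thm:realisation,lemma:proposition_naturality_topological,thm:duality} — are comparatively routine once the cited results are granted.
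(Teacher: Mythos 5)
The hard part — bounding $\Theta^{\TOP}$ on inertial pseudo-isotopies — is exactly where your proposal stops being a proof, and your sketch of a ``wrapping-up / Eilenberg swindle / torus trick'' argument is not what the paper does, nor is it clear it can be made to work. The constraint the paper uses (\cref{lem:duality_application}, citing Hatcher \cite[Part II, Lemma 5.3]{hatcher_wagoner_1973}) that $\Theta^{\TOP}(\mathcal J\cap\ker\Sigma^{\TOP})\subset Z_4^1$ is proved \emph{only} for $X$ of the form $M^3\times I$; it does not apply directly to $X=Y\times S^1$. The paper therefore takes a structurally different route: it first proves the $Y\times I$ statement (\cref{thm:interesting_homeomorphisms}), where Hatcher's lemma gives the needed control of inertial pseudo-isotopies, and then transfers to $Y\times S^1$ by citing Igusa's injectivity lemma (\cref{lem:clam}), which says $\pi_0\mathcal P^{\TOP}(M\times I,\partial)\to\pi_0\mathcal P^{\TOP}(M\times S^1)$ (gluing $M\times\{0\}$ to $M\times\{1\}$) is injective. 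This sidesteps having to analyse inertial pseudo-isotopies of $Y\times S^1$ at all, and means one only ever needs the conjugacy-class computation in $\pi_1(Y)$ rather than in $\pi_1(Y)\times\Z$. Your outline would require you to prove, from scratch and for a 4-manifold not of product-with-interval form, the analogue of \cref{lem:duality_application}; you flag this yourself as the ``main obstacle'', and indeed it is a genuine gap.

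A minor point: you do not need $y$ to survive to a quotient by the image of $\chi$ with extra care about ``which conjugacy classes $\chi$ hits''; in the paper's setup one just needs $y$ to lie outside $Z_4^1$, i.e.\ not fixed by the involution, which follows immediately from non-ambivalence. Your observation that $\pi_1(Y)\times\Z$ is good and not ambivalent is correct, but becomes unnecessary once you pass through $Y\times I$ first, since there the relevant fundamental group is just $\pi_1(Y)$.
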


The intersection of the conditions in the statement of \Cref{thm:homeos} is a little opaque, so we give some examples of 3-manifolds that satisfy it (for the definition of an ambivalent group see \cref{def:ambivalent}) .  For the detailed statements see \Cref{sbs:YxI}.  \Cref{thm:homeos} applies to lens spaces $L(p,q)$ for $p\geq 3$, all tetrahedral manifolds, all but one octahedral manifold, all icosahedral manifolds except the Poincar\'{e} homology sphere, and many prism manifolds.  It also applies to certain circle bundles over tori, including the 3-torus.  In particular, we classify exactly for which elliptic 3-manifolds \Cref{thm:homeos} applies.  

\begin{remark}
    Ohta-Watanabe \cite{ohta_watanabe_2023} have also constructed interesting diffeomorphisms of $Y\times S^1$ for certain spherical 3-manifolds, in particular for lens spaces and the Poincar\'{e} homology sphere.  Due to the overlap, it seems useful to compare these automorphisms a little.  Our \Cref{thm:homeos} does work for lens spaces, but does not work for the Poincar\'{e} homology sphere.  Furthermore, our homeomorphisms survive suspension (see \Cref{definition:suspension_map}), whereas the diffeomorphisms constructed by Ohta-Watanabe do not.  Hence, Ohta-Watanabe's construction seems to be orthogonal to ours.
\end{remark}

\subsection{Background}
The start of pseudo-isotopy theory was Cerf's proof that (in the simply connected case) smooth pseudo-isotopy implies smooth isotopy in dimensions greater than or equal to five \cite{Cerf}.  As already stated, work of Hatcher-Wagoner and Igusa extended this to the non-simply-connected case by showing that Cerf's method extends to a two-stage obstruction theory for pseudo-isotopies \cite{hatcher_wagoner_1973,Igusa}.  It should be noted that some of the Hatcher-Wagoner theory works in dimension four, in particular the definitions of the invariants $\Sigma$ and $\Theta$.  Work of Burghelea-Lashof and Pedersen extended this to the topological category in dimensions greater than or equal to five \cite{burghelea_lashof_1974}.  We give more details on this extension in \Cref{sec:def_top_obstructions}. 

We now briefly recap the background for 4-manifolds.  Perron and Quinn proved that topological pseudo-isotopy implies topological isotopy for topological, simply-connected 4-manifolds \cite{perron_1986,quinn_1986} (Quinn's proof was recently corrected in \cite{gghkp_2023}).  Ruberman showed that smooth pseudo-isotopy does not imply smooth isotopy in the simply-connected case using gauge theory \cite{Ruberman}, and many other examples have since followed.  Budney-Gabai and Watanabe have both constructed diffeomorphisms smoothly pseudo-isotopic to the identity but not smoothly isotopic to the identity in the non-simply-connected case \cite{budney_gabai_2019,watanabe_2020} which are detected using invariants coming from configuration spaces, and in fact Budney-Gabai have constructed ones that are not even topologically isotopic to the identity \cite{budney_gabai_2023}.  Singh showed that both $\Sigma$ and $\Theta$ were stably smoothly surjective in dimension 4 \cite{singh2022pseudoisotopies}, and used $\Theta$ to construct diffeomorphisms which are smoothly pseudo-isotopic to the identity but not smoothly isotopic to the identity for some 4-manifolds, in particular for $S^1\times S^1\times S^2$.  It would be very interesting to have a more complete understanding of how these different invariants interact with one another; the results of \cite{fernandez_gay_hartman_kosanovic_2024} suggest that the barbell diffeomorphisms constructed by Budney-Gabai can, in some cases, also be detected by $\Theta$.  

We also note that Kwasik \cite{kwasik_1987} states a version of \Cref{thm:realisation} and a version of \Cref{thm:homeos} which applies in the special case that $Y$ is the 3-torus.  However, he implicitly assumes that the Hatcher-Wagoner invariants $\Sigma$ and $\Theta$ are also defined in the topological category without any adjustments.  Although we recognize the general strategy that he presents, we point out that the first step of \cite[Proposition 3.2]{kwasik_1987} which is to ``allow topological pseudo-isotopies" and compute their $\Sigma$ invariant has a fundamental flaw. That is, topological pseudo-isotopies do not lie in the domain of Hatcher-Wagoner's map $\Sigma$. Hence, in order to state such a theorem one needs to first
define an invariant for topological pseudo-isotopies. There is much work that has to be done in that direction---this is the content of \Cref{sec:def_top_obstructions}. Even assuming such an invariant exists, there is no reason to expect that the computations work out exactly as in the smooth version. In fact, \Cref{sec:realisation_sigma} and \Cref{sec:realisation_theta} deal with exactly  this problem.  To compute our invariants, we have to make use of recent developments, i.e.\ \cite{singh2022pseudoisotopies,cha_kim_2023} and so we believe that the computation itself cannot be ignored.  All in all, it seems helpful for there to be an independent proof of these results, and our careful definition of $\Sigma^{\TOP}$ and $\Theta^{\TOP}$, together with a deep investigation of their properties, allows us to produce a more general result than the one sketched in \cite{kwasik_1987}.

\begin{remark}
    It is also worth stressing that in dimension $\ge 6$ the Hatcher-Wagoner and Igusa obstruction fits in an exact sequence which, in the $\TOP$ category and $\dim=4$ does not hold even for the case $\pi_1(M)\cong \Z$ (by the previous-mentioned work of \cite{budney_gabai_2023}). As such, we work with the two maps $\Sigma^{\TOP}$ and $\Theta^{\TOP}$, and not the exact sequence.
\end{remark}

\subsection{Outline}

We now briefly outline the contents of the paper and the structure in which we will prove our results.

The first part of this paper \Cref{sec:def_top_obstructions} is dedicated to the construction of the $\TOP$ version of $\Sigma$ and $\Theta$, where we follow the sketch from~\cite{BergheleaLashofBook}[Appendix 2].
We start by increasing the dimension of our manifold using a suspension map $S^+$, which is described in \Cref{sec:background} and was already introduced in \cite{hatcher_wagoner_1973}. Then we use Pedersen's \cite{pedersen_1977} work to reduce the problem to the 3-handle skeleton (see \Cref{def:handle_skeleton}) of the suspended manifold. We will show that restricting to the 3-handle skeleton does not lose us any information regarding the structure of $\pi_0 \mathcal{P}$. Restricting the problem to the 3-handle skeleton allows to pass to the smooth category, where we can utilize the work of Hatcher-Wagoner and Igusa, which we recall in \Cref{sec:background}, and hence define our invariants. Thanks to a careful comparison with the 2-handle skeleton we will conclude that the definition of our invariants is independent of the choice of smooth structure, and hence prove that our invariants are well defined.

\begin{remark}\label{rem:blp}
    We remark that the definition of the $\TOP$ obstruction in dimension $n\ge 6$ presented in \cite{BergheleaLashofBook}---which we use after suspending the manifold---is sketched and not fully described. Even though it is not the goal of this paper, in \Cref{sec:def_top_obstructions} we try to give extra details on how to derive a $\TOP$ analogue of the $\DIFF$ and $\PL$ statements in \cite{BergheleaLashofBook}.
\end{remark}

We then, in \Cref{sec:properties}, prove some properties of our invariants which  will be crucial later on.  In particular, we show that, for smooth pseudo-isotopies, our topological invariants match the smooth invariants of Hatcher-Wagoner.  Due to the circuitous nature of the definition of our topological invariants, this is not immediately clear.  This will complete the proof of \Cref{thm:main}.  We then prove \Cref{lemma:proposition_naturality_topological}, that our invariants are natural with respect to inclusion of certain codimension zero submanifolds.  Both of these properties will be key in proving our realisation theorem (\Cref{thm:realisation}).

We then move onto the realisation part of the paper.  This starts in \Cref{sec:handle_decompositions} by carefully defining a notion of \emph{allowed one-parameter families of topological handle decompositions} (\Cref{def:one-parameter_families}).  This allows us to circumvent a key problem, which is the absence of a topological version of Cerf's functional theory, and we emphasise that it is not the scope of this paper to develop such a theory.  Instead we content ourselves with this `ad-hoc' definition which will suffice for proving our realisation theorem.  Importantly, these families have the property that restricting to the far end of the family yields a pseudo-isotopy.

In \Cref{sec:realisation_sigma} and \Cref{sec:realisation_theta} we will use these allowed one-parameter families to produce candidate pseudo-isotopies for realising our invariants.  This realisation procedure will mimic the surjectivity theorems of Hatcher-Wagoner \cite{hatcher_wagoner_1973} in high-dimensions, modified for our purposes.  This is where we will invoke the disc-embedding theorem of Freedman \cite{freedman_1982, freedman_quinn_1990, behrens_kalmar_kim_powell_ray_2021} to allow us to produce these one-parameter families.  Again, due to the circuitous nature of the definition of our topological invariants, it is then a difficult problem to compute them on these candidate pseudo-isotopies.  Here we will invoke the properties that we established in \Cref{sec:properties} and recent theorems by Singh and Cha-Kim \cite{singh2022pseudoisotopies,cha_kim_2023} to allow us to compute these invariants, and hence prove \Cref{thm:realisation}.

We then turn to using \Cref{thm:realisation} to construct homeomorphisms of certain 4-manifolds which are pseudo-isotopic to the identity but not isotopic to the identity.  To do this, we have to address the issue that our invariants are invariants of pseudo-isotopies, not of homeomorphisms.  To conclude that the homeomorphisms produced by \Cref{thm:realisation} (by restricting to the far end of the pseudo-isotopies) are not isotopic to the identity, we need to control the \emph{inertial pseudo-isotopies}, i.e.\ pseudo-isotopies $F\colon X\times I\to X\times I$ such that $F\vert_{\partial (X\times I)}=\Id$.  We do this by proving a duality formula for pseudo-isotopies, \Cref{thm:duality}, which we do in \Cref{sec:duality}.

We then use \Cref{thm:duality} to prove \Cref{thm:interesting_homeomorphisms}, which is relatively straightforward after all of the machinery has been developed.  To finish, we spend some time investigating which 3-manifolds satisfy the conditions of \Cref{thm:interesting_homeomorphisms}, including completely answering the question for finite fundamental group 3-manifolds.  This involves studying the character tables of 3-manifold groups, since in the finite fundamental group case we can reduce the problem to representation theory.

\subsection{Organisation}
In \Cref{sec:background} we briefly recall the definition of pseudo-isotopy and review the smooth invariants introduced by \cite{hatcher_wagoner_1973} and revised by \cite{Igusa}.
In \Cref{sec:def_top_obstructions} we define the topological obstructions $\Sigma^{\TOP}$ and $\Theta^{\TOP}$.  In \Cref{sec:properties} we compare these to the already known smooth invariants; in particular, we prove \Cref{thm:main}.  In \Cref{sec:handle_decompositions} we define and develop a theory of one-parameterfamilies of topological handle decompositions.  Using this theory, in \Cref{sec:realisation_sigma} and \Cref{sec:realisation_theta} we prove \Cref{thm:realisation}, dealing with the realisation problem.  In \Cref{sec:duality} we prove a duality formula for inertial pseudo-isotopies (\Cref{thm:duality}).  Finally, in \Cref{sec:homeos} we use the duality formula from the previous section to study inertial pseudo-isotopies, and hence obtain \Cref{thm:homeos}, producing interesting homeomorphisms of $Y \times S^1$ for many 3-manifolds $Y$.

\subsection{Acknowledgements}
We would like to thank Mark Powell for his pertinent comments and always helpful conversations, many of which were instrumental in this work.  The first author would also like to thank Daniel Hartman, Paula Tru\"{o}l and Simona Vesel\'{a} for useful conversations. The idea behind this paper originated in the workshop \emph{"Algebraic Methods in 4-Manifold Topology"}, which was organized in Glasgow thanks to a focused research grant from the \emph{Heilbronn Institute for Mathematical Research}. The authors would like to thank the \emph{Max Plank Institute for Mathematics in Bonn} for their hospitality and support throughout much of the work.

\section{Background notions}\label{sec:background}
	Throughout this section let $X$ be a compact, connected, $\CAT$ $4$-manifold, potentially with non-empty boundary $\partial X$. 
	
	\begin{definition}\label{definition:pseudo_isotopy}
		Let $f,g\colon X\to X$ be a pair of $\CAT$-isomorphisms.  We say that $f$ is \emph{pseudo-isotopic} to $g$ if there exists a $\CAT$-isomorphism, called a \emph{pseudo-isotopy} \[
		F\colon X\times I\to X\times I
		\] such that 
    \begin{itemize}
        \item $F\vert_{X\times \{0\}}=f\times\Id\colon X\times\{0\}\to X\times\{0\}$
        \item $F\vert_{X\times\{1\}}=g\times\Id\colon X\times\{1\}\to X\times\{1\}$
        \item $F\vert_{\partial X\times I}=f\vert_{\partial X}\times \Id$.
    \end{itemize}  We say that $f$ and $g$ are \emph{isotopic} if they are pseudo-isotopic via a level-preserving pseudo-isotopy.
	\end{definition}

\begin{figure}[htb!]
\resizebox{0.65\linewidth}{!}{
\centering
\begin{tikzpicture}
\node[anchor=south west,inner sep=0] at (0,0){\includegraphics[width=15cm]{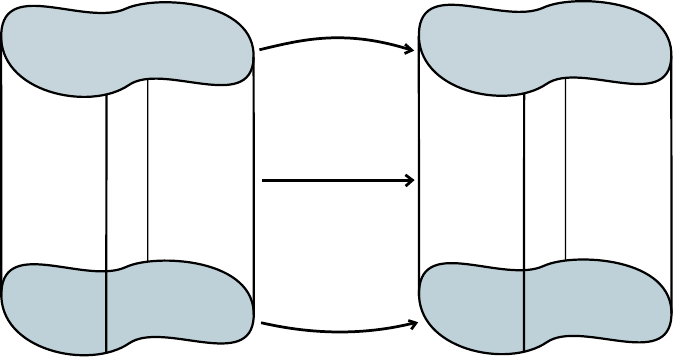}};
\node at (7.5,0.2){f};
\node at (7.5,7.4){g};
\node at (7.5,4.3){F};
\node at (1.3,3.8) {{$X \times I$}};
\node at (10.6,3.8) {{$X \times I$}};
\end{tikzpicture}}
\caption{A visual representation of a pseudo-isotopy}
\label{figure:pseudoisotopy_representation}

\end{figure}

	Of course, if $f$ is isotopic to $g$ then it is also pseudo-isotopic to $g$.

\begin{definition}[Pseudo-isotopy space]\label{definition:pseudo_isotopy_space}

    $\mathcal{P}^{\CAT}(X,\partial X)$ is the topological subgroup of $\Diff(X\times I)$ or $\Homeo(X\times I)$ consisting of all pseudo-isotopies $F\colon X\times I\to X\times I$ so that $F|_{X \times 0}=\Id_X$. The group operation is given by map composition. If $\CAT=\TOP$, $\mathcal{P}^{\CAT}(X,\partial X)$ is endowed with the compact open topology (see \cite[Appendix p.594]{hatcher_2002}). If $\CAT=\DIFF$ the space is endowed with the $C^{\infty}$-topology (see e.g.\ \cite[Chapter 2]{hirsch_1994}). 
\end{definition}
   
\begin{remark}\label{remark:pi_0}
    A path in $\mathcal{P}^{\CAT}(X,\partial X)$ corresponds to a $\CAT$ isotopy between pseudo-isotopies, and so $\pi_0(\mathcal{P}^{\CAT}(X,\partial X))$ is the set of all pseudo-isotopies $X\times I\to X\times I$, based at $\Id$ considered up to  $\CAT$ isotopy.
   \end{remark} 

In this paper we will be mainly interested in $\pi_0(\mathcal{P}^{\TOP}(X,\partial X))$. In particular we are searching for topological invariants that obstruct a (topological) pseudo-isotopy from being trivial in $\pi_0(\mathcal{P}^{\TOP}(X, \partial X))$.

Next, we briefly recall the definitions of the smooth Hatcher-Wagoner invariants, which we will use at the last stage of our topological definition in \Cref{sec:def_top_obstructions}.  For the full details, see \cite{hatcher_wagoner_1973} or \cite{singh2022pseudoisotopies}.

\subsection{Definition of $\Sigma$}\label{sbs:def_sigma}

We are going to define the following map. \[\Sigma\colon \pi_0(\mathcal{P}^{\DIFF}(X,\partial X))\to \Wh_2(\pi_1(X)).\]
We begin by defining the codomain, $\Wh_2(\pi_1(X))$.
\begin{definition}[Steinberg group]
    Let $\Lambda$ be a ring.  Then we define the \emph{Steinberg group} of $\Lambda$, denoted $\St(\Lambda)$ to be group generated by symbols $x_{i,j}^{\lambda}$ where $i,j$ are distinct positive integers and $\lambda\in \Lambda$, subject to the relations:
    \begin{enumerate}
        \item $x_{i,j}^{\lambda}x_{i,j}^{\lambda'}=x_{i,j}^{\lambda+\lambda'}$,
        \item $[x_{i,j}^{\lambda},x_{k,l}^{\lambda'}]=1$ provided $j\neq k$ and $i\neq l$,
        \item $[x_{i,j}^{\lambda},x_{j,k}^{\lambda'}]=x_{i,k}^{\lambda\lambda'}$ provided $i\neq k$.
    \end{enumerate}
Then there is a natural map $\St(\Lambda)\to E(\Lambda)$, where $E(\Lambda)$ denotes the group of elementary matrices with coefficients in $\Lambda$, given by sending the symbol $x_{i,j}^{\lambda}$ to the elementary matrix with $(i,j)$-th entry $\lambda$, denoted $e_{i,j}^{\lambda}$.  Then we define the second algebraic $K$-group $K_2(\Lambda)$ to be the kernel of this map.  Now fix $\Lambda=\Z[\pi]$ where $\pi$ is a finitely presented group.  Let $w_{i,j}^{\pm g}:=x_{i,j}^{\pm g}x_{j,i}^{\mp g^{-1}}x_{i,j}^{\pm g}$.  Then define \[
\Wh_2(\pi):= K_2(\Z[\pi])/\langle w_{i,j}^{\pm g}\rangle.
\]
\end{definition}

\begin{figure}[h]
  \centering
  \begin{minipage}[t]{0.49\textwidth} 
    \centering
\begin{tikzpicture}
    \node[anchor=south west,inner sep=0] at (0,0){
    \includegraphics[width=0.9\textwidth]{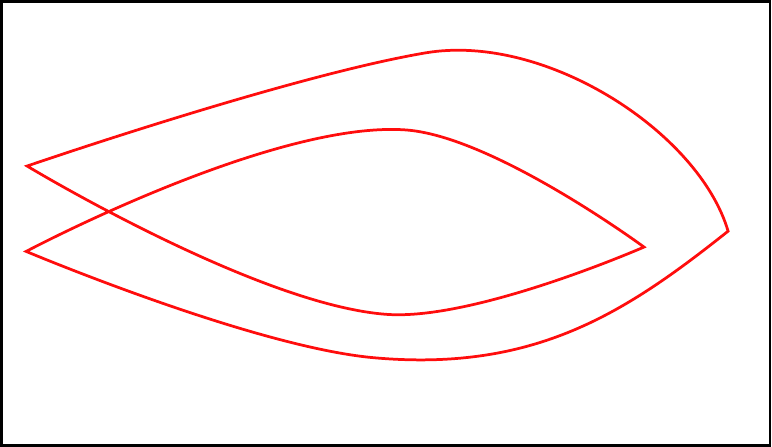}};
    \node at (-0.3,2){I};
    \node at (6,-0.2){t};
    \end{tikzpicture} 
    \caption{An example of a Cerf graphic.}
    \label{fig:example_Cerf_Graphic}

 \end{minipage}%
  \hfill
  \begin{minipage}[t]{0.49\textwidth} 
    \centering
    \begin{tikzpicture}
     \node[anchor=south west,inner sep=0] at (0,0)
     {\includegraphics[width=0.9\textwidth]{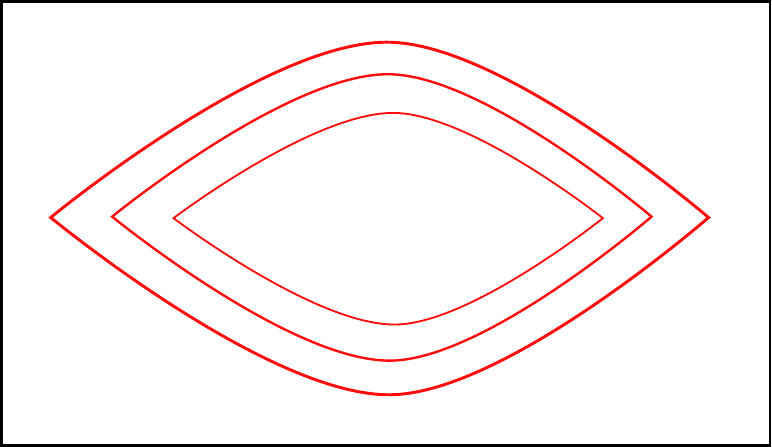}};
      \node at (-0.3,2){I};
    \node at (6,-0.2){t};
    \end{tikzpicture}
    \caption{An example of a Cerf graphic in nested eyes position.}
    \label{fig:example_Cerf_Graphic_Nested_Eyes}
  \end{minipage}
\end{figure}

We now seek to define $\Sigma$, following the discussion in \cite{hatcher_wagoner_1973}.  Start with a pseudo-isotopy $[F] \in \pi_0(\mathcal{P}^{\DIFF}(X,\partial X))$.  The pseudo-isotopy $F$ produces a one-parameter family of generalised Morse functions $g_t\colon X\times I \to I$, and, together with a choice of a one-parameter family of gradient-like vector fields $\eta_t$, this gives rise to a one-parameter family of handle decompositions for $X\times I$, starting and ending at the trivial handle decomposition.  We plot the critical values of $g_t$ in a Cerf graphic (see Figure \ref{fig:example_Cerf_Graphic}). We deform the one-parameter family of generalised Morse function and potentially obtain a different Cerf graphic.
Hatcher-Wagoner show that we can deform $(g_t,\eta_t)$ such that there are only critical points of index 2 and 3.  The obstruction $\Sigma$ will then be the obstruction to deform $(g_t,\eta_t)$ to a family of nested eyes (see Figure \ref{fig:example_Cerf_Graphic_Nested_Eyes}).

\begin{definition}
    Choose once and for all a basepoint $z\in X\times I$ and choose an ordered indexing $k$  of the critical points of index 2 and 3  which we denote by $b^k_t$ and $z^k_t$, respectively.  Choose orientations for the descending manifolds associated to $z^k_t$ and one-parameter families of basing arcs $\gamma_{z^k_t}$.  Now make the corresponding choices for $b^k_t$ such that the (equivariant) intersection matrix for the belt spheres for the index 2 handles and the attaching spheres for the index 3 handles is given by the identity matrix just after all of the births have occurred (we can assume that nothing else interesting occurs in $(g_t,\eta_t)$ until after all of the birth/deaths).  As we scan across the one-parameter family the intersection matrix changes as we pass handle slides.  Passing a 3/3 handle slide at time $t$ causes the matrix to change by multiplication on the left by $e_{i,j}^{\pm g}$ where $i$ and $j$ are determined by the ordering of the critical points and $g=\gamma_{z^i_t}\circ \varphi \circ \gamma_{z^j_t}^{-1}$, with $\varphi$ the arc that the handle slide occurs over.  Similarly, passing a 2/2 handle slide at time $t$ multiplies the matrix on the right by $e_{i,j}^{\pm g}$, where $i$ and $j$ are again determined by the ordering of the critical points and $g=\gamma_{b^i_t}\circ \varphi \circ \gamma_{b^j_t}^{-1}$, with $\varphi$ again the handle slide arc.

    Write $\prod_\ell e_{i_{\ell},j_{\ell}}^{\pm g_{\ell}}$ for the result of these multiplications.  At the end of this process (since the critical points must cancel) our matrix is of the form $PD$, where $P$ is a permutation matrix and $D$ is a diagonal matrix with elements $d_i=\pm g_i$ for some $g_i\in \pi_1(X)$.  Hatcher and Wagoner show that this matrix can be written in the form $PD = \prod_m e_{p_{m},q_{m}}^{\pm h_{m}}e_{q_{m}.p_{m}}^{\mp h^{-1}_{m}}e_{p_{m},q_{m}}^{\pm h_{m}}$. Define  \[\Pi(F):= \prod_\ell e_{i_{\ell},j_{\ell}}^{\pm g_{\ell}}\left( \prod_m e_{p_{m},q_{m}}^{\pm h_{m}}e_{q_{m},p_{m}}^{\mp h^{-1}_{m}}e_{p_{m},q_{m}}^{\pm h_{m}}\right)^{-1} \in K_2(\Z[\pi_1(X)]).\]
    We then define $\Sigma(F):= [\Pi(F)]\in \Wh_2(\pi_1(X))$.  Hatcher and Wagoner show that this does not depend on the various choices that were made.
\end{definition}

\subsection{Definition of $\Theta$}\label{sbs:def_theta}

We will briefly recall the definition of the obstruction $\Theta\colon \ker\Sigma \to \Wh_1(\pi_1(X);\Z/2 \times \pi_2(X))$.  Hatcher and Wagoner show that the kernel of $\Sigma$ is the `unicity of death' subgroup in high dimensions, which corresponds to pseudo-isotopies whose Cerf graphic can be deformed to that of a single `eye'.  As Quinn notes in \cite[Section 4.1]{quinn_1986}, the final step of the proof wherein the Cerf graphic is reduced from a family of `nested eyes' to a single eye does not apply in dimension four.  Hence, this $\Theta$ will be defined on pseudo-isotopies whose Cerf graphic can be deformed to consist only of a nested family of eyes (see \ref{fig:example_Cerf_Graphic_Nested_Eyes}). By a Cerf graphic with only "a nested family of eyes" we mean a Cerf graphic describing a one-parameter family of handle decompositions that has independent  births and deaths of handles.

\begin{definition}\label{def:wh_1}
    We follow Singh's exposition \cite[7.1]{singh2022pseudoisotopies} and the original exposition \cite[Part I, Chapter VII]{hatcher_wagoner_1973} to define $\Wh_1(\pi_1(X);\Z/2\times \pi_2(X))$.  Let $\pi$ be a group with an action on an abelian group $\Gamma$.  We define a ring $R:=\Gamma[\pi]\times \Z[\pi]$ and describe the ring structure by showing the multiplication of two generic elements:
    \[
    \left(\sum_i(\gamma_i+n_i)g_i\right)\left(\sum_j(\gamma'_j+n'_j)g'_j\right):=\sum_{i,j}(n_i(g_i\cdot \gamma'_j)+n'_j\gamma_i+n_in'_j)g_ig'_j.
    \]
    Above we have that $\gamma_i,\gamma'_j\in \Gamma$, $n_i,n'_j\in\Z$ and $g_i,g'_j\in\pi$.
    We then define \[\GL(\Gamma[\pi])=\ker\left(\GL(R)\to \GL(R/\Gamma[\pi])\right)\] and define \[K_1(\Gamma[\pi]):=\GL(\Gamma[\pi])/[\GL(R),\GL(\Gamma[\pi])].\]
    If we denote the trivial group as $\mathbb{1}$, there is then a natural map $\Gamma[\mathbb{1}]\to K_1(\Gamma[\pi])$ which sends $\gamma$ to the equivalence class of the $1\times 1$ matrix $(\gamma)$.  We then define $\Wh_1(\pi;\Gamma)$ to be the cokernel of this map.
    We remark that Hatcher showed that: \[\Wh_1(\pi;\Gamma)\cong \Gamma[\pi]/(\gamma g_1 - (g_2\cdot \gamma) (g_2 g_1 g_2^{-1}),\gamma')\] for all $\gamma,\gamma'\in \Gamma$ and $g,g'\in \pi$.
\end{definition}
We will apply this construction specifically when $\pi=\pi_1(X)$ and $\Gamma=\Z/2\times \pi_2(X)$ where the action of $\pi$ on $\Gamma$ is the product of the trivial action on the $\Z/2$-factor and the standard action on the $\pi_2(X)$-factor.

In the original definition, $\Theta$ lived in $\Wh_1(\pi_1(X);\Z/2\times \pi_2(X))$, but Igusa \cite{Igusa} showed that this only works if the first $k$-invariant $k_1(X)$ vanishes.  In general we have a map
\[
\chi \colon K_3(\Z[\pi_1(X)])\to \Wh_1(\pi_1(X);\Z/2\times \pi_2(X))
\]
where $K_3$ denotes the third algebraic $K$-group, and Igusa showed that $\Theta$ is only well-defined in $\left(\Wh_1(\pi_1(X);\Z/2\times \pi_2(X))\right)/(\Image \chi)$.  If the first $k$-invariant of $X$ vanishes then $\chi=0$.  In \Cref{sec:homeos} we will only consider examples where $k_1(X)=0$ and hence this subtlety will not matter for us.

The following definition will be rather terse.  For details, see \cite[Part I, Chapter VII]{hatcher_wagoner_1973} or \cite[Section 7.2]{singh2022pseudoisotopies}.

\begin{definition}\label{def:Theta}
    Let $F\colon X\times I\to X\times I$ such that $F$ lies in the unicity of death subgroup, i.e.\ its Cerf graphic can be deformed to consist only of nested eyes.  We now describe how to produce an element in $\Gamma[\pi]$ associated to $F$.  This will not be well-defined, but the equivalence class of this element in $\Wh_1(\Gamma;\pi)/\chi$ will be, and this will be the element $\Theta([F])$.  We will not touch on the well-definedness of $\Theta$ here and instead direct the reader to \cite[Part II]{hatcher_wagoner_1973}.

    Assume that we have already chosen a deformation of the one-parameter family associated to $F$ to a one-parameter family $(g_t,\eta_t)$ whose Cerf graphic consists of $n$ nested eyes, with all births and deaths at height $1/2$.  Assume that there exist times $0<t_0<t_1<t_2<t_3<1$ such that the following is true.  The $n$ births occur between time $t_0$ and $t_1$ and  for $t_0\leq t \leq t_1$ the 3-handles are still in cancelling position with their respective 2-handles; the $n$ deaths occur between time $t_2$ and $t_3$ and that for $t_2\leq t\leq t_3$ the 3-handles are in cancelling position with their respective 2-handles (see \cite[Section 2]{gabai_2022}).  Now we fix a diffeomorphism that identifies the middle level with \[
    \bigcup_{t\in[t_1,t_2]}g_t^{-1}(1/2)\cong (X\# n(S^2\times S^2))\times I.\]
    Let $B_i$ denote the trace of the belt sphere of the $i$-th 2-handle inside the middle level (note that via our identification this corresponds to some $(\{\pt\}\times S^2)\times I\subset X\# n(S^2\times S^2))\times I$), and let $A_i$ denote the trace of the attaching sphere of the $i$-th 3-handle inside the middle level (which looks non-standard).  The intersection $T_i:= A_i\cap B_i$ is a 1-manifold which consists of a single arc component and an unknown number of circle components denoted $C^j_i$.  For each circle component we will obtain an element $\gamma^j_i\in \pi_1(X)$ and an element $\alpha^j_i\in \Z/2\times \pi_2(X)$, and then we will define $\Theta((g_t,\eta_t))$ to be the sum $\sum_{i,j} \alpha^j_i \gamma^j_i\in (\Z/2\times \pi_2(X))[\pi_1(X)]$.
    We first describe how to obtain $\gamma^j_i$.  The circle $C^j_i$ comes with two basing arcs given by the attaching sphere and the belt sphere associated to it and together they form a loop $\gamma^j_i$ (since the circle $C^j_i$ is null-homotopic in $X\times I$, this is well-defined).  Now we describe how to obtain $\alpha^j_i$.  Since $A_i$ and $B_i$ are simply-connected, null-homotopies of $C_i^j$ in them determine two discs which glue together to give a $\pi_2$ element in $X$.  Similarly, $A_i$ and $B_i$ determine two different framings for $C$ in $A_i$ and the difference in these framings is a well-defined integer, whose value modulo two we record and combine with the $\pi_2$ element to produce $\alpha^j_i$.
    Finally, we define \[\Theta([F]):=[\Theta((g_t,\eta_t))]\in \Wh_1(\pi_1(X);\Z/2\times \pi_2(X))/\Image\chi.\]
\end{definition}

Some remarks on the above definition.

\begin{remark}
    \begin{enumerate}[(i)]
        \item Hatcher and Wagoner show that, in higher dimensions, circles $C^{j}_i$ and $C^{\ell}_i$ such that $\gamma^j_i=\gamma^{\ell}_i$ may be surgered to a single circle, hence geometrically justifying the fact that the invariant is valued in the group ring.  This surgery step does not hold in dimension four, but we nevertheless choose to lose information by passing to the group ring.
        \item In higher dimensions, there is only a modulo two choice for the framing of the circle in $A_i$.  In dimension four, there is an integers worth of framings and so we could choose to record the difference as an integer.  However, the well-definedness of $\Theta$, proven in \cite[Part II]{hatcher_wagoner_1973}, relies on suspending the given $(g_t,\eta_t)$, and so only the version of $\Theta$ defined using the $\Z/2$ framing is known to be well-defined, even in dimension four.   
    \end{enumerate} 
\end{remark}
    
\subsection{Suspension}
The aim of this subsection is to introduce the \emph{suspension} operation for pseudo-isotopies.
 We will directly follow Hatcher-Wagoner (see \cite[Part I, Chapter I, \S 5]{hatcher_wagoner_1973}). Note that the original definition in \cite{hatcher_wagoner_1973} was done for $\CAT=\DIFF$. Here we will work only in the topological category. As such, we will omit the superscript $\TOP$.

We need to make a small technical modification. For convenience in defining the suspension map we will first replace $\mathcal{P}(X,\partial X)$ with $\mathcal{P}'(X,\partial X)$, which is the subspace of pseudo-isotopies $F$ satisfying:
 \begin{itemize}
 \item $F\vert_{X \times [0,\epsilon] \cup \partial X \times I}= F \vert _{X \times 0}$,
 \item $F \vert_{X \times [1-\epsilon,1]}= F|_{X\times 1}$.
 \end{itemize}
 Here $\epsilon$ is a small positive number. In practice we are assuming that the pseudo-isotopy coincides with the identity around the boundary and a small subinterval $[0,\epsilon]$ and it also coincides with the top-end map around a small interval $[1-\epsilon, 1]$.

It is not hard to show these two spaces are homotopy equivalent and hence, in a slight abuse, we will keep writing $\mathcal{P}(X,\partial X)$ to ease the notation.
 
\begin{definition}[Suspension map] \label{definition:suspension_map}

 We define the \emph{suspension map} $S^+ \colon \mathcal{P}(X,\partial X) \to \mathcal{P}(X\times J,\partial (X \times J))$, where $J=[-1,1]$.

 Let $I=[0,1]$. Define $C \subset J \times I$ to be the set $C= J \times [0, 1 - \frac{\epsilon}{2}]$.
 Define an embedding $\varphi: C \to J \times I$ such that:
 \begin{itemize}
     \item $\varphi(1,t)=(1-t, 1)$
     \item $\varphi(-1,t)=(t-1,1)$
     \item $\varphi(s,t)= (s,t+s^2)$ for $|s| \leq \frac{\epsilon}{4}$
     \item for a fixed $t \in [0,1 - \frac{\epsilon}{2}]$ the map $[-1,1]\to [0,1]  \colon s \to \varphi (s,t)$ followed by the projection $J \times I \to I$ has a critical point only at $s=0$.
 \end{itemize}

  Using the embedding $\varphi$ we can introduce coordinates $u$ and $v$ on the image $\varphi(C)$. The lines $v=\constant$ are of the form $u=\varphi(s,t)$ with $t=\constant$, and the lines $u=\constant$ are of the form $v= \varphi(s,t)$ with $s=\constant$.

  \begin{figure}[h]
      \centering      \includegraphics{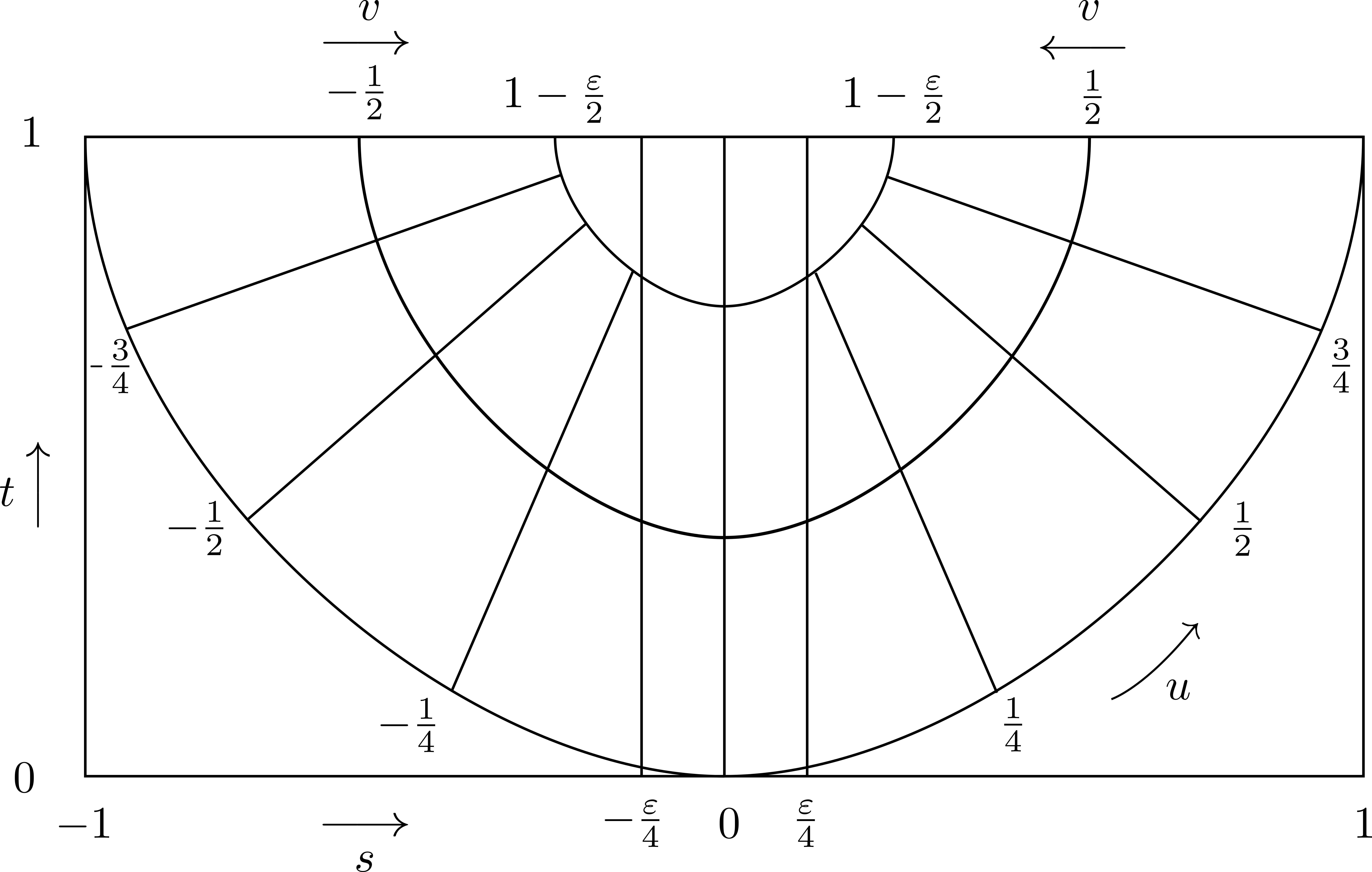}
      \caption{Pictorial description of the embedding of $\varphi\colon C\to J\times I$, taken from \cite[Part I, Chapter I, \S 5]{hatcher_wagoner_1973}.}
      \label{fig:suspension}
  \end{figure}

 Let $F \in \mathcal{P}(X,\partial X)$. We define $S^+(F) \in \mathcal{P}(X \times J, \partial (X \times J))$ to be:
 \begin{itemize}
     \item $F\vert_{X \times [0,1- \frac{\epsilon}{2}]}$ on each of the sections $ X \times \{u= \constant, 0 \leq v \leq 1- \frac{\epsilon}{2}\}$.
     \item $F\vert_{X \times 0}$ for each of the points $X \times \{(s,t)\}$ below the line $v=0$.
     \item $F|_{X \times 1}$ for each of the points $X \times \{(s,t)\}$ above the line $v=1-\frac{\epsilon}{2}$.

 \end{itemize}
\end{definition}
\begin{remark}\label{remark:well_behaved_suspension}
   The suspension map $S^+$ is defined so that it is ``well-behaved'' in terms of critical points of the pseudo-isotopy $F$. The square $J \times I$ is bent so that for each critical point of $F$, the critical point is preserved and no additional critical points are created. Moreover we stress that increasing the dimension in the trivial way (meaning taking the product with the identity on the square $J^2$) not only would introduce a line of critical points for every critical point of $F$ (this would be not ideal for the smooth functional approach used in \cite{hatcher_wagoner_1973}), but would actually not even produce a pseudo-isotopy, since the result would not be constant on the "vertical" boundary of $X \times J^2 \times I$.
   
   The behaviour with respect to critical points of course only makes sense in $\CAT = \DIFF$, and plays an important role in the definition of the Hatcher-Wagoner invariants in \cite{hatcher_wagoner_1973}.
   As for now, we only need this to be a well-defined map. The smooth properties will mostly on be useful for us for comparing with suspension in the smooth category, which we will do in \Cref{sec:properties} and \Cref{sec:duality}.
\end{remark}

 We now show that the map $S^+$ descends to a well-defined map on $\pi_0(\mathcal{P}(X, \partial X))$.
 \begin{lem}\label{lem:well-definedness_suspension}
     $\pi_0(\mathcal{P}(X, \partial X)) \xrightarrow{S^+_*}\pi_0(\mathcal{P}(X \times J, \partial( X\times J)))$ is a well-defined map.
 \end{lem}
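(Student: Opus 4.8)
The plan is to show that $S^+$ sends paths to paths, so that it descends to $\pi_0$. Concretely, suppose $F_0$ and $F_1$ are pseudo-isotopies in $\mathcal{P}(X,\partial X)$ (which, by the discussion preceding the lemma, we may assume lie in $\mathcal{P}'(X,\partial X)$, i.e.\ are constant near $X\times\{0\}\cup\partial X\times I$ and near $X\times\{1\}$) that are connected by a path in $\mathcal{P}(X,\partial X)$, i.e.\ a topological isotopy $\{F_r\}_{r\in I}$ through pseudo-isotopies. I want to produce a path $\{S^+(F_r)\}_{r\in I}$ in $\mathcal{P}(X\times J,\partial(X\times J))$. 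The key observation is that the embedding $\varphi\colon C\to J\times I$ and the bending of the square $J\times I$ used to define $S^+$ are fixed once and for all, independent of $F$; the map $S^+(F)$ is built by re-coordinatising via $\varphi$ and inserting $F|_{X\times[0,1-\epsilon/2]}$ on the $v$-slices, $F|_{X\times 0}$ below, and $F|_{X\times 1}$ above. Hence $S^+$ is really given by a fixed ``plumbing'' recipe applied fibrewise, and the assignment $F\mapsto S^+(F)$ is visibly continuous from $\mathcal{P}'(X,\partial X)$ to $\mathcal{P}(X\times J,\partial(X\times J))$ with the compact-open topologies: a continuous family $\{F_r\}$ produces a continuous family $\{S^+(F_r)\}$ since the construction only ever pre/post-composes with the $r$-independent re-coordinatisation and restricts/evaluates $F_r$ continuously.

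First I would carefully check that for each fixed $F\in\mathcal{P}'(X,\partial X)$, the map $S^+(F)$ is genuinely a homeomorphism of $X\times J\times I$ and genuinely a pseudo-isotopy, i.e.\ that the three clauses of \Cref{definition:suspension_map} patch to a well-defined homeomorphism that is the identity on $X\times J\times\{0\}$ and on $\partial(X\times J)\times I$. This is where the conditions defining $\mathcal{P}'$ are used: because $F$ is constant equal to $F|_{X\times 0}$ on $X\times[0,\epsilon]$ and constant equal to $F|_{X\times 1}$ on $X\times[1-\epsilon,1]$, the three pieces of $S^+(F)$ agree on overlaps (the seams along $v=0$ and $v=1-\epsilon/2$), so the result is a well-defined homeomorphism; and the behaviour of $\varphi$ on $s=\pm1$ guarantees that $S^+(F)$ restricts to the identity on the relevant part of $\partial(X\times J)\times I$. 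Then I would observe that a topological isotopy $\{F_r\}$ with $F_r\in\mathcal{P}'$ for all $r$ (which we may arrange since $\mathcal{P}'\hookrightarrow\mathcal{P}$ is a homotopy equivalence, as noted in the text, so a path in $\mathcal{P}$ may be pushed into $\mathcal{P}'$) yields, applying the recipe for each $r$, a family $\{S^+(F_r)\}$ which is jointly continuous in $(x,r)$ — hence a path in $\mathcal{P}(X\times J,\partial(X\times J))$ connecting $S^+(F_0)$ and $S^+(F_1)$. This shows $S^+_*$ is well-defined on path components.

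The main obstacle is the bookkeeping in the first step: verifying that the three-clause definition of $S^+(F)$ actually glues to a homeomorphism and that it lands in $\mathcal{P}$, i.e.\ chasing the coordinates $(u,v)$ through $\varphi$ and confirming the boundary and level-$0$ conditions. This is essentially the content already present (for $\DIFF$) in \cite[Part I, Chapter I, \S 5]{hatcher_wagoner_1973}; in the $\TOP$ setting nothing changes except that one no longer needs the statements about critical points, and one works with homeomorphisms in place of diffeomorphisms, using the compact-open topology. The continuity statement itself — once $S^+(F)$ is known to be well-defined for each $F$ — is then essentially formal, since all the gluing data is independent of $F$ and the compact-open topology behaves well under the operations of restriction, evaluation, and composition with a fixed homeomorphism.
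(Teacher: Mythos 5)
Your proof takes essentially the same route as the paper: given an isotopy $\{F_r\}$ (the paper's $H_s$) connecting $F_0$ and $F_1$, you apply the suspension recipe fibrewise to obtain the connecting path $\{S^+(F_r)\}$, with the same three-part gluing along $v=0$, the radial region, and $v=1-\epsilon/2$. The extra remarks you include — that $S^+(F)$ is itself a well-defined pseudo-isotopy, that the path should be pushed into $\mathcal{P}'$, and that $S^+$ is in fact a continuous map — are reasonable elaborations but do not change the underlying argument.
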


 \begin{proof}
     Let $F$ and $G$ be two pseudo-isotopies in $\mathcal{P}(X, \partial X)$ such that $[F]=[G] \in \pi_0(\mathcal{P}(X, \partial X))$. Then there exists an isotopy $H_t : X \times I \to X \times I$ so that $H_0=F$ and $H_1=G$. We now build an isotopy $\widehat{H}_s$ that connects $S^+(F)$ and $S^+(G)$. To do so, we suspend the path $H$ by taking the suspension of the isotopy $H$.

This is defined on each time $s$ to be $\hat{H}_s: (X \times J) \times I \to (X \times J) \times I$ as:
\begin{itemize}
    \item $S^+(H_s)_0$ below the line $v=0$.
    \item $S^+(H_s)$ along radial lines $X \times \{u=\constant, 0\le v \le 1-\frac{\epsilon}{2}\} $.
    \item $S^+(H_s)_1$ above the "horizontal" line $v=1- \frac{\epsilon}{2}$.\qedhere
\end{itemize}

\begin{figure}[htb]
\centering
\begin{tikzpicture}
\node[anchor=south west,inner sep=0] at (0,0){\includegraphics[width=8cm]{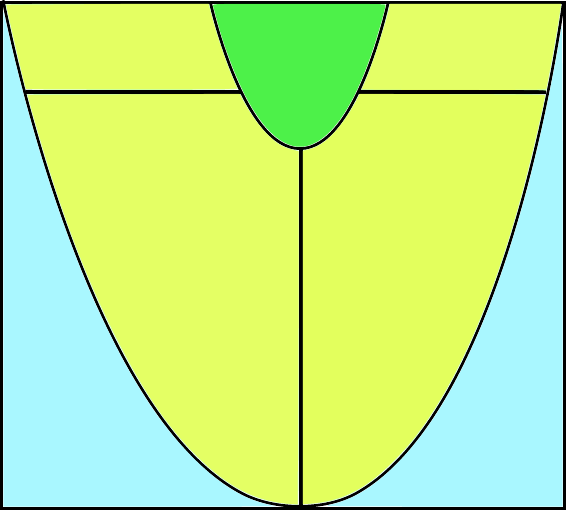}};
\node at (1.5,1.2) {$S^+(H_s)_0$};
\node at (4.2,6){$S^+(H_s)_1$};
\node at (3.3,3){$S^+(H_s)$};
\end{tikzpicture}
\caption{A time slice $s$ of the isotopy $\hat{H}_s$.}
\label{figure:stabilized_descends_to_well_defined}

\end{figure}

\end{proof}

 \begin{remark}
     In Section \ref{sec:homeos} we will work quite extensively with the suspension map. As in \cite{hatcher_wagoner_1973} we will slightly modify the embedding $\phi: C \to J \times I$ so that the "bent" rectangle does not touch $\{0\} \times I$. This is useful to work with smooth properties of the suspension.
 \end{remark}

 We will need the property that the suspension maps are compatible with the smooth Hatcher-Wagoner invariants.

 \begin{lem}\label{lrm:suspension_preserves_Sigma_Theta}
     The following diagrams, concerning $\Sigma$, $\Theta$ and the suspension $S^+$, commute.
     \[
     \begin{tikzcd}
         \pi_0\mathcal{P}^{\DIFF}(X,\partial X) \arrow[r, "\Sigma"] \arrow[d, "S^{+}"] & \Wh_2(\pi_1(X)) \arrow[d] \\
         \pi_0\mathcal{P}^{\DIFF}(X\times J,\partial (X \times J)) \arrow[r,"\Sigma"] & \Wh_2(\pi_1(X\times J))
     \end{tikzcd}
     \]
     and 
     \[
     \begin{tikzcd}
         \ker\Sigma \arrow[r, "\Theta"] \arrow[d, "S^{+}"] & \Wh_1(\pi_1(X);\Z/2\times \pi_2(X))/\chi \arrow[d] \\
         \ker\Sigma \arrow[r,"\Theta"] & \Wh_1(\pi_1(X\times J);\Z/2\times \pi_2(X\times J))/\chi
     \end{tikzcd}
     \]
     In both cases the rightmost vertical map is induced by the natural identification $\pi_k(X)\cong \pi_k(X\times J)$ for $k=1,2$.
 \end{lem}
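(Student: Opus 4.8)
The plan is to unwind the definitions of $\Sigma$ and $\Theta$ and observe that the suspension map $S^+$, by its very construction (see Remark 1.14), changes the associated one-parameter family of (generalised) Morse functions in a controlled way: for a smooth pseudo-isotopy $F$ with chosen data $(g_t, \eta_t)$, the suspended pseudo-isotopy $S^+(F)$ admits a one-parameter family $(\tilde g_t, \tilde \eta_t)$ on $(X\times J)\times I$ whose critical points are in canonical bijection with those of $(g_t,\eta_t)$, with the index shifted up by... actually \emph{not} shifted, since the bending of $J\times I$ preserves the index of each critical point while adding no new ones. First I would make this precise: describe the family $(\tilde g_t,\tilde\eta_t)$ explicitly from the embedding $\varphi\colon C\to J\times I$ of Definition 1.12, check it is a legitimate family of generalised Morse functions with gradient-like vector fields starting and ending at the trivial handle decomposition, and verify that its Cerf graphic is (a reparametrisation of) that of $(g_t,\eta_t)$.

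Given this, the computation of $\Sigma$ is essentially unchanged. The index-2 and index-3 critical points of $(\tilde g_t, \tilde\eta_t)$ are the suspensions of those of $(g_t,\eta_t)$; their descending manifolds, belt spheres, attaching spheres and basing arcs all suspend canonically, and under the natural isomorphism $\pi_1(X)\xrightarrow{\ \cong\ }\pi_1(X\times J)$ the group elements $g=\gamma_{z^i_t}\circ\varphi\circ\gamma_{z^j_t}^{-1}$ recording handle slides are sent to the corresponding group elements for the suspended family. Hence the word $\prod_\ell e_{i_\ell,j_\ell}^{\pm g_\ell}$ and the terminal matrix $PD$ map to their counterparts under the ring map $\Z[\pi_1 X]\to\Z[\pi_1(X\times J)]$, so $\Pi(S^+F)$ is the image of $\Pi(F)$ in $K_2(\Z[\pi_1(X\times J)])$, and therefore $\Sigma(S^+F)$ is the image of $\Sigma(F)$ in $\Wh_2(\pi_1(X\times J))$; this is exactly commutativity of the first diagram. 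For $\Theta$ one argues the same way, but at the level of the middle level: if $(g_t,\eta_t)$ has a Cerf graphic of $n$ nested eyes, so does $(\tilde g_t,\tilde\eta_t)$, so $\ker\Sigma$ is preserved; the middle-level identification $\bigcup_t g_t^{-1}(1/2)\cong (X\#n(S^2\times S^2))\times I$ suspends to $\bigcup_t \tilde g_t^{-1}(1/2)\cong ((X\times J)\#n(S^2\times S^2))\times I$, and the traces $A_i$, $B_i$ of attaching and belt spheres, their intersection circles $C^j_i$, the loops $\gamma^j_i\in\pi_1$, the $\pi_2$-elements and the $\Z/2$-framing data all go to the corresponding objects for $S^+(F)$ under the identifications $\pi_k(X)\cong\pi_k(X\times J)$, $k=1,2$. (The one point to note is that the framing difference is recorded mod $2$, which is exactly why the suspension-stability used by Igusa and Hatcher–Wagoner does not create a discrepancy.) Thus $\Theta(S^+F)$ is the image of $\Theta(F)$, and the second diagram commutes.

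The main obstacle is making the "suspension of the one-parameter family" rigorous and checking it respects \emph{all} the auxiliary choices entering the definitions — orderings of critical points, orientations of descending manifolds, the one-parameter families of basing arcs, and the identification of the middle level — rather than just the underlying handle decomposition. The cleanest way to handle this is to note that Hatcher–Wagoner already analyse the effect of $S^+$ on the functional data (indeed $S^+$ was introduced in \cite{hatcher_wagoner_1973} precisely for this purpose) and that both $\Sigma$ and $\Theta$ are independent of those choices; so one is free to pick the choices for $S^+(F)$ to be the suspensions of the choices for $F$, and then every term in the formulas for $\Sigma$ and $\Theta$ matches termwise under the $\pi_1$- and $\pi_2$-identifications. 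A secondary, purely bookkeeping, point is that $S^+(F)$ as defined lies in $\mathcal P'$ and one must reparametrise slightly (as in the Remark following Definition 1.12, pushing the bent rectangle off $\{0\}\times I$) so that the family is genuinely generic; this does not affect the invariants since it is realised by an isotopy of families.
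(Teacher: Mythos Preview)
Your proposal is correct and follows exactly the paper's approach: both argue that the suspension of the one-parameter family $(g_t,\eta_t)$, as constructed in \cite[Part I, Chapter I, \S5]{hatcher_wagoner_1973}, preserves all critical points, handle slides, and intersections of ascending/descending manifolds, so the data defining $\Sigma$ and $\Theta$ transfers termwise under the identifications $\pi_k(X)\cong\pi_k(X\times J)$. Your write-up is considerably more detailed than the paper's terse citation; one small slip is that the suspended middle level is $(X\times J)\#n(S^2\times S^3)$ rather than $(X\times J)\#n(S^2\times S^2)$ (a $2$-handle on the $6$-manifold $(X\times J)\times I$ has belt sphere $S^3$), but this does not affect the argument.
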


 \begin{proof}
    This fact is used often and implicitly in \cite{hatcher_wagoner_1973}.  The key observation is in \cite[Part I, Chapter I, \S 5]{hatcher_wagoner_1973} where it is shown that one can define a compatible suspension of the one-parameter family $(g_t,\eta_t)$ associated to $F$, and that the suspension operation preserves all critical points, all handle slides and all intersections between the various ascending and descending manifolds.  This means that the data that goes into defining $\Sigma$ and $\Theta$ is entirely preserved by the suspension.
 \end{proof}

\subsection{Naturality of the smooth Hatcher-Wagoner invariants}
Since it will be useful later, we establish that the smooth Hatcher-Wagoner invariants satisfy naturality under the inclusion of codimension zero submanifolds.  This will be needed when showing the well-definedness of our topological invariants, and also in proving that they satisfy the analogous property.

\begin{lem}\label{lemma:commutativity_after_extending}
Let $X$ be an $n$-dimensional smooth compact manifold $(n \ge 4)$ and let $N$ be a compact codimension zero submanifold of $X$ . Let $i: N\to X$ be defined as the inclusion. We have a natural map from $\pi_0(\mathcal{P}^{\DIFF}(N,\partial N))$ to $\pi_0(\mathcal{P}^{\DIFF}(X,\partial X))$ given by extending via the identity over $X \setminus N$. Let $F$ be a pseudo-isotopy of $N$ and $F\#\Id$ the extension over $X$. Then we have $\Sigma([F\#\Id]) 
=i_*(\Sigma([F]))$ and, if $F\in \ker \Sigma$, then $\Theta([F\#\Id]) 
=i_*(\Theta([F]))$.
\end{lem}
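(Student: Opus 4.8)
The plan is to trace through the construction of $\Sigma$ and $\Theta$ and observe that extending a pseudo-isotopy of $N$ by the identity over $X\setminus N$ corresponds, at the level of the defining data, to passing to the group ring of a larger group along the map induced by $i\colon N\to X$. First I would fix a pseudo-isotopy $F$ of $N$ together with a one-parameter family of generalised Morse functions $g_t\colon N\times I\to I$ and gradient-like vector fields $\eta_t$ realising it, arranged so that all critical points have index $2$ or $3$ and so that the Cerf graphic is in a convenient position. I would then choose a collar $\partial N\times[0,1]\subset X\setminus N$ and build a one-parameter family for $F\#\Id$ on $X\times I$ which agrees with $(g_t,\eta_t)$ on $N\times I$, is the trivial (constant in $t$) family with no critical points on $(X\setminus N)\times I$, and is interpolated smoothly across the collar; since $F\#\Id$ is literally the identity outside $N$, such a family exists and introduces no new critical points, handle slides, or intersections. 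Thus the combinatorial data — the ordered critical points $b^k_t$, $z^k_t$, the basing arcs, the descending-manifold orientations, the sequence of handle slides, and the terminal matrix $PD$ — is identical for $F\#\Id$ and for $F$, except that every group element $g\in\pi_1(N)$ appearing as $g=\gamma_{z^i_t}\circ\varphi\circ\gamma_{z^j_t}^{-1}$ (or the $b$-analogue) is now to be read in $\pi_1(X)$ via $i_*$. The same applies to the $\pi_1$- and $\pi_2$-elements $\gamma^j_i$, $\alpha^j_i$ entering $\Theta$: the circles $C^j_i$, their basing arcs, their null-homotopies in the simply-connected traces $A_i,B_i$, and the framing differences all live inside $N\times I$ and are transported to $X\times I$ by the inclusion, so each contributes $i_*\alpha^j_i\cdot i_*\gamma^j_i$.

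Next I would make this precise at the $K$-theoretic level. The ring map $\Z[\pi_1(N)]\to\Z[\pi_1(X)]$ induced by $i_*$ induces compatible maps on Steinberg groups $\St(\Z[\pi_1(N)])\to\St(\Z[\pi_1(X)])$ sending $x_{i,j}^\lambda\mapsto x_{i,j}^{i_*\lambda}$, hence on $K_2$ and on $\Wh_2$; similarly the ring map $\Gamma_N[\pi_1(N)]\times\Z[\pi_1(N)]\to\Gamma_X[\pi_1(X)]\times\Z[\pi_1(X)]$, where $\Gamma_N=\Z/2\times\pi_2(N)$ and $\Gamma_X=\Z/2\times\pi_2(X)$ and $\pi_2(N)\to\pi_2(X)$ is $i_*$, is compatible with the $\pi$-action and so induces $\GL(\Gamma_N[\pi_1 N])\to\GL(\Gamma_X[\pi_1 X])$, hence $\Wh_1(\pi_1 N;\Gamma_N)\to\Wh_1(\pi_1 X;\Gamma_X)$ and, using naturality of $\chi$ with respect to the coefficient change, a map on the quotients by $\operatorname{Image}\chi$. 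Under these maps the word $\Pi(F)\in K_2(\Z[\pi_1 N])$ is sent termwise to $\Pi(F\#\Id)$ — here I use that if $PD=\prod_m e_{p_m,q_m}^{\pm h_m}e_{q_m,p_m}^{\mp h_m^{-1}}e_{p_m,q_m}^{\pm h_m}$ is a factorisation of the terminal matrix for $F$, its image is such a factorisation for $F\#\Id$, so the two choices of correction term are compatible — and likewise $\Theta((g_t,\eta_t))=\sum_{i,j}\alpha^j_i\gamma^j_i$ is sent to $\sum_{i,j}(i_*\alpha^j_i)(i_*\gamma^j_i)$. Passing to the respective Whitehead quotients gives $\Sigma([F\#\Id])=i_*\Sigma([F])$ and $\Theta([F\#\Id])=i_*\Theta([F])$.

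The main obstacle is not the algebra but making the geometric comparison airtight: one must ensure that the particular choices made in computing $\Sigma(F\#\Id)$ and $\Theta(F\#\Id)$ can be taken to restrict to valid choices for $F$, i.e.\ that the deformation of the one-parameter family to the position required by the definitions (only indices $2$ and $3$ for $\Sigma$; a nested family of eyes with the $t_0<t_1<t_2<t_3$ structure for $\Theta$) can be performed supported in $N\times I$. Since $F\#\Id$ is already the identity on $(X\setminus N)\times I$, every Cerf-theoretic move used by Hatcher–Wagoner can be chosen with support in a neighbourhood of $N\times I$, so the deformation of the family for $F$ and the deformation of the family for $F\#\Id$ can be taken to be the same move performed inside $N\times I$; the well-definedness statements of \cite[Part II]{hatcher_wagoner_1973}, which guarantee independence of all these choices, then let us conclude. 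The hypothesis $n\ge 4$ is exactly what is needed for the definitions of $\Sigma$ and $\Theta$ (though not the full high-dimensional exact sequence) to be available, as noted in the introduction, so no extra dimension hypothesis is required. I would close by remarking that naturality in the second, $\partial$-preserving variable (the map "extend by the identity") together with the functoriality of $\Wh_2$ and $\Wh_1$ under $i_*$ is all that is being asserted, and that the proof is really a bookkeeping argument organised around the slogan "the defining data of $F\#\Id$ is the defining data of $F$, read in $\pi_1(X)$ and $\pi_2(X)$."
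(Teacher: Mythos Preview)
Your proposal is correct and follows essentially the same approach as the paper: first put the one-parameter family $(g_t,\eta_t)$ for $F$ on $N$ into the required position, extend it by the projection/trivial family over $X\setminus N$, and observe that every piece of data entering the definitions of $\Sigma$ and $\Theta$ (handle slides and their group elements for $\Sigma$; the traces $A_i$, $B_i$, circles $C^j_i$, basing arcs, $\pi_1$/$\pi_2$ elements and framings for $\Theta$) is literally carried by the inclusion $i$, so the invariants differ only by $i_*$. Your extra care about the functoriality of the Steinberg group, of $\Wh_1$, and of $\chi$, and about the support of the deformations, is sound but more than the paper spells out.
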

\begin{proof}
    We first deal with $\Sigma$, so we start by showing that the following diagram commutes.
    \[\begin{tikzcd}
	{\pi_0(\mathcal{P}^{\DIFF}~(X,\partial X))} && {\Wh_2(\pi_1(X))} \\
	{\pi_0(\mathcal{P}^{\DIFF}(N,\partial N))} && {\Wh_2(\pi_1(N))}
	\arrow["\Sigma"{description}, from=1-1, to=1-3]
	\arrow[from=2-1, to=1-1]
	\arrow["\Sigma"{description}, from=2-1, to=2-3]
	\arrow[ from=2-3, to=1-3]
\end{tikzcd}\]
Let $(g^N_t,\eta^N_t)$ be the one-parameter family for $F$, which we already assume to have been deformed such that it is in the position for defining $\Sigma$ (see \Cref{sbs:def_sigma}).  The $g^N_t\colon N\times I \to I$ can be extended to a one-parameter family of generalised Morse functions $g_t\colon X\times I\to I$ via extending by the projection map.  In particular, since we extended by the projection map, they have exactly the same critical points, critical values, births and deaths as $g^N_t$.  We can also extend $F$ via the identity to a pseudo-isotopy $\widetilde{F}$, and it is clear that $g_t$ is a one-parameter family for $\widetilde{F}$ (in particular, $g_1=\pr_2\circ \widetilde{F}$).  Similarly, the one-parameter family of gradient-like fields $\eta^N_t$ can be extended to $\eta_t$ such that the one-parameter family of handle decompositions induced by $(g_t,\eta_t)$ is the trivial one-parameter family outside of $N$.

Let \[\prod_\ell e_{i_{\ell},j_{\ell}}^{\pm g_{\ell}}\] be the matrix corresponding to the handle slides for $(g^N_t,\eta^N_t)$, with $g_\ell\in \pi_1(N)$.  Then from the above observation it follows that the matrix corresponding to the handle slides for $(g_t,\eta_t)$ is \[\prod_\ell e_{i_{\ell},j_{\ell}}^{\pm i_*(g_{\ell})}.\]  From this and the definition of $\Sigma$ we see that $i_*(\Sigma([F])=\Sigma([\widetilde{F}])$, which proves that the stated square commutes.

We now deal with $\Theta$ by showing the following diagram commutes.

\[\begin{tikzcd}
	{\ker \left(\Sigma\colon \pi_0(\mathcal{P}^{\DIFF}(X,\partial X))\to \Wh_2(\pi_1(X))\right) } && {\Wh_1(\pi_2(X);\Z/2\times \pi_1(X))/\chi} \\
	{\ker \left(\Sigma\colon \pi_0(\mathcal{P}^{\DIFF}(N,\partial N))\to \Wh_2(\pi_1(N))\right) } && {\Wh_1(\pi_2(N);\Z/2\times \pi_1(N))/\chi}
	\arrow["\Theta"{description}, from=1-1, to=1-3]
	\arrow[from=2-1, to=1-1]
	\arrow["\Theta"{description}, from=2-1, to=2-3]
	\arrow[ from=2-3, to=1-3]
\end{tikzcd}\]

Again let $(g^N_t,\eta^N_t)$ be the one-parameter family for $F$, we already assume we have deformed such that it is in the position for defining $\Theta$ (see \Cref{sbs:def_theta}).  Again, as we did when we considered $\Sigma$, we may extend $(g^N_t,\eta^N_t)$ to a one-parameter family $(g_t,\eta_t)$ which is associated to the pseudo-isotopy $\widetilde{F}\colon X\times I\to X\times I$ which is obtained by extending $F$ via the identity.  

We now use the notation in \Cref{sbs:def_theta}.  Let $A^N_j$ and $B^N_j$ be the traces of the attaching spheres and the belt spheres in the middle level for $(g^N_t,\eta^N_t)$ and let $T^N_j$ denote their intersection.  By the way we defined the extension $(g_t,\eta_t)$, the induced one-parameter family of handle decompositions for $X\times I$ is constant and trivial away from $N$.  This means that the traces of the attaching spheres and the belt spheres in the middle level for $(g_t,\eta_t)$ are simply $i(A^N_i)$ and $i(B^N_i)$, and their intersection is $i(T^N_j)$.  It follows that the circles used for computing $\Theta(\widetilde{F})$ are produced by including the circles used to compute $\Theta(F)$ into $X\times I$.  By choosing the basing arcs for the attaching spheres and belt spheres in the middle level for $(g_t,\eta_t)$ to be compatible with the ones used for $(G^N_t,\eta_t)$ it can be seen that the $\pi_2$ element, the $\pi_1$ element and the framing difference associated to a given circle component in $i(T^N_j)$ are obtained from the corresponding elements for the corresponding circle component in $T^N_j$ by the inclusion induced map (or, in the case of the framing difference, this is simply the same framing difference).  This proves that the above diagram involving $\Theta$ commutes, finishing the proof.
\end{proof}

\section{Defining the topological obstructions}\label{sec:def_top_obstructions}
Using the suspension map $S^+$, we are able to increase the dimension of the topological manifold we are working with. This is crucial in our approach, since we need to apply the machinery from  Burghelea-Lashof-Rothenberg \cite{burghelea_lashof_1974,BergheleaLashofBook}, which is only available in dimensions $\geq 6$ (c.f.\ \Cref{theorem:berghelea_lashof} and \Cref{thm:top_version_3.1}). Hence we need to suspend our pseudo-isotopies twice. 
    
\subsection{Smoothing pseudo-isotopies}
Our goal is to manipulate our topological pseudo-isotopy until we reach a smooth pseudo-isotopy on which we can evaluate the Hatcher-Wagoner invariants. In order to do so, we need to find a way to pass from $\CAT=\TOP$ to $\CAT=\DIFF$. In this process we also have to make sure not to lose any relevant information about the connectivity of the space $\mathcal{P}^{\TOP}(X,\partial X)$.  We start by suspending our pseudo-isotopy twice, and will denote the suspended $X$ by $X\times J^2$.  We need the following definition.

\begin{definition}\label{def:handle_skeleton}
    Let $M$ be a $n$-manifold with a fixed handle decomposition.  Then we denote the $k$-handle-skeleton of $M$ by $M^{(k)}$. This is the union of all handles in the given decomposition of index $\leq k$.
\end{definition}

First note that $X\times J^2$ is a $6$-manifold and hence it has a handle decomposition by the work of \cite[Essay III, Theorem 2.1]{kirby_siebenmann_1977}. If $\partial X \neq \emptyset$, we always take the handle decomposition to be relative to $\partial X \times J^2$. Fix one, and then we can consider the $3$-handle-skeleton $(X\times J^2)^{(3)}$. Let $N$ be a neighbourhood of  $(X\times J^2)^{(3)}$ inside $(X \times J^2)$.

We now need the following lemma.

\begin{lem}\label{lem:3_skeleton_isomorphism}
    The inclusion map $N\to X\times J^2$ induces an isomorphism  \[ \pi_0\left(\mathcal{P}^{\TOP}(N, \partial N)\right) \xrightarrow{\cong} \pi_0\left(\mathcal{P}^{\TOP}(X\times J^2, \partial (X \times J^2) )\right).\]
\end{lem}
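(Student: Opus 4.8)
The plan is to compare pseudo-isotopy spaces using the handle structure of $X \times J^2$ relative to the $3$-skeleton, dualizing so that the "extra" handles are of high index. Concretely, $N$ is obtained from $X\times J^2$ by removing (an open neighbourhood of) the cores of all handles of index $\ge 4$; equivalently, $X\times J^2$ is built from $N$ by attaching handles of index $4$, $5$, and $6$. The key input is the high-dimensional analogue of Pedersen's theorem (which in the excerpt is invoked via \cite{pedersen_1977} and will appear later as a stated result, cf.\ the discussion preceding \Cref{def:handle_skeleton} and \Cref{thm:top_version_3.1}): attaching a handle of index $k$ to an $n$-manifold $M$ does not change $\pi_0 \mathcal{P}^{\TOP}$ provided $k \le n-3$ (and more generally controls $\pi_j \mathcal{P}$ in a range). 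Since $\dim(X\times J^2) = 6$, handles of index $\le 3$ are exactly the ones that could affect $\pi_0 \mathcal{P}$, so attaching the remaining index-$4,5,6$ handles — each of which satisfies $k \ge 4 > 6 - 3 = 3$... wait, one must be careful with the inequality direction. Handles of index $k$ attached to an $n$-manifold correspond, after turning the handle decomposition upside down, to handles of index $n-k$; the relevant stabilization/excision statement is that $\mathcal{P}^{\TOP}(M) \to \mathcal{P}^{\TOP}(M \cup (\text{$k$-handle}))$ is an iso on $\pi_0$ when the cocore dimension $n-k$ is large, i.e.\ when $k$ is small — so here we want to *add* the low-index handles to $N$ and the high-index handles are harmless. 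Thus the correct framing: $N$ contains all handles of index $\le 3$; passing from $N$ to $X\times J^2$ only adds handles of index $4,5,6$, whose cocores have dimension $2,1,0 \le 3$, and Pedersen's handle-attachment lemma (in the $\TOP$ category in dimension $\ge 6$) says adding such handles is an isomorphism on $\pi_0\mathcal{P}^{\TOP}$.

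First I would fix the handle decomposition of $X \times J^2$ relative to $\partial X \times J^2$ guaranteed by Kirby–Siebenmann, and set $N = $ closed regular neighbourhood of $(X\times J^2)^{(3)}$; note $N$ is a codimension-$0$ submanifold, so extension-by-identity gives a well-defined map $\pi_0 \mathcal{P}^{\TOP}(N,\partial N) \to \pi_0 \mathcal{P}^{\TOP}(X\times J^2, \partial(X\times J^2))$ (this is the map in the statement, and it is the relevant map to analyze). Then I would enumerate the handles of index $\ge 4$, say $h_1, \dots, h_m$ ordered by non-decreasing index, and set $N = N_0 \subset N_1 \subset \cdots \subset N_m = X \times J^2$ where $N_{i} = N_{i-1} \cup h_i$. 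Each inclusion $N_{i-1} \hookrightarrow N_i$ attaches a single handle of index $\ge 4$ to a $6$-manifold, so by the $\TOP$ version of Pedersen's handle lemma the induced map $\pi_0\mathcal{P}^{\TOP}(N_{i-1}, \partial) \to \pi_0 \mathcal{P}^{\TOP}(N_i, \partial)$ is an isomorphism. Composing over $i = 1, \dots, m$ yields the claim, provided the composite of these maps agrees with the extension-by-identity map $N \to X\times J^2$ — which it does, since extension-by-identity is transitive with respect to nested codimension-$0$ inclusions.

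The main obstacle is establishing — or precisely citing — the $\TOP$, dimension-$\ge 6$ handle-attachment lemma for $\pi_0 \mathcal{P}$: Pedersen's and Burghelea–Lashof's results are stated for attaching handles of low index (controlling the connectivity of the stabilization map $\mathcal{P}(M) \to \mathcal{P}(M \cup h_k)$ in terms of $\dim M - k$), and one must verify that the version needed here is available in the topological category with the dimension and index bookkeeping as above; this is exactly the content flagged in \Cref{rem:blp} as being "sketched and not fully described" in \cite{BergheleaLashofBook}, so some care is needed to assemble it. A secondary technical point is the handling of the boundary: since the handle decomposition is relative to $\partial X \times J^2$, the submanifold $N$ meets $\partial(X\times J^2)$ only along $\partial X \times J^2$ together with part of the boundary introduced by the neighbourhood, and one should check that "pseudo-isotopy rel boundary" is the correct relative notion throughout so that the extension-by-identity maps are defined; this is routine once the relative handle picture is set up carefully.
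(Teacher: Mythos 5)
Your proposal is essentially the same as the paper's proof: both rely on the $\TOP$ version of Burghelea--Lashof/Pedersen's handle-attachment theorem (the paper's \Cref{thm:top_version_3.1}) to show that adding handles of index $\ge 4$ to the $6$-manifold $N$ does not change $\pi_0\mathcal{P}^{\TOP}$. The paper applies the theorem \emph{once}, directly to the pair $W=N$, $V=X\times J^2$, with $r=3$, $k=0$, yielding $\pi_j(\gamma)=0$ for $j\le \inf(2r-3,\,r+k-2,\,r+2)=1$, which gives bijectivity on $\pi_0$ via the long exact sequence of the map $\gamma$. Your variant instead does a handle-by-handle induction, setting $N=N_0\subset N_1\subset\cdots\subset N_m=X\times J^2$; this would work, but costs you a little extra bookkeeping, since the hypotheses of \Cref{thm:top_version_3.1} (e.g.\ condition (1) on $\pi_i(\partial W)\cong\pi_i(W)$, and the parenthetical requirement for $r=n-3$ involving $\pi_i(\partial V)\cong\pi_i(V)$) must then be verified for each intermediate $N_i$, rather than once for the whole pair. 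There is no mathematical obstruction to your route, but the direct application is cleaner. One small stylistic issue: mid-proposal you momentarily state the connectivity estimate in the wrong direction (``is an iso on $\pi_0$ when the cocore dimension $n-k$ is large, i.e.\ when $k$ is small''), before self-correcting to the right statement (high-index handles are the harmless ones); be sure to purge the incorrect intermediate claim from a final write-up, since only the corrected version is true. Finally, you correctly identify, exactly as the paper does in \Cref{rem:blp}, that the $\TOP$ version of the Burghelea--Lashof theorem is not fully spelled out in the literature and requires assembling Pedersen's concordance-straightening and topological transversality, valid only in ambient dimension $\ge 6$ (hence the double suspension to $X\times J^2$).
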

To prove the result we start from the work of \cite{BergheleaLashofBook}. We adapt for this proof part of their notation. 
Let $W \subset \mathring{V}$ be a compact submanifold with $\dim W= \dim V$. Then we have an injection $\gamma:\mathcal{P}(W,\partial W) \to \mathcal{P}(V,\partial V)$ that takes a pseudo-isotopy $F$ of $W$ and extends it via the identity to the rest of $V$. We have the following result. Note that this is stated in terms of $\CAT= \DIFF$ or $\CAT=\PL$ in \cite{BergheleaLashofBook}.

\begin{thm}[{\cite[Theorem $3.1'$]{BergheleaLashofBook}}]\label{theorem:berghelea_lashof}
Let $V^n$, $n \ge 5$, and $W^n \subset \mathring{ V}^n$. Assume:
\begin{enumerate}
    \item $\pi_i( \partial W) \cong \pi_i(W)$, $i=0,1$,
    \item $\pi_i(W,V)=0$ for $i \le r$ \text{ where }$r \le n-4$ $($ or $r \le n-3$ if $n >5$ and $\pi_i(\partial V) \cong \pi_i(V)$, $i=0,1$ $)$,
    \item $W$ is $k$-connected, for some  $0\le k \le r$.
    
\end{enumerate}
Then $\pi_j(\gamma)=0$ for:
\begin{itemize}
    \item $j \le \inf(2r-3,r+k-2)$ if $\CAT=\DIFF$,
    \item $j \le \inf(2r-3,r+k-2,r+2)$ if $\CAT=\PL$.
\end{itemize}
\end{thm}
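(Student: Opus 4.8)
The plan is to adapt the functional approach of Cerf and Hatcher--Wagoner: first replace the pseudo-isotopy spaces by spaces of critical-point-free functions, then induct over a handle decomposition of $V$ relative to $W$, the per-handle step being a parametrised engulfing argument whose dimension bookkeeping produces exactly the stated range (with an extra constraint in the $\PL$ case). Throughout we may assume $r\ge 1$, as otherwise the conclusion is vacuous.

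\emph{Step 1: reduction to function spaces.} For a compact $\CAT$ manifold $M^m$, let $\mathcal{F}_0(M)$ be the space of $\CAT$ functions $M\times I\to I$ with no critical points, equal to the projection near $M\times\{0,1\}\cup\partial M\times I$. A gradient-like vector field for such a function is a contractible choice, so $F\mapsto \mathrm{pr}_I\circ F$ induces a weak homotopy equivalence $\mathcal{P}^{\CAT}(M,\partial M)\simeq\mathcal{F}_0(M)$ (Cerf; in the $\PL$ category one uses the $\PL$ function/handle theory of Hudson--Zeeman, and it is here that the extra bound $r+2$ is forced). Under this equivalence $\gamma$ becomes the map $\mathcal{F}_0(W)\to\mathcal{F}_0(V)$ extending a function by the projection over $V\setminus W$, so it suffices to prove this map is $q$-connected with $q=\inf(2r-3,r+k-2)$ for $\CAT=\DIFF$ and additionally $q\le r+2$ for $\CAT=\PL$.

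\emph{Step 2: handle induction.} Hypotheses (1)--(2) give $\pi_1(W)\xrightarrow{\cong}\pi_1(V)$ and that $(V,W)$ is $r$-connected; since $n\ge 5$, Smale--Wall handle trading lets us build $V$ from a collar of $W$ using only handles of index $\ge r+1$. One then argues by induction on the number of such handles, the inductive step being the \emph{per-handle statement}: if $V'=V''\cup h^\lambda$ with $\lambda\ge r+1$ and $V''$ still satisfying the running hypotheses, then $\mathcal{F}_0(V'')\to\mathcal{F}_0(V')$ is $q$-connected. Patching the per-handle statements along the handle decomposition is formal.

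\emph{Step 3: the per-handle engulfing.} Represent a relative class in $\pi_j$, $j\le q$, by a $j$-parameter family $\{f_x\}_{x\in D^j}$ in $\mathcal{F}_0(V')$ which is standard outside $h^\lambda\times I$ for $x\in\partial D^j$. Using parametrised Morse theory (Cerf, Hatcher--Wagoner, Igusa) put the family in general position and arrange that each level set $f_x^{-1}(1/2)\subset V'\times I$ meets the co-core $(\{0\}\times D^{\,n-\lambda})\times I$ transversally; a general-position count bounds the dimension of the intersection locus, and using the core sphere of $h^\lambda$ (of dimension $\lambda-1\ge r$) together with the $k$-connectivity of $W$ to slide the excess intersection off the handle into $W$, one finger-moves the whole family off the co-core and pushes every $f_x$ into $V''\times I$, rel $\partial D^j$. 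The thresholds at which the general-position step and the sliding step succeed are precisely $2r-3$ and $r+k-2$; in the $\PL$ category Zeeman engulfing additionally demands that the relevant complex have codimension $\ge 3$, which yields the further bound $r+2$. The main obstacle is exactly this step: carrying out the engulfing in the $j$-parameter setting while controlling the full function (not just one level hypersurface), and verifying that the dimension counts come out to $\inf(2r-3,r+k-2)$, along with the $\PL$ general-position subtleties; Steps 1 and 2 are essentially formal once $n\ge 5$.
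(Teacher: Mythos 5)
First, note that the paper does not prove this statement at all: it is imported verbatim from Burghelea--Lashof--Rothenberg, and the surrounding discussion only records the chain of dependencies (Theorem $3.1'$ follows from their Theorem $3.1$, which rests on their Lemma $b)$, which in turn is a consequence of Morlet's Disjunction Lemma). So the only fair comparison is between your sketch and that cited argument. Your Steps 2--3 do follow the same broad strategy --- trade handles so that $V$ is built from $W$ by handles of index $\ge r+1$, then prove a per-handle disjunction statement in parametrised families --- but your Step 1 is a detour the original does not take: Burghelea--Lashof work directly with concordance and embedding spaces, and Morlet's lemma is a statement about isotoping embedded discs off handles in $j$-parameter families, not about level sets of critical-point-free functions. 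In the $\PL$ case the equivalence $\mathcal{P}^{\PL}(M)\simeq\mathcal{F}_0(M)$ you invoke is itself delicate (there is no off-the-shelf $\PL$ Cerf theory at this level of generality), so routing the proof through function spaces adds a nontrivial burden rather than removing one.

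The genuine gap is Step 3, and you have in effect flagged it yourself. The entire quantitative content of the theorem --- that the disjunction succeeds precisely for $j\le 2r-3$, for $j\le r+k-2$, and (in $\PL$) for $j\le r+2$ --- is asserted rather than derived: you write that "the thresholds at which the general-position step and the sliding step succeed are precisely $2r-3$ and $r+k-2$" without exhibiting the dimension count that produces these numbers, and without explaining how the $k$-connectivity of $W$ enters the estimate beyond the phrase "slide the excess intersection off the handle into $W$". Since a reader granting you Steps 1 and 2 would still have no way to verify the stated range, the proof of the theorem has not actually been given; what you have is a correct identification of the architecture of the Burghelea--Lashof argument together with an unproved core lemma (essentially Morlet's Disjunction Lemma with its connectivity estimates). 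There is also a small internal inconsistency: you attribute the extra $\PL$ bound $r+2$ both to the function-space reduction in Step 1 and to Zeeman engulfing in Step 3; only the latter attribution is consistent with the source.
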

For the purposes of this paper and specifically for the proof of Lemma \ref{lem:3_skeleton_isomorphism} we need a $\CAT=\TOP$ version of Theorem \ref{theorem:berghelea_lashof}.

This result in fact holds in the $\TOP$ category as well. In fact, this is implicitly stated in \cite{BergheleaLashofBook} but not exactly spelled out. We describe how to obtain the topological version.

Theorem \ref{theorem:berghelea_lashof} ($\CAT=\PL$ version) follows from \cite[Theorem 3.1, $\CAT=\PL$ version]{BergheleaLashofBook}.
This result is proved using \cite[Lemma $b)$]{BergheleaLashofBook}. This lemma is in turn a consequence of \emph{Morlet's Disjunction Lemma} (due to Morlet, proven also in \cite{BergheleaLashofBook}). Using Pedersen's work -- which appears briefly as an appendix in \cite{BergheleaLashofBook} -- on \emph{concordance straightening} and a topological notion of transversality \footnote{Pedersen's approach uses a local version of $PL$ transversality. At the time topological transversality from \cite{kirby_siebenmann_1977} was not yet available---we believe it can now  be used to simplify Pedersen's approach.} one can adapt the proof of the Disjunction Lemma appearing in \cite{BergheleaLashofBook} to $\CAT=\TOP$. Hence all the results directly depending on Morlet's result presented in \cite{BergheleaLashofBook} -- more specifically, we focus on \Cref{theorem:berghelea_lashof} -- can be restated by changing $\PL$ to $\TOP$ (and hence all the relevant categorical objects have to be turned into the corresponding $\TOP$ version). 
Note that we also have to assume that the ambient manifold has to have dimension at least 6 -- this is necessary to use Pedersen's result of concordance implies isotopy \cite{pedersen_1977}.

We can thus obtain the analogue of Theorem \ref{theorem:berghelea_lashof}.
\begin{thm}\label{thm:top_version_3.1}
  Let $V^n$, $n \ge 6$, and $W^n \subset \mathring{V}^n$ be an embedded handlebody. Assume:
\begin{enumerate}
     \item $\pi_i( \partial W) \cong \pi_i(W)$, $i=0,1$,
    \item $\pi_i(W,V)=0$ for $i \le r$ \text{ where }$r \le n-4$ $($ or $r \le n-3$ if $n >5$ and $\pi_i(\partial V) \cong \pi_i(V)$, $i=0,1$ $)$,
    \item $W$ is $k$-connected, for some  $0\le k \le r$.
    
\end{enumerate}
Then $\pi_j(\gamma)=0$ for
\begin{equation}\label{equation:range}
    j \le \inf(2r-3,r+k-2,r+2).
\end{equation}
\end{thm}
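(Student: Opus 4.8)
The plan is to reduce \Cref{thm:top_version_3.1} to a single categorical input---a $\TOP$ version of \emph{Morlet's Disjunction Lemma}---and then to check that, granted this input, the proof of \cite[Theorem $3.1'$]{BergheleaLashofBook} in the $\PL$ category transcribes line for line, with an unchanged numerical range. First I would trace through the argument of \cite{BergheleaLashofBook} and locate every use of piecewise-linearity. The connectivity hypotheses (1)--(3) are used exactly as in \emph{loc.\ cit.}: they permit the handle-by-handle comparison of the embedded handlebody $W$ with $V$ and the parametrized pushing-off arguments that drive the induction. The only genuinely categorical step is the appeal to \cite[Lemma $b)$]{BergheleaLashofBook}, whose proof rests on Morlet's Disjunction Lemma. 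So it suffices to establish that lemma in $\CAT=\TOP$ for ambient manifolds of dimension $\ge 6$, under the same codimension constraints as in the $\PL$ statement.

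To prove the $\TOP$ Disjunction Lemma I would re-run Morlet's argument in the form presented in \cite{BergheleaLashofBook}, making two substitutions. Wherever the proof uses (local) $\PL$ transversality to make intersections and their parametrized versions generic, I would instead use topological transversality from \cite{kirby_siebenmann_1977}; since $n \ge 6$ and the relevant strata have codimension governed by the same inequalities as in the $\PL$ case, one stays clear of the low-dimensional obstructions and transversality is available. Wherever the proof upgrades a concordance to an isotopy---which in the $\PL$ category is essentially automatic---I would invoke Pedersen's \emph{concordance straightening}, i.e.\ the theorem that a topological concordance on a manifold of dimension $\ge 6$ is isotopic to an isotopy \cite{pedersen_1977} (also recorded as an appendix to \cite{BergheleaLashofBook}); this is precisely the source of the hypothesis $n \ge 6$. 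The combinatorial bookkeeping---the induction over the handles of $V$ relative to $W$ and the accounting of how much connectivity is consumed at each stage---is identical to \cite{BergheleaLashofBook}, so the range in which $\pi_j(\gamma)=0$ is the one already appearing in the $\PL$ statement, namely \eqref{equation:range}. In particular one does \emph{not} recover the sharper $\DIFF$ range, because the substitutes above only deliver disjunction of $\PL$-strength.

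The step I expect to be the main obstacle is verifying that Pedersen's concordance straightening is available in the \emph{parametrized} and \emph{relative} forms that Morlet's Disjunction Lemma actually consumes---families of concordances, straightened rel a given closed subset---and that these refined statements still only require the ambient dimension to be at least $6$. A secondary technical point is to confirm that the hypothesis that $W$ is an \emph{embedded handlebody}, rather than an arbitrary codimension-zero submanifold as in \cite{BergheleaLashofBook}, is exactly what the inductive handle-pushing argument needs in $\TOP$, where a general codimension-zero submanifold of a $6$-manifold need not admit a handle decomposition; this is also what makes $W$ accessible to the machinery of \cite{pedersen_1977, burghelea_lashof_1974}. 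Once these points are settled, assembling them into the stated theorem is formal.
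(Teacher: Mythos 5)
Your proposal matches the paper's own (sketch of a) proof essentially step for step: reduce the statement to a $\TOP$ version of Morlet's Disjunction Lemma via Lemma $b)$ of Burghelea--Lashof, and obtain that $\TOP$ version by substituting Kirby--Siebenmann topological transversality for $\PL$ transversality and Pedersen's concordance-implies-isotopy (hence the hypothesis $n\ge 6$) for the $\PL$ concordance-to-isotopy step, yielding the $\PL$ (not $\DIFF$) range. The two points you flag as potential obstacles---parametrized/relative forms of Pedersen's straightening, and the role of the embedded-handlebody hypothesis---are reasonable worries, but they do not mark a departure from the paper's route, which leaves these same details at the level of a sketch.
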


\begin{remark}
    Note that the first two conditions in Theorem \ref{thm:top_version_3.1} are satisfied when \[V= W \cup (n-a)-\text{handles},\] where $n-a>r$. 
\end{remark}
Now we can proceed to prove Lemma \ref{lem:3_skeleton_isomorphism}.
\begin{proof}[Proof of \cref{lem:3_skeleton_isomorphism}]

Let $V$ be our twice suspended manifold $X\times J^2$ and $W$ be the neighbourhood $N$ of $(X\times J^2)^{(3)}$. Our goal is to show that $\pi_1(\gamma)=0$, where $\gamma$ is the inclusion map: $$\gamma:\mathcal{P}^{\TOP}(W) \to \mathcal{P}^{\TOP}(V).$$

We apply Theorem \ref{thm:top_version_3.1} to $X\times J^2$ and $N$. Set $r=3$. The conditions $1)$ and $2)$ in  \cref{thm:top_version_3.1} are satisfied since $X\times J^2$ is built from $N$ by attaching handles of index $>r=3$. In general we do not have more information regarding the connectivity of $N$, so $k$ will be 0. We thus obtain that $\pi_j(\gamma)=0$ for:
\begin{equation*}
     j \le \inf(3,1,5)=1
\end{equation*}
i.e. $\pi_0(\gamma)=0$ and $\pi_1(\gamma)=0$.

In particular, $\gamma$ is 1-connected and using the long exact sequence in (relative) homotopy groups we obtain that $\pi_0(\mathcal{P}(X\times J^2,\partial (X \times J^2))) \cong \pi_0(\mathcal{P}(N, \partial N)) $, concluding our proof.
\end{proof}

\begin{notation}\label{not:inclusion_inverse}
    We will denote the inverse of the inclusion-induced map from \Cref{lem:3_skeleton_isomorphism} by \[ \mathfrak{i}\colon\pi_0\left(\mathcal{P}^{\TOP}(X\times J^2, \partial (X \times J^2) )\right) \xrightarrow{\cong} \pi_0\left(\mathcal{P}^{\TOP}(N, \partial N)\right).\]
\end{notation}

\begin{remark}
     Note that the above result requires $r=3$. If we had chosen a lower index we would have obtained less information on the connectivity of $\gamma$.  In particular, if we were to instead take the 2-handle skeleton and a neighbourhood $N'$ of it, \Cref{thm:top_version_3.1} would only give us that $ \pi_0(\mathcal{P}(N', \partial N'))  \to \pi_0(\mathcal{P}(X \times J^2, \partial (X \times J^2)))$ is onto.  This is not surprising, however, as the presence of $\pi_2$ in the definition of $\Theta$ implies that relevant data is encoded in the 3-handles.

     We will, however, make use of the surjectivity of  $\pi_0(\mathcal{P}^{\TOP}(N',\partial N'))  \to \pi_0(\mathcal{P}^{\TOP}(X \times J^2, \partial (X\times J^2)))$ in \Cref{lemma:independence_on_smoothing}.
 \end{remark}

With Lemma \ref{lem:3_skeleton_isomorphism} in our hands we restrict to pseudo-isotopies of a neighbourhood $N$ of the 3-handle-skeleton of $X \times J^2$. The next step is to pass to $\CAT=\DIFF$. In order to do so, we first need to smooth the 3-handle-skeleton of $X \times J^2$. 

 \begin{lem}\label{lem:3_skeleton_smoothing}
     The 3-handle-skeleton $(X\times J^2)^{(3)}$ can be given a smooth structure $\mathcal{S}$.  The 2-handle-skeleton $(X\times J^2)^{(2)}$ can be given a smooth structure which is unique up to isotopy.
 \end{lem}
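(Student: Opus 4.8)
The plan is to build the smooth structures handle-by-handle, using the fact that obstructions to smoothing a topological handlebody in low dimensions are governed by the smoothing theory of Kirby–Siebenmann together with the vanishing of the relevant obstruction groups in low degrees. First I would recall that a topological $k$-handle $D^k\times D^{n-k}$ carries an essentially canonical smooth structure (it is a smooth manifold with corners in the obvious way), so the content is entirely in how the handles are glued: the attaching maps of the handles in $(X\times J^2)^{(3)}$ are only topological embeddings, and one must isotope them to be smooth (with respect to a smooth structure already chosen on the lower skeleton) before the smooth structures can be assembled. The key input is that smoothing theory for a $j$-dimensional topological manifold is unobstructed once $j$ is small: concretely, $(X\times J^2)^{(k)}$ is (homotopy equivalent to) a $k$-complex, and the smoothing obstructions lie in $H^{i+1}(-;\Theta_i)$ where $\Theta_i$ is the group of smooth structures on $S^i$; since $\Theta_i=0$ for $i\le 6$ and in particular for $i\le 3$, and the primary Kirby–Siebenmann obstruction in $H^4(-;\Z/2)$ vanishes on a $3$-complex, the obstruction to existence is trivial. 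For uniqueness one compares two smoothings via the obstruction groups $H^{i}(-;\Theta_i)$, which vanish for a $2$-complex in the relevant range but not necessarily for a $3$-complex (there the group $H^3(-;\Theta_3)$ can be nonzero, $\Theta_3$ being trivial but the relevant difference class living one degree lower is what matters — this is exactly why we only claim uniqueness up to isotopy for the $2$-skeleton and not the $3$-skeleton).

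In more detail, I would proceed inductively over the handles ordered by index. Suppose a smooth structure has been chosen on $(X\times J^2)^{(k-1)}$ (for the base case $k=0$ the $0$-handles are disjoint discs, smoothed trivially, and if $\partial X\neq\emptyset$ we start relative to a chosen smoothing of $\partial X\times J^2$, noting $\partial X$ is a $3$-manifold hence canonically smoothable by Moise). To attach a $k$-handle we are given a topological embedding $\varphi\colon S^{k-1}\times D^{n-k}\hookrightarrow \partial$ of (a piece of) the boundary of the current skeleton; since $k-1\le 2$ and the target is (after thickening) a manifold of dimension $\le 5$ where $n=6$, I would invoke the topological-to-smooth approximation/isotopy results — local smoothing of embeddings of low-dimensional manifolds, again using $\Theta_i=0$ and $\ks$-vanishing in this range — to isotope $\varphi$ to a smooth embedding. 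Then the handle is glued smoothly, extending the smooth structure over $(X\times J^2)^{(k)}$. Running this for $k=1,2,3$ produces $\mathcal S$. For the uniqueness statement for the $2$-skeleton, given two smooth structures I would again argue handle-by-handle: the difference of two smoothings of $(X\times J^2)^{(1)}$ is controlled by $H^1(-;\Theta_1)=0$ and the extension over the $2$-handles by $H^2(-;\Theta_2)=0$, so after an isotopy the two agree; crucially this argument breaks at index $3$ because the concordance classes of smoothings of a $3$-complex are a torsor over a group that need not vanish, so we do not assert uniqueness there.

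The main obstacle I anticipate is making the embedding-smoothing step rigorous and correctly citing it: one needs that a topological locally flat embedding of a manifold of dimension $\le 3$ into a smooth manifold can be topologically isotoped to a smooth embedding, with the ambient dimension $6$ giving enough room (codimension $\ge 3$), and one must be careful that this is exactly the regime covered by the Kirby–Siebenmann smoothing theory (Essays IV and V of \cite{kirby_siebenmann_1977}) rather than requiring a codimension hypothesis that fails. A secondary subtlety is the relative version when $\partial X\neq\emptyset$: the smooth structure must be chosen compatibly with a fixed smoothing of $\partial X\times J^2$ throughout the induction, which is harmless since $\partial X\times J^2$ is a smoothable $5$-manifold, but it should be stated explicitly. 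Finally, one should record that the smooth structure $\mathcal S$ on $(X\times J^2)^{(3)}$, while not canonical, extends over a neighbourhood $N$ (it is an open subset and one smooths the collar of the attaching regions), which is what is needed downstream.
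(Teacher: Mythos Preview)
Your overall strategy---build the smooth structure handle by handle and invoke Kirby--Siebenmann smoothing theory, noting that the relevant obstructions vanish because the $k$-handle-skeleton has the homotopy type of a $k$-complex---is the same as the paper's. The paper phrases the inductive step slightly differently (extend the smooth structure from the attaching region $\partial_A\mathcal{H}_i$ over the handle $\mathcal{H}_i$ using \cite[Essay~IV, Theorem~10.1]{kirby_siebenmann_1977}, with obstruction in $H^4(\mathcal{H}_i,\partial_A\mathcal{H}_i;\Z/2)$, then uses concordance-implies-isotopy for uniqueness), but this is only a cosmetic difference from your approach of smoothing the attaching embedding.

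There is, however, a genuine confusion in your identification of the obstruction groups. You write the obstructions as $H^{i+1}(-;\Theta_i)$ and $H^i(-;\Theta_i)$ with $\Theta_i$ the group of exotic $i$-spheres; those are the coefficients for $\PL/\tO$, not $\TOP/\tO$. The correct coefficients for smoothing a topological manifold are $\pi_i(\TOP/\tO)$, and the first nonvanishing one is $\pi_3(\TOP/\tO)\cong\Z/2$. This is exactly what explains the dichotomy in the lemma: the uniqueness obstruction for the $2$-skeleton lies in $H^3$ of a $2$-complex with $\Z/2$ coefficients, hence vanishes, while for the $3$-skeleton it lies in $H^3$ of a $3$-complex with $\Z/2$ coefficients, which need not vanish. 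Your own sentence ``$H^3(-;\Theta_3)$ can be nonzero, $\Theta_3$ being trivial but the relevant difference class living one degree lower is what matters'' is self-contradictory as written---if the coefficient group $\Theta_3$ is trivial then $H^3(-;\Theta_3)=0$ regardless. Replace every $\Theta_i$ by $\pi_i(\TOP/\tO)$ (or, if you prefer the two-step route, keep $\Theta_i$ for the $\PL\to\DIFF$ step but then correctly attribute the failure of uniqueness at the $3$-skeleton to the $\TOP\to\PL$ step, whose obstruction lives in $H^3(-;\Z/2)$), and the argument goes through.
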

 
\begin{proof} 
The existence statements follow from simple applications of the smoothing theory of Kirby-Siebenmann, and we prove that first.  Note that it suffices to show that the 3-handle-skeleton is smoothable, since the 2-handle-skeleton inherits a smooth structure from it.

Since our handle decomposition is relative to $\partial X \times J^2$, we always assume that we already have smoothed such portion of the manifold -- there is a unique way to do so \cite{kirby_siebenmann_1977}. 

We will prove this by finite induction on the index of handles. Let $\mathcal{C}$ be the collection of the handles we have already smoothed. At the beginning, $\mathcal{C}=\emptyset$. 

    Start from the collection $C_0$ of $0$-handles. This clearly has a smooth structure obtained by pulling back the standard structure on $\bigsqcup_i D^6$ via the corresponding homeomorphism -- one for each handle; all of them are disjoint. Hence, we can say that $\mathcal{C}=C_0$.
    
    Now assume we have extended this smooth structure up until the $(k-1)$-handles, i.e.\ $\mathcal{C}=C_{k-1}$.  We begin by extending the smooth structure onto a small neighbourhood of the $k$-handles attaching regions, canonically smoothing corners as necessary.  Denote the $k$-handles as $\mathcal{H}_i,  i \in I$ and denote the attaching region of these handles by $\partial_A \mathcal{H}_i$. Using smoothing theory \cite[Essay IV, Theorem 10.1]{kirby_siebenmann_1977}, we know that the first obstruction to extending this smooth structure onto the rest of the handles lies in $\bigoplus_i H^4 (\mathcal{H}_i, \partial_A \mathcal{H}_i; \Z/2)$.  A simple computation gives that this cohomology group vanishes for handles of index three or lower; the further obstructions lie in even higher degree cohomology groups, and hence also vanish. 
    Hence we obtain obtain a smooth structure on $C_k$ as well. Applying this to $k=1,2,3$ we have $\mathcal{C}=C_0 \cup C_1 \cup C_2 \cup C_3$.

    Now we deal with the uniqueness statement for the 2-handle-skeleton.  This follows from the computation the homotopy groups of $\TOP/O$ \cite[Essay IV, Chapter 10]{kirby_siebenmann_1977}, which means that the only obstruction for the smooth structures to be concordant relative to the chosen structure on $(X \times J^2)^{(2)}$ lies in $ \bigoplus _iH^3(\mathcal{H}_i, \partial_A \mathcal{H}_i; \Z/2)$, and hence vanishes for handles of index two or lower (higher degree obstructions also appear but these similarly vanish in our case).  Applying the concordance implies isotopy theorem \cite[Essay I, Theorem 4.1]{kirby_siebenmann_1977} finishes the proof.
\end{proof}

\begin{remark}
    The uniqueness obstructions lying in $\bigoplus_iH^3(\mathcal{H}_i, \partial_A \mathcal{H}_i; \Z/2)$, mentioned at the end of the proof of \Cref{lem:3_skeleton_smoothing} means that the 3-handle-skeleton does not in general admit a unique smooth structure. In fact, each different element of $\bigoplus_iH^3(\mathcal{H}_i, \partial_A \mathcal{H}_i; \Z/2)$ corresponds to a (potentially) distinct (up to isotopy) smooth structure on $(X \times J^2)^{(3)}$, and we will have to take this non-uniqueness into account later in \Cref{lemma:independence_on_smoothing}.
\end{remark}

Once $N$ is smoothed, we can pass to $\pi_0\left(\mathcal{P}^{\DIFF}(N_{\mathcal{S}})\right)$. We have to make sure that this step does not cause loss of information with respect to the path-connectedness in the $\TOP$ category.
In order to do so, we appeal to Burghelea-Lashof \cite[Theorem 6.1, Theorem 6.2]{burghelea_lashof_1974} which gives:
\begin{equation*}
 \pi_0\left(\mathcal{P}^{\DIFF}(N_{\mathcal{S}},\partial N_{\mathcal{S}})\right) \xrightarrow{\cong}\pi_0\left(\mathcal{P}^{\TOP}(N,\partial N)\right) 
\end{equation*} via the forgetful map.  

\begin{notation}\label{not:smoothing_inverse}
    We denote the inverse of this isomorphism by $\mathfrak{f}_{\mathcal{S}}$.
\end{notation}

\subsection{Defining the topological obstruction}

We briefly recall what we have obtained so far. Starting from the manifold $X^4$, we twice-suspended it to $X \times J^2$. Using the $\TOP$ Theorem \ref{thm:top_version_3.1} we showed that the space of path components of $\mathcal{P}^{\TOP} (X\times J^2,\partial (X \times J^2))$ depends only on a neighbourhood $N$ of the 3-handle skeleton. We then smoothed $N$ and used the work of \cite{burghelea_lashof_1974} to transition into the $\DIFF$ category, i.e.\ smoothed the pseudo-isotopy, without losing information with respect to the connected-components of the pseudo-isotopy spaces.

The last step is to use Hatcher-Wagoner as in \cite{hatcher_wagoner_1973} to define the obstruction maps $\Sigma$ and $\Theta$ for $\CAT=\DIFF$. These maps have image in $\Wh_2\left(\pi_1(X \times J^2)\right)$ and \\$\Wh_1\left(\pi_1(X \times J^2);\Z/2 \times \pi_2(X\times J^2)\right)/\chi$.

Showing that the choice of smoothing of $N$ does not affect the result of the composition map that is involved in our definition, which is a key step to have an actual unique definition of the invariant, will be the content of \cref{lemma:independence_on_smoothing}. We postpone the proof for now.

\begin{definition}[Topological Hatcher-Wagoner invariants]\label{def:top_hw}
Let $X$ be a compact, topological 4-manifold.  Let $S^2:= S^+\circ S^+ \colon \pi_0\mathcal{P}^{\TOP}(X,\partial X)\to \pi_0\mathcal{P}^{\TOP}(X\times J^2,\partial (X\times J^2))$ be the two-fold suspension homomorphism and let $\mathcal{S}$ denote a smooth structure on $N$, the neighbourhood of the 3-handle-skeleton of $X\times J^2$.  Then we define:
\[
\Sigma^{\TOP}:= \Sigma \circ \mathfrak{f}_{\mathcal{S}} \circ \mathfrak{i} \circ S^2\colon \pi_0\mathcal{P}^{\TOP}(X,\partial X) \to \Wh_2(\pi_1(X)) 
\]
where $\mathfrak{i}$ is from \Cref{not:inclusion_inverse}, $\mathfrak{f}_{\mathcal{S}}$ is from \Cref{not:smoothing_inverse} and $\Sigma$ is the smooth, first Hatcher-Wagoner invariant from \Cref{sbs:def_sigma}.

Similarly, we define
\[
\Theta^{\TOP}:= \Theta \circ \mathfrak{f}_{\mathcal{S}} \circ \mathfrak{i} \circ S^2\colon \ker\Sigma^{\TOP} \to \Wh_1(\pi_1(X);\Z/2 \times \pi_2(X))/\chi,
\]
where $\Theta$ is the smooth, second Hatcher-Wagoner invariant from \Cref{sbs:def_theta}.
\end{definition}

\begin{lemma}\label{lem:homomorphism}
    The maps $\Sigma^{\TOP}$ and $\Theta^{\TOP}$ are group homomorphisms.
\end{lemma}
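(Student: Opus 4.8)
The plan is to show that $\Sigma^{\TOP}$ and $\Theta^{\TOP}$ are homomorphisms by decomposing each map into its constituent pieces and checking that each piece is a homomorphism. Recall from \Cref{def:top_hw} that $\Sigma^{\TOP} = \Sigma \circ \mathfrak{f}_{\mathcal{S}} \circ \mathfrak{i} \circ S^2$ and similarly for $\Theta^{\TOP}$. So it suffices to verify that each of the four maps $S^2$, $\mathfrak{i}$, $\mathfrak{f}_{\mathcal{S}}$, and $\Sigma$ (respectively $\Theta$) is a group homomorphism on $\pi_0$, since compositions of homomorphisms are homomorphisms (and the restriction of a homomorphism to a subgroup such as $\ker \Sigma^{\TOP}$, together with the passage to a quotient group such as $\Wh_1(\dots)/\chi$, preserves the homomorphism property).

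First I would handle $S^2 = S^+ \circ S^+$. Here the key point is that the group operation on $\pi_0(\mathcal{P}^{\CAT}(X,\partial X))$ is induced by composition of pseudo-isotopies (as in \Cref{definition:pseudo_isotopy_space}), and the suspension construction in \Cref{definition:suspension_map} is visibly compatible with composition: given $F, G \in \mathcal{P}'(X,\partial X)$, the pseudo-isotopy $S^+(F \circ G)$ agrees with $S^+(F) \circ S^+(G)$ because on each radial section $X \times \{u = \mathrm{const}\}$ the suspended map is built from $F$ (respectively $G$) applied to that section, and composition is performed sectionwise; below the line $v=0$ both sides are the identity and above $v = 1-\tfrac{\epsilon}{2}$ both sides are $(F\circ G)|_{X\times 1} = F|_{X\times 1}\circ G|_{X\times 1}$. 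Combined with \Cref{lem:well-definedness_suspension}, which already establishes that $S^+$ descends to $\pi_0$, this shows $S^+_*$ is a homomorphism, hence so is $S^2$. Next, $\mathfrak{i}$ is the inverse of the inclusion-induced isomorphism from \Cref{lem:3_skeleton_isomorphism}; inclusion of a codimension-zero submanifold extending pseudo-isotopies by the identity is manifestly compatible with composition (the extension of $F \circ G$ is the composition of the extensions), so the inclusion-induced map on $\pi_0$ is a homomorphism, and therefore its inverse $\mathfrak{i}$ is too. The same reasoning applies to $\mathfrak{f}_{\mathcal{S}}$: the forgetful map from \cite{burghelea_lashof_1974} that turns a smooth pseudo-isotopy into a topological one is the identity on underlying maps, so it commutes with composition, is a group homomorphism, and hence so is its inverse.

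Finally, I would invoke that the smooth Hatcher--Wagoner invariants $\Sigma$ and $\Theta$ are themselves group homomorphisms, which is part of the foundational work of \cite{hatcher_wagoner_1973} (and recalled in \Cref{sbs:def_sigma} and \Cref{sbs:def_theta}); this is standard. Composing, $\Sigma^{\TOP}$ is a homomorphism $\pi_0\mathcal{P}^{\TOP}(X,\partial X) \to \Wh_2(\pi_1 X)$. For $\Theta^{\TOP}$, note that $\ker \Sigma^{\TOP}$ is a subgroup of $\pi_0\mathcal{P}^{\TOP}(X,\partial X)$ (being the kernel of a homomorphism), and $\mathfrak{f}_{\mathcal{S}} \circ \mathfrak{i} \circ S^2$ restricts to a homomorphism from this subgroup into $\ker \Sigma$ (by the commutativity already implicit in the definition, since $\Sigma^{\TOP} = \Sigma \circ \mathfrak{f}_{\mathcal{S}} \circ \mathfrak{i} \circ S^2$), so precomposing $\Theta$ with it yields a homomorphism into $\Wh_1(\pi_1 X; \Z/2\times\pi_2 X)/\chi$. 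The only mild subtlety — and the place requiring a sentence of care rather than a genuine obstacle — is checking the sectionwise compatibility of $S^+$ with composition when the two pseudo-isotopies are first adjusted to lie in $\mathcal{P}'(X,\partial X)$; since $F\circ G$ again lies in (a space homotopy equivalent to) $\mathcal{P}'$, this causes no trouble at the level of $\pi_0$.
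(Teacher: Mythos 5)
Your proof is correct and takes exactly the same approach as the paper: the paper's entire argument is the single observation that $\Sigma^{\TOP}$ and $\Theta^{\TOP}$ are, by definition, compositions of homomorphisms. You have simply unpacked that observation by verifying the homomorphism property for each constituent map $S^2$, $\mathfrak{i}$, $\mathfrak{f}_{\mathcal{S}}$, and $\Sigma$/$\Theta$, which the paper leaves implicit.
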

\begin{proof}
    By definition, both $\Sigma^{\TOP}$ and $\Theta^{\TOP}$ are compositions of homomorphisms.
\end{proof}
\Cref{def:top_hw} a priori depends on the choice of smooth structure $\mathcal{S}$ on $N$, but this is not reflected in the notation.  We will justify this notation in \Cref{lemma:independence_on_smoothing} by showing that our invariants do not depend on the choice of $\mathcal{S}$.

\begin{remark}
    Note that there could be elements $[F]$ in $\ker \Sigma ^{\TOP}$ that vanish because they lie in $\ker S^2$. For such elements, $\Theta^{\TOP}$ also automatically vanishes. This however does not imply that $[F]$ is isotopic to the identity (e.g. Budney-Gabai's examples \cite{budney_gabai_2023}). It is also important to notice that there could be elements for which $\Sigma^{\TOP}$ vanishes  and whose $\Theta$ invariant is non-trivial. For such elements, it is evident by the definitions that they cannot lie in $\ker S^2$. 
\end{remark}

\begin{thm}
    If either $\Sigma^{\TOP}([F])\neq 0$ and $\Theta^{\TOP}([F]) \neq0$ then $F$ is not topologically isotopic to an isotopy.
\end{thm}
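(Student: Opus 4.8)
The plan is to prove the contrapositive: assuming $F$ is topologically isotopic to an isotopy, I will show that $\Sigma^{\TOP}([F])=0$ and $\Theta^{\TOP}([F])=0$, which is the contrapositive of the statement as written. By \Cref{def:top_hw}, both invariants are composites of the two-fold suspension map $S^2$, the isomorphisms $\mathfrak{i}$ and $\mathfrak{f}_{\mathcal{S}}$, and the smooth Hatcher-Wagoner invariants, and by \Cref{lem:homomorphism} these composites are group homomorphisms; so it suffices to check that $[F]$ is the trivial element of $\pi_0(\mathcal{P}^{\TOP}(X,\partial X))$.

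To that end I would first unwind the hypothesis. Saying that $F$ is topologically isotopic to an isotopy means that $F$ is joined, through a path in $\mathcal{P}^{\TOP}(X,\partial X)$, to a level-preserving pseudo-isotopy $G$ (an isotopy in the sense of \Cref{definition:pseudo_isotopy}). The crucial point is the classical ``straightening'' observation that any such $G$ already represents the trivial class: writing $G(x,t)=(g_t(x),t)$ we have $g_0=\Id_X$, and because membership in $\mathcal{P}^{\TOP}(X,\partial X)$ forces $G|_{X\times\{0\}\cup\,\partial X\times I}=\Id$ we also get $g_t|_{\partial X}=\Id_{\partial X}$ for all $t$; then $H_s(x,t):=(g_{(1-s)t}(x),t)$ is a path in $\mathcal{P}^{\TOP}(X,\partial X)$ from $H_0=G$ to $H_1=\Id_{X\times I}$, since each $H_s$ is a homeomorphism of $X\times I$ restricting to the identity on $X\times\{0\}\cup\,\partial X\times I$. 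Hence $[F]=[G]=[\Id_{X\times I}]$ in $\pi_0(\mathcal{P}^{\TOP}(X,\partial X))$, so $\Sigma^{\TOP}([F])=\Sigma^{\TOP}([\Id])=0$; in particular $[F]\in\ker\Sigma^{\TOP}$, and $\Theta^{\TOP}([F])=\Theta^{\TOP}([\Id])=0$ as well. The contrapositive then gives the theorem. Equivalently, one may propagate the trivial class forward through $S^2$, then $\mathfrak{i}$, then $\mathfrak{f}_{\mathcal{S}}$---each of which sends the class of the identity to the class of the identity, landing on the trivial smooth pseudo-isotopy of the smoothed $3$-handle skeleton---and conclude using the Hatcher-Wagoner fact that $\Sigma$ and $\Theta$ vanish on smooth pseudo-isotopies smoothly isotopic to isotopies.

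I do not anticipate a real obstacle for this particular statement: the only thing needing care is checking the endpoint conditions of \Cref{definition:pseudo_isotopy_space} along the straightening path $H_s$, which is routine. The substantive work---that $\Sigma^{\TOP}$ and $\Theta^{\TOP}$ are well defined independently of the auxiliary smooth structure $\mathcal{S}$ (\Cref{lemma:independence_on_smoothing}) and that they are honest homomorphisms (\Cref{lem:homomorphism})---is handled (or scheduled) elsewhere, so the present theorem is a formal consequence, and together with the above it is precisely the assertion in \Cref{thm:main} that $\Sigma^{\TOP}$ and $\Theta^{\TOP}$ vanish on pseudo-isotopies topologically isotopic to isotopies.
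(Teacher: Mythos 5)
Your proof is correct and takes essentially the same route as the paper's one-line argument: both reduce to the fact that $\Sigma^{\TOP}$ and $\Theta^{\TOP}$ are homomorphisms (\Cref{lem:homomorphism}) and hence vanish on the class $[\Id]$. The one detail you supply beyond the paper's proof is the explicit straightening path $H_s(x,t)=(g_{(1-s)t}(x),t)$ showing that any level-preserving pseudo-isotopy is connected to $\Id_{X\times I}$ in $\mathcal{P}^{\TOP}(X,\partial X)$---a step the paper leaves implicit when it passes directly from ``$F$ is isotopic to an isotopy'' to ``$[F]=[\Id]$''.
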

\begin{proof}
   Following \cref{lem:homomorphism}, the maps $\Sigma ^{\TOP}$ and $\Theta^{\TOP}$ are homomorphisms. Hence if $[F]=[\Id]$ then  $\Sigma ^{\TOP}([F])=0$ and $\Theta^{\TOP}([F])=0$. 
\end{proof}

\section{Properties of the topological Hatcher-Wagoner invariants}\label{sec:properties}

We start this section by showing that the choice of smoothing of $N$ does not affect the result of the composition map in \Cref{def:top_hw}. 
We then proceed to prove the main properties that our topological invariants satisfy. Namely, they possess 
naturality under the inclusion of certain codimension zero submanifolds (see Proposition \ref{lemma:proposition_naturality_topological}) and that the topological and smooth Hatcher-Wagoner invariants coincide for smooth pseudo-isotopies, which was stated in the last part of \Cref{thm:main} in the introduction.

\begin{lem}\label{lemma:independence_on_smoothing}
    The maps $\Sigma^{\TOP}$ and $\Theta^{\TOP}$ are independent of the choice of smooth structure $\mathcal{S}$ on $N$.
\end{lem}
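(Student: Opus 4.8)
The plan is to show that two different smooth structures $\mathcal{S}$ and $\mathcal{S}'$ on the neighbourhood $N$ of the $3$-handle-skeleton $(X\times J^2)^{(3)}$ give the same value for the composition $\Sigma \circ \mathfrak{f}_{\mathcal{S}} \circ \mathfrak{i} \circ S^2$, and likewise for $\Theta$. The key point is that by \Cref{lem:3_skeleton_smoothing}, the \emph{2}-handle-skeleton carries a smooth structure that is unique up to isotopy, so any two smoothings of $N$ agree (up to isotopy) when restricted to a neighbourhood $N'$ of $(X\times J^2)^{(2)}$. So I would factor the comparison through pseudo-isotopies supported on $N'$.

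First I would fix a topological pseudo-isotopy $F$ on $X\times J^2$ (in the image of $S^2$, though this is not really needed) and use the surjectivity of $\pi_0(\mathcal{P}^{\TOP}(N',\partial N')) \to \pi_0(\mathcal{P}^{\TOP}(X\times J^2,\partial(X\times J^2)))$ noted in the remark after \Cref{not:inclusion_inverse} to choose a representative $F'$ of $\mathfrak{i}(S^2[F])$ that is supported in $N'$, i.e.\ $F' = F'' \# \Id$ for some pseudo-isotopy $F''$ of $N'$. Next, since $\mathcal{S}$ and $\mathcal{S}'$ restrict to isotopic smooth structures on $N'$, the pseudo-isotopy $F''$ can be smoothed using either one to give smooth pseudo-isotopies $F''_{\mathcal{S}}$ and $F''_{\mathcal{S}'}$ of $(N')_{\mathcal{S}}$ and $(N')_{\mathcal{S}'}$ respectively, and these are identified with each other under the diffeomorphism $(N')_{\mathcal{S}} \cong (N')_{\mathcal{S}'}$ coming from the isotopy of smooth structures (this uses the Burghelea--Lashof identification of $\pi_0$ of $\DIFF$ and $\TOP$ pseudo-isotopy spaces). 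Then $\mathfrak{f}_{\mathcal{S}}(S^2[F])$ and $\mathfrak{f}_{\mathcal{S}'}(S^2[F])$ are the images of $[F''_{\mathcal{S}}]$ and $[F''_{\mathcal{S}'}]$ under extension-by-identity into $N$ (smoothed with $\mathcal{S}$, resp.\ $\mathcal{S}'$), and these extensions are intertwined by the diffeomorphism induced by the isotopy of smoothings.

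Finally I would invoke \Cref{lemma:commutativity_after_extending} (naturality of the smooth Hatcher--Wagoner invariants under inclusion of codimension-zero submanifolds): we have $\Sigma([F''_{\mathcal{S}} \# \Id]) = (j_{\mathcal{S}})_* \Sigma([F''_{\mathcal{S}}])$ where $j_{\mathcal{S}}\colon (N')_{\mathcal{S}} \to N_{\mathcal{S}}$ is the inclusion, and similarly with $\mathcal{S}'$. Since the diffeomorphism $N_{\mathcal{S}} \cong N_{\mathcal{S}'}$ (induced by the isotopy of smooth structures, which is supported away from $N'$ in the relevant sense and is the identity on $\pi_1$) carries $j_{\mathcal{S}}$ to $j_{\mathcal{S}'}$ and $F''_{\mathcal{S}}$ to $F''_{\mathcal{S}'}$, and since a diffeomorphism isotopic to one respecting the identification $\pi_1(N) \cong \pi_1(X)$ acts trivially on the relevant Whitehead groups, we conclude $\Sigma([F''_{\mathcal{S}}\#\Id]) = \Sigma([F''_{\mathcal{S}'}\#\Id])$ as elements of $\Wh_2(\pi_1(X))$, i.e.\ $\Sigma^{\TOP}$ is independent of $\mathcal{S}$. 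The argument for $\Theta^{\TOP}$ is identical, using the $\Theta$ part of \Cref{lemma:commutativity_after_extending}, once we note that if $[F] \in \ker\Sigma^{\TOP}$ then the value of $\Theta$ does not depend on which smoothing was used to compute that $\Sigma$ vanishes.

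The main obstacle I expect is the bookkeeping around the identification $N_{\mathcal{S}} \cong N_{\mathcal{S}'}$: one must check that the isotopy of smooth structures on $(X\times J^2)^{(2)}$ provided by \Cref{lem:3_skeleton_smoothing} genuinely extends to a diffeomorphism (or at least a concordance inducing an identification of $\pi_0$ of pseudo-isotopy spaces) of neighbourhoods of the $3$-skeleton that is compatible with the extension-by-identity maps and acts as the identity on fundamental groups — and hence acts trivially on $\Wh_2$ and on $\Wh_1(\cdot;\Z/2\times\pi_2)$. This is where the fact that the non-uniqueness of the smoothing of the $3$-skeleton lives in $\bigoplus_i H^3(\mathcal{H}_i,\partial_A\mathcal{H}_i;\Z/2)$ (degree too high to affect the $2$-skeleton) gets used: the two smoothings differ only over the $3$-handles, so all the data feeding into $\Sigma$ and $\Theta$ — which by the dimension count in the remark after \Cref{not:inclusion_inverse} is detected already on $N'$ — is unaffected.
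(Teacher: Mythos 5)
Your proposal is essentially the same approach as the paper: factor through a neighbourhood $N'$ of the $2$-handle-skeleton, use that $N'$ has a unique smooth structure up to isotopy (\Cref{lem:3_skeleton_smoothing}), and push the resulting smooth invariant forward into $N$ using \Cref{lemma:commutativity_after_extending}. The paper encodes this as a commutativity check in one large diagram rather than an element chase, but the logic is the same, and the three commutativity facts you invoke (lifting topological pseudo-isotopies to $N'$ by surjectivity, compatibility of extension-by-identity with forgetful maps, and naturality of $\Sigma$, $\Theta$ under inclusion) are exactly the paper's three sub-diagrams.

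One small clarification on your last paragraph: you do not actually need (and in general cannot produce) a diffeomorphism $N_{\mathcal{S}}\cong N_{\mathcal{S}'}$ compatible with $N'$, nor does the isotopy of smooth structures on $N'$ extend over $N$ — by design the two smoothings of $N$ may be genuinely non-isotopic, which is the whole difficulty. The paper's proof sidesteps this entirely: once $\Sigma([F''_{\mathcal{S}}])$ and $\Sigma([F''_{\mathcal{S}'}])$ are identified in $\Wh_2(\pi_1(N'))$ via the unique smoothing of $N'$, one simply observes that $(j_{\mathcal{S}})_*$ and $(j_{\mathcal{S}'})_*$ are the \emph{same} homomorphism $\Wh_2(\pi_1(N'))\to\Wh_2(\pi_1(N))$, since both are induced by the single topological inclusion $N'\hookrightarrow N$ on $\pi_1$ (and similarly on $\pi_1,\pi_2$ for $\Theta$). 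So the `obstacle' you flag dissolves: no comparison of $N_{\mathcal{S}}$ with $N_{\mathcal{S}'}$ as smooth manifolds is required. With that adjustment your argument is a correct rendering of the paper's proof.
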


\begin{proof}

We only prove this for $\Sigma^{\TOP}$. The proof for $\Theta^{\TOP}$ is analogous.
The general idea is that we will combine \Cref{def:top_hw} with an analogous `definition' using the 2-handle-skeleton instead.  As in \Cref{def:top_hw}, choose a smooth structure $\mathcal{S}$ on $N$, the neighbourhood of the 3-handle-skeleton, and denote a neighbourhood of the 2-handle-skeleton by $N'$.  We now draw the relevant diagram.  

\[\begin{tikzcd}
	& {\pi_0 \left( \mathcal{P}^{\TOP}(X,\partial X)\right)} \\
	& {\pi_0\left(\mathcal{P}^{\TOP}(X \times J^2, \partial (X \times J^2))\right)} \\
	{\pi_0\left(\mathcal{P}^{\TOP}(N',\partial N') \right)} && {\pi_0\left(\mathcal{P}^{\TOP}(N,\partial N) \right)} \\
	{\pi_0\left(\mathcal{P}^{\DIFF}(N',\partial N') \right)} && {\pi_0\left(\mathcal{P}^{\DIFF}(N_{\mathcal{S}},\partial N_{\mathcal{S}}) \right)} \\
    \Wh_2(\pi_1(N')) && \Wh_2(\pi_1(N)) \\
    && \Wh_2(\pi_1(X))
	\arrow["{S^2}"{pos=0.3}, from=1-2, to=2-2]
	\arrow["a", from=3-1, to=2-2, two heads]
    \arrow["\mathfrak{i}'"', from=2-2, to=3-1, bend right, start anchor={[xshift=0,yshift=0]west}, end anchor={north}, dotted]
    \arrow["\mathfrak{i}", "\cong"', from=2-2, to=3-3, bend left, start anchor={[xshift=0,yshift=0]east}, end anchor={north}]
    \arrow["\cong", "b"', from=3-3, to=2-2]
	\arrow["c"{description}, two heads, from=3-1, to=3-3]
    \arrow["\cong", from=4-3, to=3-3, shift left=5ex]
    \arrow["\cong", "\mathfrak{f}"',from=3-1, to=4-1, shift right=5ex]
	\arrow["d"{description}, two heads, from=4-1, to=4-3]
	\arrow["\cong", "e"', from=4-1, to=3-1, shift right=5ex]
    \arrow["\mathfrak{f}_{\mathcal{S}}","\cong"', from=3-3, to=4-3, shift left=5ex]
    \arrow["\cong"', from=5-1, to=5-3]
    \arrow["\Sigma"', from=4-1, to=5-1]
    \arrow["\Sigma", from=4-3, to=5-3]
    \arrow["\cong"', from=5-3, to=6-3]
\end{tikzcd}\]

We describe the maps in the diagram.  The maps $a, b, c$ and $d$ are all inclusion induced maps.  By appealing to \cite[Theorem 6.1, Theorem 6.2]{burghelea_lashof_1974}, we have the existence of the inverse map $\mathfrak{f}$ to $e$, which does not depend on a choice of smooth structure, since, by \Cref{lem:3_skeleton_smoothing}, $N'$ is uniquely smoothable.  As in \Cref{def:top_hw}, we have the existence of $\mathfrak{i}$, the inverse to $b$.  Similarly, $\mathfrak{f}_{\mathcal{S}}$ is the inverse to the forgetful map, as in \Cref{def:top_hw}.  In the above diagram we can see the map $\Sigma^{\TOP}$ for the smooth structure $\mathcal{S}$: map down the right hand side of the diagram.  We now aim to show that the diagram commutes.

We first consider the top section of the diagram, then the middle rectangle and eventually the bottom rectangle.

\subsubsection{The upper triangle}\label{sbsbs:upper}
\[\begin{tikzcd}
	& {\pi_0\left(\mathcal{P}^{\TOP}(X \times J^2, \partial (X \times J^2))\right)} \\
	{\pi_0\left(\mathcal{P}^{\TOP}(N',\partial N') \right)} && {\pi_0\left(\mathcal{P}^{\TOP}(N,\partial N) \right)}
	\arrow["a"{pos=0.4}, from=2-1, to=1-2]
	\arrow["c"{description}, two heads, from=2-1, to=2-3]
	\arrow["b"'{pos=0.4}, from=2-3, to=1-2]
\end{tikzcd}\]

Start with a class $[F] \in \pi_0\left(\mathcal{P}^{\TOP}(N',\partial N') \right)$. 
The map $a$ is given by extending via the identity outside the neighbourhood $N'$ of the 2-handle-skeleton. We have: \[a([F])= [\hat{F}],\] where $\hat{F}|_{N'}=F$ and $\hat{F}|_{X\times J^2 \setminus N'}=\Id$. Surjectivity of this map is guaranteed by Theorem \ref{thm:top_version_3.1}. 
The map $b$ is defined in a similar way---the only difference is that we are extending over the complement of a neighbourhood of the $3$-skeleton. Start with $[G] \in \pi_0\left(\mathcal{P}^{\TOP}(N,\partial N) \right)$. This is sent to: \[b([G])= [\overline{G}],\] where $\overline{G}|_{N}=G$ and $\overline{G}|_{X\times J^2 \setminus N}=\Id$. The map
$b$ is an isomorphism thanks to 
\cref{lem:3_skeleton_isomorphism}. 

The map $c$ is defined by extending via the identity from $N'$ to $N$. In particular if $[F]$ is in $\pi_0\left(\mathcal{P}^{\TOP}(N',\partial N') \right)$, then: \[ c([F])=[G],\] where $G|_{N'}=F$ and $G|_{N\setminus N'}=\Id$. This map is surjective by an application of Theorem \ref{thm:top_version_3.1}. 

It is immediate from the definitions of the maps to show that $a= b \circ c$: \[ b \circ c ([F])= b ([G]) = [\overline{G}],\] where $\overline{G}|_{N'}=F$ and $\overline{G}|_{X \times J^2\setminus N'}=\Id$, which is exactly the same as $[\widehat{F}]=a([F])$.

\subsubsection{The middle rectangle}\label{sbsbs:middle_rectangle}

We now prove commutativity of the middle rectangle. 
\[\begin{tikzcd}
	{\pi_0\left(\mathcal{P}^{\TOP}(N',\partial N') \right)} && {\pi_0\left(\mathcal{P}^{\TOP}(N,\partial N) \right)} \\
	{\pi_0\left(\mathcal{P}^{\DIFF}(N',\partial N') \right)} && {\pi_0\left(\mathcal{P}^{\DIFF}(N_{\mathcal{S}},\partial N_{\mathcal{S}}) \right)}
	\arrow["c"{description}, two heads, from=1-1, to=1-3]
	\arrow["\cong", from=2-1, to=1-1]
	\arrow["d"{description}, two heads, from=2-1, to=2-3]
	\arrow["\cong"', from=2-3, to=1-3]
\end{tikzcd}\]

A class $[F] \in\pi_0\left(\mathcal{P}^{\DIFF}(N',\partial N') \right)$ (resp. in $\pi_0\left(\mathcal{P}^{\DIFF}(N_{\mathcal{S}},\partial N_{\mathcal{S}}) \right)$) is sent by forgetting the smooth structure to $[F] \in \pi_0\left(\mathcal{P}^{\TOP}(N',\partial N') \right)$ (resp. in $\pi_0\left(\mathcal{P}^{\TOP}(N_{\mathcal{S}},\partial N_{\mathcal{S}}) \right)$). 
The map $c$ was previously defined in \cref{sbsbs:upper}. The map $d$ is the smooth analogue of $c$, i.e.\  extension via the identity from $N$ to $N'$. 
Note that this map is also surjective (using \cref{theorem:berghelea_lashof}).

We start with $[F] \in \pi_0\left(\mathcal{P}^{\DIFF}(N',\partial N') \right)$. This is sent to $[F] \in \pi_0\left(\mathcal{P}^{\TOP}(N',\partial N') \right)$ via the vertical isomorphism. Next we have $c([F])=[G]$, where $G|_{N'}=F$ and $G|_{N\setminus N'}=\Id$.

On the other hand, following the bottom part of the diagram, we have: \[ d([F])=[G],\] with $G$ now smooth and with $G|_{N'}=F$ and $G|_{N\setminus N'}=\Id$. This is then sent to $[G] \in \pi_0\left(\mathcal{P}^{\TOP}(N_{\mathcal{S}},\partial N_{\mathcal{S}}) \right)$. The properties of $G$ are not lost.
It is now clear that the diagram commutes.

\subsection{The main diagram}
\[\begin{tikzcd}
	& {\pi_0 \left( \mathcal{P}^{\TOP}(X,\partial X)\right)} \\
	& {\pi_0\left(\mathcal{P}^{\TOP}(X \times J^2, \partial( X \times J^2))\right)} \\
	{\pi_0\left(\mathcal{P}^{\TOP}(N',\partial N') \right)} && {\pi_0\left(\mathcal{P}^{\TOP}(N,\partial N) \right)} \\
	{\pi_0\left(\mathcal{P}^{\DIFF}(N',\partial N') \right)} && {\pi_0\left(\mathcal{P}^{\DIFF}(N_{\mathcal{S}},\partial N_{\mathcal{S}}) \right)} \\
    \Wh_2(\pi_1(N')) && \Wh_2(\pi_1(N)) \\
    && \Wh_1(\pi_1(X))
	\arrow["{S^2}"{pos=0.4}, from=1-2, to=2-2]
	\arrow["\mathfrak{i}'"'{pos=0.6}, dashed, from=2-2, to=3-1]
	\arrow["\mathfrak{i}"{pos=0.6}, from=2-2, to=3-3]
	\arrow["c"{description}, two heads, from=3-1, to=3-3]
	\arrow["\mathfrak{f}"', from=3-1, to=4-1]
	\arrow["\mathfrak{f}_\mathcal{S}", from=3-3, to=4-3]
	\arrow["d"{description}, two heads, from=4-1, to=4-3]
    \arrow["\cong"', from=5-1, to=5-3]
    \arrow["\Sigma"', from=4-1, to=5-1]
    \arrow["\Sigma", from=4-3, to=5-3]
    \arrow["\cong"', from=5-3, to=6-3]
\end{tikzcd}\]

We now return to the main diagram, noting that the commutativity of the bottom rectangle follows from \Cref{lemma:commutativity_after_extending}. We have only included the arrows in the direction for defining the topological Hatcher-Wagoner invariants, i.e.\ going downwards.  It is a simple exercise to see that the commutativity proved above for the more natural maps implies the commutativity for the above diagram (note that $\mathfrak{i}'$ denotes a choice of lift, not a map).

We can similarly draw the same diagram for a different choice of smooth structure, say $\mathcal{S}'$, and let $F$ be a topological pseudo-isotopy of $X$.  Then the commutativity of the diagrams involving $\mathcal{S}$ and $\mathcal{S}'$ give
\[
\Sigma (\mathfrak{f}_{\mathcal{S}}\circ \mathfrak{i}\circ S^2(F)) = \Sigma(\mathfrak{f}\circ \mathfrak{i}' \circ S^2(F)) = \Sigma (\mathfrak{f}_{\mathcal{S}'}\circ \mathfrak{i}\circ S^2(F))
\] and this implies that our definition of $\Sigma^{\TOP}(F)$ does not depend on the choice of $\mathcal{S}$.
\end{proof}

\subsection{The topological and smooth obstructions coincide for smooth pseudo-isotopies}
We now prove that our topological invariants are compatible with the smooth ones.  This will complete the proof of \Cref{thm:main}.
\begin{lem}\label{lemma:smooth_is_equal_top}
    Let $X$ be a smooth 4-manifold i.e.\ a topological manifold $X$ with a given choice of smooth structure $\mathcal{S}$, and let $F\colon X\to X$ be a smooth pseudo-isotopy, $[F]\in \pi_0\left(\mathcal{P}^{\DIFF}(X,\partial X)\right)$. 
    Then \[
    \Sigma(F) = \Sigma^{\TOP}(F),
    \]
    and \[\Theta(F)=\Theta^{\TOP}(F).\]
\end{lem}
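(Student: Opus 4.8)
The plan is to exploit that, when $X$ carries the smooth structure $\mathcal{S}$, every step of \Cref{def:top_hw} is intertwined by the forgetful maps with a purely smooth operation, and then to reduce to the two compatibility results already proved in \Cref{sec:background}: that the smooth suspension preserves $\Sigma$ and $\Theta$ (\Cref{lrm:suspension_preserves_Sigma_Theta}) and that the smooth Hatcher--Wagoner invariants are natural under codimension-zero inclusions (\Cref{lemma:commutativity_after_extending}).

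First I would set up a smooth model: equip $X\times J^2$ with the product smooth structure, choose a smooth handle decomposition relative to $\partial X\times J^2$ (this is in particular a topological handle decomposition of the type used in \Cref{def:top_hw}), and let $N$ be a smooth regular neighbourhood of the $3$-handle-skeleton, carrying the induced smooth structure $\mathcal{S}$. By \Cref{lemma:independence_on_smoothing} this choice of $\mathcal{S}$ may be used to compute $\Sigma^{\TOP}(F)$ and $\Theta^{\TOP}(F)$. Next I would observe that the two-fold suspension $S^+\circ S^+$ is given by the same explicit formula in $\DIFF$ and $\TOP$, so forgetting the smooth structure sends the smooth suspension $S^2_{\DIFF}(F)\in\pi_0\mathcal{P}^{\DIFF}(X\times J^2,\partial(X\times J^2))$ to $S^2(F)\in\pi_0\mathcal{P}^{\TOP}(X\times J^2,\partial(X\times J^2))$. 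The $\DIFF$ case of \Cref{theorem:berghelea_lashof} (the $\DIFF$ analogue of \Cref{lem:3_skeleton_isomorphism}, obtained from the identical numerical input $r=3$, $k=0$) shows that extension by the identity induces an isomorphism $\pi_0\mathcal{P}^{\DIFF}(N_{\mathcal{S}},\partial N_{\mathcal{S}})\xrightarrow{\cong}\pi_0\mathcal{P}^{\DIFF}(X\times J^2,\partial(X\times J^2))$; let $F_N$ be the smooth pseudo-isotopy of $N_{\mathcal{S}}$ with $[F_N\#\Id]=[S^2_{\DIFF}(F)]$. Since forgetting the smooth structure commutes both with the extend-by-identity maps and with the Burghelea--Lashof isomorphism of \cite{burghelea_lashof_1974}, it follows first that $\mathfrak{i}([S^2(F)])=[F_N^{\TOP}]$ (\Cref{not:inclusion_inverse}) and then that $\mathfrak{f}_{\mathcal{S}}(\mathfrak{i}([S^2(F)]))=[F_N]$ (\Cref{not:smoothing_inverse}); hence $\Sigma^{\TOP}(F)=\Sigma(F_N)$ and, provided $F\in\ker\Sigma^{\TOP}$, $\Theta^{\TOP}(F)=\Theta(F_N)$.

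To conclude I would apply \Cref{lemma:commutativity_after_extending} to get $\Sigma(S^2_{\DIFF}(F))=\Sigma(F_N\#\Id)=i_*\Sigma(F_N)$, where $i\colon N\hookrightarrow X\times J^2$, so that $\Sigma(F_N)=(i_*)^{-1}\Sigma(S^2_{\DIFF}(F))$; and \Cref{lrm:suspension_preserves_Sigma_Theta} (applied twice) identifies $\Sigma(S^2_{\DIFF}(F))$ with $\Sigma(F)$ under the canonical isomorphism $\Wh_2(\pi_1(X))\cong\Wh_2(\pi_1(X\times J^2))$. Chaining these identifications together with the ones built into the codomain of $\Sigma^{\TOP}$ (the discussion preceding \Cref{def:top_hw}) gives $\Sigma^{\TOP}(F)=\Sigma(F_N)=\Sigma(F)$. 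For $\Theta$ the argument is word-for-word the same, once one notes that for smooth $F$ the condition $F\in\ker\Sigma^{\TOP}$ is equivalent to $F\in\ker\Sigma$ by the part just proved, and that all of $S^2_{\DIFF}$, extension-by-identity and the Burghelea--Lashof isomorphism preserve $\ker\Sigma$.

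The hard part will not be any individual step but keeping track of the various canonical isomorphisms of Whitehead groups: one must check that $\Wh_2(\pi_1(X))\cong\Wh_2(\pi_1(X\times J^2))\cong\Wh_2(\pi_1(N))$ --- and, for $\Theta$, the analogous chain involving $\pi_2$ and the fact that it is unaffected by the $\chi$-quotient --- are compatible with $i_*$, with the suspension-induced identifications of \Cref{lrm:suspension_preserves_Sigma_Theta}, and with the identifications implicit in \Cref{def:top_hw}, so that their total composite is the identity. The only other point needing care is the $\DIFF$ analogue of \Cref{lem:3_skeleton_isomorphism}, namely that the smooth inclusion $N_{\mathcal{S}}\hookrightarrow X\times J^2$ induces an isomorphism on $\pi_0\mathcal{P}^{\DIFF}$; this is immediate from \Cref{theorem:berghelea_lashof} with exactly the numerology already used in the topological case.
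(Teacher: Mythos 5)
Your proposal is correct and follows essentially the same route as the paper: you compute $\Sigma^{\TOP}(F)$ and $\Theta^{\TOP}(F)$ by passing the smooth suspension through the forgetful/extend-by-identity maps to land on a smooth pseudo-isotopy of $N_{\mathcal{S}}$, then apply \Cref{lrm:suspension_preserves_Sigma_Theta} and \Cref{lemma:commutativity_after_extending} to match this with $\Sigma(F)$ and $\Theta(F)$, which is exactly what the paper's commuting-diagram argument does. The only cosmetic difference is that you name the intermediate object $F_N$ explicitly and invoke the $\DIFF$ case of \Cref{theorem:berghelea_lashof} by name, while the paper leaves that implicit in the displayed diagrams.
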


\begin{proof}[Proof of Lemma \ref{lemma:smooth_is_equal_top}]
    We begin by showing that the following diagram commutes.
    \begin{center}
    \begin{tikzcd}
{\pi_0\left(\mathcal{P}^{\TOP}(X,\partial X)\right)} \arrow[d]                                           &  & {\pi_0\left(\mathcal{P}^{\DIFF}(X_{\mathcal{S}},\partial X_{\mathcal{S}})\right)} \arrow[ll]    \arrow[d]                             \\
{\pi_0\left(\mathcal{P}^{\TOP}(X\times J^2, \partial (X \times J^2))\right)}                               &  & {\pi_0\left(\mathcal{P}^{\DIFF}(X_{\mathcal{S}}\times J^2, \partial (X_{\mathcal{S}} \times J^2))\right)} \arrow[ll] \\
{\pi_0\left(\mathcal{P}^{\TOP}(N,\partial N)\right)} \arrow[u]                                           &  &                                                                                      
{\pi_0\left(\mathcal{P}^{\DIFF}(N_{\mathcal{S}},\partial N_{\mathcal{S}})\right)} \arrow[ll] \arrow[u] &  &                                                                                      
\end{tikzcd}
\end{center}
where in $\pi_0 \left(\mathcal{P}^{\DIFF}(N_{\mathcal{S}}, \partial N_{\mathcal{S}})\right)$ we choose the smoothing that coincides with the restriction of $\mathcal{S}$ to $N$, and the horizontal maps are just forgetful maps. We chose the suspension map so that it matches the one used in \cite{hatcher_wagoner_1973} in the smooth case. Thus the upper square commutes.

    Commutativity of the lower square is equivalent to forgetting the smooth structure and then extending the pseudo-isotopy via the identity, which yields the same result as extending the smooth pseudo-isotopy via the identity and the forgetting the smooth structure (this is almost identical to the proof in \ref{sbsbs:middle_rectangle}, so we omit the details).
    Now that we have established commutativity of the above diagram, the proof of the lemma  is finished by showing that the following diagrams commute.\footnote{In the second diagram (given $Y$) we write $\ker(\Sigma^{\DIFF}_Y)$ to indicate $\ker \Sigma : \pi_0(\mathcal{P}^{\DIFF}(Y,\partial Y))\to \Wh_2(\pi_1(Y))$.}
\begin{center}
    \begin{tikzcd}
        \pi_0\left(\mathcal{P}^{\DIFF}(X_{\mathcal{S}},\partial X_{\mathcal{S}})\right) \arrow[d] \arrow[r] & \Wh_2\left(\pi_1(X) \right) \arrow[d,"\cong"] \\
        \pi_0\left(\mathcal{P}^{\DIFF}(X_{\mathcal{S}}\times J^2, \partial (X_{\mathcal{S}}\times J^2))\right)\arrow[r] & \Wh_2\left(\pi_1(X \times J^2)\right)  \\
        \pi_0\left(\mathcal{P}^{\DIFF}(N_{\mathcal{S}},\partial N_{\mathcal{S}})\right) \arrow[r] \arrow[u,"\cong"] & \Wh_2\left( \pi_1(N_{\mathcal{S}}) \right)\arrow[u,"\cong"']
    \end{tikzcd}
    \end{center}
and
    \begin{center}

    \begin{tikzcd}
	\ker \Sigma_{X_{\mathcal{S}} }&& {\Wh_1(\pi_2(X_{\mathcal{S}}),\Z/2 \times \pi_1(X_{\mathcal{S}}))/ \chi} \\
	\ker \Sigma_{X_{\mathcal{S}}\times J^2} && {\Wh_1(\pi_2(X_{\mathcal{S}}\times J^2),\Z/2 \times \pi_1(X_{\mathcal{S}}\times J^2))/ \chi} \\
	\ker \Sigma_{N_{\mathcal{S}}} && {\Wh_1(\pi_2(N_{\mathcal{S}}),\Z/2 \times \pi_1(N_{\mathcal{S}}))/ \chi}
	\arrow[from=1-1, to=1-3]
	\arrow[from=1-1, to=2-1]
	\arrow["\cong", from=1-3, to=2-3]
	\arrow[from=2-1, to=2-3]
	\arrow["{\cong }", from=3-1, to=2-1]
	\arrow[from=3-1, to=3-3]
	\arrow["\cong"', from=3-3, to=2-3]
\end{tikzcd}
\end{center}

The commutativity of the top squares follow from \Cref{lrm:suspension_preserves_Sigma_Theta}. The commutativity of the bottom squares follow from \Cref{lemma:commutativity_after_extending}. This finishes the proof of \Cref{lemma:smooth_is_equal_top}, which finally finishes the proof of \Cref{thm:main}. \end{proof}

\subsection{Naturality of the topological Hatcher-Wagoner invariants}

This subsection is devoted to proving that our invariants are natural under certain inclusions of codimension zero submanifolds, i.e.\ \Cref{lemma:proposition_naturality_topological}, which we repeat here for the reader's convenience.

\begin{proposition}
    Let $X = Y \cup_{W} Z$, where $W$ is a (connected) codimension-0 submanifold of $\partial Y$. Let $F$ be a pseudo-isotopy of $X$ that satisfies $F|_{Z \times I}=\Id$.  Let $i_{Y,X}\colon Y\to X$ be the inclusion map. Then $\Sigma^{\TOP}(F)=(i_{Y,X})_*\Sigma^{\TOP}(F|_Y)$. If $F$ lies in the kernel of $\Sigma^{\TOP}$, then we have $\Theta^{\TOP}(F)=(i_{Y,X})_*\Theta^{\TOP}(F|_Y)$. 
\end{proposition}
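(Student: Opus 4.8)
The plan is to reduce \Cref{lemma:proposition_naturality_topological} to the already-established naturality statement for the smooth Hatcher-Wagoner invariants, \Cref{lemma:commutativity_after_extending}, by carefully tracking how the decomposition $X = Y \cup_W Z$ survives each of the steps in \Cref{def:top_hw}: double suspension, restriction to a neighbourhood of the $3$-handle-skeleton, smoothing, and passing to $\DIFF$. First I would observe that, since $W$ is a codimension-$0$ submanifold of $\partial Y$, the manifold $Y$ is itself a codimension-$0$ submanifold of $X$; the pseudo-isotopy $F$, being the identity on $Z\times I$, is exactly the extension by the identity of $F|_Y$ in the sense of \Cref{lemma:commutativity_after_extending}. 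So at the level of $X^4$ itself we are already in the setting of that lemma. The issue is purely that $\Sigma^{\TOP}$ and $\Theta^{\TOP}$ are not defined directly on $X$ but via the composite $\Sigma\circ\mathfrak{f}_{\mathcal S}\circ\mathfrak i\circ S^2$, and we must check naturality at each stage.

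The key steps, in order, would be the following. (1) \emph{Suspension is natural.} Since $S^+$ is defined fibrewise over $X$ (the bending happens in the $J\times I$ direction, untouched by the decomposition), we have $X\times J^2 = (Y\times J^2)\cup_{W\times J^2}(Z\times J^2)$ and $S^2(F)$ restricted to $(Z\times J^2)\times I$ is the identity, with $S^2(F)|_{Y\times J^2} = S^2(F|_Y)$. So $Y\times J^2$ is a codimension-$0$ submanifold of $X\times J^2$ and the suspended pseudo-isotopy is the identity-extension of the suspended restriction. (2) \emph{Handle-skeleton restriction is natural.} Choose the handle decomposition of $X\times J^2$ (relative to $\partial X\times J^2$) so that it restricts to a handle decomposition of $Y\times J^2$ — concretely, build $X\times J^2$ by first building $Y\times J^2$ and then attaching the handles of $Z\times J^2$ on top; then $N$, the neighbourhood of $(X\times J^2)^{(3)}$, contains $N_Y$, the neighbourhood of $(Y\times J^2)^{(3)}$, as a codimension-$0$ submanifold, and $\mathfrak i(S^2(F))$ is (isotopic to) the identity-extension of $\mathfrak i_Y(S^2(F|_Y))$ over $N\setminus N_Y$; this uses the naturality of the inclusion-induced isomorphism of \Cref{lem:3_skeleton_isomorphism} with respect to further inclusions, which follows because the maps $\gamma$ are all just ``extend by the identity''. (3) \emph{Smoothing is natural.} Choose the smooth structure $\mathcal S$ on $N$ so that it restricts to a smooth structure $\mathcal S_Y$ on $N_Y$ (the inductive smoothing argument in \Cref{lem:3_skeleton_smoothing} can be carried out on $N_Y$ first, then extended over $N$); then the forgetful-map inverses $\mathfrak f_{\mathcal S}$ and $\mathfrak f_{\mathcal S_Y}$ are compatible with the identity-extension maps, just as in \Cref{sbsbs:middle_rectangle}. (4) \emph{Invoke the smooth statement.} Now $\mathfrak f_{\mathcal S}\circ\mathfrak i\circ S^2(F)$ is a smooth pseudo-isotopy of $N_{\mathcal S}$ which is the identity-extension of $\mathfrak f_{\mathcal S_Y}\circ\mathfrak i_Y\circ S^2(F|_Y)$ over $N\setminus N_Y$, so \Cref{lemma:commutativity_after_extending} (applied to the inclusion $N_Y\hookrightarrow N$) gives $\Sigma(\mathfrak f_{\mathcal S}\mathfrak i S^2 F) = (j_*)\Sigma(\mathfrak f_{\mathcal S_Y}\mathfrak i_Y S^2 F|_Y)$ where $j\colon N_Y\to N$ is the inclusion, and similarly for $\Theta$ when $F\in\ker\Sigma^{\TOP}$. (5) \emph{Identify the induced maps on $\pi_1$ and $\pi_2$.} Finally, chase the square of $\pi_1$'s: the composite $\pi_1(Y)\to\pi_1(N_Y)\to\pi_1(N)$ (via suspension and skeleton-inclusion, which are isomorphisms onto $\pi_1$) equals $\pi_1(Y)\to\pi_1(X)$ followed by the corresponding identifications for $X$, because all the intermediate spaces deformation-retract appropriately and the inclusion $Y\hookrightarrow X$ induces $i_{Y,X}$ on $\pi_1$ (and likewise on $\pi_2$). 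Assembling (1)–(5) yields $\Sigma^{\TOP}(F) = (i_{Y,X})_*\Sigma^{\TOP}(F|_Y)$, and the $\Theta^{\TOP}$ statement follows by the same argument using the $\Theta$-half of \Cref{lemma:commutativity_after_extending}.

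The main obstacle I expect is step (2): ensuring that one can choose the handle decomposition of $X\times J^2$ compatibly with the decomposition $X = Y\cup_W Z$, and that the resulting neighbourhood $N_Y$ of $(Y\times J^2)^{(3)}$ really does sit inside $N$ as a codimension-$0$ submanifold in a way that makes $\mathfrak i$ and $\mathfrak i_Y$ commute with identity-extension. The subtlety is that \Cref{lem:3_skeleton_isomorphism} is proved via the Burghelea–Lashof–Pedersen disjunction machinery rather than by an explicit geometric construction, so ``naturality of $\mathfrak i$'' is not completely formal; however, since the underlying maps $\gamma$ in \Cref{theorem:berghelea_lashof} are literally ``extend by the identity'', the relevant square of $\gamma$-maps commutes on the nose, and hence so does the square of their inverses on $\pi_0$. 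A secondary, more bookkeeping-level obstacle is keeping the framing-difference and $\pi_2$-data in the definition of $\Theta$ straight under all these identifications, but this is handled exactly as in the proof of \Cref{lemma:commutativity_after_extending} and requires no new idea.
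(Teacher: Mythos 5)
Your proposal matches the paper's proof essentially step for step: both reduce naturality of $\Sigma^{\TOP}$ and $\Theta^{\TOP}$ to the smooth naturality statement (\Cref{lemma:commutativity_after_extending}) by building a ladder of commutative squares through each stage of \Cref{def:top_hw} — suspension, 3-handle-skeleton restriction (via a handle decomposition of $X\times J^2$ chosen to extend one of $Y\times J^2$, possible since the latter is relative to $W\times J^2$), smoothing/forgetful, and finally $\Sigma$ (resp. $\Theta$). The "main obstacle" you flag in step (2) is resolved in the paper exactly as you anticipate: the inclusion-induced maps are all "extend by the identity," so the relevant square of $\gamma$-maps commutes on the nose, and hence so do the inverses on $\pi_0$.
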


\begin{proof}
    We start with $\Sigma^{\TOP}$.  We can write the following diagram:
    \begin{center}
        \begin{tikzcd}
\Wh_2(\pi_1(X))                                                                     &  & \Wh_2(\pi_1(Y)) \arrow[ll]                                                              \\
{\pi_0(\mathcal{P}^{\DIFF}(N,\partial N))} \arrow[d, "\cong"'] \arrow[u, "\Sigma"] &  & {\pi_0(\mathcal{P}^{\DIFF}(N',\partial N'))} \arrow[ll] \arrow[d, "\cong"] \arrow[u, "\Sigma"'] \\
{\pi_0(\mathcal{P}^{\TOP}(N,\partial N))} \arrow[d, "\cong"']                      &  & {\pi_0(\mathcal{P}^{\TOP}(N',\partial N'))} \arrow[d, "\cong"] \arrow[ll]                       \\
{\pi_0(\mathcal{P}^{\TOP}(X \times J^2, \partial(X\times J^2)))}                               &  & {\pi_0(\mathcal{P}^{\TOP}(Y \times J^2, \partial(Y\times J^2)))} \arrow[ll]                                  \\
{\pi_0(\mathcal{P}^{\TOP}(X,\partial X))} \arrow[u]                                &  & {\pi_0(\mathcal{P}^{\TOP}(Y, \partial Y))} \arrow[ll] \arrow[u]                                
\end{tikzcd}
    \end{center}

Here $N$ is a neighbourhood of the 3-handle-skeleton in $X \times J^2$ and $N'$ is a neighbourhood of the 3-handle skeleton in $Y \times J^2$, where we choose the handle decompositions such that the handle decomposition of $Y\times J^2$ extends to the chosen handle decomposition of $X\times J^2$.  This is possible because the handle decomposition of $Y\times J^2$ needs to be relative to $\partial Y \times J^2$ and hence relative to $W \times J^2$.
The point is to show that the above diagram commutes. 
We start with the bottom square.
    \begin{center}
        \begin{tikzcd}
{\pi_0(\mathcal{P}^{\TOP}(X\times J^2,\partial(X \times J^2)))}   & {\pi_0(\mathcal{P}^{\TOP}(Y \times J^2, \partial(Y\times J^2)))} \arrow[l]  \\
{\pi_0(\mathcal{P}^{\TOP}(X,\partial X))} \arrow[u, "S^2"] & {\pi_0(\mathcal{P}^{\TOP}(Y, \partial Y))} \arrow[l] \arrow[u,"S^2"]
\end{tikzcd}
    \end{center}

The upward pointing maps are given by the suspension maps $S^2$ on $X$ and $Y$ respectively.
 The bottom map $ \pi_0( \mathcal{P}^{\TOP}(Y,\partial Y)) \xrightarrow{}\pi_0( \mathcal{P}^{\TOP}(X,\partial X))$ is given by extension via the identity over $Z$. This makes sense since the pseudo-isotopies in $\pi_0( \mathcal{P}^{\TOP}(Y,\partial Y))$ fix $W$.
 The map $ \pi_0(\mathcal{P}^{\TOP}(Y \times J^2, \partial(Y \times J^2) )) \xrightarrow{} \pi_0(\mathcal{P}^{\TOP}(X \times J^2, \partial(X\times J^2)))$ is given by the suspended version of the extension via the identity.
 It is clear that this diagram commutes by definition of the maps.
We consider the next square.
 \begin{center}
     \begin{tikzcd}
{\pi_0(\mathcal{P}^{\TOP}(N,\partial N))} \arrow[d] & {\pi_0(\mathcal{P}^{\TOP}(N',\partial N'))} \arrow[l] \arrow[d] \\
{\pi_0(\mathcal{P}^{\TOP}(X\times J^2,\partial(X\times J^2)))}   & {\pi_0(\mathcal{P}^{\TOP}(Y \times J^2, \partial(Y\times J^2)))} \arrow[l]  
\end{tikzcd}
 \end{center}
The arrows pointing downwards are given by extending via the identity outside a neighbourhood $N$ (resp. $N'$) of the 3-handle-skeleton of $X \times J^2$ (resp. $Y \times J^2$). 
The bottom horizontal arrow is given by the suspended-identity extension. 
The top horizontal map is given by extending via the identity. Note that this is well defined thanks to our choice for the handle decompositions on $X\times J^2$ and on $Y\times J^2$ to be compatible---plus, we know that all pseudo-isotopies have to fix $W \times J^2$. Thus the top horizontal map is defined by extending via the identity over the 3-handle skeleton of $Z$. It is clear now that the diagram commutes.
The next square is easier to study.
\begin{center}
    \begin{tikzcd}
{\pi_0(\mathcal{P}^{\DIFF}(N,\partial N))} \arrow[d] & {\pi_0(\mathcal{P}^{\DIFF}(N',\partial N'))} \arrow[d] \arrow[l] \\
{\pi_0(\mathcal{P}^{\TOP}(N,\partial N))}            & {\pi_0(\mathcal{P}^{\TOP}(N',\partial N'))} \arrow[l]           
\end{tikzcd}
\end{center}
The downward pointing arrows are just given by forgetful maps. The horizontal maps are extensions (topological and smooth) via the identity. Commutativity of the diagram is immediate (again, the argument is just like in \ref{sbsbs:middle_rectangle}).

The only thing left to check is the topmost square, which can be factored into the following diagram:
\[\begin{tikzcd}
	{\Wh_2(\pi_1(X))} & {\Wh_2(\pi_1(Y))} \\
	{\Wh_2(\pi_1(N))} & {\Wh_2(\pi_1(N'))} \\
	{\pi_0(\mathcal{P}^{\DIFF}(N,\partial N))} & {\pi_0(\mathcal{P}^{\DIFF}(N',\partial  N'))}
	\arrow[from=1-2, to=1-1]
	\arrow[from=2-1, to=1-1]
	\arrow[from=2-2, to=1-2]
	\arrow[from=2-2, to=2-1]
	\arrow[from=3-1, to=2-1]
	\arrow[from=3-2, to=2-2]
	\arrow[from=3-2, to=3-1]
\end{tikzcd}\]
The bottom part commutes thanks to Lemma \ref{lemma:commutativity_after_extending}. The topmost portion commutes by analysing the definition of $\Wh_2(\pi_1)$ (in particular, note that the corresponding square on $\pi_1$ commutes). We now analyse the $\Theta$ case. We write the following commutative diagram:
  \[\begin{tikzcd}[cramped]
	{\Wh_1(\pi_1X;\Z/2\times\pi_2X)/\chi} && {\Wh_1(\pi_1Y;\Z/2\times \pi_2Y)/\chi} \\
	{\ker (\Sigma_N^{\DIFF})} && {\ker(\Sigma_{N'}^{\DIFF})} \\
	{\pi_0(\mathcal{P}^{\DIFF}(N,\partial N))} && {\pi_0(\mathcal{P}^{\DIFF}(N',\partial N'))} \\
	{\pi_0(\mathcal{P}^{\TOP}(N,\partial N))} && {\pi_0(\mathcal{P}^{\TOP}(N',\partial N'))} \\
	{\pi_0(\mathcal{P}^{\TOP}(X \times J^2, \partial(X\times J^2)))} && {\pi_0(\mathcal{P}^{\TOP}(Y \times J^2, \partial(Y\times J^2)))} \\
	{\pi_0(\mathcal{P}^{\TOP}(X,\partial X))} && {\pi_0(\mathcal{P}^{\TOP}(Y, \partial Y))}
	\arrow[from=1-3, to=1-1]
	\arrow[from=2-1, to=1-1]
	\arrow[hook, from=2-1, to=3-1]
	\arrow[from=2-3, to=1-3]
	\arrow[from=2-3, to=2-1]
	\arrow[hook', from=2-3, to=3-3]
	\arrow["\cong"', from=3-1, to=4-1]
	\arrow[from=3-3, to=3-1]
	\arrow["\cong", from=3-3, to=4-3]
	\arrow["\cong"', from=4-1, to=5-1]
	\arrow[from=4-3, to=4-1]
	\arrow["\cong", from=4-3, to=5-3]
	\arrow[from=5-3, to=5-1]
	\arrow[from=6-1, to=5-1]
	\arrow[from=6-3, to=5-3]
	\arrow[from=6-3, to=6-1]
\end{tikzcd}\]

As before, $N$ is a neighbourhood of the 3-handle skeleton in $X \times J^2$ and $N'$ is a neighbourhood of the 3-handle skeleton in $Y\times J^2$, where we choose the handle decomposition such that the handle decomposition of $Y \times J^2$ extends to the chosen handle decomposition of $X \times J^2$.

 We will consider the above diagram. Note that, thanks to the above steps we know that if $\Sigma(F|_Y)=0$ then $\Sigma(F)=0$. As such, $F$ lies inside $\ker ( \Sigma: \mathcal{P}^{\TOP}(X,\partial X) \to \Wh_2(\pi_1X))$.
 The goal is to show the diagram is commutative. 
 We will use it to compute $\Theta^{\TOP}$ for maps in $\ker (\Sigma_X)$.
Hence for our purposes we know that once we land in $\pi_0(\mathcal{P}^{\DIFF}(N,\partial N))$ (resp. $\pi_0(\mathcal{P}^{\DIFF}(N',\partial N'))$) we actually land in $\ker (\Sigma_N)$ (resp. $\ker (\Sigma_{N'})$).

For most of the squares the proofs are identical to the ones for $\Sigma^{\TOP}$.  We only focus on the other squares.
The square:
\[\begin{tikzcd}[cramped]
	{\ker (\Sigma_N^{\DIFF})} && {\ker(\Sigma_{N'}^{\DIFF})} \\
	{\pi_0(\mathcal{P}^{\DIFF}(N,\partial N))} && {\pi_0(\mathcal{P}^{\DIFF}(N',\partial N'))}
	\arrow[hook, from=1-1, to=2-1]
	\arrow[from=1-3, to=1-1]
	\arrow[hook', from=1-3, to=2-3]
	\arrow[from=2-3, to=2-1]
\end{tikzcd}\]
commutes trivially since inclusion of the kernel commutes by extension with the identity---note that the top map is well defined thanks to Lemma \ref{lemma:commutativity_after_extending}.
Finally we need to analyse the following diagram.
\[\begin{tikzcd}[cramped]
	& {\Wh_1(\pi_1X;\Z/2\times \pi_2X)/\chi} & {\Wh_1(\pi_1Y;\Z/2\times \pi_2Y)/\chi} \\
	{} & {\Wh_1(\pi_1N;\Z/2\times \pi_2N)/\chi} & {\Wh_1(\pi_1N';\Z/2\times \pi_2N')/\chi} \\
	& {\ker (\Sigma_N^{\DIFF})} & {\ker(\Sigma_{N'}^{\DIFF})}
	\arrow[from=1-3, to=1-2]
	\arrow[from=2-2, to=1-2]
	\arrow[from=2-3, to=1-3]
	\arrow[from=2-3, to=2-2]
	\arrow["\Theta", from=3-2, to=2-2]
	\arrow["\Theta"', from=3-3, to=2-3]
	\arrow[from=3-3, to=3-2]
\end{tikzcd}\]

As for $\Sigma^{\TOP}$, the bottom part of the diagram commutes thanks to Lemma \ref{lemma:commutativity_after_extending}. The topmost part commutes by analysing the definition of $\Wh_1$ and by noting that the corresponding squares involving $\pi_1$ and $\pi_2$ also commute.\qedhere

\end{proof}

	\section{One-parameter families of handle decompositions}\label{sec:handle_decompositions}
We now shift our attention to the realisation problem for $\Sigma$ and $\Theta$. The first step in that direction is to define a potential candidate pseudo-isotopy (or, more precisely, isotopy class of pseudo-isotopies) that \emph{might} realise the desired element $x \in  \Wh_2(\pi_1(X))$ or, if $x=0$, the desired $ y \in \Wh_1(\pi_1(X);\Z/2 \times \pi_2(X))/ \chi$. We do this by carefully emulating the smooth functional approach in a purely topological setting.

In this section we describe the framework needed to describe the candidate pseudo-isotopies. We give a definition of an \emph{allowed} one-parameter family of topological handle decompositions on $X \times I$, where $X$ is a topological 4-manifold, and we will see that from these objects we obtain a well defined topological pseudo-isotopy. This construction will, roughly speaking, mimic  the ``one-parameter family of Morse functions in nested eyes position" discussion in \cite{hatcher_wagoner_1973}.  Later, in \Cref{sec:realisation_sigma} and \Cref{sec:realisation_theta}, we will then show how to construct a specific allowed one-parameter family of handle decompositions which will correspond to an element $x\in\Wh_2(\pi_1(X))$ or $y\in \Wh_1(\pi_1(X);\Z/2\times \pi_2(X))/\chi$.

It is important to stress that this is an ``ad hoc" construction. We do not claim any genericity of our families of handle structures and we do not claim that there is a strong correspondence between a space of handle structures and our invariants. In particular, we are not describing anything that could be described as \emph{topological Cerf theory} and, at the current time, such a thing does not exist in the literature. We will satisfy ourselves with producing some concrete examples of one-parameter families of handle decompositions that will suffice to realise our invariants.

\subsection{Allowed one-parameter families of topological handle decompositions.}  

\begin{notation}
    Let \[D^+:=\{(x_1,x_2,x_3,x_4,x_5)\in \R^5 \mid (x_1^2+x_2^2+x_3^2+x_4^2+x_5^2\leq 1), x_5\geq 0 \}\] denote the standard 5-dimensional half-disc of radius one in $\R^5$ centred at the origin.  There is a natural decomposition $\partial D^+ = \partial_- D^+ \cup \partial_+ D^+$ where $\partial_-$ denotes the region given by setting $x_5=0$, and $\partial_+$ denotes the remaining portion of the boundary.  We fix once and for all a decomposition $D^+=\mathfrak{h}_2\cup \mathfrak{h}_3$, where \[\mathfrak{h}_3:= \{(x_1,x_2,x_3,x_4,x_5)\in D^+\mid x_1^2+x_2^2+x_5^2\leq 3/5\}\] and $\mathfrak{h}_2:= D^+\sm \mathring{\mathfrak{h}}_3$.  This decomposes $D^+$ into a cancelling 2-handle and 3-handle pair, after fixing parameterisations $\mathfrak{h}_2= D^2\times D^3$ and $\mathfrak{h}_3 = D^3\times D^2$, which we do now (and use implicitly throughout).
\end{notation}

\begin{definition}[Allowed one-parameter families of topological handle decompositions]\label{def:one-parameter_families}
    Let $X$ be a compact topological 4-manifold.  Suppose we have the following input data.

\begin{itemize}
    
    \item A collection of distinct times $\mathcal{B}:=\{b_1, \dots, b_n\in [0,1]\}$ called the \emph{birth times};
    \item A collection  of distinct times $\mathcal{D}:=\{d_1, \dots, d_n\in [0,1]\sm \mathcal{B}\}$ called the \emph{death times} such that $b_k<d_k$ for all $k$;
    \item A collection of times $\mathcal{A}_2:=\{a^2_1, \dots, a^2_m\in [0,1]\sm \left( \mathcal{B}\cup\mathcal{D}\right)\}$ called the \emph{2-handle slide times} such that for each time there is an associated \emph{sliding 2-handle index pair} $k(a^2_i)= (\vv{k}(a_i^2), \dot{k}(a_i^2))$ and such that $b_{\vv{k}(a^2_i)}<a^2_i<d_{\vv{k}(a^2_i)}$ and $b_{\dot{k}(a^2_i)}<a^2_i<d_{\dot{k}(a^2_i)}$;
    \item A collection of times $\mathcal{A}_3:=\{a^3_1, \dots, a^3_p\in [0,1]\sm \left( \mathcal{B}\cup\mathcal{D}\right)\}$ called the \emph{3-handle slide times} such that for each time there is an associated \emph{sliding 3-handle index pair} $k(a^3_i)= (\vv{k}(a_i^3),\dot{k}(a_i^3))$ and such that $b_{\vv{k}(a^3_i)}<a^3_i<d_{\vv{k}(a^3_i)}$ and $b_{\dot{k}(a^3_i)}<a^3_i<d_{\dot{k}(a^3_i)}$;
\end{itemize}

Then an \emph{allowed one-parameter family of handle decompositions} $\mathcal{F}_t$, $t\in [0,1]$, for $X\times I$ is given by the following data.
\begin{itemize}
    \item A continuous $1$-parameter family of embeddings $F^{-}_t\colon X\times [0,1/2]\to X\times I$ such that $F^{-}_t\vert_{X\times\{0\}\cup \partial X\times I}$ is the inclusion map.
    \item A continuously varying one-parameter family of collections of 2-handle embeddings $\mathcal{E}_2^t:=\{\prescript{2}{}{h}^{k}_{t}: D^2 \times D^3 \hookrightarrow X \times I$\} such that $\prescript{2}{}{h}^{k}_{t}\vert_{S^1\times D^3}$ maps into $F^{-}_t(X\times\{1/2\})$ (except for times near $t\in \mathcal{A}_2$, as explained below) where for each $k$ the time parameter lives in the range $t\in (b_k,d_k)$.  In a small neighbourhood of each $a\in \mathcal{A}_2$, the embedding $\prescript{2}{}{h}^{\vv{k}(a)}_{t}\vert_{S^1\times D^3}$ is allowed to map into \[\left(F_t^{-}(X\times\{1/2\})\sm \prescript{2}{}{h}^{\dot{k}(a)}_{t}(S^1\times D^3)\right) \cup \prescript{2}{}{h}^{\dot{k}(a)}_{t}(D^2\times S^2)\] (this is a slide of the $\vv{k}(a)$-th 2-handle over the $\dot{k}(a)$-th 2-handle).
    \item A continuously varying one-parameter family of collections of 3-handle embeddings $\mathcal{E}_3^t:=\{\prescript{3}{}{h}^{k}_{t}: D^3 \times D^2 \hookrightarrow X \times I$\} such that $\prescript{3}{}{h}^{k}_{t}\vert_{S^2\times D^2}$ maps into 
    \[
    M_t^{-}:=\left(F_t^{-}(X\times\{1/2\})\sm \bigcup_{k}\left(\prescript{2}{}{h}^{k}_{t}(S^1\times D^3)\right)\right) \cup \bigcup_k \prescript{2}{}{h}^{k}_{t}(D^2\times S^2)
    \] 
    (except for times near $t\in \mathcal{A}_3$, as explained below) where for each $k$ the time parameter lives in the range $t\in (a_k,d_k)$.  In a small neighbourhood of each $a\in \mathcal{A}_3$, the embedding $\prescript{3}{}{h}^{\vv{k}(a)}_{t}$ is allowed to attach along \[\left(M_t^{-}\sm \prescript{3}{}{h}^{\dot{k}(a)}_{t}(S^2\times D^2)\right) \cup \prescript{3}{}{h}^{\dot{k}(a)}_{t}(D^3\times S^1)\] (this is a slide of the $\vv{k}(a)$-th 3-handle over the $\dot{k}(a)$-th 3-handle).
    \item A one-parameter family of homeomorphisms 
    \[F^{+}_t\colon X\times [1/2,1] \to X \times I \setminus \left(F^{-}_t(X\times [0,1/2])\cup\bigcup_k \prescript{2}{}{h}^{k}_{t} \cup \bigcup_k \prescript{3}{}{h}^{k}_{t}\right)\] 
    such that $F_t^{+}\vert_{X\times \{1/2\}}$ glues together with $F_t^{-}\vert_{X\times \{1/2\}}$ wherever they meet, which is continuous everywhere except at times $t\in \mathcal{B}\cup \mathcal{D}$, where the the limits from below and above exist separately.  We now define the behaviour at such a birth point, e.g.\ $t=b_j$. Fix notation for the limit from below as $\prescript{\shortuparrow}{}{F}^{+}_{t}:=\lim_{t\nearrow b_j}F_t^{+}$ (respectively, the limit from above as $\prescript{\shortdownarrow}{}{F}^{+}_{t}:=\lim_{t\searrow b_j}F_t^{+}$).

    \begin{enumerate}
        \item We require that there is a given embedding  $D_j\colon D^+\hookrightarrow X\times[1/2,1]$ such that $D_j(\partial_-D^+)$ lies on $X\times\{1/2\}$ and such that $\prescript{\shortuparrow}{}{F}^{+}_{t}(D_j(\partial_-D^{+}))$ lies on $F^{-}_t(X\times \{1/2\})$.
        \item We require that the $j$-th 2-handle embedding $\prescript{2}{}{h}^{j}_{t}$ can be described immediately after its birth as \[\lim_{t\searrow b_j}\left(\prescript{2}{}{h}^{j}_{t}\right)=\prescript{\shortuparrow}{}{F}^{+}_{b_j}(D_j(\mathfrak{h}_2))\]
        and similarly for the $j$-th 3-handle embedding we require that \[\lim_{t\searrow b_j}\left(\prescript{3}{}{h}^{j}_{t}\right)=\prescript{\shortuparrow}{}{F}^{+}_{b_j}(D_j(\mathfrak{h}_3)).\]
        \item The embedding $D_j(D^+)$ guides an ambient isotopy of $D_j(\partial_- D^+)$ to $D_j(\partial_+D^+)$.  Applying isotopy extension to this and post-composing with $\prescript{\shortuparrow}{}{F}^{+}_{b_j}$ describes an isotopy of $\prescript{\shortdownarrow}{}{F}^{+}_{b_j}$ relative to most of the boundary (all but the piece corresponding to $D_j(\partial_- D^+)$, which clearly moves) to a new homeomorphism which we denote by $G$.  We require that $\prescript{\shortdownarrow}{}{F}^{+}_{b_j}=G$.
    \end{enumerate}

    Similarly for a death point, e.g.\ $t=d_j$ we require the same but in the reverse time direction; for brevity we leave the details to the reader.

\end{itemize}
\end{definition}

\begin{remark}
    The condition on the homeomorphisms $F^{+}_t$ means that for times $t$ before any births have occured or after all deaths have occured $\mathcal{F}_t$ consists of a \emph{single} homeomorphism $F_t := F^{-}_t\cup F^{+}_t\colon X\times I\to X\times I$.
\end{remark}

\begin{definition}
    Let $\mathcal{F}_t$ be an allowed one-parameter family of handle decompositions of $X\times I$.  Then the \emph{pseudo-isotopy associated to} $\mathcal{F}_t$ is given by $F_1\colon X\times I\to X\times I$.
\end{definition}

We also have the analogous notion of \emph{isotopy} for allowed one-parameter families of handle decompositions, i.e.\ a separate isotopy of each piece of constituent data of $\mathcal{F}_t$ such that for each time slice of the isotopy, the data still fits together to form a one-parameter family, as in \Cref{def:one-parameter_families}.  We will omit the details for this.

We will need the following compatibility result between smooth one-parameter families and our allowed one-parameter families of topological handle decompositions.

\begin{lem}\label{lem:smooth_top_handle_decomposition}
    Let $X$ be a smooth 4-manifold and let $(g_t,\eta_t)$ be a one-parameter family for a smooth pseudo-isotopy $F\colon X\times I\to X\times I$.  Then there exists a one-parameter family of topological handle decompositions $\mathcal{F}_{t}(g_t,\eta_t)$ such that $F$ is equal to the pseudo-isotopy associated to $\mathcal{F}_t$.
\end{lem}

\begin{proof}
    We sketch the proof.  Deform $(g_t,\eta_t)$ to one with only index two and three critical points and such that first all births occur, then all handle slides, then all deaths \cite[Part I, Chapter V, Theorem 3.1]{hatcher_wagoner_1973}.  Choose a one-parameter family of Morse coordinate charts $\prescript{2}{}{\varphi}^{k}_t$ and $\prescript{3}{}{\varphi}^{k}_t$ for the critical points of $g_t$ as in \cite[Section 3]{laudenbach_2014}, as well as a one-parameter family of collars for $X\times\{0\}$ which we denote by $F_t^{-}$.  At a time slice $t$ where no births, deaths or handle slides occur, the Morse coordinate neighbourhoods for the index two critical points determine smooth handle embeddings $\prescript{2}{}{h}^{k}_t$ after flowing down to the collar using $\eta_t$.  Similarly, the Morse coordinate neighbourhoods for the index three critical points then determine smooth handle embeddings $\prescript{3}{}{h}^{k}_t$ after flowing with $\eta_t$.  Since all of the data was chosen in a one-parameter fashion, this yields not just handle embeddings at each $t$, but actually a smooth one-parameter family of handle embeddings.  To complete the handle decomposition, we produce the final collar by flowing down from $X\times\{1\}$ using $\eta_t$ to form another collar embedding $F_t^{+}$.

    If we are at time $a$ when a handle slide occurs (see \cite[Part I, Chapter II, Proof of Lemma 1.2]{hatcher_wagoner_1973}) then we append the time $a$ to $\mathcal{A}_2$ or $\mathcal{A}_3$, depending on the index of the handle slide.  Assume the index of the handle slide is $2$, for simplicity.  By independence of trajectories \cite[Part I, I \S 7]{hatcher_wagoner_1973}, we can assume that all of the other data stays fixed in a small time neighbourhood of the handle slide.  Let the label of sliding handle be $\vv{k}(a)$ and the label of the handle that is being slid over be $\dot{k}(a)$ and then the sliding 2-handle index pair is $k(a):= (\vv{k}(a),\dot{k}(a))$.  By arrangement, during a small time neighbourhood of $a$, the construction in the above paragraph means that the handle $\prescript{2}{}{h}^{\vv{k}(a)}_t$ attaches along not just the collar, but also the belt region of the handle $\prescript{2}{}{h}^{\dot{k}(a)}_t$.  However, this is precisely what is allowed in \Cref{def:one-parameter_families}.

    Since births and deaths are inverses to each other, we will only describe what happens at a time $b$ when a birth occurs, involving index $2$ and $3$ critical points with label $k$.  First we append $b$ to $\mathcal{B}$.  At time $b+\varepsilon$ the corresponding pair of handles are in cancelling position.  We can assume that the collar of $X\times\{0\}$ stays constant throughout the handle birth (again by independence of trajectories), and so the Morse coordinate charts $\prescript{2}{}{\varphi}^{k}_{b+\varepsilon}$ and $\prescript{3}{}{\varphi}^{k}_{b+\varepsilon}$ and the handle cancellation lemma \cite[Lemma 5.3]{milnor_1965} give that there exists a diffeomorphism
    \[
    f\colon \prescript{2}{}{\varphi}^{k}_{b+\varepsilon}\cup \prescript{3}{}{\varphi}^{k}_{b+\varepsilon} \to D^{+}_5
    \]
    and this diffeomorphism describes how to absorb the union of the handles $\prescript{2}{}{h}^{k}_{b+\varepsilon}$ and $\prescript{3}{}{h}^{k}_{b+\varepsilon}$ into the collar $F^{+}_t$; this is what it needed to describe the data for the one-parameter family of handle decompositions immediately before the handle birth.  Again using independence of trajectories, we can assume that all of the other data stays fixed in a small time neighbourhood of the birth and hence contracting the data of the birth for times $b<t<b+\varepsilon$ creates the situation described in \Cref{def:one-parameter_families}.  Throwing away the smoothness data then produces the given allowed one-parameter family of handle decompositions.
\end{proof}

We call any such $\mathcal{F}_t(g_t,\eta_t)$ the allowed one-parameter family of handle decompositions induced by $(g_t,\eta_t)$, despite it not being unique by any means.

\subsection{Constructions}

\Cref{def:one-parameter_families} gives an extension of the smooth one-parameter families considered in Cerf and Hatcher-Wagoner, but so far the only way we have to build one is to take the one induced by a smooth family (as in \Cref{lem:smooth_top_handle_decomposition}).  This will almost suffice for our methods; we will need to have a way to arrange for handle deaths that does not rely on the smooth machinery.  In contrast, all handle births and handle slides will be constructed smoothly.  One could in principle construct these topologically.  Roughly, one should use a result of Freedman-Quinn \cite[8.7D]{freedman_quinn_1990} which says that $X\times I\sm F^{-}_t(X\times[0,1/2])$ is smoothable away from a proper 1-dimensional submanifold, and then one should perform all of the smooth constructions (such as handle births and handle slides) in the complement of this submanifold.  We will not discuss this idea further since we will not need it.

 \subsubsection{Deaths of cancelling handle pairs}\label{sbsbs:death}

 Assume we are at a time $t$ where we have all of the data used in \Cref{def:one-parameter_families} but no deaths have occurred. To simplify the notation we will assume that the embedding $F^{-}_t$ simply given by the inclusion map in a time-neighbourhood where the deaths will occur.  The extra data is then a collection of handle embeddings $\prescript{2}{}{h}^{k}_{t}$ and $\prescript{3}{}{h}^{k}_{t}$.
 
 Our aim here is to describe the geometric data necessary for these handles can be cancelled, i.e.\ such that at some future time $t'$ our data consists only of an embedding $F_{t'}\colon X\times I\to X\times I$, which will then be the pseudo-isotopy that has been created by this whole process.

Consider the submanifold 
\[
M_t^{-}\approx X\# n(S^2\times S^2)
\]
from \Cref{def:one-parameter_families}.  The homeomorphism is given by an identification that we choose and fix throughout this discussion, which has the property that the $n$ 2-spheres $B_k:= \prescript{2}{}{h}^{k}_{t}(D^2\times S^2)=\{\pt\}\times S^2\subset X\# k(S^2\times S^2)$ are the framed images of the belt spheres of the 2-handles $\prescript{2}{}{h}^{k}_{t}$.  The embeddings of the 3-handles $\prescript{3}{}{h}^{k}_{t}$ when restricted to the attaching region gives embeddings $\prescript{3}{}{h}^{k}_{t}\vert_{S^2\times D^2}$ which determines framed embedded 2-spheres $A_k\subset M_t^{-}\approx X\#k(S^2\times S^2)$.  In the smooth category a necessary and sufficient condition for 2- and 3-handles to cancel is given by the Morse cancellation theorem \cite[Theorem 5.4]{milnor_1965}.  We will make do without this at the expense of needing a stronger hypothesis on the intersection between the attaching spheres and the belt spheres.

Let $A_k^+$ (resp.\ $A_k^-$) denote the upper (resp.\ lower) hemisphere of the framed attaching sphere of the $k$-th 3-handle, and let $B_k^+$ (resp.\ $B_k^-$) denote the upper (resp.\ lower) hemisphere of the framed belt sphere of the $k$-th 2-handle.

\begin{lemma}\label{lem:top_handle_cancellation}
    Assume that the attaching spheres $A_k$ are all disjoint and $A_k \cap B_j=\emptyset$ if $k\neq j$.  Further assume that the $A^k_+$ and $B^k_-$ completely coincide (as subsets) inside $M_t^{-}$ and that no other part of $A^k$ intersects $h_2^k$.  Then there are $n$ embeddings $d_{2,3}^k\colon D_+ \to X\times I$ such that $d_{2,3}^k\vert_{\mathfrak{h}_2}=\prescript{2}{}{h}^{k}_{t}$ and similarly $d_{2,3}^k\vert_{\mathfrak{h}_3}=\prescript{3}{}{h}^{k}_{t}$.
\end{lemma}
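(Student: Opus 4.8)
The plan is to reduce the statement to an explicit local model, after which the existence of $d^k_{2,3}$ is a soft gluing-and-compactness argument. First I would analyse the fixed decomposition $D^+=\mathfrak{h}_2\cup\mathfrak{h}_3$. Writing $\Delta:=\mathfrak{h}_2\cap\mathfrak{h}_3$, a direct computation with the defining inequalities (and with the chosen parameterisations $\mathfrak{h}_2=D^2\times D^3$, $\mathfrak{h}_3=D^3\times D^2$, which we are free to take adapted to what follows) shows: $\Delta$ is a codimension-zero submanifold-with-corners of both $\partial\mathfrak{h}_2$ and $\partial\mathfrak{h}_3$; $D^+$ is recovered as $\mathfrak{h}_2\cup_{\Delta}\mathfrak{h}_3$; on the $\mathfrak{h}_3$ side $\Delta$ is the thickened upper hemisphere $S^2_+\times D^2$ of the attaching region $S^2\times D^2$ of $\mathfrak{h}_3$ (the complementary $S^2_-\times D^2$ lying on the floor $\partial_-D^+$, disjoint from $\mathfrak{h}_2$); and on the $\mathfrak{h}_2$ side $\Delta$ is the thickened lower hemisphere $D^2\times S^2_-$ of the belt region $D^2\times S^2$ of $\mathfrak{h}_2$. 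Here $S^2_\pm\approx D^2$ denote the two hemispheres. In particular the two inclusions $\Delta\hookrightarrow\partial\mathfrak{h}_2$ and $\Delta\hookrightarrow\partial\mathfrak{h}_3$ identify, as parameterised framed discs, the framed upper hemisphere of the attaching sphere of $\mathfrak{h}_3$ with the framed lower hemisphere of the belt sphere of $\mathfrak{h}_2$.

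Next, fixing $k$, I would unwind the hypothesis that $A^k_+$ and $B^k_-$ coincide as subsets of $M_t^-$ in the light of \Cref{def:one-parameter_families}. The $3$-handle $\prescript{3}{}{h}^{k}_{t}$ attaches along $M_t^-$, which already contains the framed belt region $\prescript{2}{}{h}^{k}_{t}(D^2\times S^2)$ with its prescribed parameterisation, so the coincidence hypothesis (together with the framing data built into the definitions of $A^k_\pm$ and $B^k_\pm$) says exactly that
\[
\prescript{3}{}{h}^{k}_{t}\,|_{S^2_+\times D^2}=\prescript{2}{}{h}^{k}_{t}\,|_{D^2\times S^2_-}
\]
under the model identification $S^2_+\times D^2=\Delta=D^2\times S^2_-$ from the first paragraph; that is, the two handle embeddings agree on $\Delta$. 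The hypotheses that the $A_k$ are pairwise disjoint, that $A_k\cap B_j=\emptyset$ for $k\neq j$, and that no part of $A^k$ other than $A^k_+$ meets $\prescript{2}{}{h}^{k}_{t}(\mathfrak{h}_2)$, combined with the disjointness built into \Cref{def:one-parameter_families} and the standing assumptions of \Cref{sbsbs:death} (the $2$- and $3$-handles, the lower collar $F^-_t$ and the upper collar $F^+_t$ have pairwise disjoint interiors, and a $3$-handle meets the rest of the picture only along its attaching region in $M_t^-$), then give that $\prescript{2}{}{h}^{k}_{t}(\mathfrak{h}_2)$ and $\prescript{3}{}{h}^{k}_{t}(\mathfrak{h}_3)$ intersect only along $\prescript{2}{}{h}^{k}_{t}(\Delta)=\prescript{3}{}{h}^{k}_{t}(\Delta)$.

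Then I would simply glue. Define $d^k_{2,3}\colon D^+=\mathfrak{h}_2\cup_{\Delta}\mathfrak{h}_3\to X\times I$ to be $\prescript{2}{}{h}^{k}_{t}$ on $\mathfrak{h}_2$ and $\prescript{3}{}{h}^{k}_{t}$ on $\mathfrak{h}_3$. This is well defined and continuous by the pasting lemma, because the two pieces agree on the overlap $\Delta$, and it is injective because each of $\prescript{2}{}{h}^{k}_{t}$, $\prescript{3}{}{h}^{k}_{t}$ is injective and their images meet only in the common image of $\Delta$. A continuous injection from the compact space $D^+$ into the Hausdorff space $X\times I$ is a homeomorphism onto its image, so $d^k_{2,3}$ is an embedding, and it restricts to $\prescript{2}{}{h}^{k}_{t}$ on $\mathfrak{h}_2$ and to $\prescript{3}{}{h}^{k}_{t}$ on $\mathfrak{h}_3$ by construction. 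Finally, the disjointness hypotheses (read together with the framings) imply that the images $d^k_{2,3}(D^+)$, $1\le k\le n$, are pairwise disjoint, so we obtain genuinely $n$ separate embeddings, as needed for the death construction of \Cref{sbsbs:death}.

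The step I expect to be the main obstacle is the bookkeeping in the first two paragraphs: one must pin down the parameterisations of $\mathfrak{h}_2$, $\mathfrak{h}_3$ and of $M_t^-$ carefully enough that the set-theoretic coincidence $A^k_+=B^k_-$ really upgrades to the on-the-nose equality of \emph{parameterised, framed} handle data $\prescript{3}{}{h}^{k}_{t}|_{S^2_+\times D^2}=\prescript{2}{}{h}^{k}_{t}|_{D^2\times S^2_-}$ that the pasting lemma requires. This ultimately amounts to the (essentially unique, since the base disc is contractible) trivialisation of the normal $D^2$-bundle of a locally flat $2$-disc in a $4$-manifold, and in practice it is arranged on the nose in the realisation constructions of \Cref{sec:realisation_sigma} and \Cref{sec:realisation_theta}; once it is in place the remainder of the argument is purely formal, and — this being the whole point of the ad hoc framework — no disc-embedding input is needed.
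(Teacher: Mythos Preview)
Your argument has a genuine gap at the step you yourself flag as the main obstacle, and your proposed resolution does not close it. The hypothesis is that $A^k_+$ and $B^k_-$ coincide \emph{as subsets} of $M_t^-$; this says only that the images $\prescript{3}{}{h}^{k}_{t}(S^2_+\times D^2)$ and $\prescript{2}{}{h}^{k}_{t}(D^2\times S^2_-)$ are equal, not that the two maps agree as parameterised embeddings on $\Delta$. The two restrictions differ by some self-homeomorphism of $\Delta\approx D^2\times D^2\approx D^4$, and your appeal to ``essentially unique trivialisation of the normal $D^2$-bundle over a contractible base'' only controls the framing (fibre) direction; it says nothing about the parameterisation of the core hemisphere $D^2$ itself, which can differ by an arbitrary self-homeomorphism of $D^2$, or indeed about a self-homeomorphism of $D^4$ mixing the two factors. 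So the pasting lemma is not yet available, and the assertion in your second paragraph that the hypothesis ``says exactly'' the displayed equality is simply false.

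The paper's proof supplies the missing ingredient: since the overlap is a $D^4$, one first uses $\pi_0\Homeo(S^3)=0$ to match the two parameterisations on its boundary, and then the Alexander trick to extend this to an isotopy on the interior; after this adjustment of one of the handle embeddings the two maps genuinely agree on $\Delta$ and one can glue. (This does mean $d^k_{2,3}|_{\mathfrak{h}_3}$ equals the adjusted $\prescript{3}{}{h}^{k}_{t}$ rather than the original one, but the adjustment is by an isotopy and is absorbed into the one-parameter family.) Once you insert this Alexander-trick step your third paragraph goes through as written.
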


\begin{proof}
    
    The region where the embeddings $\prescript{2}{}{h}^{k}_{t}$ and $\prescript{3}{}{h}^{k}_{t}$ of $\mathfrak{h}_2$ and $\mathfrak{h}_3$, respectively, coincide is homeomorphic to a copy of $D^2\times D^2 \approx D^4$ and hence (relative to the boundary $S^3$ after using that $\pi_0 \Homeo(S^3)$ is trivial) we can make the embeddings match using the Alexander trick.  Now we can glue together the embeddings and form \[d_{2,3}:=h_2^k\cup h_3^k\colon \mathfrak{h}_2\cup \mathfrak{h}_3(=D^+)\to X\times I,\] which has the desired properties.
\end{proof}

We now describe the handle death procedure.  We say that a pair of handles which satisfy the conditions of \Cref{lem:top_handle_cancellation} are in \emph{topological cancelling position}.  Assume that our two handles $h_2^k$ and $h_3^k$ are in topological cancelling position.  Then by \Cref{lem:top_handle_cancellation} we can combine these two embeddings into a single embedding $d_{2,3}^k$.  We now absorb this embedding into the upper collar $F^{+}_{t+\delta}$, as in \Cref{def:one-parameter_families}, which can be done after a potential isotopy of the embedding $d_{2,3}^k$ to ensure that the embedding glues to the upper collar.  By a further isotopy of the new collar $F^{+}_{t+\delta}$, it can be arranged that $F^{+}_{t+\delta}$ and $F^{-}_{t+\delta}$ match along the new region where they meet. 

Before we end this subsection, we prove a lemma which shows how we can achieve the hypotheses of \Cref{lem:top_handle_cancellation} in practice.  This lemma (and its applications) are used implicitly in the proof of the smooth to topological h-cobordism theorem \cite[Theorem 7.1D]{freedman_quinn_1990}. We provide a statement and a proof of such a lemma here.

\begin{lemma}\label{lem:smooth_to_top_handle_cancellation}
    Let $X$ be a compact, smooth $4$-manifold with good fundamental group.  Assume that we have a smooth handle decomposition of $X\times I$ which consists $n$ 2-handles and $n$ 3-handles.  Denote the belt spheres of the 2-handles by $B_k$ and the attaching spheres of the 3-handles by $A_k$ inside the middle level $X\# n(S^2\times S^2)$.  Further assume that $\lambda(A_k,B_k)=1$ and $\lambda(A_k,B_j)=0$ if $k\neq j$, where $\lambda$ here denotes the equivariant intersection form, and that there exist smoothly immersed, framed Whitney discs pairing up all of the excess intersections between each $A_k$ and $B_j$ for all $k$ and $j$, whose interiors are disjoint from $A_k$ and $B_j$.  Then we can arrange via a topological isotopy that the conditions of \Cref{lem:top_handle_cancellation} are satisfied.
\end{lemma}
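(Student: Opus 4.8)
The plan is to follow the strategy used in the proof of the smooth-to-topological $h$-cobordism theorem \cite[Theorem 7.1D]{freedman_quinn_1990}: upgrade the immersed Whitney data to embedded Whitney data via Freedman's disc embedding theorem -- this is where goodness of $\pi_1(X)$ is used -- then perform the resulting Whitney moves in the topological category, and finally carry out a local straightening to put the handle pairs into the standard cancelling configuration demanded by \Cref{lem:top_handle_cancellation}. Note first that part of that configuration comes for free: since $X\times I$ carries an honest handle decomposition, the attaching spheres $A_k$ of the $3$-handles are automatically mutually disjoint, locally flat embedded $2$-spheres in the (smooth) middle level $M^-_t\cong X\# n(S^2\times S^2)$, and likewise the belt spheres $B_k$ of the $2$-handles are mutually disjoint.

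Next I would put the Whitney data in usable form. The hypothesis supplies, for each pair $(A_k,B_j)$, smoothly immersed framed Whitney discs pairing up the excess intersections, with interiors disjoint from $A_k$ and $B_j$. After the standard maneuver of tubing away the intersections of such a Whitney disc with the spheres $A_\ell$, $B_\ell$ not belonging to the pair in question -- tubing into the sphere factors of the $n(S^2\times S^2)$ summands -- one may assume, at the cost of replacing the Whitney discs by new immersed framed Whitney discs with the same boundaries, that every Whitney disc has interior disjoint from all of the $A_\ell$ and $B_\ell$. The sphere factors of the $n(S^2\times S^2)$ summands also provide algebraically dual spheres for the resulting collection of Whitney discs; this is the reason the discussion is set in $X\# n(S^2\times S^2)$ rather than in $X$ itself.

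Now I would apply the disc embedding theorem \cite{freedman_1982,freedman_quinn_1990,behrens_kalmar_kim_powell_ray_2021}. Since $\pi_1(M^-_t)\cong\pi_1(X)$ is good and the Whitney discs are framed, have pairwise disjoint boundaries, and admit algebraically dual spheres, the disc embedding theorem yields a collection of pairwise disjoint, locally flat embedded Whitney discs with the same boundaries whose interiors can moreover be taken disjoint from all of the $A_\ell$ and $B_\ell$. Performing the associated topological Whitney moves on the spheres $A_k$ produces, after a topological ambient isotopy, new locally flat embedded spheres -- which we continue to denote $A_k$ -- that are still mutually disjoint, satisfy $A_k\cap B_j=\emptyset$ for $k\neq j$, and meet $B_k$ transversally in a single point $p_k$.

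It remains to straighten each pair near $p_k$. There the pair $(A_k,B_k)$ is a locally flat transverse pair of $2$-discs in the $4$-manifold $M^-_t$, hence locally standard by the existence of topological normal bundles and topological transversality \cite[Chapter 9]{freedman_quinn_1990}; and since $A_k\subset M^-_t$ can meet the $2$-handle $h_2^k=\prescript{2}{}{h}^{k}_{t}(D^2\times D^3)$ only within the belt region $\prescript{2}{}{h}^{k}_{t}(D^2\times S^2)$, which is the part of $h_2^k$ lying in $M^-_t$, this intersection is a single small flat disc through $B_k=\{0\}\times S^2$. A local isotopy of $A_k$ supported near $p_k$, modelled on the smooth Morse cancellation picture \cite[Theorem 5.4]{milnor_1965}, then spreads this intersection so that the upper hemisphere $A^k_+$ coincides as a subset of $M^-_t$ with the lower hemisphere $B^k_-$ while pushing $A^k_-$ entirely out of $h_2^k$ (the relevant framing is automatically correct once the intersection is a single transverse point, exactly as in the smooth case). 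This is precisely the configuration required in \Cref{lem:top_handle_cancellation}. The step I expect to be the main obstacle is the verification that the hypotheses of the disc embedding theorem are genuinely in force -- producing the algebraically dual spheres for the Whitney discs and keeping all Whitney-disc interiors disjoint from the spheres $A_\ell$, $B_\ell$ after the preliminary tubing -- which is the (somewhat implicit) content of the proof of \cite[Theorem 7.1D]{freedman_quinn_1990}; everything else is soft once this is in hand.
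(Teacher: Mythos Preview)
Your approach is essentially the same as the paper's: invoke Freedman--Quinn to replace the immersed Whitney discs by disjoint topologically embedded ones, perform the Whitney moves to reduce to a single transverse point $p_k\in A_k\cap B_k$, and then straighten locally. The paper also restricts to $n=1$ for notational simplicity and just cites \cite[Proof of Theorem 7.1D]{freedman_quinn_1990} rather than spelling out the tubing and dual-sphere setup as you do.

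There is one point where the paper is more careful and where your argument has a small gap. You assert that after the Whitney moves the intersection $A_k\cap(D^2\times S^2)$ is ``a single small flat disc through $B_k$'', and that a local isotopy supported near $p_k$ then suffices to push $A_k^-$ out of $h_2^k$. But the Whitney moves may well leave pieces of $A_k$ inside the tube $D^2\times S^2$ away from $p_k$ (these miss $B_k=\{0\}\times S^2$, but need not miss the whole belt region), and no isotopy supported near $p_k$ will dislodge them. The paper avoids this by doing a \emph{smooth} setup \emph{before} the Whitney moves: it first isotopes the still-smooth $A$ so that a small disc $f(U)\subset A$ around the distinguished intersection point $p$ lies as $D^2_\varepsilon(0)\times\{1\}$, and pushes the Whitney arcs on $B$ off $f(U)$. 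The Whitney moves then happen away from this patch, so $f(U)$ survives unchanged. Finally the paper uses the $2$-dimensional annulus theorem to build an ambient isotopy expanding $f(U)$ to $D^2\times\{1\}$ and, by isotopy extension, pushing the remainder of $A$ off $D^2\times S^2$. Your local-standardness observation is correct and could be turned into an equivalent argument (e.g.\ by shrinking the belt region into the locally standard chart), but as written the ``single small flat disc'' claim is not justified.
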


\begin{proof}
    First we assume that $n=1$, i.e.\ there is only one 2-handle/3-handle pair (we drop the $k$ subscript in the notation).  To begin with, the belt sphere $B$, the attaching sphere $A$ and Whitney discs pairing up the intersections are all smooth.  Denote the framed attaching map of $A$ by $f$.  This data determines the non-excess intersection point $p\in A\cap B$, and we isotope $A$ such that $p=(0,1)\in A= D^2\times S^2\subset S^2\times S^2$.  Pick a sufficiently small neighbourhood $U\subset S^2$ of the preimage of $p$ such that $U$ is mapped to $D^2_{\varepsilon}(0)\times\{1\}\subset D^2\times S^2$, isotoping $A$ if required.  Further isotope the Whitney discs such that the (framed) Whitney arcs lying on $B$ do not intersect $f(U)$.  All of the smooth setup is now complete.

    Now apply the procedure detailed in Freedman-Quinn \cite[Proof of theorem 7.1D]{freedman_quinn_1990} to replace the smoothly immersed Whitney discs by topologically, disjointly embedded Whitney discs with the same (framed) boundaries.  Applying simultaneous Whitney moves across these Whitney discs then changes $A$ to a topological embedding which has a single point of intersection, $p$, with $B$.

    By an application of the classical 2-dimensional annulus theorem there is an isotopy of $f(U)$ to $D^2\times \{1\}$ which fixes the point $p$ throughout.  Ambient isotopy extension yields an isotopy of $f$ which pushes all of $A\sm U$ off this $D^2\times S^2$.  Taking $f(U)$ to be $A_+$ (the upper hemisphere of $A$) and $D^2\times \{1\}$ to be $B_-$ (the lower hemisphere of $B$), we have made those portions of the embeddings entirely coincide as subsets inside $M_t^{-}$.
\end{proof}

\subsection{Producing pseudo-isotopies from one-parameter families of handle decompositions}\label{sbs:producing_PI}

The purpose of our allowed one-parameter families of topological handle decompositions is to be a sufficient stand-in for one-parameter families $(g_t,\eta_t)$.  In the smooth category one can freely go between these one-parameter families and smooth pseudo-isotopies, but we are only able to go one way.  Namely, we can produce topological pseudo-isotopies from allowed one-parameter families.  Given an allowed one-parameter family of topological handle decompositions $\mathcal{F}_t$ a conditition is that for $t=1$ the family consists only of a single piece of a data: a homeomorphism $F_1\colon X\times I\to X\times I$ and this is the pseudo-isotopy produced by $\mathcal{F}_t$.  There is no clear way to build a one-parameter family of topological handle decompositions from a given pseudo-isotopy and so our approach is not a priori helpful for determining if a given pseudo-isotopy can be straightened, as was the original purpose of Cerf theory.  However, we hope that our work concerning the realisation side of the story will eventually be useful in future developments.

\section{Realisation for $\Sigma^{\TOP}$}\label{sec:realisation_sigma}

\begin{thm}\label{thm:realisation_sigma}
     Let $X$ be a compact, topological 4-manifold with good fundamental group.  Then given $x\in \Wh_2(\pi_1(X))$ there exists a pseudo-isotopy $F\colon X\times I\to X\times I$ with $\Sigma^{\TOP}(F)=x$.
 \end{thm}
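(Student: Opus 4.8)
The plan is to adapt Hatcher--Wagoner's proof that $\Sigma$ is surjective in high dimensions \cite[Part II]{hatcher_wagoner_1973} to the topological $4$-dimensional setting, replacing the smooth handle-cancellation step at the end by the topological one furnished by \Cref{lem:smooth_to_top_handle_cancellation} (this is where the hypothesis that $\pi_1(X)$ is good is used). First I would lift $x$ to an element $\xi\in K_2(\Z[\pi_1(X)])\subseteq \St(\Z[\pi_1(X)])$ and write $\xi$ as a finite word $w=\prod_\ell x_{i_\ell,j_\ell}^{\pm g_\ell}$ in the Steinberg generators; since $\xi\in K_2$ this word maps to the identity in $E(\Z[\pi_1(X)])$. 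Choosing $N$ larger than every index occurring in $w$ (and allowing ourselves to enlarge $N$ further below), I would then build an allowed one-parameter family of topological handle decompositions $\mathcal{F}_t$ on $X\times I$ in the sense of \Cref{def:one-parameter_families}: for $t$ near $0$ we birth $N$ standard cancelling $2$/$3$-handle pairs, arranged so that immediately after the births the equivariant intersection matrix between the belt spheres of the $2$-handles and the attaching spheres of the $3$-handles in $M_t^{-}\approx X\#N(S^2\times S^2)$ is the identity; then, at a sequence of handle-slide times, we perform one handle slide for each letter of $w$, each carried out smoothly in the region made smoothable by \cite[8.7D]{freedman_quinn_1990} (cf.\ \Cref{lem:smooth_top_handle_decomposition}), choosing the sliding arcs so that the accumulated multiplications by $e_{i_\ell,j_\ell}^{\pm g_\ell}$ assemble to $w$.

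After the slides the intersection matrix is again the identity, so $\lambda(A_k,B_k)=1$ and $\lambda(A_k,B_j)=0$ for $k\neq j$. To cancel the handles I would produce framed immersed Whitney discs pairing up the excess intersections of the $A_k$ with the $B_j$: such discs exist because the algebraic intersection numbers vanish, their interiors can be pushed off the $A_k$ and $B_j$ by general position in the $4$-dimensional middle level, and the framing obstructions can be killed by tubing into the $S^2\times S^2$ summands, at the cost of enlarging $N$ --- it is this stabilisation that the surjectivity results of Singh \cite{singh2022pseudoisotopies} and Cha--Kim \cite{cha_kim_2023} are used to organise. Now \Cref{lem:smooth_to_top_handle_cancellation} puts, after a topological isotopy, every surviving $2$/$3$-handle pair into topological cancelling position, and the death procedure of \Cref{sbsbs:death} removes all of them, producing a homeomorphism $F:=F_1\colon X\times I\to X\times I$ --- the pseudo-isotopy associated to $\mathcal{F}_t$, and our candidate for realising $x$.

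The remaining step, and the main obstacle, is to verify that $\Sigma^{\TOP}(F)=x$, which is delicate precisely because $\Sigma^{\TOP}$ is defined indirectly as $\Sigma\circ\mathfrak{f}_{\mathcal{S}}\circ\mathfrak{i}\circ S^2$ (\Cref{def:top_hw}) rather than read off from a one-parameter family on $X\times I$. The plan here is to suspend $\mathcal{F}_t$ twice; by the suspension-compatibility underlying \Cref{lrm:suspension_preserves_Sigma_Theta} this yields an allowed one-parameter family on $(X\times J^2)\times I$ with associated pseudo-isotopy $S^2(F)$ whose births, handle slides and attaching/belt-sphere intersections are the suspended copies of those for $\mathcal{F}_t$, so the associated handle-slide word is still $w$. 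All the handles involved have index $\le 3$; after choosing a handle decomposition of $X\times J^2$ compatibly, one argues that this data lives inside a neighbourhood $N$ of the $3$-handle skeleton, which is smoothable by \Cref{lem:3_skeleton_smoothing}, and that smoothing it does not disturb the combinatorics of births, slides and cancellations; by \Cref{lemma:independence_on_smoothing} the choice of smoothing is immaterial. The resulting smooth one-parameter family underlies the smooth pseudo-isotopy $\mathfrak{f}_{\mathcal{S}}\circ\mathfrak{i}\circ S^2(F)$, and evaluating the smooth definition of $\Sigma$ on it gives $\bigl[\prod_\ell e_{i_\ell,j_\ell}^{\pm g_\ell}\,(PD)^{-1}\bigr]=x$ in $\Wh_2(\pi_1(X))$, where --- exactly as in the high-dimensional surjectivity argument --- the residual matrix $PD$ and the auxiliary handles introduced during the Whitney-disc step contribute only $w_{i,j}^{\pm g}$-type factors which vanish in $\Wh_2$. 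Identifying the smoothed suspended family with $\mathfrak{f}_{\mathcal{S}}\circ\mathfrak{i}\circ S^2(F)$ and controlling the stabilisation is the technical heart of the argument, and is where the naturality of \Cref{lemma:proposition_naturality_topological} together with \cite{singh2022pseudoisotopies,cha_kim_2023} enter; bridging the explicit $4$-dimensional family $\mathcal{F}_t$ and the essentially $6$-dimensional smoothed data on which $\Sigma^{\TOP}$ is literally computed is what makes this the hard part.
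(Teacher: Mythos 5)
Your construction of the candidate $F_x$ matches the paper's Construction~\ref{con:sigma_realisation_PI}: build a smooth one-parameter family realising the Steinberg word $w$, use the disc embedding theorem to get topologically embedded Whitney discs, then invoke \Cref{lem:smooth_to_top_handle_cancellation} and \Cref{lem:top_handle_cancellation} to close the eyes. The divergence, and the gap, is in the verification that $\Sigma^{\TOP}(F_x)=x$.

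You propose to push $\mathcal{F}_t$ directly through the definition of $\Sigma^{\TOP}$: suspend twice, localise to the $3$-handle skeleton $N$, smooth $N$, and read off the smooth $\Sigma$. The step that does not go through as you describe it is ``smoothing $N$ does not disturb the combinatorics of births, slides and cancellations.'' The handle deaths in $\mathcal{F}_t$ were achieved using \emph{topologically} embedded Whitney discs produced by the disc embedding theorem, and these need to become smooth in the smoothed $N$ for you to actually evaluate the smooth $\Sigma$ on a smooth one-parameter family. This smoothing of the Whitney discs is precisely the content of Cha--Kim's stable surface smoothing theorem, and it only works after further $S^2\times S^2$-stabilisation --- which immediately breaks the ``everything lives in $N$'' argument, because the handle decomposition of $X\times J^2$ you fixed does not see those extra stabilising summands. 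There is also a subtler issue with $\mathfrak{i}$: it is defined abstractly as the inverse of an inclusion-induced bijection, so to compute $\mathfrak{i}(S^2(F_x))$ you must first exhibit an isotopy of $S^2(F_x)$ to a pseudo-isotopy supported in $N$, not merely observe that the handle data sits in low index.

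The paper circumvents both problems with an indirect comparison (its \Cref{lem:stable_smooth_top_comparison_sigma} and \Cref{lem:realisation_sigma_smooth}): first show that $F_x$ is \emph{stably isotopic}, after puncturing and adding $S^2\times S^2$'s, to a genuinely smooth pseudo-isotopy $G_x$ with $\Sigma(G_x)=x$ --- this is where Cha--Kim enters, applied to the Whitney discs of the construction, not to the $3$-handle skeleton of the suspension. Then run a diagram chase using the naturality statement \Cref{lemma:proposition_naturality_topological} (puncture $X$, stabilise) together with the smooth/topological compatibility \Cref{lemma:smooth_is_equal_top} to get $\Sigma^{\TOP}(F_x)=\Sigma^{\TOP}(F_x^{\#})=\Sigma(G_x)=x$. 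Your sketch invokes the same ingredients (naturality, Cha--Kim, Singh, smooth-vs-top comparison) but places them inside the direct computation of $\mathfrak{f}_{\mathcal{S}}\circ\mathfrak{i}\circ S^2(F_x)$ rather than using them to bypass that computation; as stated, the direct computation does not go through because the stabilisation forced by Cha--Kim is incompatible with staying inside $N$.
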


We will produce $F$ by building an allowed one-parameter family of handle decomposition $\mathcal{F}_t$ that `looks like' it should realise the given element $x\in\Wh_2(\pi_1(X))$, with $F$ produced by $\mathcal{F}_t$ as in \Cref{sbs:producing_PI}.  Given the circuitous nature of our definition of $\Sigma^{\TOP}$, it is then a non-trivial matter to show that $\Sigma^{\TOP}(F)=x$.  We will first describe the initial step, before going on to the computation step.

\subsection{Producing the pseudo-isotopy}

Assume $X$ is a compact smoothable 4-manifold with good fundamental group and let $x\in \Wh_2(\pi_1(X))$, and now pick a smooth structure on $X$ which we fix once and for all.  As in the smooth realisation of $\Sigma$ (see \cite[Part I, Chapter VI, Theorem 2]{hatcher_wagoner_1973} and \cite[Chapter 6]{singh2022pseudoisotopies} we choose a word $\lambda$ in the Steinberg group representing $x$ and then begin building a one-parameter family $(g_t,\eta_t)$ which has index 2 and 3 critical points and handle slides which describe the word $\lambda$.  Note that by \Cref{lem:smooth_top_handle_decomposition} we have all of the data necessary to describe our one-parameter family of topological handle decompositions $\mathcal{F}_t$ except the data necessary to describe the deaths.  We will explain how to produce this now.  After performing all of these handle slides, the picture in the middle level has the all of the belt spheres of the 2-handles and all of the attaching spheres of the 3-handles in algebraically cancelling position, i.e.\ the equivariant intersection matrix is a permutation matrix multiplied by the identity matrix.  In dimensions $\geq 5$, as in the proof of the s-cobordism theorem, one can now use the Whitney trick repeatedly to ensure that the belt spheres and attaching spheres are \emph{geometrically} cancelling.  By the standard argument in the smooth to topological s-cobordism theorem (see \cite[Theorem 7.1D]{freedman_quinn_1990}) there exist a system of smoothly immersed, framed Whitney discs $\mathcal{W}$, whose interiors are disjoint from the belt spheres and attaching spheres, pairing up all of the excess intersections between the belt spheres and attaching spheres and hence we can apply \Cref{lem:smooth_to_top_handle_cancellation} to put our handles into topological cancelling position.  Then an application of \Cref{lem:top_handle_cancellation} allows us to `cancel' the handles and by the construction described in \Cref{sbsbs:death} we can complete our one-parameter family $\mathcal{F}_t$ `corresponding' to $x$.  Note that this is by no means well-defined, in particular since the construction described in \Cref{sbsbs:death} has no uniqueness statement attached to it: there may exist many ways to `kill' a 2-3 handle pair topologically, but we ignore these differences.  Similarly, we will not make any comment on the choice of smooth structure on $X$ picked at the beginning.

We now describe how to produce the corresponding $\mathcal{F}_t$ when $X$ is not smoothable.  By Quinn \cite[Theorem 2.3.1]{quinn_1982} (c.f.\ \cite[Theorem 9.1]{freedman_quinn_1990}) $X\times I$ has a handle decomposition; pick such a handle decomposition.  As in \Cref{sec:def_top_obstructions}, let $N$ be an open neighbourhood of the 3-handle-skeleton of $X\times I$.  By a similar argument to that used in \Cref{sec:def_top_obstructions}, $N$ can be smoothed, but we will be more careful in how we build the smooth structure on it.

By \cite[Theorem 8.6, Proof of Theorem 10.1]{freedman_quinn_1990}, $X\# l E_8\# k (S^2\times S^2)$ is smoothable for some $l\in\{0,1\}$ and $k\geq 0$ (here $l$ is given by the Kirby-Siebenmann invariant of $X$). We pick a smooth structure on $X\# l E_8\# k (S^2\times S^2)$ that we fix once and for all.  By topological transversality\footnote{Apply topological transversality \cite[Essay III, Theorem 1.5]{kirby_siebenmann_1977} to the cores of the relevant handles in the handle decomposition for $N$ (note that these all satisfy the relevant dimensions restrictions in \cite[Essay III, Theorem 1.5]{kirby_siebenmann_1977}).  These are of index 0, 1, 2 and 3 and hence all of these admit normal vector bundles---the rest of the handles---see the discussion in \cite[Section 9.4]{freedman_quinn_1990}.  By shrinking the handles using the vector bundle structures we can make the handles themselves disjoint from the arc, and performing this operation handle by handle gives the required disjointness.} \cite[Essay III, Theorem 1.5]{kirby_siebenmann_1977} we can assume that $N\subset X\times I$ is disjoint from a neighbourhood of $\{\pt\}\times I\subset X\times I$ and then we form the connected sum along an interval $X\# l E_8\# k (S^2\times S^2) \times I$.  This can be equipped with the product smooth structure that we can then restrict to $N$.  This is the smooth structure we will use on $N$.

Since we picked $N$ to be a neighbourhood of the 3-handle-skeleton, this means that we have an inclusion induced isomorphism $\pi_1(N)\cong \pi_1(X)$.  So, after picking $x\in \Wh_2(\pi_1(X))$, on $N$ we can build a one-parameter family just as in the smoothable case which `corresponds' to $x$.  Note that since $N$ is not a product of a 4-manifold with an interval we are not producing a pseudo-isotopy of $N$, but we are producing a one-parameter family of topological handle decompositions $\mathcal{F}_t$ for $X\times I$, since we can build the data for the one-parameter family of homeomorphisms $F_t\colon X\times I\to X\times I$ by extending the one-parameter family on $N$ via the trivial handle decomposition.  In particular, this means that all births, handle slides and deaths occur inside $N\subset X\times I$.

\begin{con}\label{con:sigma_realisation_PI}
    Given a compact 4-manifold $X$ with good fundamental group and $x\in\Wh_2(\pi_1(X))$, we carry out the above procedure, making choices to produce an allowed one-parameter family of handle decompositions which we denote by $\mathcal{F}_t(x)$.  By \Cref{sbs:producing_PI} this produces a pseudo-isotopy, which we denote by $F_x$.
\end{con}

\subsection{An alternative viewpoint on stable surjectivity of $\Sigma$}

To prove \Cref{thm:realisation_sigma} we will compare our realisation $\mathcal{F}_t$ to the stable smooth realisation result for $\Sigma$ due to Singh \cite[Theorem E]{singh2022pseudoisotopies} using the stable surface smoothing theorem of Cha-Kim \cite[Theorem D]{cha_kim_2023}.  Singh builds a smooth pseudo-isotopy realising $x\in \Wh_2(\pi_1(X))$ after $X$ has been stabilised, i.e.\ after taking connected-sums with $S^2\times S^2$.  We will briefly describe an alternative way to achieve the same stable smooth realisation, which more easily compares with our one-parameter family of handle decompositions $\mathcal{F}_t$.  

\begin{lem}\label{lem:stable_smooth_top_comparison_sigma}
    Let $X$ be a compact, smooth 4-manifold, let $x\in \Wh_2(\pi_1(X))$ and let $\mathcal{F}_{t}(x)$ be a one-parameter family of topological handle decompositions associated to $x$ (from \Cref{con:sigma_realisation_PI}), with induced pseudo-isotopy $F_x\colon X\times I\to X\times I$.  Then there exists a smooth pseudo-isotopy $G_x\colon X\#k(S^2\times S^2)\times I\to X\# k(S^2\times S^2) \times I$ with $\Sigma(G_x)=x$ and $F_x$ stably isotopic to $G_x$.
\end{lem}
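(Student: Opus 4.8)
The plan is to reconcile the topological one-parameter family $\mathcal{F}_t(x)$ with an honest smooth family on a stabilised manifold by exploiting the fact that all the non-trivial data (births, handle slides, and the final handle deaths) occurs inside the smoothed neighbourhood $N$ of the 3-handle-skeleton of $X \times I$, and that the only genuinely topological step in Construction \ref{con:sigma_realisation_PI} is the handle-death step, which invokes \Cref{lem:smooth_to_top_handle_cancellation} to upgrade smoothly immersed Whitney discs to topologically embedded ones. First I would recall that in the smoothable case $X$ carries a fixed smooth structure, so the births and handle slides of $\mathcal{F}_t(x)$ are already described by a genuine smooth one-parameter family $(g_t, \eta_t)$ up to the moment just before the deaths, with the equivariant intersection matrix of belt spheres and attaching spheres equal to a permutation matrix times the identity; the word read off the handle slides is precisely the chosen Steinberg word $\lambda$ representing $x$. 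The only obstruction to smoothly cancelling is the presence of excess intersections paired up by the smoothly immersed framed Whitney discs $\mathcal{W}$ whose interiors are disjoint from the belt and attaching spheres.

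The key step is to eliminate this obstruction smoothly after stabilisation. Here I would invoke the stable surface smoothing theorem of Cha–Kim \cite[Theorem D]{cha_kim_2023}: after taking a connected sum with some number $k$ of copies of $S^2 \times S^2$, the system of immersed Whitney discs $\mathcal{W}$ can be replaced by a system of \emph{smoothly} embedded, framed, disjoint Whitney discs with the same framed boundaries, with interiors still disjoint from the belt and attaching spheres. This is exactly the input needed to run the classical smooth Whitney trick on $X \# k(S^2 \times S^2) \times I$: performing the simultaneous Whitney moves across these smooth discs makes the belt spheres and attaching spheres \emph{geometrically} cancelling in the smooth category, and then the smooth Morse cancellation theorem \cite[Theorem 5.4]{milnor_1965} lets us smoothly cancel the handle pairs, completing a genuine smooth one-parameter family $(g'_t, \eta'_t)$ on $X \# k(S^2 \times S^2) \times I$. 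This produces a smooth pseudo-isotopy $G_x$. Its $\Sigma$-invariant is computed from the births, handle slides, and the final permutation/diagonal matrix — and since the stabilisation only added the $2k$ extra trivial critical points of the $S^2 \times S^2$ summands (contributing nothing to the intersection-matrix bookkeeping beyond harmless block-identity enlargement) and we built the handle slides to realise the word $\lambda$, we get $\Sigma(G_x) = [\lambda] = x$ in $\Wh_2(\pi_1(X)) \cong \Wh_2(\pi_1(X \# k(S^2\times S^2)))$, exactly as in Singh's surjectivity argument \cite[Theorem E]{singh2022pseudoisotopies}.

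It remains to see that $F_x$ is stably isotopic to $G_x$, i.e.\ that after connect-summing with $k$ copies of $S^2 \times S^2$ the two pseudo-isotopies agree in $\pi_0(\mathcal{P}^{\TOP})$. For this I would argue that both $F_x$ and $G_x$ are the pseudo-isotopies \emph{produced by one-parameter families of topological handle decompositions} in the sense of \Cref{sbs:producing_PI}, built from the \emph{same} data up to the death step: the smooth family $(g'_t, \eta'_t)$ on the stabilised manifold induces, via \Cref{lem:smooth_top_handle_decomposition}, an allowed one-parameter family of topological handle decompositions $\mathcal{F}_t(g'_t, \eta'_t)$, and this family differs from (a stabilisation of) $\mathcal{F}_t(x)$ only in the chosen way the cancelling $2$-$3$ handle pairs are killed. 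Both killing procedures satisfy the hypotheses of \Cref{lem:top_handle_cancellation} (the smooth Morse cancellation certainly gives coinciding hemispheres after an isotopy, and the topological procedure of \Cref{sbsbs:death} does so by construction), and so both deaths absorb the merged handle $d_{2,3}^k \colon D^+ \hookrightarrow X \times I$ into the upper collar; since $\pi_0\Homeo(S^3)$ is trivial, the Alexander-trick identification used in \Cref{lem:top_handle_cancellation} shows the two absorptions differ by an isotopy of the family, hence the resulting pseudo-isotopies at $t = 1$ are topologically isotopic. The main obstacle is precisely this last bookkeeping: making rigorous that "the same births and handle slides followed by any two valid death procedures give isotopic pseudo-isotopies", which amounts to checking that the family-level isotopies of \Cref{def:one-parameter_families} propagate to an isotopy of the endpoints — this is where one must be careful, but it follows from isotopy extension applied fibrewise over $[0,1]$ together with the triviality of $\pi_0\Homeo(S^3)$ and $\pi_0\Homeo(D^4,\partial)$.
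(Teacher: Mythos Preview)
Your overall strategy is sound, but there is a genuine gap in the final step, and a related misstatement of Cha--Kim.

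First, the smaller point: Cha--Kim's stable surface smoothing theorem takes \emph{topologically embedded} surfaces as input and produces a topological isotopy to a smoothly embedded surface after stabilisation. It does not take smoothly immersed discs and embed them. So you cannot apply it directly to the immersed system $\mathcal{W}$. The paper's route is to first pass through the disc embedding theorem (as in \Cref{lem:smooth_to_top_handle_cancellation}) to obtain the topologically embedded discs $\mathcal{W}^{\TOP}$ --- which are exactly the discs used to build $F_x$ --- and \emph{then} apply Cha--Kim to $\mathcal{W}^{\TOP}$.

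This is not just a technicality: it is precisely what makes the ``stably isotopic'' step work, and this is where your argument has a real hole. You claim that any two valid death procedures yield isotopic pseudo-isotopies, justifying this by the Alexander trick and the triviality of $\pi_0\Homeo(S^3)$ and $\pi_0\Homeo(D^4,\partial)$. But those facts only control the absorption of the half-disc $D^+$ into the upper collar \emph{after} the attaching sphere has been isotoped into topological cancelling position. They say nothing about the Whitney moves that get you there: performing a Whitney move across the topological disc $W'' \in \mathcal{W}^{\TOP}$ versus across some unrelated smooth disc gives two different isotopies of the attaching sphere in the middle level, and there is no reason these should be ambiently isotopic. (Indeed, the whole point of the $\Theta$-invariant in \Cref{sbs:def_theta} is that different Whitney discs can lead to genuinely different pseudo-isotopies.)

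The missing idea is that Cha--Kim provides not just smooth discs, but an explicit topological isotopy $H$ rel boundary from each $W'' \in \mathcal{W}^{\TOP}$ to a smooth disc $W'''$. One then defines $G_x$ using the Whitney moves across the $W'''$. Now the two families agree up to the death step by construction, and at the death step the attaching-sphere isotopies differ precisely by the Whitney moves across $W''$ versus $W'''$; the isotopy $H$, promoted via isotopy extension, furnishes the required isotopy between them. This is how the paper closes the gap.
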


\begin{proof}
    Assume we are in the same situation as in \Cref{sbs:producing_PI} at the point where we are trying to close the eyes: we have immersed Whitney discs $\mathcal{W}$ pairing the excess intersections between the belt spheres and the attaching spheres. In Singh's proof \cite[Section 6.1]{singh2022pseudoisotopies} he uses the Norman trick \cite{norman_1969} to make the Whitney discs embedded after stabilisation.  We will refine this argument to show that we can stably smooth the topologically embedded Whitney discs given by the disc embedding theorem, though we refer the reader to \cite[Section 6.1]{singh2022pseudoisotopies} for many of the details.

    Singh assumes we have as many stabilisations as there are intersections between Whitney discs $W$, $W'\in\mathcal{W}$ (possibly $W=W'$).  For each intersection Singh performs the Norman trick by tubing $W$ into $S^2\times \{\pt\}\subset S^2\times S^2$ and then using the $\{\pt\}\times S^2$ to tube off the intersection.  Instead of doing this, apply the procedure detailed in Freedman-Quinn \cite[Proof of theorem 7.1D]{freedman_quinn_1990} (c.f.\ the proof of \Cref{lem:smooth_to_top_handle_cancellation}) to build topologically embedded, framed Whitney discs $\mathcal{W}^{\TOP}$ with the same framed boundaries.  Note that applying \Cref{lem:smooth_to_top_handle_cancellation} and \Cref{lem:top_handle_cancellation}, as we did in \Cref{sbs:producing_PI}, is how we produce our one-parameter family $\mathcal{F}_t$.  Now we appeal to the stable surface smoothing theorem of Cha-Kim \cite[Theorem D]{cha_kim_2023} which says that, after some number of stabilisations, there exists a topological isotopy $H$, relative to the boundary of each disc, taking each topologically embedded disc $W''\in \mathcal{W}^{\TOP}$ to a smoothly embedded one $W'''$.  These can be arranged to still be all disjoint , and so the rest of Singh's proof proceeds as written to produce $G_x$, still realising $\Sigma(G_x)=x$.  Note that the number of stabilisations is now the number required by \cite{cha_kim_2023}, rather than the number of intersections between Whitney discs, which may not be the same.

    We now show that $F_x$ is stably isotopic to $G_x$.  Let $\mathcal{G}_t$ be the one-parameter family of topological handle decompositions induced by the one-parameter family $(g_t,\eta_t)$ associated to $G_x$.  First, choose an isotopy of $\mathcal{F}_t$ such that the homeomorphism $F^{-}_t$ fixes a $D^4\times I\subset X\times I$ throughout (the $D^4\times I$ is then away from all of the handle births, handle deaths and handle slides).  This allows us to define a stabilisation $\mathcal{F}^{\#}_t$ which induces a stabilised pseudo-isotopy \[F^{\#}_x\colon (X\# k(S^2\times S^2))\times I\to (X\# k(S^2\times S^2))\times I\] that $F_x$ is stably isotopic to.  One can arrange that the data for $\mathcal{F}^{\#}_t$ and $\mathcal{G}_t$ are completely identical up until the point that the eyes are closed.  Via isotopy extension, $H$ then provides the isotopy between the the rest of the data for $\mathcal{F}^{\#}_t$ and $\mathcal{G}_t$.  In particular, restricting this isotopy to the end of the one-parameter families produces the isotopy between $F^\#_x$ and $G_x$.
\end{proof}

\subsection{Proof of \Cref{thm:realisation_sigma}}

We now give the proof of \Cref{thm:realisation_sigma}.  We will first prove the case for smoothable manifolds and then reduce the non-smoothable case to the smoothable one.

\begin{lem}\label{lem:realisation_sigma_smooth}
    Let $X$ be a compact, smooth 4-manifold and $x\in \Wh_2(\pi_1(X))$.  Let $F_x$ be as in \Cref{con:sigma_realisation_PI}.  Then $\Sigma^{\TOP}(F_x)=x$.
\end{lem}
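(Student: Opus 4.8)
The plan is to reduce everything to the stable comparison \Cref{lem:stable_smooth_top_comparison_sigma}, the naturality of $\Sigma^{\TOP}$ (\Cref{lemma:proposition_naturality_topological}) and the agreement of $\Sigma^{\TOP}$ with $\Sigma$ on smooth pseudo-isotopies (\Cref{lemma:smooth_is_equal_top}). Indeed, \Cref{lem:stable_smooth_top_comparison_sigma} already produces a \emph{smooth} pseudo-isotopy $G_x$ of $X\# k(S^2\times S^2)$ with $\Sigma(G_x)=x$ which is isotopic to a stabilisation $F^{\#}_x$ of $F_x$; since $\Sigma^{\TOP}$ is a well-defined function on $\pi_0(\mathcal{P}^{\TOP}(-,\partial-))$ and computes $\Sigma$ on $G_x$, it only remains to compare $\Sigma^{\TOP}(F_x)$ with $\Sigma^{\TOP}(F^{\#}_x)$, and that comparison is exactly what naturality under codimension-zero inclusions provides.

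First I would normalise $F_x$. As in the proof of \Cref{lem:stable_smooth_top_comparison_sigma}, isotope the allowed one-parameter family $\mathcal{F}_t(x)$ so that $F^{-}_t$ fixes a chosen $D^4\times I\subset X\times I$ throughout, with all births, deaths and handle slides disjoint from it; then $F^{+}_t$ fixes $D^4\times I$ too, so $F_x\vert_{D^4\times I}=\Id$, and since $\Sigma^{\TOP}$ is isotopy-invariant this replacement is harmless. Put $Y:=X\sm\mathring D^4$; then $F_x$ restricts to a pseudo-isotopy $F_x\vert_Y$ of $Y$ and \Cref{lemma:proposition_naturality_topological}, applied to $X=Y\cup_{S^3}D^4$, gives
\[
\Sigma^{\TOP}(F_x)=(i_{Y,X})_*\,\Sigma^{\TOP}(F_x\vert_Y),
\]
where $i_{Y,X}$ induces an isomorphism on $\pi_1$. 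Next, let $F^{\#}_x$ be the $k$-fold stabilisation from \Cref{lem:stable_smooth_top_comparison_sigma}; by construction it is a pseudo-isotopy of $X\# k(S^2\times S^2)=Y\cup_{S^3}\bigl(k(S^2\times S^2)\sm\mathring D^4\bigr)$ which is the identity on the second piece and restricts to $F_x\vert_Y$ on $Y\times I$. Applying \Cref{lemma:proposition_naturality_topological} a second time,
\[
\Sigma^{\TOP}(F^{\#}_x)=(i_{Y,\,X\# k(S^2\times S^2)})_*\,\Sigma^{\TOP}(F_x\vert_Y),
\]
again with an inclusion inducing a $\pi_1$-isomorphism. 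Composing, under the standard identification $\pi_1(X)\cong\pi_1(X\# k(S^2\times S^2))$ the class $\Sigma^{\TOP}(F^{\#}_x)$ corresponds to $\Sigma^{\TOP}(F_x)$.

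To conclude, \Cref{lem:stable_smooth_top_comparison_sigma} provides (for this value of $k$) an isotopy from $F^{\#}_x$ to the smooth pseudo-isotopy $G_x$, so $\Sigma^{\TOP}(F^{\#}_x)=\Sigma^{\TOP}(G_x)$; by \Cref{lemma:smooth_is_equal_top} this equals $\Sigma(G_x)$, which is $x$ by \Cref{lem:stable_smooth_top_comparison_sigma} (viewed in $\Wh_2(\pi_1(X\# k(S^2\times S^2)))$). Transporting back along the $\pi_1$-identification, whose induced map on $\Wh_2$ is an isomorphism, yields $\Sigma^{\TOP}(F_x)=x$.

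The substantive geometric input --- the disc-embedding theorem, the Cha--Kim stable surface smoothing theorem, and Singh's stable realisation --- has already been absorbed into \Cref{lem:stable_smooth_top_comparison_sigma}, so the main obstacle here is organisational rather than deep: one must check that $F_x$ and $F^{\#}_x$ restrict to literally the same pseudo-isotopy of $Y$, that deleting an open $4$-ball and then connect-summing with $k(S^2\times S^2)$ leaves $\pi_1$ (hence $\Wh_2$) unchanged, and that every identification of fundamental groups appearing above is the canonical one, so that the two applications of naturality cancel cleanly.
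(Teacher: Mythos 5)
Your proof is correct and takes essentially the same route as the paper: the paper organises the two applications of \Cref{lemma:proposition_naturality_topological} (pass from $X$ to $\mathring X$, then from $\mathring X$ to $X\# k(S^2\times S^2)$) and the application of \Cref{lemma:smooth_is_equal_top} into a single commutative diagram and then does a diagram chase, whereas you unwind the same naturality statements step by step. The normalisation step (isotope $\mathcal F_t(x)$ so that $F_x$ is the identity on a fixed $D^4\times I$, and note that $F_x\vert_Y = F^{\#}_x\vert_Y$) is implicit in the paper's appeal to \Cref{lem:stable_smooth_top_comparison_sigma}, and your making it explicit is a useful clarification rather than a deviation.
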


\begin{proof}
    Let $\mathring{X}$ denote $X$ with a small open ball removed.  Consider the following diagram. 
    \[\begin{tikzcd}
	{\pi_0\mathcal{P}^{\TOP}(X, \partial X)} & {\Wh_2(\pi_1(X))} \\
	{\pi_0\mathcal{P}^{\TOP}(\mathring{X}, \partial \mathring{X})} & {\Wh_2(\pi_1(\mathring{X}))} \\
	{\pi_0\mathcal{P}^{\TOP}(X \# k (S^2 \times S^2), \partial X)} & {\Wh_2(\pi_1(X\#k(S^2\times S^2)))} \\
	{\pi_0\mathcal{P}^{\DIFF}(X \# k (S^2 \times S^2), \partial X)} & {\Wh_2(\pi_1(X\#k(S^2\times S^2)))}
	\arrow[from=1-1, to=1-2, "\Sigma^{\TOP}"]
	\arrow[from=2-1, to=1-1]
	\arrow[from=2-1, to=2-2, "\Sigma^{\TOP}"]
	\arrow[from=2-1, to=3-1]
	\arrow[from=2-2, to=1-2]
	\arrow[from=2-2, to=3-2]
	\arrow[from=3-1, to=3-2,"\Sigma^{\TOP}"]
	\arrow[from=4-1, to=3-1]
	\arrow[from=4-1, to=4-2,"\Sigma"]
	\arrow[from=4-2, to=3-2]
\end{tikzcd}\]
The vertical maps are either induced by inclusions or are forgetful maps.  The top two squares commute by our naturality for certain inclusions of codimension zero submanifolds \Cref{lemma:proposition_naturality_topological}.  The bottom square commutes by \Cref{lemma:smooth_is_equal_top}.  

Using \Cref{lem:stable_smooth_top_comparison_sigma} and a diagram chase involving the above diagram gives that \[\Sigma^{\TOP}(F_x)=\Sigma^{\TOP}(F^{\#}_x)=\Sigma(G_x)=x,\]
where $F^{\#}_x$ is the stabilisation of $F_x$ compatible with $G_x$, as in the proof of \Cref{lem:stable_smooth_top_comparison_sigma}.
\end{proof}
 
\begin{proof}[Proof of \Cref{thm:realisation_sigma}]

\Cref{lem:realisation_sigma_smooth} provides the proof in the case that $X$ is smoothable, so we will now make no assumption that $X$ admits a smooth structure.  We will instead show that we can smooth $X$ after performing connected-sums with specific simply-connected 4-manifolds and then argue using \Cref{lemma:proposition_naturality_topological} that we can still carry through the calculation in this case.

Let $F_x$, $k$ and $l$ be as in the non-smoothable case in \Cref{con:sigma_realisation_PI} and note that in this construction we chose a smooth structure on $X\# k (S^2\times S^2)\# l E_8$ that we will use throughout.  Consider the following diagram.

\[\begin{tikzcd}
	{\pi_0\mathcal{P}^{\TOP}(X,\partial X)} & {\Wh_2(\pi_1X)} \\
	{\pi_0\mathcal{P}^{\TOP}(\mathring{X},\partial \mathring{X})} & {\Wh_2(\pi_1X)} \\
	{\pi_0\mathcal{P}^{\TOP}(X\#k(S^2 \times S^2) \# lE_8,\partial X)} & {\Wh_2(\pi_1(X\#k(S^2\times S^2)\#lE_8))}
	\arrow[from=1-1, to=1-2, "\Sigma^{\TOP}"]
	\arrow[from=2-1, to=1-1]
	\arrow[from=2-1, to=2-2, "\Sigma^{\TOP}"]
	\arrow[from=2-1, to=3-1]
	\arrow[from=2-2, to=1-2]
	\arrow[from=2-2, to=3-2]
	\arrow[from=3-1, to=3-2, "\Sigma^{\TOP}"]
\end{tikzcd}\]

All of the vertical maps are either induced by inclusions or are forgetful maps, and $k$ and $l$ are as determined in \Cref{con:sigma_realisation_PI}.  The diagram commutes by \Cref{lemma:proposition_naturality_topological}.  Note that by the definition of our pseudo-isotopy $F_x$ we can lift/map it down to bottom-left of the diagram in such a way that we obtain a pseudo-isotopy $F^{\#}_x$ as in \Cref{con:sigma_realisation_PI} for the \emph{smooth} case.  By \Cref{lem:realisation_sigma_smooth} we have that $\Sigma^{\TOP}(F^{\#}_x)=x$ and hence, by the commutativity of the diagram, we conclude that $\Sigma^{\TOP}(F_x)=x$.
\end{proof}

\section{Realisation theorem for $\Wh_1(\pi_1(X); \Z/2 \times \pi_2(X))$}\label{sec:realisation_theta}

\begin{thm}\label{thm:realisation_theta}
    Let $X$ be a compact topological 4-manifold with good fundamental group. Then given $y \in \Wh_1(\pi_1X; \Z/2 \times \pi_2 X)$ there exists a pseudo-isotopy $F : X \times I \to X \times I$ with vanishing $\Sigma^{\TOP}$ obstruction and $\Theta^{\TOP}(F)=y$. 
\end{thm}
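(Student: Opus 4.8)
The plan is to mirror the proof of \Cref{thm:realisation_sigma}: first produce a candidate pseudo-isotopy $F_y$ from an allowed one-parameter family of topological handle decompositions modelled on the high-dimensional Hatcher--Wagoner realisation of $\Theta$, and then compute $\Sigma^{\TOP}(F_y)=0$ and $\Theta^{\TOP}(F_y)=y$ by comparison with Singh's stable smooth realisation of $\Theta$ via the Cha--Kim stable surface smoothing theorem. As for $\Sigma^{\TOP}$, I would handle the smoothable case first and reduce the general case to it at the end.

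\emph{Producing the candidate.} Fix a smooth structure on $X$ and write $y=\sum_i\alpha_i\gamma_i\in(\Z/2\times\pi_2(X))[\pi_1(X)]$. Following \cite[Part II]{hatcher_wagoner_1973} and \cite[Chapter 7]{singh2022pseudoisotopies}, build a smooth one-parameter family $(g_t,\eta_t)$ consisting of $n$ nested eyes with all births before all deaths, and arrange, by finger moves of the attaching $2$-spheres of the $3$-handles across the belt $2$-spheres of the $2$-handles, that the circle components of the traces $T_i=A_i\cap B_i$ in the middle level realise precisely the $\pi_1$-elements, $\pi_2$-elements and $\Z/2$-framing differences recorded by $y$ (see \Cref{sbs:def_theta} and \Cref{def:Theta}), with this $\pi_2$-data carried entirely in the $X$-factor. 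Since finger moves leave the equivariant intersection form unchanged, the belt and attaching spheres stay algebraically cancelling with identity intersection matrix, so the data feeding into $\Sigma$ is trivial by construction. To close the eyes, replace the smooth handle-cancellation step of the high-dimensional argument by the topological one: the excess intersections come with a system of smoothly immersed, framed Whitney discs with interiors disjoint from the spheres, so \Cref{lem:smooth_to_top_handle_cancellation} puts the handles into topological cancelling position, and \Cref{lem:top_handle_cancellation} together with the death procedure of \Cref{sbsbs:death} completes the family $\mathcal{F}_t(y)$, exactly as in \Cref{sbs:producing_PI}. This produces the pseudo-isotopy $F_y$.

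\emph{The computation.} I would then establish the exact analogue of \Cref{lem:stable_smooth_top_comparison_sigma} for $\Theta$: there is a smooth pseudo-isotopy $G_y$ of $X\#k(S^2\times S^2)$ with $\Sigma(G_y)=0$, $\Theta(G_y)=y$, and $F_y$ stably isotopic to $G_y$. The proof refines Singh's stable realisation of $\Theta$ \cite[Chapter 7]{singh2022pseudoisotopies} in the same way as the proof of \Cref{lem:stable_smooth_top_comparison_sigma}: replace the immersed Whitney discs closing the eyes by topologically embedded framed ones via \cite[Theorem 7.1D]{freedman_quinn_1990}, and then apply the stable surface smoothing theorem of Cha--Kim \cite[Theorem D]{cha_kim_2023} to isotope these, relative to their boundaries, to smoothly embedded disjoint discs after some number of stabilisations; being relative to the boundary, this isotopy preserves all circle, $\pi_2$ and $\Z/2$-framing data, so $G_y$ still realises $y$, and isotopy extension identifies the corresponding stabilisation $F^{\#}_y$ of $F_y$ with $G_y$. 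Running the diagram chase of \Cref{lem:realisation_sigma_smooth}, using the naturality \Cref{lemma:proposition_naturality_topological} and the comparison \Cref{lemma:smooth_is_equal_top}, then yields $\Sigma^{\TOP}(F_y)=\Sigma(G_y)=0$ and $\Theta^{\TOP}(F_y)=\Theta^{\TOP}(F^{\#}_y)=\Theta(G_y)=y$.

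\emph{The main obstacle and the non-smoothable case.} The delicate point is the computation $\Theta^{\TOP}(F_y)=y$ through the circuitous definition of $\Theta^{\TOP}$: one must check that the double suspension preserves the full $\Theta$-data, which is \Cref{lrm:suspension_preserves_Sigma_Theta}, and, more seriously, that the Cha--Kim stable smoothing can be arranged compatibly with every disc in play, keeping the null-homotopy discs witnessing the $\pi_2$-elements and the Whitney discs simultaneously disjoint and framed so that Singh's computation of $\Theta$ transfers verbatim; one should also confirm that $\Sigma^{\TOP}(F_y)$ still vanishes after suspension and smoothing, which again follows from \Cref{lrm:suspension_preserves_Sigma_Theta} and the fact that the relevant handle-slide and intersection-matrix data are unchanged. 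For non-smoothable $X$, the construction is carried out, as in the non-smoothable case of \Cref{con:sigma_realisation_PI}, inside a smoothable neighbourhood $N$ of the $3$-handle-skeleton of $X\times I$, for which the inclusion induces isomorphisms $\pi_1(N)\cong\pi_1(X)$ and, since $\pi_2$ depends only on the $3$-skeleton, $\pi_2(N)\cong\pi_2(X)$; after smoothing $N$ by a connected sum with $l E_8\#k(S^2\times S^2)$ performed away from $N$, one realises $y$ on $N$ as above and transfers the computation back to $X$ via \Cref{lemma:proposition_naturality_topological}, exactly as in the proof of \Cref{thm:realisation_sigma}.
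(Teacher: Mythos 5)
Your high-level strategy matches the paper's: build an allowed one-parameter family $\mathcal{F}_t(y)$, compare with a stable smooth realisation via Cha--Kim, and run the same commutative-diagram chase used for $\Sigma^{\TOP}$, treating the non-smoothable case by working in a smoothed neighbourhood of the $3$-handle-skeleton. However, there is a genuine gap in the step producing the candidate.

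You write that after arranging the circle data ``the excess intersections come with a system of smoothly immersed, framed Whitney discs with interiors disjoint from the spheres, so \Cref{lem:smooth_to_top_handle_cancellation} puts the handles into topological cancelling position.'' But for the $\Theta$ realisation the Whitney disc $W$ is not the sort that arises in the $\Sigma$/$s$-cobordism setting: to encode the $\pi_2$- and framing-data of $\alpha$ one must \emph{build} $W$ by tubing the obvious Whitney disc $U$ (the one undoing the $\gamma$-finger move) into an immersed sphere $S$ representing the $\pi_2$-part, and then perform boundary twists to set the $\Z/2$-framing. After that, $W$ meets $A$ and $B$ in extra points, which are pushed off using the ambient dual spheres $A^*$, $B^*$ via the Norman trick, at the cost of introducing self-intersections of $W$. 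At this stage $W$ no longer has an obvious geometric dual, and the application of the disc-embedding machinery underlying \Cref{lem:smooth_to_top_handle_cancellation} (the Freedman--Quinn $7.1$D procedure) requires one. The paper's key technical move is to take the Clifford torus $T$ of $W$, modify its caps $D_A$, $D_B$ by the Norman trick, perform the symmetric capping operation, and clean up the resulting intersections to obtain an \emph{immersed geometric dual sphere} $P$ for $W$. This is precisely what lets the unstable topological realisation go through where the smooth one requires a stabilisation. Your proposal omits this construction entirely, and without it the invocation of \Cref{lem:smooth_to_top_handle_cancellation} is not justified in the one genuinely hard case, namely $\omega_2^X(\alpha)=0$ with non-trivial framing.

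Two smaller points: the paper works with a \emph{single} cancelling handle pair rather than $n$ nested eyes (sufficient since $\Theta^{\TOP}$ is a homomorphism), and it explicitly splits into four cases on $\alpha$, noting that three of them are already realised smoothly by Singh's argument and so reduce directly to \Cref{lemma:smooth_is_equal_top}; only the remaining case needs the disc-embedding/Clifford-torus argument. Your proposal treats all cases uniformly and hence obscures where the topological category actually buys you something.
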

We know that in dimension $\ge 5$ , $$ \Theta: \ker \Sigma \to \Wh_1(\pi_1(X); \Z/2 \times \pi_2(X))/ \chi(K_3\mathbb{Z}[\pi_1 X]) $$
is surjective. 
We are only concerned in the case where $k_1(X)=0$, which is the case treated in \cite{singh2022pseudoisotopies}.
Note that in \cite{singh2022pseudoisotopies} a \emph{stable} realisation theorem for $\Theta$ is proved. Unlike the stable realisation for $\Sigma$, the result in \cite{singh2022pseudoisotopies} requires only a single stabilisation. 
We will prove here a realisation theorem for $\Theta^{\TOP}$ that does not require stabilisation. In order to do so, we will follow the steps in \cite{singh2022pseudoisotopies}. During the process, we will find an immersed geometrical dual sphere $P$; while in the smooth case there is some necessary work to be done to embed the sphere---hence the stabilisation---in the topological case we do not need to find an embedded sphere. Instead we appeal to the disc embedding theorem \cite{behrens_kalmar_kim_powell_ray_2021} using the immersed $P$ to get the desired Whitney disc.
As we did in \Cref{sec:realisation_sigma}, we will produce $F$ by building an allowed one-parameter family of handle decompositions $\mathcal{F}_t$ that 'potentially' realises the given element $y \in \Wh_1(\pi_1 X; \Z/2 \times \pi_2(X))$. This family $\mathcal{F}_t$ is first of all created so that $\Sigma^{\TOP}(F_y)=0$. In parallel with smooth Cerf theory, one can think of this family as a 'nested-eyes' Cerf family.  We will consider two different cases, depending whether $X$ is smoothable or non-smoothable. We begin with the smoothable case, and then move to the non-smoothable case.

\subsection{Producing the pseudo-isotopy}\label{sbs:producing_pseudo_isotopy_theta}

Assume $X$ is a compact smoothable 4-manifold with good fundamental group and let $y \in \Wh_1(\pi_1X;\Z/2 \times \pi_2(X))$. Pick a smooth structure for $X$ and fix it once and for all. As in the smooth realisation for $\Theta$ (see \cite{hatcher_wagoner_1973,singh2022pseudoisotopies}) we choose elements $\alpha\in \Z/2\times \pi_2(X)$ and $ \gamma\in \pi_1(X)$ so that the sum $\alpha\gamma\in (\Z/2\times \pi_2(X))[\pi_1(X)]$ represents $y$ and then we begin building a one-parameter family $(g_t, \eta_t)$ which has one index 2 critical point and one index 3 critical point and no handle slides.\footnote{We will just work with a single handle pair; this is sufficient to realise all possible elements.} Note that following \Cref{lem:smooth_top_handle_decomposition} we have all the data necessary to describe our one-parameter family of handle decompositions $\mathcal{F}_t$ except for how these two critical points cancel, i.e.\ the death.  Our goal is to instead cancel them in a non-trivial way which encodes the data from $y$, by first performing an isotopy of the attaching sphere of the 3-handle.  We show that the smooth realisation process of $y$, which requires (a single) stabilisation to be fully carried out, can be terminated in the $\TOP$ category unstably.

We can represent the $\pi_2$ part of $\alpha$ by an immersed 2-sphere $S \subset X$ (with an arc $\mu$ to the base point) and we can arrange that $S$ has only transverse double points of self-intersection.

    The family starts with the trivial handle structure on $X \times I$ and we generate a cancelling 2-3 handle pair. We make sure the birth region is disjoint from the sphere $S$. We then consider the middle-level $V:=X \# (S^2 \times S^2)$. We denote the 2-handle belt sphere and the 3-handle attaching sphere by $A$ and $B$ in $V$ respectively. At the beginning these are $S^2\times \{p\}$ and $\{q\}\times S^2$ in $X \# (S^2 \times S^2)$. Note we also have dual spheres $A^*$ \and $B^*$ (see \cite[Section 7.3.1]{singh2022pseudoisotopies}. Everything here is disjoint from $S$.
Since $A \cup B$ is $\pi_1 $-negligible in $V$, $\gamma$ determines a finger move from $A$ to $B$. By simple dimension arguments, we can ensure the finger move does not hit $S$ and the dual spheres $A^*$ and $B^*$ . We do this finger move to obtain a new handle structure. Afterwards, it is clear we see the Whitney disc $U$ that undoes the move. The interior of $U$ is disjoint from $A$,$B$, $A^*$ and $B^*$.
    
    This is our setup. Now our goal is to tube $U$ into the sphere $S$ to create a new Whitney disc $W$, make $W$ embedded and perform a Whitney move. We will do this in such a way that $U \cup W$ represents $\alpha$ and  so that $W$ has the correct framing (in \cref{def:Theta} we combined the framing value in $\Z/2$ and the $\pi_2(X)$ element into a single element $\alpha$, which is a slightly different convention from the one used in \cite{singh2022pseudoisotopies}).
    Depending on the values of $\alpha$ we have four different cases to consider. Three of them can be dealt as in \cite[Proposition 8.1]{singh2022pseudoisotopies} without appealing to the disc embedding theorem, since all of the discussion in \cite[Proposition 8.1]{singh2022pseudoisotopies} produces a smooth pseudo-isotopy and then we can then appeal to \Cref{lemma:smooth_is_equal_top}.

    The last case left to realise is when $\omega_2^X(\alpha)=0$ and we have non-trivial framing. In the smooth realm, this requires a single stabilisation to be realised.
    We basically follow the proof in \cite[Proposition 8.2]{singh2022pseudoisotopies} up until when we can make use of the disc embedding theorem, i.e.\ \Cref{lem:smooth_to_top_handle_cancellation} to bypass the smooth problems and avoid the need for an extra $S^2 \times S^2$ in the middle-middle level.

    First of all, to get the correct value for the framing of $\alpha$, we perform a single boundary twist of $W$ along the boundary componenet of $W\cap B$.  We then perform another boundary twist on $W$ with $A$ (an opposite twist), so that $W$ remains a framed Whitney disc. Note that $W$ intersects $A$ and $B$. Using the \emph{Norman trick} on $A^*$ we can resolve the intersection between $A $ and $W$. Similarly we can perform the Norman trick on $B^*$ to resolve the single intersection between $B$ and $W$.
    This procedure adds a single self-intersection of $W$ (as the parallel copy of $A^*$ intersects the parallel copy of $B^*$).

    We now consider $T$, the \emph{Clifford torus} in $V$ for the Whitney disc $W$.  This Clifford torus $T$ intersects $W$ in exactly one point and it is disjoint from $A, B, A^*, B^*$.
    \begin{figure}[htb!]
\centering
\begin{tikzpicture}
\node[anchor=south west,inner sep=0] at (0,0){\includegraphics[width=15cm]{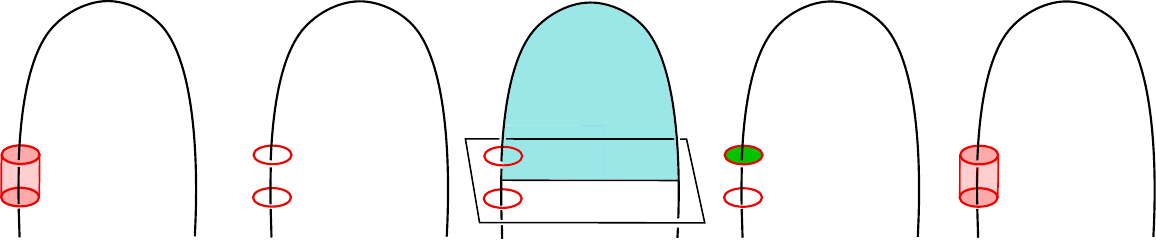}};
\node at (7.6,1.8){W};
\node at (8.5,3.3){A};
\node at (0.8,0.8) {T};
\node at (8,0.5) {B};
\node at (10.2,1){$D_A$};
\end{tikzpicture}
\caption{Movie of the 4-dimensional space. In the middle time slice we see the Whitney disc $W$ and a disc subset of the sphere $B$. In each slice we see a line from $A$ that sweeps a disc subset of $A$. The Clifford torus is denoted by $T$; it intersects $W$ in exactly one point but does not intersect $A$ and $B$. The disc $D_A$ in the fourth slice is the one of the two caps. To see the other one we can just re-draw the picture with the roles of $A$ and $B$ reversed. We credit the figures of this section to \cite{singh2022pseudoisotopies}.}
\label{figure:CliffordTorus}
\end{figure}

There are two caps for $T$, i.e. embedded discs $D_A$ and $D_B$ which intersect $T$ on $\partial D_A$ and $\partial D_B$. The disc $D_A$ intersects $A$ in exactly one point but it is disjoint from $W, B, A^*,B^*$ and the disc $D_B$ intersects $B$ in exactly one point but it is disjoint from $W,A,A^*,B^*$. Moreover, $D_A,D_B$ intersect in a single point on their boundary.

 \begin{figure}[htb!]
 \resizebox{0.55\linewidth}{!}{
\centering
\begin{tikzpicture}
\node[anchor=south west,inner sep=0] at (0,0){\includegraphics[width=12cm]{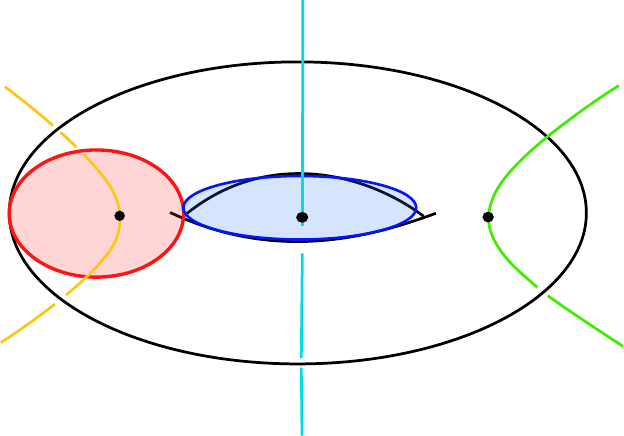}};
\node at (1.8,4) {$\textcolor{red}{D_A}$};
\node at (7,4.3){$\textcolor{blue}{D_B}$};
\node at (6.3,6.5){$\textcolor{cyan}{B}$};
\node at (10,4){$\textcolor{green}{W}$};
\node at (1,2){$\textcolor{orange}{A}$};
\end{tikzpicture}}
\caption{The Clifford torus with the caps. Note that $A$ and $B$ do not intersect $T$ and that $W$ only intersects $T$ in a single point.}
\label{figure:Torus}
\end{figure}

 \begin{figure}[htb!]
\centering
\begin{tikzpicture}
\node[anchor=south west,inner sep=0] at (0,0){\includegraphics[width=12cm]{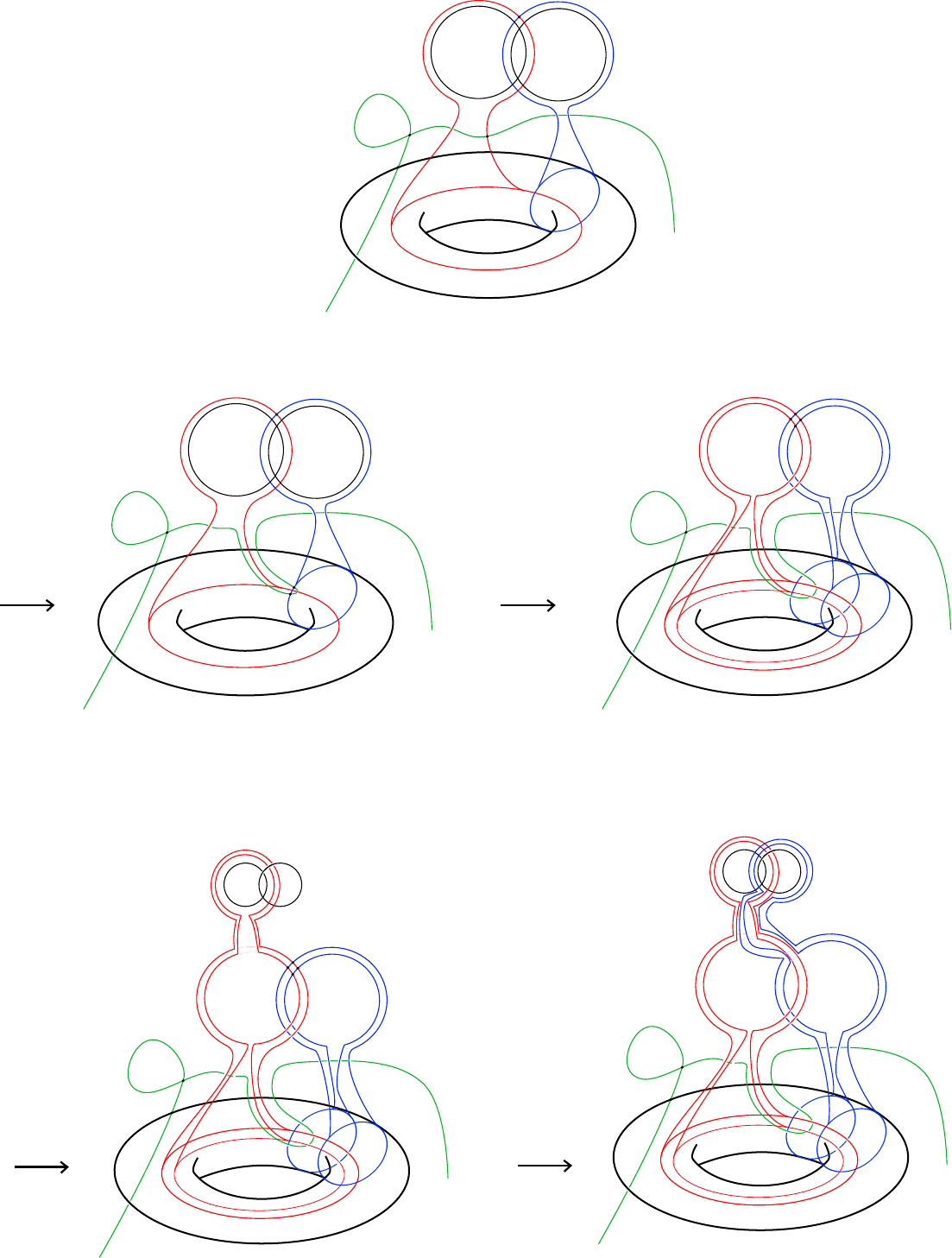}};

\end{tikzpicture}
\caption{We picture the manipulation of the Clifford caps.}
\label{figure:ModifyingCaps}
\end{figure}

We now modify the caps (see Figure \ref{figure:ModifyingCaps}).  We remove the intersection of $D_A$ and $A$ using the Norman trick, tubing it into $A^*$. We similarly remove the intersection of $D_B$ and $B$, tubing into $B^*$, noting that this adds intersections with $W$, and with the dual spheres $A^*$ and $B^*$, and adds a single point of intersection between $D_A$ and $D_B$.

All of this is done so we can now perform the \emph{symmetric capping operation} introduced in \cite[2.3]{freedman_quinn_1990} using the resulting two discs $D_A$, $D_B$. To do so, we remove a neighbourhood of $\partial_A \cup \partial_B$ from $T$ and glue back two parallel copies of $D_A$, two parallel copies of $D_B$ and glue back in a square in $T$ around $\partial D_A \cap \partial D_B \in T$ to fill the resulting hole (see Figure \ref{figure:symmetric_capping}). 
\begin{figure}[htb!]
\centering
\begin{tikzpicture}
\node[anchor=south west,inner sep=0] at (0,0){\includegraphics[width=12cm]{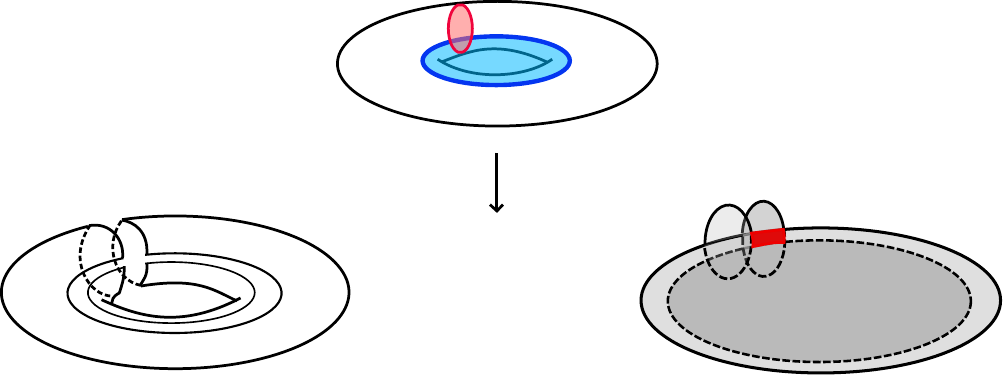}};

\end{tikzpicture}
\caption{The symmetric capping operation}
\label{figure:symmetric_capping}
\end{figure}

We smooth the edges of the resulting sphere and denote the result of this operation $P$.
Note that $P$ is an \emph{immersed} sphere with double points. Moreover, as of now $P$ intersects $W$ in more than one point---we did introduce more intersections during our operations.
However, we can remove the extra intersections between $W$ and $P$ as follows. Start with those between $D_B$ and $W$. Push down all such intersections into $T$ so that the intersection points of $W$ and $T$ lie on $\partial D_A$. Now when we perform symmetric capping to get $P$, $W$ will not intersect $P$ in there (see Figure \ref{figure:removingIntersections}). Same can be done for intersections between $D_A$ and $W$. 
\begin{figure}[htb!]
\centering
\begin{tikzpicture}
\node[anchor=south west,inner sep=0] at (0,0){\includegraphics[width=12cm]{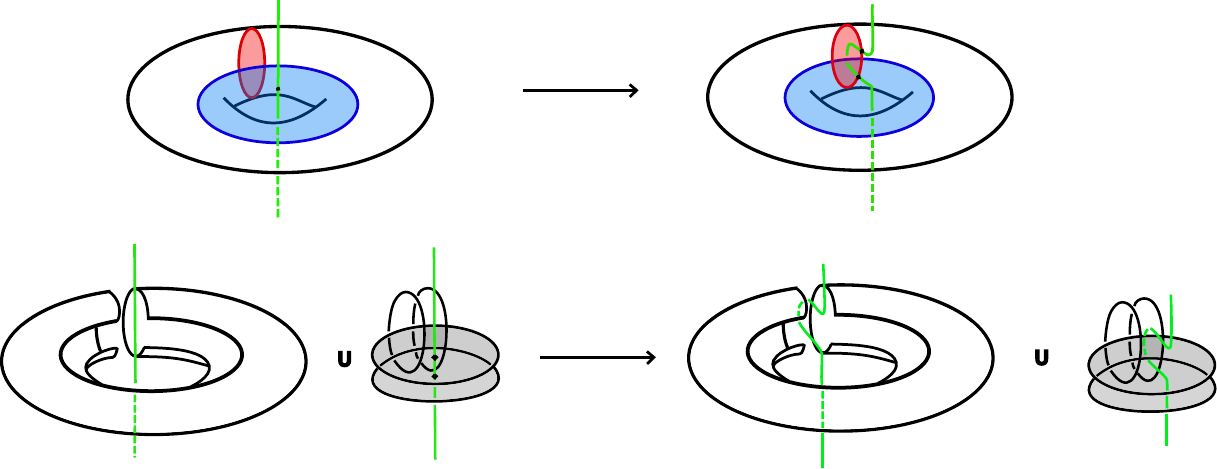}};

\end{tikzpicture}
\caption{Removing intersections between $P$ and $W$. }
\label{figure:removingIntersections}
\end{figure}
At the end, $P$ is an immersed sphere that is geometrically dual to $W$. Here is where our construction diverges from \cite{singh2022pseudoisotopies}. 

We claim that we are in exactly in the position to apply \Cref{lem:smooth_to_top_handle_cancellation}.  In the middle level, the $A$ sphere and the $B$ sphere are algebraically cancelling with precisely two excess points of intersection, and we have a smoothly immersed, framed Whitney disc $W$ which pairs up the double points and $W$ has an immersed geometric dual sphere $P$.  We can now apply \Cref{lem:smooth_to_top_handle_cancellation} and then \Cref{lem:top_handle_cancellation} to complete the one-parameter family of handle decompositions\footnote{This one-parameter family of handle decompositions `looks like' it should realise the correct invariant, since we used the correct $\alpha$ and $\gamma$ and the right framing in the construction, the latter because we performed a
 single boundary twist with $B$.}, as in the construction in \Cref{sbs:producing_PI}.

We now describe how to produce the corresponding $\mathcal{F}_t$ when $X$ is not smooth. This will be done as in \cref{sec:realisation_sigma}. 
By \cite[Theorem 8.6, Proof of Theorem 10.1]{freedman_quinn_1990}, $X\# l E_8\# k (S^2\times S^2)$ is smoothable for some $l=0,1$ and $k\geq 0$. We pick a smooth structure on $X\# l E_8\# k (S^2\times S^2)$ that we fix once and for all.  By topological transversality \cite[Essay III, Theorem 1.5]{kirby_siebenmann_1977} we can assume that $N\subset X\times I$ is disjoint from a neighbourhood of $\{\pt\}\times I\subset X\times I$ and then we form the connected sum along an interval $X\# l E_8\# k (S^2\times S^2) \times I$.  This can be equipped with the product smooth structure that we then restrict to $N$.

Since we picked $N$ to be an open neighbourhood of the 3-handle-skeleton, this means that we have inclusion induced isomorphisms $\pi_1(N)\cong \pi_1(X)$ and $\pi_2(N) \cong \pi_2(X) $.  So, after picking $y\in \Wh_1(\pi_1(X);\Z/2 \times \pi_2 (X))$, on $N$ we can build a one-parameter family just as in the smoothable case which `corresponds' to $y$.  Note that since $N$ is not a product of a 4-manifold with an interval we are not producing a pseudo-isotopy of $N$, but we are producing a one-parameter family of topological handle decompositions $\mathcal{F}_t$ for $X\times I$, since we can build the data for the one-parameter family of homeomorphisms $F_t\colon X\times I\to X\times I$ by extending the one-parameter family via the trivial handle decomposition.  In particular, this means that all births 
and deaths and moves dictated by $y$  occur inside $N\subset X\times I$.

\begin{con}\label{con:allowed_1_for_theta}
    Given a compact 4-dimensional manifold $X$ with good fundamental group and $y \in \Wh_1(\pi_1X; \Z/2 \times \pi_2(X))$ we carry out the above procedure, making choices where necessary, to produce an allowed one-parameter family of handle decompositions which we denote by $\mathcal{F}_t(y)$.  By \cref{sec:handle_decompositions} this defines a pseudo isotopy $F_y$.
\end{con}
\subsection{An alternative viewpoint on stable surjectivity of $\Theta$.}

To prove \Cref{thm:realisation_theta}, we will compare our realisation of $\mathcal{F}_t$ to the stable smooth realisation of $\Theta$. Instead of using Singh's strategy \cite{singh2022pseudoisotopies}, we use again the stable surface smoothing theorem of Cha-Kim \cite{cha_kim_2023}. This approach may require more than one stabilisation, but has the advantage that this stabilized pseudo-isotopy will be much easier to compare with our one-parameter family of handle decompositions $\mathcal{F}_t$ and $F_X$.
\begin{lemma}\label{lem:stably_pseudo_isotopic_theta}
    Let $X$ be a compact, smooth 4- manifold and let $y \in \Wh_1(\pi_1(X);\Z/2 \times \pi_2(X))$. Let $\mathcal{F}(y)$ be the one-parameter family of topological handle decompositions associated to $y$, with induced pseudo-isotopy $F_y: X \times I \to X \times I$. Then there exists a smooth pseudo-isotopy $G_y \colon X \# k(S^2 \times S^2) \times I \to X\# (S^2 \times S^2)$ with $\Theta(G_y)=y$ and $F_y$ is stably isotopic to~$G_y$.
\end{lemma}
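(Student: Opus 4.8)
The plan is to mirror the proof of \Cref{lem:stable_smooth_top_comparison_sigma}, now tracking the single $2$/$3$-handle pair together with the Whitney disc $W$ and its immersed geometric dual sphere $P$ that appear in the construction of \Cref{con:allowed_1_for_theta}, rather than the system of Whitney discs pairing belt and attaching spheres. First I would dispose of the easy cases: as recalled in \Cref{sbs:producing_pseudo_isotopy_theta}, when $\omega_2^X(\alpha)\neq 0$ or the framing is trivial, the construction of $F_y$ is literally the smooth construction of \cite[Proposition 8.1]{singh2022pseudoisotopies}, so $F_y$ is already a smooth pseudo-isotopy; taking $k=0$, $G_y=F_y$, and appealing to \Cref{lemma:smooth_is_equal_top} (or rather its $\Theta$-analogue, which is part of the same lemma) for $\Theta(G_y)=y$ finishes these cases. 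So assume $\omega_2^X(\alpha)=0$ and the framing is non-trivial, and recall the state of the construction just before the eye is closed: in the middle level $V=X\#(S^2\times S^2)$ the belt sphere $A$ and the attaching sphere $B$ are algebraically cancelling with exactly two excess intersection points, paired by a smoothly immersed, correctly framed Whitney disc $W$ (the framing correction being built in via the boundary twists along $A$ and $B$) equipped with a smoothly immersed geometric dual sphere $P$ obtained via the symmetric capping operation, with $W$, $P$ arranged in good position with respect to the dual spheres $A^*$, $B^*$ and the sphere $S$.

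Next, instead of invoking the disc embedding theorem \cite{behrens_kalmar_kim_powell_ray_2021} directly as in \Cref{con:allowed_1_for_theta}, I would run the argument of \cite[proof of Theorem 7.1D]{freedman_quinn_1990} (compare the proof of \Cref{lem:smooth_to_top_handle_cancellation}), using the immersed dual sphere $P$ and the good fundamental group hypothesis, to replace $W$ by a topologically embedded, framed Whitney disc $W^{\TOP}$ with the same framed boundary, whose interior is disjoint from $A$ and $B$. Then I would apply the stable surface smoothing theorem of Cha--Kim \cite[Theorem D]{cha_kim_2023}: after some number of stabilisations there is a topological isotopy $H$, relative to the boundary of the disc, carrying $W^{\TOP}$ to a smoothly embedded disc $W'''$ with the same framed boundary and interior still disjoint from $A$ and $B$. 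With such a smoothly embedded framed Whitney disc in hand, the remainder of Singh's argument \cite[Proposition 8.2]{singh2022pseudoisotopies} applies verbatim: the Whitney move across $W'''$ puts $A$ and $B$ into geometric cancelling position in the (stabilised) middle level, the handles are then cancelled smoothly, and this produces a smooth pseudo-isotopy $G_y$ on $X\#k(S^2\times S^2)\times I$ with $\Theta(G_y)=y$, where $k$ is now the number of stabilisations demanded by \cite{cha_kim_2023} rather than the single stabilisation used in \cite{singh2022pseudoisotopies}.

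It then remains to see that $F_y$ is stably isotopic to $G_y$, which is entirely parallel to the corresponding part of the proof of \Cref{lem:stable_smooth_top_comparison_sigma}. First isotope $\mathcal{F}_t(y)$ so that the collar family $F^{-}_t$ fixes a $D^4\times I\subset X\times I$ disjoint from all births, deaths and moves; connected-summing the $S^2\times S^2$'s into this region produces a stabilised allowed one-parameter family $\mathcal{F}^{\#}_t$ with induced pseudo-isotopy $F^{\#}_y$ on $(X\#k(S^2\times S^2))\times I$, to which $F_y$ is stably isotopic. One can arrange that the data of $\mathcal{F}^{\#}_t$ and of the one-parameter family $\mathcal{G}_t$ induced by the family $(g_t,\eta_t)$ associated to $G_y$ are identical up until the eye is closed --- the birth, the finger move realising $\gamma$, the tubing into $S$ realising $\alpha$, the boundary twists and the topologically embedded Whitney disc $W^{\TOP}$ all agree --- after which $H$, via isotopy extension, supplies an isotopy between the remaining data of $\mathcal{F}^{\#}_t$ and $\mathcal{G}_t$; restricting this isotopy to $t=1$ yields an isotopy from $F^{\#}_y$ to $G_y$.

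I expect the main obstacle to be bookkeeping rather than any new idea. One must ensure that both the Freedman--Quinn replacement of $W$ by $W^{\TOP}$ and the Cha--Kim smoothing isotopy $H$ can be carried out keeping the Whitney arcs on $A$ and $B$ fixed (so the framings recorded by the boundary twists are undisturbed) and keeping everything disjoint from $A^*$, $B^*$ and $S$, so that when the eye is closed the intersection pattern in the middle level is exactly the one computing $\alpha\gamma$ in \Cref{def:Theta}; and, in the last paragraph, one must check that the two one-parameter families really can be set up to coincide away from the smoothing isotopy, which requires some care since our topological families are defined only in the ad hoc sense of \Cref{def:one-parameter_families} and there is no functional calculus available to appeal to.
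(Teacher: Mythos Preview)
Your proposal is correct and follows essentially the same approach as the paper's proof: dispose of the smoothly realisable cases first, then in the remaining case replace the immersed Whitney disc by a topologically embedded one via the disc embedding theorem, apply Cha--Kim stable surface smoothing to obtain a smoothly embedded Whitney disc after stabilisation, finish Singh's computation to get $\Theta(G_y)=y$, and compare the two one-parameter families via isotopy extension of the Cha--Kim isotopy $H$. The paper's argument is slightly terser and does not spell out the bookkeeping you flag in your final paragraph, but the ideas and structure are the same.
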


\begin{proof}
    The proof is essentially similar to that of \Cref{lem:stable_smooth_top_comparison_sigma}. 
    First we remark that for some elements $y$ we have unstable smooth realisation: the one-parameter family that is created smoothly gives us the candidate $\mathcal{F}_t(y)$, hence the proof of the lemma follows from \Cref{thm:main} for such elements.
    For the elements that require a stable statement, instead of using Singh's method, we work with the stable surface smoothing \cite{cha_kim_2023}. We build $\mathcal{F}_t(y)$ using the disc embedding theorem to produce a topologically embedded Whitney disc $\mathcal{W}$ which is correctly framed. We then perform the Whitney move using this disc. 
    Due to Cha-Kim \cite[Theorem D]{cha_kim_2023}, after stabilisations, there exists a topological isotopy~$H$, relative to the boundary of $\mathcal{W}$ that takes $\mathcal W$ to a smoothly embedded $\mathcal{W}'$. Thus we can terminate the proof by performing the Whitney move smoothly along this disc, producing a pseudo-isotopy $G_y$ so that $\Theta(G_y)=y$.

    We now show that $F_y$ is stably isotopic to $G_y$. Let $G_t$ be the one-parameter family of topological handle decompositions induced by the one-parameter family $(g_t,\eta_t)$ associated to $G_y$. First, choose an isotopy of $\mathcal{F}_t$ that makes the embedding $F^{-}_t$ fixes a $D^4 \times I \subset X \times I$ throughout (the $D^4\times I$ is then away from handle births, deaths and the isotopy in the middle level of the attaching sphere of the 3-handle).
    This allows to define a stabilisation $\mathcal{F}_t^{\#}$ which induces a stabilised pseudo-isotopy $$ F_y^{\#}\colon (X \# k(S^2 \times S^2))\times I \to (X \#k(S^2 \times S^2))\times I$$ stably isotopic to $F_y$. One can arrange the data for $\mathcal{F}_y^{\#}$ and $\mathcal{G}_t$ to be completely identical up until the point that we need to perform the Whitney move in the middle level. Via isotopy extension, $H$ provides the isotopy between the rest of the data for $\mathcal{F}_y^{\#}$ and $\mathcal{G}_t$. In particular, restricting this isotopy to the end of the one-parameter families produces the isotopy between $F^{\#}_y$ and $\mathcal{G}_y$.
\end{proof}
\subsection{Proof of Theorem \ref{thm:realisation_theta}}
We now give the proof of \Cref{thm:realisation_theta}. We will start by working with smoothable manifolds.
\begin{lemma}\label{lem:smoothable_theta}
 Let $X$ be a compact, smooth 4-manifold and $y \in \Wh_1(\pi_1(X);\Z/2 \times \pi_2(X))$. Let $F_y$ be as in \ref{con:allowed_1_for_theta}. Then $\Theta^{\TOP}(F_y)=y$.   
\end{lemma}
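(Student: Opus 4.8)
The plan is to run the proof of \Cref{lem:realisation_sigma_smooth} with $\Sigma$ replaced by $\Theta$ throughout, using \Cref{lem:stably_pseudo_isotopic_theta} in place of \Cref{lem:stable_smooth_top_comparison_sigma}. As in \cite{singh2022pseudoisotopies} we work under the assumption $k_1(X)=0$, so that $\chi=0$ and $\Theta^{\TOP}$ is valued in $\Wh_1(\pi_1X;\Z/2\times\pi_2X)$. The first thing I would check is that $F_y\in\ker\Sigma^{\TOP}$, so that $\Theta^{\TOP}(F_y)$ is even defined: the family $\mathcal{F}_t(y)$ of \Cref{con:allowed_1_for_theta} contains no handle slides, and the smooth stable model $G_y$ produced by \Cref{lem:stably_pseudo_isotopic_theta} has $\Theta(G_y)=y$ defined, hence $G_y\in\ker\Sigma$; running the $\Sigma$-version of the diagram below exactly as in \Cref{lem:realisation_sigma_smooth} (together with \Cref{lemma:smooth_is_equal_top} and the fact that $F_y$ is stably isotopic to $G_y$) then yields $\Sigma^{\TOP}(F_y)=0$.

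Next I would assemble the commuting tower. Writing $\mathring X$ for $X$ with a small open ball removed and $X':=X\#k(S^2\times S^2)$, consider
\[\begin{tikzcd}[cramped]
{\pi_0\mathcal{P}^{\TOP}(X,\partial X)} & {\Wh_1(\pi_1X;\Z/2\times\pi_2X)} \\
{\pi_0\mathcal{P}^{\TOP}(\mathring X,\partial\mathring X)} & {\Wh_1(\pi_1\mathring X;\Z/2\times\pi_2\mathring X)} \\
{\pi_0\mathcal{P}^{\TOP}(X',\partial X)} & {\Wh_1(\pi_1X';\Z/2\times\pi_2X')} \\
{\pi_0\mathcal{P}^{\DIFF}(X',\partial X)} & {\Wh_1(\pi_1X';\Z/2\times\pi_2X')}
\arrow["\Theta^{\TOP}", from=1-1, to=1-2]
\arrow[from=2-1, to=1-1]
\arrow["\Theta^{\TOP}", from=2-1, to=2-2]
\arrow[from=2-1, to=3-1]
\arrow[from=2-2, to=1-2]
\arrow[from=2-2, to=3-2]
\arrow["\Theta^{\TOP}", from=3-1, to=3-2]
\arrow[from=4-1, to=3-1]
\arrow["\Theta", from=4-1, to=4-2]
\arrow[from=4-2, to=3-2]
\end{tikzcd}\]
where the horizontal maps $\Theta^{\TOP}$ (resp.\ $\Theta$) are defined on the respective kernels of the $\Sigma$-invariants, which all relevant elements belong to by the previous paragraph, and the vertical maps are induced by inclusions or are forgetful maps. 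The top two squares commute by \Cref{lemma:proposition_naturality_topological} and the bottom one by \Cref{lemma:smooth_is_equal_top}. The point absent from the $\Sigma$ case is that the right-hand vertical composite is now merely \emph{injective} rather than an isomorphism: removing a ball changes neither $\pi_1$ nor $\pi_2$, while $\pi_2(X')\cong\pi_2(X)\oplus\Z[\pi_1X]^{2k}$ as $\Z[\pi_1X]$-modules, so $\Z/2\times\pi_2(X')\cong(\Z/2\times\pi_2X)\oplus\Z[\pi_1X]^{2k}$; since the description in \Cref{def:wh_1} shows that $\Gamma\mapsto\Wh_1(\pi;\Gamma)$ carries direct sums of coefficient modules to direct sums, the induced map $\Wh_1(\pi_1X;\Z/2\times\pi_2X)\to\Wh_1(\pi_1X;\Z/2\times\pi_2X')$ is a split injection.

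The final step is a diagram chase. I would take $F^{\#}_y\in\pi_0\mathcal{P}^{\TOP}(X',\partial X)$ to be the stabilisation of $\mathcal{F}_t(y)$ along a $D^4\times I$ disjoint from all births, deaths and the middle-level isotopy, as constructed in the proof of \Cref{lem:stably_pseudo_isotopic_theta}; by construction $F^{\#}_y$ is the image of $F_y$ down the left-hand column, and by \Cref{lem:stably_pseudo_isotopic_theta} it is topologically isotopic to a smooth pseudo-isotopy $G_y$ with $\Theta(G_y)=y$. Then $\Theta^{\TOP}(F_y)$ maps to $\Theta^{\TOP}(F^{\#}_y)$ under the right-hand column, and $\Theta^{\TOP}(F^{\#}_y)=\Theta^{\TOP}(G_y)=\Theta(G_y)=y$ by isotopy invariance of $\Theta^{\TOP}$ and \Cref{lemma:smooth_is_equal_top}; injectivity of the right-hand column then forces $\Theta^{\TOP}(F_y)=y$.

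I expect the main obstacle to be the same one already confronted in \Cref{lem:realisation_sigma_smooth} and \Cref{lem:stably_pseudo_isotopic_theta}: not any individual square, but the verification that the stabilisation $F^{\#}_y$, obtained by stabilising the one-parameter family of handle decompositions along a fixed $D^4\times I$, genuinely equals the image of $F_y$ under the naturality map $\pi_0\mathcal{P}^{\TOP}(\mathring X)\to\pi_0\mathcal{P}^{\TOP}(X')$ — this is exactly why the stabilisation must be performed away from all the handle activity. Beyond that, the only things to track are that every pseudo-isotopy in sight lies in the relevant $\ker\Sigma$, and that the $\chi$-indeterminacy is vacuous because $k_1(X)=0$; all of the genuinely hard geometry (the Clifford-torus and symmetric-capping construction, the disc embedding theorem, and the Cha--Kim stable smoothing producing $G_y$ with $\Theta(G_y)=y$) is already packaged inside \Cref{lem:stably_pseudo_isotopic_theta}.
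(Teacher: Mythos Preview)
Your proposal is correct and follows essentially the same route as the paper: the same four-row tower (with $\mathring X$ and $X\#k(S^2\times S^2)$), commutativity via \Cref{lemma:proposition_naturality_topological} and \Cref{lemma:smooth_is_equal_top}, and the conclusion via \Cref{lem:stably_pseudo_isotopic_theta}. Your argument is in fact slightly more careful than the paper's in two places: you explicitly verify $F_y\in\ker\Sigma^{\TOP}$ before applying $\Theta^{\TOP}$, and you address the fact that the right-hand vertical map is only a split injection (since $\pi_2$ grows under stabilisation), which the paper's terse chain $\Theta^{\TOP}(F_y)=\Theta^{\TOP}(F_y^{\#})=\Theta(G_y)=y$ leaves implicit.
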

\begin{proof}
Let $X$ be a smooth 4-manifold.
We write down the following diagram, which mimics the one in Section \ref{sec:realisation_sigma}.
Let $\mathring{X}$ be $X$ with a small open ball removed. Consider the following.
\begin{center}

\begin{tikzcd}
	{\ker \Sigma^{\TOP}_{X}} && {\Wh_1(\pi_1X;\Z/2 \times \pi_2X)/\chi} \\
	{\ker \Sigma^{\TOP}_{\mathring{X}} } &&{\Wh_1(\pi_1\mathring{X}; \Z/2 \times \pi_2\mathring{X})/\chi} \\
	{\ker \Sigma^{\TOP}_{X\#k(S^2\times S^2)}} && {\Wh_1(\pi_1X\#k(S^2 \times S^2);\Z/2\times \pi_2(X\#k(S^2 \times S^2))/\chi} \\
	{\ker \Sigma_{X\#k(S^2\times S^2)}} && {\Wh_1(\pi_1X\#k(S^2 \times S^2);\Z/2\times \pi_2(X\#k(S^2 \times S^2))/\chi}
	\arrow["\Theta^{\TOP}", from=1-1, to=1-3]
	\arrow[from=2-1, to=1-1]
	\arrow["\Theta^{\TOP}", from=2-1, to=2-3]
	\arrow[from=2-1, to=3-1]
	\arrow[from=2-3, to=1-3]
	\arrow[from=2-3, to=3-3]
	\arrow["\Theta^{\TOP}", from=3-1, to=3-3]
	\arrow[from=4-1, to=3-1]
	\arrow["\Theta", from=4-1, to=4-3]
	\arrow[from=4-3, to=3-3]
\end{tikzcd}
\end{center}
The  vertical arrows on the left are given by extension via the identity or forgetful maps.

To prove commutativity of the top two squares we use \Cref{lemma:proposition_naturality_topological}.
The bottom square commutes thanks to \Cref{lemma:smooth_is_equal_top}.  Using \Cref{lem:stably_pseudo_isotopic_theta} and a diagram chase involving the above diagram we have $$ \Theta^{\TOP}(F_y)=\Theta^{\TOP}(F_y^{\#})=\Theta(G_y)=y,$$
where $F^{\#}_y$ is the stabilisation of $F_y$ compatible with $G_y$, as in the proof of Lemma \ref{lem:stably_pseudo_isotopic_theta}.
\end{proof}
\begin{proof}[Proof of \Cref{thm:realisation_theta}]
Lemma \ref{lem:smoothable_theta} provides the proof when $X$ is smoothable, so we can drop this assumption. 
We will instead show that we can smooth $X$ after some specific connected-sums, as we did in the proof of \Cref{thm:realisation_sigma}.
Let $F_y, k $ and $l$ be as in the non-smoothable case in \Cref{con:allowed_1_for_theta}. Note that in this construction we chose a smooth structure on $X\#k(S^2 \times S^2) \#lE_8$ that we will use throughout.
Consider the following diagram:

\[\begin{tikzcd}
	{\ker \Sigma^{\TOP}_X} & {\Wh_1(\pi_1X;\Z/2 \times \pi_2X)/ \chi(K_3\mathbb{Z}[\pi_1X]}) \\
	{\ker \Sigma^{\TOP}_{\mathring{X}}} & {\Wh_1(\pi_1X)/\chi(K_3\mathbb{Z}[\pi_1\mathring{X}]}) \\
	{\ker \Sigma^{\TOP}_{X\#k(S^2 \times S^2) \# l E_8}} & {\Wh_1(\pi_1(X\#k(S^2\times S^2)\#lE_8))/\chi(K_3\mathbb{Z}[\pi_1 (X \#k(S^2 \times S^2) \# lE_8)])} 
	\arrow[from=1-1, to=1-2]
	\arrow[from=2-1, to=1-1]
	\arrow[from=2-1, to=2-2,]
	\arrow[from=2-1, to=3-1]
	\arrow[from=2-2, to=1-2, "\cong"]
	\arrow[from=2-2, to=3-2]
	\arrow[from=3-1, to=3-2]
\end{tikzcd}\]
All the vertical maps are either inclusions or forgetful maps, and $k$ and $l$ are as fixed in \Cref{con:allowed_1_for_theta}. The diagram commutes by \Cref{lemma:proposition_naturality_topological}. Note that by definition of $F_y$ we can lift/map it down to the bottom left of the diagram so that we obtain a pseudo-isotopy $F_y^{\#}$ as in \Cref{con:allowed_1_for_theta} for the \emph{smooth} case. By Lemma \ref{lem:smoothable_theta} we have $\Theta^{\TOP}(F_y^{\#})=y$ and hence, by commutativity, we conclude $\Theta^{\TOP}(F_y)=y$.\qedhere

\end{proof}

\section{Inertial pseudo-isotopies and duality}\label{sec:duality}

Given a homeomorphism $f \in \Homeo(X, \partial X)$ which is pseudo-isotopic to the identity, we may try to obstruct it from being isotopic to the identity by picking a pseudo-isotopy $F \in \mathcal {P}$ from $\Id$ to $f$ and then computing the invariants $\Sigma^{\TOP}$ and $\Theta^{\TOP}$. However, $\Sigma^{\TOP}(F)$ and $\Theta^{\TOP}(F)$ depend on the choice of the specific pseudo-isotopy. It is perfectly possible that there exists a different pseudo-isotopy, say $G$, for which both obstruction vanish.

We note that the composition $F \circ G^{-1} $ is still a  pseudo-isotopy, and moreover it fixes the whole boundary of $X \times I$. This motivates the following definition:
\begin{definition}[Inertial pseudo-isotopy]\label{def:intertial_pseudo_isotopy}
    Let $F \in \mathcal{P}^{\TOP}(X,\partial X)$ be a pseudo-isotopy with $F\vert_{X\times 1}=id_X$. In fact, this forces $F\vert_{\partial(X \times I)}= \Id\vert_{\partial (X\times I)}$. We say that $F$ is an \emph{inertial} pseudo-isotopy, and denote the set of inertial pseudo-isotopies $\mathcal{J}^{\TOP}(X, \partial X) \subset \mathcal{P}^{\TOP}(X,\partial X)$.
\end{definition}
 To have a well defined obstruction for a homeomorphism pseudo-isotopic to the identity to be isotopic to the identity we need to mod out by the invariants of the inertial pseudo-isotopies. See, for example, \cite{singh2022pseudoisotopies} for a detailed explanation on why the quotient map is well defined. 
In general, the images $\Sigma(\mathcal{J}(X, \partial X))$ and $\Theta(\mathcal{J}(X,\partial X) \cap \ker \Sigma)$ are extremely hard to compute. In some cases, e.g.\ when our manifold is of the form $Y^3\times I$, we can say much about these two subgroups. In order to do so, we recall Hatcher-Wagoner's involution on the space on the space of pseudo-isotopies.

\begin{definition}[Dual pseudo-isotopy]\label{def:dual_pseudo_isotopy}
    Let $F$ be a pseudo-isotopy. Denote the reflection map on $X\times I$ which sends $(p,s)$ to $(p,1-s)$ by $r$. We define the \emph{dual} pseudo-isotopy to $F$ to be: $$ \overline{F}= \left( (F|_{X \times \{1\}})^{-1} \times id_{I}\right) \circ r \circ F \circ r.$$
\end{definition}
One can check that $\overline{F}$ is indeed a pseudo-isotopy and that it connects $\Id$ with $F|_{X \times 1}^{-1}:= F_1^{-1}$.  Furthermore, one may verify that this involution is well-defined on $\pi_0(\mathcal{P}(X,\partial X))$, though we leave this as an exercise.

Our goal is to use this dualisation construction to obtain structural information about $\mathcal{J}^{\TOP}(X,\partial X) \cap \ker \Sigma^{\TOP}$.
Smoothly, this is encoded via a duality formula.  However, the proof of this formula uses Cerf's approach to studying pseudo-isotopies; i.e.\ the strong connection between the pseudo-isotopy space and one-parameter families of generalised Morse functions.  Since we do not have this correspondence, we will need to work harder to prove a corresponding formula.  The central idea will be to go through the definition of our topological invariants and track how the involution behaves throughout the process.

\begin{proposition}{[c.f.\ \cite[I, Chapter VIII; II, \S 4]{hatcher_wagoner_1973}]}\label{proposition:duality_inertial}
   Let $X$ be a compact topological 4-manifold with $k_1(X)=0$, and let $F \in \mathcal{P}(X)$ be a pseudo-isotopy. Then: $$\Sigma^{\TOP}(\ol{F})= \overline{\Sigma^{\TOP}(F)}$$ and, if $F \in \ker \Sigma$, then 
    $$ \Theta^{\TOP}(\ol{F})=\overline{\Theta(F)},$$
    where the involutions on $\Wh_2(\pi_1X)$, $\Wh_1(\pi_1X; \Z/2 \times \pi_2X)$ are defined\footnote{See \cite[Part I, Chapter VIII; Part II, \S 4]{hatcher_wagoner_1973} for more details.} to be those induced by the involution on the group ring $\Z[\pi_1(X)]$ that sends \[\sum_{g\in \pi_1(X)}\lambda_{g}g\mapsto \sum_{g\in \pi_1(X)}w_1(g)\lambda_{g}g^{-1}\] for $\Wh_2(\pi_1(X))$, and induced by the involution on the group ring $(\Z/2\times \pi_2(X))[\pi_1(X)]$ that sends \[\sum_{g\in \pi_1(X)}(\lambda_{g}, \mu_g)g^{-1}\mapsto \sum_{g\in \pi_1(X)}\left(\lambda_{g}+w_2(\mu_g),-w_1(g)(g^{-1}\cdot\mu_{g})\right)g \] (here $w_1$ and $w_2$ denote the first and second Steifel-Whitney classes\footnote{In \Cref{sec:homeos} the manifolds we consider are spin, and hence these involutions greatly simplify due to the vanishing of $w_1$ and $w_2$.}).
    \begin{remark}
        We stress that we are working with 4-manifolds with $k_1 X=0$. It is a priori not clear that the involution respects $\chi$ and so we would need to take a further quotient of $\Theta$.  Conjecturally, this is unnecessary (see \cite[Conjecture 9.7]{singh2022pseudoisotopies} and the surrounding discussion for more details).
    \end{remark}
\end{proposition}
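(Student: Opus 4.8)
The plan is to push the Hatcher--Wagoner duality formula through the definition of our topological invariants (\Cref{def:top_hw}), tracking the behaviour of the dualisation involution at every stage of the composition $\Sigma^{\TOP}=\Sigma\circ\mathfrak{f}_{\mathcal S}\circ\mathfrak{i}\circ S^2$ (and similarly for $\Theta^{\TOP}$). First I would establish that dualisation is compatible with the suspension map: the reflection $r$ on $X\times I$ commutes appropriately with the ``bent rectangle'' construction of $S^+$, up to the homotopies in \Cref{lem:well-definedness_suspension}, so that $\overline{S^2(F)}=S^2(\overline F)$ in $\pi_0\mathcal P^{\TOP}(X\times J^2,\partial(X\times J^2))$. (This is the topological analogue of the observation in \Cref{lrm:suspension_preserves_Sigma_Theta} that suspension preserves all the Morse-theoretic data; here one checks instead that it preserves the involution.) Next I would check that dualisation is compatible with the restriction-to-the-3-handle-skeleton isomorphism $\mathfrak i$ and the smoothing isomorphism $\mathfrak f_{\mathcal S}$: since these are canonical (up to the relevant uniqueness statements, \Cref{lem:3_skeleton_isomorphism}, \Cref{lem:3_skeleton_smoothing}) and the dual of a pseudo-isotopy supported near a submanifold is again supported near that submanifold, the involution commutes with both maps. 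Here one must also use that the smooth structure $\mathcal S$ may be chosen reflection-invariant, or else invoke \Cref{lemma:independence_on_smoothing} to remove the dependence.

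Once these compatibilities are in place, the problem reduces to the \emph{smooth} duality formula: for a smooth pseudo-isotopy $G$ of a smooth $6$-manifold $M$ (here $M=N_{\mathcal S}$, a neighbourhood of the $3$-handle-skeleton of $X\times J^2$), one has $\Sigma(\overline G)=\overline{\Sigma(G)}$ and, on $\ker\Sigma$, $\Theta(\overline G)=\overline{\Theta(G)}$. This is precisely the content of \cite[Part~I, Chapter~VIII; Part~II, \S4]{hatcher_wagoner_1973}, and since $N_{\mathcal S}$ is a genuine smooth manifold of dimension $6$ (so all of Cerf--Hatcher--Wagoner functional theory applies without the dimension-four caveats), we may quote it directly. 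The geometric content of that formula is that turning the one-parameter family $(g_t,\eta_t)$ upside down via $r$ interchanges index-$2$ and index-$3$ critical points, reverses handle slides, and sends each intersection loop to its inverse, which on the algebra is exactly the stated involution on $\Z[\pi_1]$ (with the $w_1$-twist coming from orientation-reversal of descending manifolds, and the $w_2$-term in the $\Theta$ involution coming from the change of framing). One subtlety to address: the formula must be applied with $\pi_1$ and $\pi_2$ of $N_{\mathcal S}$, but the inclusion-induced isomorphisms $\pi_k(N_{\mathcal S})\cong\pi_k(X\times J^2)\cong\pi_k(X)$ intertwine the respective involutions (as these are all induced from $\Z[\pi_1X]\to\Z[\pi_1(X\times J^2)]$), and $w_1,w_2$ of $N_{\mathcal S}$ pull back from $X$ since $N_{\mathcal S}$ is a neighbourhood of a $3$-complex built from $X\times J^2$. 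Assembling the compatibility squares with the smooth formula then yields $\Sigma^{\TOP}(\overline F)=\overline{\Sigma^{\TOP}(F)}$ and, on $\ker\Sigma^{\TOP}$, $\Theta^{\TOP}(\overline F)=\overline{\Theta^{\TOP}(F)}$, which is the proposition.

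The main obstacle I expect is the very first step: verifying that $S^2$ commutes with the involution at the level of $\pi_0$. The suspension $S^+$ is defined via the asymmetric embedding $\varphi\colon C\to J\times I$ (the bent rectangle is glued to the top-and-side boundary, not symmetrically), so $\overline{S^+(F)}$ and $S^+(\overline F)$ are not literally equal as homeomorphisms of $X\times J\times I$; one must produce an explicit isotopy between them, analogous to the one built in the proof of \Cref{lem:well-definedness_suspension}, and check that dualisation of the suspended isotopy realises it. A clean way to organise this is to note that $\overline F$ can itself be described as a reparametrisation-and-composition of $F$, then suspend that description directly and compare; alternatively, one can first isotope $F$ to its ``symmetric'' form (constant near both ends of $I$, as in $\mathcal P'$) and then use a symmetric variant of the suspension embedding, which makes the commutation with $r$ transparent at the cost of an isotopy back to the standard $S^+$. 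Either route is routine but fiddly, and the $\Theta$ case requires the extra care that the framing-difference $\Z/2$-invariant and the $\pi_2$-element are genuinely tracked through the symmetric capping construction; fortunately, for the applications in \Cref{sec:homeos} the manifolds are spin, so $w_1=w_2=0$ and the involutions — and their interaction with $\chi$, flagged in the remark — simplify considerably.
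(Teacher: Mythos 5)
Your overall strategy coincides with the paper's: push the dualisation involution through each stage of $\Sigma^{\TOP}=\Sigma\circ\mathfrak f_{\mathcal S}\circ\mathfrak i\circ S^2$, reduce to the smooth Hatcher--Wagoner duality formula on $N_{\mathcal S}$, and check compatibility with $\mathfrak i$ and $\mathfrak f_{\mathcal S}$ via a commutative diagram. That part of your plan is sound and is exactly what the paper does (their $\ol K$ is, as you suspect, obtained by ``dualising'' the concordance-straightening isotopy $K$ on the last coordinate, and the forgetful-map square commutes trivially).

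The genuine gap is in your handling of the suspension. You write that $\overline{S^+(F)}$ and $S^+(\overline F)$ ``are not literally equal as homeomorphisms\ldots one must produce an explicit isotopy between them.'' This is false: no such isotopy exists in general. The correct relation, quoted by the paper from Hatcher's concordance paper \cite[Appendix I, Lemma]{hatcher_concordances} and recorded as \Cref{lem:suspension_involution}, is
\[
\overline{S^+(F)} \;=\; -\,S^+(\overline F) \quad\text{in }\pi_0\mathcal P^{\CAT}(X\times J,\partial(X\times J)),
\]
i.e.\ the two classes are \emph{inverses}, not isotopic. The asymmetry of the bent-rectangle embedding $\varphi$ is precisely what produces the sign, so your proposed route of ``symmetrising'' the suspension and extracting an isotopy would not succeed. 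You do state the correct target identity $\overline{S^2(F)}=S^2(\overline F)$ at the outset, and this holds because the sign cancels after \emph{two} applications of the lemma; but as written your plan for proving it (by showing the one-suspension identity without the sign) would stall. Importantly, the proof of Hatcher's lemma is also not a small modification of \Cref{lem:well-definedness_suspension}: it requires passing to the homotopy-equivalent model $\mathcal I^{\CAT}(X)$ of the pseudo-isotopy space (automorphisms sending $X\times\{0\}$ to itself and restricting to isotopies on $\partial X\times I$, modulo isotopy), on which the involution becomes conjugation by the reflection $r$, and then running Hatcher's argument there. So the single missing ingredient is real: you need the signed suspension--involution lemma, not an unsigned isotopy.

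Your remaining concerns --- reflection-invariance of $\mathcal S$, matching of $w_1,w_2$ under the inclusion $N_{\mathcal S}\hookrightarrow X\times J^2$, and the $\chi$-quotient in the $\Theta$ case --- are all correctly flagged, and the paper handles the last via the standing hypothesis $k_1(X)=0$ together with the remark that respect for $\chi$ is conjectural in general. With the signed lemma substituted for the unsigned claim, the rest of your outline reproduces the paper's argument.
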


Before we prove \Cref{proposition:duality_inertial}, we establish how the involution interacts with the suspension map.

\begin{lem}{\cite[Appendix I, Lemma]{hatcher_concordances}}\label{lem:suspension_involution}
    Let $X$ be a compact $\CAT$ manifold and let $F\in \mathcal{P}^{\CAT}(X,\partial X)$.  Then $\ol{S^{+}(F)}=-S^{+}(\ol{F})$ in $\pi_0\mathcal{P}^{\CAT}(X\times J,\partial (X\times J))$.
\end{lem}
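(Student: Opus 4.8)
The plan is to reduce the statement to a careful bookkeeping computation directly from the definitions of the suspension map $S^+$ (\Cref{definition:suspension_map}), the dualisation involution $\ol{(\cdot)}$ (\Cref{def:dual_pseudo_isotopy}), and the group operation on $\pi_0\mathcal{P}^{\CAT}$. Since $S^+$ is built by bending the square $J\times I$ along the embedding $\varphi\colon C\to J\times I$ and then applying $F$ along the ``radial'' lines $u=\text{const}$, the key is to understand how the reflection $r\colon (p,s)\mapsto(p,1-s)$ in the $I$-direction interacts with this bending. First I would observe that $\ol{S^+(F)}$ involves precomposing and postcomposing $S^+(F)$ with $r$ (now the reflection on $(X\times J)\times I$) and then correcting by $(S^+(F)_1)^{-1}\times\Id_I$; since $S^+(F)_1 = S^+(F_1)$ by construction of the suspension, this correction term is itself $S^+$ of the correction term appearing in $\ol F$.

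The heart of the argument is a geometric identification: reflecting the bent square $\varphi(C)\subset J\times I$ in the vertical ($I$) direction is, up to an ambient isotopy of $J\times I$ rel the relevant boundary, the same as reversing the roles of the ``$v=0$'' and ``$v=1-\tfrac{\epsilon}{2}$'' edges and simultaneously reversing the $s$-parameter of $F$ along each radial line. This is exactly where the minus sign comes from: reversing the direction in which the radial lines are traversed turns $F$ into $\ol F$ along each line, but it also reverses the orientation of the suspension parameter, which on $\pi_0\mathcal{P}^{\CAT}(X\times J,\partial(X\times J))$ corresponds to taking the inverse in the group, i.e.\ negation. Concretely, I would exhibit an explicit isotopy of embeddings $\varphi_\tau\colon C\to J\times I$ interpolating between $\varphi$ composed with the vertical reflection and (a reparametrisation of) $\varphi$ itself with the $t$-coordinate reversed, check that it is compatible with the boundary conditions defining $\mathcal{P}'$, and then use \Cref{lem:well-definedness_suspension}-style reasoning to conclude that the associated suspended pseudo-isotopies are isotopic.

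The main obstacle I expect is the careful verification that the correction term $(S^+(F)_1)^{-1}\times\Id_I$ genuinely equals $S^+((F_1)^{-1}\times\Id_I)$ and is carried correctly through the bending, together with keeping track of the boundary behaviour: the suspension map has a delicate structure near $\{0\}\times I$ (the critical-point locus) and near the ``horizontal'' lines $v=0$ and $v=1-\tfrac{\epsilon}{2}$, where $S^+(F)$ is constant equal to $F_0$ or $F_1$. Getting the reflection to respect these regions, and checking that no extra correction is needed when one swaps them, is the fiddly part. I would handle this by first passing (harmlessly, as noted after \Cref{definition:suspension_map}) to the modified embedding $\phi$ whose bent rectangle avoids $\{0\}\times I$, which removes the critical-point subtlety and lets the isotopy of embeddings be constructed purely in the region where everything is a product. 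Once the geometric picture is set up this way, the identity $\ol{S^+(F)}=-S^+(\ol F)$ follows by unwinding definitions; since this lemma is quoted from \cite[Appendix I, Lemma]{hatcher_concordances} and the proof there is purely formal in the relevant coordinates, I would at most sketch these verifications and refer to Hatcher for the remaining routine details.
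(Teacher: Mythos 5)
Your plan takes a genuinely different route from the paper. You work directly in $\mathcal{P}^{\CAT}(X,\partial X)$, carrying the correction term $(S^+(F)_1)^{-1}\times\Id_I$ through the computation, whereas the paper first replaces $\mathcal{P}^{\CAT}(X,\partial X)$ by the homotopy-equivalent model $\mathcal{I}^{\CAT}(X)$ of automorphisms of $X\times I$ preserving $X\times\{0\}$ and restricting to isotopies on $\partial X\times I$, modulo isotopy. In that model the dualisation involution becomes simply conjugation by the reflection $r$ with \emph{no} correction term, which is what makes the remaining bookkeeping in Hatcher's reference manageable. The paper's route buys a much cleaner form for the involution at the modest cost of constructing the homotopy inverse $\mathcal{I}^{\CAT}(X)\to\mathcal{P}^{\CAT}(X,\partial X)$; your route keeps the correction term, which you handle reasonably with the observation that $(S^+(F)_1)^{-1}\times\Id_I$ is the suspension of the correction term in $\ol{F}$.

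There is, however, a gap at the crux of your argument. You assert that ``reversing the orientation of the suspension parameter $\ldots$ corresponds to taking the inverse in the group, i.e.\ negation,'' and treat this as a definitional consequence of the bending picture. It is not: what the reflection $r$ on $(X\times J)\times I$ actually does to the bent square $\varphi(C)$ is flip it from a $\cap$-shaped configuration (outer boundary on the top face) to a $\cup$-shaped configuration (outer boundary on the bottom face), which is precisely the \emph{other} suspension $S^-$ rather than a reparametrisation of $S^+$. The identity $\ol{S^+(F)} = S^-(\ol{F})$ is the genuinely easy geometric part; the minus sign then requires the separate and non-formal fact that $S^- = -S^+$ in $\pi_0\mathcal{P}^{\CAT}(X\times J,\partial(X\times J))$, which is an eye-cancellation argument and not a matter of bookkeeping in the $\varphi$-coordinates. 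Relatedly, your proposed explicit isotopy $\varphi_\tau$ of embeddings $C\to J\times I$ from $r\circ\varphi$ to a $t$-reversed $\varphi$ cannot exist rel the boundary conditions defining $\mathcal{P}'$: the two configurations have their outer boundary arcs on opposite faces of $J\times I$, so no isotopy of embeddings of the bent square alone can connect them while preserving the ``constant near $X\times\{0\}$ and near $X\times\{1\}$'' structure. The required isotopy of suspended pseudo-isotopies must use the extra $J$-direction in an essential way, which is exactly the content hidden in $S^-=-S^+$. You should either invoke this identity explicitly as a separate input or, as the paper does, pass to $\mathcal{I}^{\CAT}(X)$ where the involution is conjugation by $r$ and the reduction to Hatcher's computation is cleaner.
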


\begin{proof}
    Although the proof is given in the attribution, we spell out some initial details.  Firstly, the proof proceeds by first passing to a modification of the pseudo-isotopy space.  We denote by $\mathcal{I}^{\CAT}(X)$ the space of $\CAT$ automorphisms of $X\times I$ which send $X\times \{0\}$ to $X\times\{0\}$ and restrict to isotopies along $\partial X\times I$ modulo isotopies of $X\times I$.  Hatcher claims that the inclusion map $\mathcal{P}^{\CAT}(X,\partial X)\to \mathcal{I}^{\CAT}(X)$ is a homotopy equivalence.  We construct the homotopy inverse.

    Let $G\in \mathcal{I}^{\CAT}(X)$.  One can verify that $(G_0^{-1}\times \Id)\circ G \in \mathcal{P}^{\CAT}(X)$.  Since $G$ originally restricted to an isotopy on $\partial X\times I$, we can now `push out' the identity map along $\partial X\times I$ using the reverse of this isotopy, and further use of isotopy extension in a collar results in a $\CAT$ pseudo-isotopy $G'\in \mathcal{P}^{\CAT}(X,\partial X)$.  We have now defined a map $\mathcal{I}^{\CAT}(X)\to \mathcal{P}^{\CAT}(X,\partial X)$ which we claim is the homotopy inverse.  We leave verifying this to the reader.

    To prove the lemma it now suffices to show it for the involution on $\mathcal{I}^{\CAT}(X,\partial X)$.  On $\mathcal{I}^{\CAT}(X,\partial X)$, note that the involution is now equal to the map which conjugates by the reflection map $r$.  Now the interested reader is equipped to read the proof in \cite{hatcher_concordances}.
\end{proof}

\begin{proof}[Proof of \Cref{proposition:duality_inertial}]
We will prove the formula for $\Sigma^{\TOP}$ (the proof for $\Theta^{\TOP}$ is analogous).
Let $F \in \mathcal{P}^{\TOP}(X,\partial X)$. We consider the following diagram:
\[\begin{tikzcd}[cramped]
	{\pi_0(\mathcal{P}^{\TOP}(X \times J^2, \partial (X\times J^2))} & {\pi_0(\mathcal{P}^{\TOP}(X \times J^2, \partial (X\times J^2))} \\
	{\pi_0(\mathcal{P}^{\TOP}(N,\partial N))} & {\pi_0(\mathcal{P}^{\TOP}(N,\partial N))} \\
	{\pi_0(\mathcal{P}^{\DIFF}(N,\partial N))} & {\pi_0(\mathcal{P}^{\DIFF}(N,\partial N))}
	\arrow["{{\ol{\cdot}}}", from=1-1, to=1-2]
	\arrow["{\cong,\ K}"', curve={height=24pt}, dashed, from=1-1, to=2-1]
	\arrow["{\cong,\ \ol{K}}", curve={height=-24pt}, dashed, from=1-2, to=2-2]
	\arrow[from=2-1, to=1-1]
	\arrow["{{\ol{\cdot}}}", from=2-1, to=2-2]
	\arrow[from=2-2, to=1-2]
	\arrow["\cong"', from=3-1, to=2-1]
	\arrow["{{\ol{\cdot}}}", from=3-1, to=3-2]
	\arrow["\cong", from=3-2, to=2-2]
\end{tikzcd}\]
The map $K$ on the left is given by straightening concordances (see \cref{lem:3_skeleton_isomorphism}) and 
the map $\ol{K}$  is defined as an isotopy $\ol{K}\colon (X \times J^2 \times I) \times I \to (X\times J^2 \times I) \times I$ which is given by dualising $K$ on the last $I$ coordinate---treating this isotopy as a pseudo-isotopy---using the $\ol{\cdot}$ construction on $K$.
We now prove commutativity of the diagram. 
The topmost square commutes by definition of $\ol{K}$. 
The bottom square commutes trivially, since forgetting the smooth structure and dualizing is the same as dualizing and forgetting the smooth structure.

Let $G\in \pi_0(\mathcal{P}^{\DIFF}(N,\partial N))$ be the resulting smooth pseudo-isotopy constructed in the definition of $\Sigma^{\TOP}(F)$. We conclude that
\[
\ol{\Sigma^{\TOP}(F)}=\ol{\Sigma(G)}=\Sigma(\ol{G})=\Sigma^{\TOP}(\ol{S^+(S^+(F))})=\Sigma^{\TOP}(\ol{F})
\]
where the first equality is via the definition of $\Sigma^{\TOP}$, the second is from the smooth duality formula \cite[Part I, Chapter VIII]{hatcher_wagoner_1973}, the third is via the commutativity of the above diagram,\footnote{Note the abuse of notation $\Sigma^{\TOP}(\ol{S^+(S^+(F))})$; as defined in \Cref{sec:def_top_obstructions} this would mean first suspend $\ol{S^+(S^+(F))}$ twice, but instead we skip this suspension step since this is already a 6-dimensional pseudo-isotopy.} and finally the last is given by two applications of \Cref{lem:suspension_involution}.\qedhere
\end{proof}

\section{Application to homeomorphisms of $Y^3 \times I$}\label{sec:homeos}

In this section, we will use the realisation theorem (\Cref{thm:realisation}) and the duality formula \Cref{thm:duality} to produce homeomorphisms of smooth 4-manifolds of the form $M^3 \times I$ which are topologically pseudo-isotopic to the identity, but not topologically isotopic to the identity.  We will only consider the second obstruction, $\Theta^{\TOP}$, in this section.  

\subsection{Inertial pseudo-isotopies of $Y^3 \times I$.}
In general, the duality formula \Cref{thm:duality} is not enough to be able to sufficiently control $\Theta^{\TOP}(\pi_0\mathcal{J}^{\TOP}(X,\partial X))$.  Hatcher shows in \cite[Part II, Lemma 5.3]{hatcher_wagoner_1973} that if we restrict to the case that $X=M\times I$, then we can say more.  To describe this,  we note that there is a differential defined on $\Wh_2(\pi_1X) \oplus \Wh_1(\pi_1X;\Z/2\times \pi_2 X)$ given by $d_i(x)= x-(-1)^i \overline{x}$. 
We can define the subgroup $Z_i= \ker d_i$, which we can also split as: \[ Z_i= Z^2_i \oplus Z^1_i\subset \Wh_2(\pi_1X) \oplus \Wh_1 (\pi_1X;\Z/2 \times \pi_2X).\]

For simplicity we write $\mathcal{J}$ for $\pi_0\mathcal{J}(X,\partial X)$ when the $X$ in question is clear.

\begin{lemma}\label{lem:duality_application}
    Let $X=Y^3\times I$ for some compact, orientable 3-manifold $Y$.  Then
    \[
    \Sigma^{\TOP}(\mathcal{J})\subset Z^2_4,\ \text{and}\ \Theta^{\TOP}(\mathcal{J}\cap \ker \Sigma^{\TOP})\subset Z^1_4.
    \]
\end{lemma}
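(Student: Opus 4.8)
The key point is to reduce the statement, which concerns topological pseudo-isotopies of $Y^3 \times I$, to a statement about the \emph{suspended} smooth situation, where Hatcher's analysis \cite[Part II, Lemma 5.3]{hatcher_wagoner_1973} applies directly. The plan is as follows. First I would observe that an inertial pseudo-isotopy $F \in \mathcal{J}^{\TOP}(Y \times I, \partial(Y\times I))$ has the property that $F|_{(Y\times I)\times\{1\}} = \Id$, so by \Cref{def:intertial_pseudo_isotopy} it fixes the entire boundary of $(Y\times I)\times I$; this presents $Y\times I\times I$ as a product $Y\times I^2$ and the inertial condition persists under the suspension maps (one can check directly from \Cref{definition:suspension_map} that $S^+$ sends inertial pseudo-isotopies to inertial pseudo-isotopies, since the top-end map of $S^+(F)$ is $F|_{X\times 1}$, which is the identity when $F$ is inertial). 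Hence $S^2(F)$ is an inertial pseudo-isotopy of $(Y\times I)\times J^2$, and via \Cref{lem:3_skeleton_isomorphism} and the smoothing of $N$ (the neighbourhood of the $3$-handle-skeleton) it is carried to a \emph{smooth} pseudo-isotopy $G$ of $N_{\mathcal{S}}$, which one should arrange to still be inertial — here one needs to check that the isomorphisms $\mathfrak{i}$ and $\mathfrak{f}_{\mathcal{S}}$ respect the inertial condition, which again follows from the fact that all the maps involved fix boundaries by construction.

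Second, I would invoke the duality formula \Cref{thm:duality} (equivalently \Cref{proposition:duality_inertial}), which requires $k_1(Y\times I) = k_1(Y) = 0$. \textbf{Wait} — the lemma as stated does \emph{not} assume $k_1(Y)=0$; so instead of using duality in the $\TOP$ world, the cleaner route is to transport the inertial pseudo-isotopy all the way to the smooth pseudo-isotopy $G$ of (a smoothing of) $N$, note that $N$ is, up to the relevant connectivity, a neighbourhood of the $3$-skeleton of $(Y\times I)\times J^2$, and that $Y\times I \times J^2 = Y \times I^3$ (rearranging coordinates) is again of the form $M \times I$ with $M = Y\times I^2$. Then Hatcher's smooth result \cite[Part II, Lemma 5.3]{hatcher_wagoner_1973}, applied to the smooth pseudo-isotopy of the smooth manifold $M\times I$, gives that $\Sigma(G) \in Z_4^2$ and (if $G\in\ker\Sigma$) $\Theta(G)\in Z_4^1$, where the subscript $4$ records the sign $(-1)^4 = +1$ in the differential $d_4$, matching the fact that $M\times I$ is odd-plus-one dimensional in the relevant sense. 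Since by \Cref{def:top_hw} we have $\Sigma^{\TOP}(F) = \Sigma(G)$ and $\Theta^{\TOP}(F) = \Theta(G)$ (under the natural identifications $\pi_k(Y\times I) \cong \pi_k(Y\times I\times J^2)$, which are compatible with the involutions by \Cref{lrm:suspension_preserves_Sigma_Theta} and the discussion of the involution in \Cref{sec:duality}), the containments $\Sigma^{\TOP}(F) \in Z_4^2$ and $\Theta^{\TOP}(F)\in Z_4^1$ follow.

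The third ingredient is to check that the splitting $Z_i = Z_i^2 \oplus Z_i^1$ and the differential $d_i(x) = x - (-1)^i\overline{x}$ are preserved under the suspension-induced identifications of Whitehead groups; this is where one uses \Cref{lem:suspension_involution}, which says $\ol{S^+(F)} = -S^+(\ol F)$, together with \Cref{lrm:suspension_preserves_Sigma_Theta}. Composing: $\Sigma^{\TOP}(\ol F)$ corresponds under suspension to $\Sigma(\ol G)$ which equals $\overline{\Sigma(G)}$ by the smooth duality formula, and one tracks the signs through the two suspensions using \Cref{lem:suspension_involution} twice (each suspension contributes a sign, so two suspensions contribute $(-1)^2 = +1$ and the parity of the ambient dimension is what fixes the index $i=4$). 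This bookkeeping of signs is the part I would be most careful with, since it is easy to be off by a sign and land in the wrong $Z_i$.

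\textbf{Main obstacle.} The hard part is not any single step but the careful verification that the \emph{inertial} condition is preserved through the entire chain of reductions defining $\Sigma^{\TOP}$ and $\Theta^{\TOP}$ — the double suspension $S^2$, the restriction to a neighbourhood $N$ of the $3$-handle-skeleton via $\mathfrak{i}$, and the smoothing $\mathfrak{f}_{\mathcal{S}}$ — and, simultaneously, that these identifications intertwine the involution $\ol{\cdot}$ with the standard involutions on the Whitehead groups in a way compatible with the decomposition $Z_i = Z_i^2\oplus Z_i^1$. The suspension step is the subtle one: $S^+$ is not the trivial stabilisation (see \Cref{remark:well_behaved_suspension}), so one must confirm using the explicit embedding $\varphi\colon C \to J\times I$ from \Cref{definition:suspension_map} that the suspended pseudo-isotopy still restricts to the identity on the full boundary of $(Y\times I)\times J^2 \times I$, which it does precisely because the top-end map being the identity makes the ``bent'' region act trivially. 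Once inertiality is propagated to the smooth level, the conclusion is an immediate consequence of Hatcher's \cite[Part II, Lemma 5.3]{hatcher_wagoner_1973} and \Cref{lemma:smooth_is_equal_top}.
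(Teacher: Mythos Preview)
Your proposal takes a much more circuitous route than the paper, and in doing so introduces a genuine gap.

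The paper's proof is two lines: Hatcher's argument in \cite[Part II, Lemma~5.3]{hatcher_wagoner_1973} uses nothing from the smooth category except the duality formula $\Sigma(\ol{F})=\ol{\Sigma(F)}$, $\Theta(\ol{F})=\ol{\Theta(F)}$. Since \Cref{thm:duality} establishes exactly this formula for $\Sigma^{\TOP}$ and $\Theta^{\TOP}$, Hatcher's argument runs verbatim with the topological invariants. There is no need to transport the inertial pseudo-isotopy through the suspension--skeleton--smoothing pipeline at all; the duality formula has already absorbed all of that work.

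Your attempt to propagate inertiality through $\mathfrak{i}$ and $\mathfrak{f}_{\mathcal{S}}$ is where the gap lies. These maps are defined only as isomorphisms on $\pi_0$, obtained by \emph{inverting} extension-by-identity and the forgetful map respectively. Given an inertial $S^2(F)$ on $(Y\times I)\times J^2$, the class $\mathfrak{i}([S^2(F)])\in\pi_0\mathcal{P}^{\TOP}(N,\partial N)$ is merely some isotopy class whose extension-by-identity is isotopic to $S^2(F)$; you would need an \emph{inertial} representative of that class, and nothing in \Cref{thm:top_version_3.1} or \cite{burghelea_lashof_1974} provides one. The same issue recurs for $\mathfrak{f}_{\mathcal{S}}$. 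Your claim that ``all the maps involved fix boundaries by construction'' conflates the forward maps (which do) with their inverses on $\pi_0$ (which need not produce inertial representatives). Even granting this, the smooth pseudo-isotopy $G$ lives on $N$, which is not of the form $M\times I$, so Hatcher's lemma does not apply to it directly; you would have to re-extend to $(Y\times I)\times J^2$ and then argue that the extension is still inertial, looping back to the same difficulty.

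You are right to flag that the lemma as stated omits the hypothesis $k_1(Y)=0$ needed to invoke \Cref{thm:duality} for $\Theta^{\TOP}$; the paper's proof does implicitly use it, and every application downstream (\Cref{thm:interesting_homeomorphisms}) carries that hypothesis anyway. But the correct response is to note the missing hypothesis and then use the duality formula directly, not to bypass duality by chasing inertiality through the definition.
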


\begin{proof}
    We refer to Hatcher's proof of the corresponding fact in the smooth case: \cite[Part II, Lemma 5.3]{hatcher_wagoner_1973}.  The proof does not use the smooth category, only the duality formula \cite[Part I, Chapter VIII; Part II, \S 4]{hatcher_wagoner_1973}, which we proved for our topological invariants in \Cref{thm:duality}.
\end{proof}

This lemma allows us to significantly control the values of $\Theta^{\TOP}$ realised by inertial pseudo-isotopies, which we now use to produce interesting homeomorphisms.

\subsection{Homeomorphisms of $Y^3 \times I$}\label{sbs:YxI}

In this section we utilize the topological invariant $\Theta^{\TOP}$, together with \Cref{thm:realisation_sigma} and \Cref{lem:duality_application} to construct homeomorphisms of $Y^3 \times I$ (where $Y^3$ is a closed 3-manifold) that are pseudo-isotopic to the identity but not isotopic to it.  Later on, in \Cref{sbs:YxS1}, we will explain how to translate this into the closed case i.e.\ homeomorphisms of $Y^3\times S^1$.  Our discussion mirrors the discussion of Singh \cite[Section 9]{singh2022pseudoisotopies}.

Recall that we have a splitting $\Wh_1(\pi_1X ; \Z/2 \times \pi_2 X) = \Wh_1 (\pi_1X;\Z/2) \oplus \Wh_1(\pi_1 X; \pi_2 X)$. Moreover, we also have $\Wh_1(\pi_1 X; \Gamma)= \Gamma[\pi_1X]/ \langle \alpha \sigma - \alpha^{\tau}\tau \sigma \tau^{-1}, \beta \cdot 1 \mid \alpha, \beta \in \Gamma, \sigma, \tau \in \pi_1 X\rangle$.  In what follows we will only consider the summand $\Wh_1(\pi_1(X);\Z/2)$, which can be described as \[\Wh_1(\pi_1X;\Z/2)\cong\Z/2[\pi_1 X]/\langle \gamma(g_1-g_2g_1g_2^{-1}), \gamma'\rangle\]
where $\gamma,\gamma'$ range over all elements in $\Z/2$ and $g_1,g_2$ range over all elements in $\pi$.  It follows that, as a $\Z/2$-vector space, we have
\[ \Wh_1(\pi_1(X);\Z/2)\cong \bigoplus_c \Z/2\]
where $c$ ranges over all non-trivial conjugacy classes in $\pi_1(X)$.
Clearly the action of $\pi_1$ on $\pi_2$ is trivial and $k_1 X \in H^3(\pi_1 X;\pi_2 X)$ is trivial as well.
We can hence consider $\Theta^{\TOP}(\ker \Sigma^{\TOP}) \subset \Wh_1(\pi_1 X;\Z/2 \times \pi_2 X)=\Wh_1(\pi_1X;\Z/2)$.

We know that \[\Theta^{\TOP}( \mathcal{J}(Y^3 \times I)) \subset Z_4((Y^3 \times I))= \{\Theta^{\TOP} \in \Wh_1(\pi_X;\Z/2)\mid\Theta^{\TOP}=\overline{\Theta^{\TOP}}\}.\]
We would like to show that, for our cases of interest, the quotient $$\left(\bigoplus_{c \in \{conj \setminus {1}\}} \Z/2 \right)/ \left(Z_4 \cap \ker \Sigma^{\TOP}\right)$$ is non-trivial, i.e.\ there exist homeomorphisms that are pseudo-isotopic but not isotopic to the identity.  Before stating the main theorem we need one definition.

\begin{definition}\label{def:ambivalent}
    We say that a group is \emph{ambivalent} if every element is conjugate to its inverse.
\end{definition}

\begin{thm}\label{thm:interesting_homeomorphisms}
Let $Y^3$ be a 3-manifold whose first $k$-invariant $k_1(Y)$ is trivial and with $\pi_1(Y)$ good and not ambivalent.  Then there exists a homeomorphism $f\colon Y\times I\to Y\times I$ which is pseudo-isotopic to the identity but not isotopic to the identity.  In particular, there exists a homeomorphism $Y\times I\to Y\times I$ which is homotopic but not isotopic to the identity.
\end{thm}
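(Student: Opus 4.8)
The plan is to combine the realisation theorem for $\Theta^{\TOP}$ with the duality constraint on inertial pseudo-isotopies. Since $Y$ is a compact orientable $3$-manifold, it is smoothable, hence so is $X := Y\times I$; moreover $\pi_1(X)\cong\pi_1(Y)$ is good, $\pi_2(X)\cong\pi_2(Y)$, the action of $\pi_1(X)$ on $\pi_2(X)$ is trivial, and $k_1(X)=0$ (all inherited from $Y$). Consequently the obstruction group simplifies: $\Wh_1(\pi_1(X);\Z/2\times\pi_2(X))/\chi = \Wh_1(\pi_1(X);\Z/2\times\pi_2(X))$, and we may focus on its $\Z/2$-summand, which as a $\Z/2$-vector space is $\bigoplus_{c}\Z/2$ with $c$ ranging over the nontrivial conjugacy classes of $\pi_1(Y)$.

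\textbf{Key steps, in order.} First I would choose a nontrivial conjugacy class $c$ of $\pi_1(Y)$ with $c\neq c^{-1}$; such a class exists precisely because $\pi_1(Y)$ is not ambivalent. Let $g\in c$ and take $y\in\Wh_1(\pi_1(X);\Z/2)$ to be the class of $1\cdot g$ (i.e. the generator supported on $c$). Second, apply \Cref{thm:realisation_theta} (the smoothable case, \Cref{lem:smoothable_theta}, suffices): there is a pseudo-isotopy $F\colon X\times I\to X\times I$ with $\Sigma^{\TOP}(F)=0$ and $\Theta^{\TOP}(F)=y$. Set $f := F_1 = F|_{X\times\{1\}}\colon X\to X$; by construction $f$ is pseudo-isotopic to the identity, and since $\pi_0\mathcal{P}^{\TOP}(X,\partial X)\to\pi_0(\mathcal{P}^{\TOP})$ arises from homeomorphisms connected to $\Id$ through a pseudo-isotopy (which in particular furnishes a homotopy $X\times I\to X$ from $\mathrm{id}$ to $f$ by projecting $F$), $f$ is homotopic to the identity. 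Third, I would argue $f$ is not topologically isotopic to the identity. Suppose it were; then there is a level-preserving pseudo-isotopy $G$ from $\Id$ to $f$, so $F\circ G^{-1}$ is an inertial pseudo-isotopy with $\Sigma^{\TOP}(F\circ G^{-1})=0$ (both summands vanish: $\Sigma^{\TOP}$ vanishes on isotopies and $\Sigma^{\TOP}(F)=0$) and $\Theta^{\TOP}(F\circ G^{-1})=\Theta^{\TOP}(F)=y$, using that $\Theta^{\TOP}$ is a homomorphism and vanishes on isotopies. But by \Cref{lem:duality_application}, $\Theta^{\TOP}(\mathcal J\cap\ker\Sigma^{\TOP})\subseteq Z^1_4 = \{x : x=\overline{x}\}$. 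Since $Y$ is orientable, $w_1=0$; since $Y^3\times I$ is spin (every orientable $3$-manifold is parallelisable), $w_2=0$, so the involution on $\Wh_1(\pi_1(X);\Z/2)$ is simply $g\mapsto g^{-1}$ on conjugacy classes. Thus $\overline{y}$ is the generator supported on $c^{-1}\neq c$, so $y\neq\overline{y}$, contradicting $y\in Z^1_4$. Hence $f$ is not isotopic to the identity.

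\textbf{Main obstacle.} The conceptual core is already packaged in the earlier results, so the residual work is bookkeeping: verifying that $f=F_1$ is genuinely homotopic to the identity (project the pseudo-isotopy $F$ to get a homotopy, being careful about boundary behaviour — this is where the convention $F|_{\partial X\times I}=f|_{\partial X}\times\mathrm{Id}$ from \Cref{definition:pseudo_isotopy} must be reconciled with wanting $f$ to agree with $\mathrm{id}$ suitably near $\partial X$, which holds since $\Sigma^{\TOP},\Theta^{\TOP}$ realisations can be taken supported away from $\partial X$), and confirming that the duality involution reduces to inversion on conjugacy classes in the spin/orientable setting. The genuinely delicate point — and the one I would flag as the main obstacle — is ensuring the well-definedness argument for the obstruction to isotopy: one must check that passing from "$f$ isotopic to $\Id$" to "$F\circ G^{-1}$ inertial with the stated invariants" is legitimate, i.e. that composing pseudo-isotopies behaves well with respect to $\Sigma^{\TOP}$ and $\Theta^{\TOP}$ and that the relevant invariants of a level-preserving pseudo-isotopy vanish; this is exactly the content that makes the quotient $\big(\bigoplus_c\Z/2\big)/(Z^1_4\cap\ker\Sigma^{\TOP})$ the correct receptacle for the obstruction, and it is where the hypotheses $k_1(Y)=0$ and $\pi_1(Y)$ good are silently used (goodness via \Cref{thm:realisation_theta}, triviality of $k_1$ so that no further quotient by $\chi$ is needed and the involution is well-defined on the nose).
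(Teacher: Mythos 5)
Your proposal is correct and takes essentially the same route as the paper: pick a non-self-inverse conjugacy class $c$ (which exists by non-ambivalence), realise $1\cdot c$ via \Cref{thm:realisation_theta}, and use the duality constraint \Cref{lem:duality_application} to show no inertial pseudo-isotopy can kill $\Theta^{\TOP}$. The paper is terser, stating directly that $\Theta^{\TOP}(F)\notin Z_4\cap\ker\Sigma^{\TOP}$ forces $f$ not to be isotopic to the identity; your spelled-out argument with $F\circ G^{-1}$ is precisely the mechanism the paper is invoking implicitly, and your observations about $w_1=w_2=0$ in the spin/orientable setting reproduce the footnote and the standing orientability hypothesis of \Cref{lem:duality_application}.
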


\begin{proof}
   By the discussion immediately above we have that $\Wh_1(\pi_1(Y);\Z/2)\cong \bigoplus_c \Z/2$, where $c$ ranges over all non-trivial conjugacy classes in $\pi_1(Y)$.  Since $\pi_1(Y)$ is not ambivalent, we have that there exists at least one non-trivial conjugacy class $c$ such that for every $g\in c$, $g^{-1}\notin c$.  By \Cref{thm:realisation_theta} there exists a pseudo-isotopy $F\colon Y\times I^2\to Y\times I^2$ such that $\Sigma^{\TOP}(F)=0$ and \[\Theta^{\TOP}(F)=1\cdot c\in \Wh_1(\pi_1(Y\times I);\Z/2)\subset \Wh_1(\pi_1(Y\times I);\Z/2\times\pi_2(Y\times I)).\]
    We define $f:=F_{Y\times I\times \{1\}}\colon Y\times I\to Y\times I$.  Since $\overline{c}\neq c$ by assumption, $\Theta^{\TOP}(F)\notin Z_4\cap \ker\Sigma^{\TOP}$ and hence $\Theta^{\TOP}(F)$ cannot be killed by an inertial pseudo-isotopy.  It follows that $f$ is not isotopic to the identity (but it is clearly pseudo-isotopic to the identity via $F$).
\end{proof}

\subsection{Non-ambivalent 3-manifold groups}

We now briefly investigate which 3-manifolds satisfy the condition in \Cref{thm:interesting_homeomorphisms}.  This by no means constitutes a full investigation, but serves to illustrate that our realisation applies to many 3-manifolds.  

We make some brief observations about \cref{def:ambivalent}.

\begin{remark}\label{rem:ambivalence}
    The following two properties hold.
    \begin{enumerate}
        \item A finite group is ambivalent if and only if all of its irreducible representations have real characters \cite[Page 31]{isaacs_1976}.
        \item If a group is ambivalent then its centre is ambivalent (a simple exercise).
    \end{enumerate} 
\end{remark}

We start with the finite 3-manifold groups, where we can fully say for which 3-manifolds our realisation will work.

\begin{proposition}\label{prop:ambivalence}
    All ambivalent, finite 3-manifold groups are listed below.  The rest are not ambivalent.
    \begin{enumerate}[(i)]
        \item The cyclic groups $\Z_m$ for $m=1,2$.
        \item The dicyclic group of order $8n$, $\Dic_{2n}$.
        \item The binary octahedral group $O_{48}$.
        \item The binary icosahedral group $I_{120}$ .
    \end{enumerate}
\end{proposition}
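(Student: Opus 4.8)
The plan is to combine the classification of finite $3$-manifold groups — equivalently, of finite groups acting freely and orthogonally on $S^3$, i.e.\ the spherical space form groups of Hopf, Seifert--Threlfall, Milnor and Wolf — with the two observations in \Cref{rem:ambivalence} and the elementary fact that an abelian group is ambivalent if and only if it has exponent at most $2$. Let $2T$, $2O$, $2I$ denote the binary tetrahedral, octahedral and icosahedral groups, of orders $24$, $48$, $120$ (so $2O=O_{48}$ and $2I=I_{120}$ in the notation of the statement). Every finite $3$-manifold group $\pi$ is a direct product $\pi\cong\Z_d\times\pi'$ with $\gcd(d,|\pi'|)=1$, where $\pi'$ lies in one of six families: (I) $\pi'$ cyclic; (II) $\pi'$ dicyclic (binary dihedral); (III) the binary tetrahedral family, with representatives $Q_8\rtimes\Z_{3^k}$, $k\ge 1$, the case $k=1$ being $2T=\SL(2,3)$; (IV) $\pi'=2O$; (V) $\pi'=2I$; (VI) $\pi'$ metacyclic of the form $\Z_m\rtimes\Z_{2^k}$ with $m$ odd and a cyclic normal subgroup of index $2$. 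The key reduction I would use throughout is that ambivalence passes to and is detected by direct factors — $(g,h)$ is conjugate to $(g^{-1},h^{-1})$ in $A\times B$ exactly when $g\sim g^{-1}$ in $A$ and $h\sim h^{-1}$ in $B$ — so $\Z_d\times\pi'$ is ambivalent if and only if $\Z_d$ is (i.e.\ $d\le 2$) and $\pi'$ is. In families (II)--(VI) the factor $\pi'$ has even order, so $\Z_2\times\pi'$ contains the noncyclic abelian subgroup $\Z_2\times Z(\pi')$ and cannot act freely on $S^3$; thus for those families only $d=1$ can yield an ambivalent $3$-manifold group, while family (I) contributes precisely $\Z_1$ and $\Z_2$, giving item (i).

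I would then treat the remaining families in turn. For (II), compute in the dicyclic group of order $4\nu$, presented by $\langle a,b\mid a^{2\nu}=1,\ b^2=a^\nu,\ bab^{-1}=a^{-1}\rangle$: the $a^k$ are always conjugate to $a^{-k}$ (via $b$), while $(a^kb)^{-1}=a^{k-\nu}b$, and a short conjugacy-class computation shows $a^kb\sim(a^kb)^{-1}$ for all $k$ exactly when $\nu$ is even; hence the ambivalent dicyclic groups are those of order divisible by $8$, i.e.\ the groups $\Dic_{2n}$ (of order $8n$) of item (ii). For (III), $Q_8\rtimes\Z_{3^k}$ has commutator subgroup $Q_8$ and abelianisation $\Z_{3^k}$, so it carries a nontrivial complex one-dimensional character and is not ambivalent by the first point of \Cref{rem:ambivalence}. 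For (IV) and (V), I would read off the standard character tables of $2O$ and $2I$: the representations factoring through $S_4$ (resp.\ $A_5$) have real characters since $S_4$ and $A_5$ are ambivalent, and the remaining \emph{spinorial} representations have characters in $\Q(\sqrt 2)$ (resp.\ $\Q(\sqrt 5)$), hence real; so $2O$ and $2I$ are ambivalent, giving items (iii) and (iv). For (VI), with $\pi'=\Z_m\rtimes\Z_{2^k}$, $m\ge 3$ odd (smaller $m$ or trivial action degenerate into family (I) or fail to be $3$-manifold groups): the cyclic index-$2$ subgroup forces a suitable power of the $\Z_{2^k}$-generator to be central, so for $k\ge 3$ the centre has an element of order $4$ and ambivalence fails by the second point of \Cref{rem:ambivalence}; for $k=2$ (and $k=1$, which does not occur here) a conjugacy-class computation as in the dicyclic case shows the elements $x^ay^b$ with $b$ odd are not conjugate to their inverses, so no group in family (VI) is ambivalent.

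Assembling these cases gives exactly the list $\Z_1$, $\Z_2$, $\Dic_{2n}$, $O_{48}$, $I_{120}$, and shows every other finite $3$-manifold group fails to be ambivalent, proving the proposition.

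The main obstacle I anticipate is bookkeeping rather than substance: one must pin down the six families together with the coprimality and order constraints precisely enough that the list is genuinely exhaustive, and must check that these constraints really rule out would-be ambivalent groups such as $\Z_2\times 2O$ or $\Z_2\times\Dic_{2n}$. The only genuinely nontrivial input is the character-theoretic fact separating $2O$ and $2I$ (ambivalent) from $2T$ (not) — this is precisely what produces the asymmetry between $2T$, $O_{48}$ and $I_{120}$ in the statement, and it is why the argument cannot be completed by centres and direct factors alone.
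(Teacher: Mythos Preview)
Your proof is correct and follows the same overall strategy as the paper: invoke the classification of finite $3$-manifold groups, use the centre criterion from \Cref{rem:ambivalence}(2) to dispose of most cases, and handle the remaining groups by character theory or direct conjugacy computation. The two arguments differ mainly in how they organise the classification and in which tool they reach for in each case.

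The paper uses a coarser formulation of the classification --- every non-cyclic finite $3$-manifold group is a central extension of $D_n$, $A_4$, $S_4$ or $A_5$ by an even-order cyclic group --- which makes the centre argument immediate: if the kernel has order $\ge 4$ the centre already fails to be ambivalent, leaving only the $\Z/2$-extensions ($\Dic_\ell$, $T_{24}$, $O_{48}$, $I_{120}$) to examine individually. You instead use the finer six-family direct-product form $\Z_d\times\pi'$ with $\gcd(d,|\pi'|)=1$, which requires you to treat the generalised tetrahedral and metacyclic families separately but lets you argue via direct factors rather than centres. For individual groups the paper leans more heavily on cited character tables (e.g.\ handling $\Dic_{2n}$ via the isomorphism of its character table with that of $D_{4n}$, and $T_{24}$ by table lookup), whereas you do the dicyclic case by an explicit conjugacy computation and dispatch the tetrahedral family via its abelianisation $\Z_{3^k}$ --- a slightly cleaner argument that avoids any table lookup for $2T$. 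Both routes land on the same list with comparable effort.
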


\begin{proof}
    We begin by showing that all of the listed groups are ambivalent.  For (i) the claim is clear.  For the rest we prove this using (1) in \Cref{rem:ambivalence}, i.e.\ we show that in all of the cases these groups have all characters of irreducible representations being real.  For (ii) we use the fact that $\Dic_{2n}$ and $D_{4n}$, the dihedral group of order $8n$, have isomorphic character tables \cite[Page 64]{feit_1967}.  From the standard group presentation of $D_{k}$ it is not hard to see that all dihedral groups are ambivalent, and hence all irreducible representations of $D_{k}$ have real characters.  For (iii) and (iv) we appeal to the character tables of these groups, which can be found on GroupNames at \url{https://people.maths.bris.ac.uk/~matyd/GroupNames/}.

    We now show that all other finite 3-manifold groups are not ambivalent.  To begin with, we appeal to the classification of finite 3-manifold groups which is essentially due to Hopf \cite{hopf_1926} (see also \cite{milnor_1957,orlik_1972}).  The useful fact is the following: every non-cyclic finite 3-manifold group is a central extension of one of the following groups by an even order cyclic group: dihedral groups $D_n$, the tetrahedral group $A_4$, the octahedral group $S_4$ or the icosahedral group $A_5$. 
    By (2) of \Cref{rem:ambivalence}, we see that if the 3-manifold group is a central extension by $\Z/2m$ where $m\geq 2$, then automatically the group is not ambivalent.  Hence we only have to show that the groups obtained via central extension by $\Z/2$ are not ambivalent, or were already included in the statement of the proposition.  The ones not appearing in the statement of the proposition are the following groups (here we again appeal to the classification to know that these are the only $\Z/2$ central extensions which occur). 
    \begin{enumerate}[(a)]
        \item Odd degree dicyclic groups $\Dic_{2n+1}$.
        \item The binary tetrahedral group $T_{24}$.
    \end{enumerate}

    We deal with case (a) first.  A presentation for $\Dic_{\ell}$ is given by
    \[
    \Dic_\ell = \langle a, x \mid a^{2\ell}=1, x^2=a^\ell, x^{-1}ax=a^{-1}\rangle,
    \]
    and one can calculate from this presentation that the element $x$ is order 4 and is not conjugate to $x^{-1}$ provided that $\ell$ is odd.

    For case (d) we simply appeal again to the character table, which again can be found on GroupNames at the url listed above.
\end{proof}

We now consider the general class of Seifert fibred 3-manifolds, where we can give some partial statements.

We follow Orlik \cite[Chapter 5]{orlik_1972}.  Recall that the fundamental group of a Seifert fibred 3-manifold $Y$ is determined by its Seifert invariants\[
Y = Y(b;(\varepsilon,g);(\alpha_1,\beta_1),\dots, (\alpha_r,\beta_r))
\] where $\epsilon\in\{o_1,o_2,n_1,n_2,n_3,_n4\}$ is a ``meta-variable" encoding which of six types $Y$ is, depending on the genus $g$ and orientability of the base orbifold of the Seifert fibration; $b$ is an integer or an integer modulo two depending on $\varepsilon$; and $(\alpha_i,\beta_i)$ are pairs of relatively prime integers, with $r$ the number of exceptional fibres.  We refer the reader to \cite[Chapter 5, Theorem 3]{orlik_1972} for further details. 

The fundamental group then has the following presentation
\begin{align*}
    \pi_1(Y) = \langle &a_1, b_1, \dots, a_g, b_g, q_1, \dots, q_r, h \mid a_i ha_i^{-1}=h^{\varepsilon_i}, b_ihb_i^{-1}=h^{\varepsilon_i}, \\ &q_jhq_j^{-1}=h, q_j^{\alpha_j}h^{\beta_j}=1, q_1\cdots q_r[a_1,b_1]\cdots[a_g,b_g]=h^{b}\rangle,
\end{align*}
in the case that $\varepsilon =o_1$ or $o_2$ and 
\begin{align*}
    \pi_1(Y) = \langle &v_1, \dots v_g, q_1, \dots, q_r, h \mid v_i hv_i^{-1}=h^{\varepsilon_i}, q_jhq_j^{-1}=h, \\ &q_j^{\alpha_j}h^{\beta_j}=1, q_1\cdots q_rv_1^2\cdots v_g^2=h^{b}\rangle,
\end{align*}
in the case that $\varepsilon =n_1, n_2, n_3$ or $n_4$ (here $o$ stands for orientable and $n$ for non-orientable, with these terms referring to the base space of the orbifold for the Seifert fibration).

\begin{lem}\label{lem:seifert_ambivalence}
    Let $Y(b;(\varepsilon,g);(\alpha_1,\beta_1),\dots, (\alpha_r,\beta_r))$ be a Seifert fibred 3-manifold with $\varepsilon =o_1$ or $n_1$ and $\lvert h\rvert > 2$.  Then $\pi_1(Y)$ is not ambivalent.
\end{lem}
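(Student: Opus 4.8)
The plan is to exploit the structure of the presentation of $\pi_1(Y)$ when $\varepsilon = o_1$ or $n_1$, where every monodromy exponent $\varepsilon_i = +1$, so that the generic fibre $h$ is \emph{central} in $\pi_1(Y)$. Once $h$ is central, ambivalence of $\pi_1(Y)$ would force the cyclic subgroup $\langle h\rangle$ to be ambivalent (this is precisely (2) of \Cref{rem:ambivalence}: the centre of an ambivalent group is ambivalent). But a cyclic group $\langle h\rangle$ is ambivalent if and only if it has order $1$ or $2$; since we assume $\lvert h\rvert > 2$, this is a contradiction. So the entire proof reduces to two observations: (a) when $\varepsilon = o_1$ or $n_1$, the element $h$ is central, and (b) $h$ has infinite order or order $>2$ as hypothesised, so $\langle h\rangle$ is not ambivalent.

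First I would verify centrality of $h$. In the $\varepsilon = o_1$ presentation, the relations read $a_i h a_i^{-1} = h^{\varepsilon_i}$, $b_i h b_i^{-1} = h^{\varepsilon_i}$, and $q_j h q_j^{-1} = h$. For $\varepsilon = o_1$ one has $\varepsilon_i = 1$ for all $i$ (this is exactly what the symbol $o_1$ encodes in Orlik's convention \cite[Chapter 5]{orlik_1972}), so $a_i h a_i^{-1} = h$ and $b_i h b_i^{-1} = h$; combined with $q_j h q_j^{-1} = h$, every generator commutes with $h$, hence $h \in Z(\pi_1(Y))$. The $\varepsilon = n_1$ case is identical using the second presentation: the relations $v_i h v_i^{-1} = h^{\varepsilon_i}$ with $\varepsilon_i = 1$ and $q_j h q_j^{-1} = h$ again show $h$ is central.

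Next I would assemble the contradiction. Suppose for contradiction that $\pi_1(Y)$ is ambivalent. By \Cref{rem:ambivalence}(2), the centre $Z(\pi_1(Y))$ is ambivalent, and since $\langle h\rangle \subseteq Z(\pi_1(Y))$ is itself a characteristic-type subgroup of the centre — more carefully, one argues directly: in an abelian ambivalent group every element equals its own inverse, so $Z(\pi_1(Y))$ has exponent $2$, whence $h^2 = 1$, contradicting $\lvert h \rvert > 2$. (Alternatively, and perhaps cleaner: in any ambivalent group, a central element $h$ is conjugate to $h^{-1}$, but conjugation fixes central elements, so $h = h^{-1}$, i.e.\ $h^2 = 1$; this avoids invoking \Cref{rem:ambivalence}(2) entirely and is the route I would actually write.) This forces $\lvert h\rvert \le 2$, contradicting the hypothesis. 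Therefore $\pi_1(Y)$ is not ambivalent.

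The main thing to be careful about is the bookkeeping of Orlik's conventions: one must confirm that the labels $o_1$ and $n_1$ (as opposed to $o_2, n_2, n_3, n_4$) are exactly the cases in which all $\varepsilon_i = +1$, so that no generator inverts $h$. This is a matter of citing \cite[Chapter 5, Theorem 3]{orlik_1972} correctly rather than a genuine obstacle. A secondary point: one should note the hypothesis ``$\lvert h\rvert > 2$'' is stated as a condition on the order of $h$ in $\pi_1(Y)$ (which can be finite — governed by the relations $q_j^{\alpha_j} h^{\beta_j} = 1$ and $q_1\cdots q_r [a_1,b_1]\cdots[a_g,b_g] = h^b$ — or infinite), and the argument above works verbatim in both the finite and infinite order cases, since all we use is $h^2 \ne 1$. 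No genericity or geometric input is needed beyond the group presentation.
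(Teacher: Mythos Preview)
Your proof is correct and follows essentially the same approach as the paper: the paper simply cites \cite[Section 5.3]{orlik_1972} for the fact that $\varepsilon = o_1$ or $n_1$ are precisely the cases where $h$ is central, and then invokes \Cref{rem:ambivalence}(2) together with $\lvert h\rvert > 2$ to conclude. Your version spells out the centrality from the presentation and offers the slightly cleaner direct argument (a central element conjugate to its inverse must satisfy $h^2=1$), but the underlying idea is identical.
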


\begin{proof}
    As noted in \cite[Section 5.3]{orlik_1972}, $\varepsilon =o_1$ or $n_1$ corresponds precisely to the cases when $h$ is central in $\pi_1(Y)$.  By our assumption on the order of $h$ and (2) of \Cref{rem:ambivalence}, this implies that $\pi_1(Y)$ is not ambivalent.
\end{proof}

We verify a basic set of examples which satisfy this condition.

\begin{proposition}\label{prop:ambivalence_surfaces}
    Let $Y$ be the $S^1$ bundle over the orientable surface bundle of genus $k$ with Euler number $e\neq 1$.  Then $\pi_1(Y)$ is not ambivalent.
\end{proposition}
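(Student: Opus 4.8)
The plan is to recognize $Y$ as a Seifert fibred $3$-manifold of the type covered by \Cref{lem:seifert_ambivalence} and then reduce to checking that the fibre class $h$ has infinite order (and in particular order $>2$). An $S^1$-bundle over a closed orientable surface $\Sigma_k$ of genus $k$ with Euler number $e$ is precisely the Seifert fibred manifold $Y(-e;(o_1,k))$ with no exceptional fibres, i.e.\ in the notation of Orlik we take $\varepsilon=o_1$, $g=k$, $b=-e$ and $r=0$. Since $\varepsilon=o_1$, the fibre class $h$ is central in $\pi_1(Y)$ by \cite[Section 5.3]{orlik_1972}, and \Cref{lem:seifert_ambivalence} applies provided we show $\lvert h\rvert>2$.

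The key step is therefore to show that $h$ generates an infinite cyclic central subgroup. With $r=0$ the presentation from the excerpt becomes
\[
\pi_1(Y)=\langle a_1,b_1,\dots,a_k,b_k,h \mid [a_i,h]=[b_i,h]=1,\ [a_1,b_1]\cdots[a_k,b_k]=h^{-e}\rangle.
\]
I would argue as follows: the quotient $\pi_1(Y)/\langle h\rangle$ is the surface group $\pi_1(\Sigma_k)$, and the extension
\[
1\to \langle h\rangle \to \pi_1(Y)\to \pi_1(\Sigma_k)\to 1
\]
is the central extension classified by the Euler class $e\in H^2(\pi_1(\Sigma_k);\Z)\cong\Z$. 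If $h$ had finite order $n$, then reducing the coefficients would force $e\equiv 0\pmod n$ in $H^2(\pi_1\Sigma_k;\Z/n)$ and, more to the point, $h^n=1$ would collapse the extension; one can see directly from the presentation that imposing $h^n=1$ yields a group with a surjection onto $\Z^{2k}$ (abelianizing) whose second homology or whose structure forces $n\mid e$ only, not $h^n=1$. Cleanest is to exhibit a representation detecting $h$: for $e\neq 0$ the class $e$ is non-torsion in $H^2(\pi_1\Sigma_k;\Z)\cong\Z$, so the central extension is non-trivial and $h$ has infinite order; equivalently, $\pi_1(Y)$ has infinite centre generated by $h$ (this is standard for circle bundles over aspherical bases with non-zero Euler number --- the fibre is incompressible). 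For $e=0$ the bundle is the trivial one, $Y\cong \Sigma_k\times S^1$, and $h$ is the obvious $\Z$ factor, again of infinite order. So in all cases $e\neq 1$ (indeed all $e$) we get $\lvert h\rvert=\infty>2$.

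With $\lvert h\rvert>2$ established and $\varepsilon=o_1$ giving centrality of $h$, \Cref{lem:seifert_ambivalence} immediately yields that $\pi_1(Y)$ is not ambivalent, completing the proof. I would remark that the hypothesis $e\neq 1$ in the statement is not actually needed for non-ambivalence --- it is presumably there to ensure $k_1(Y)=0$ and that $\pi_1(Y)$ is good, so that \Cref{thm:interesting_homeomorphisms} can be applied --- but since only non-ambivalence is asserted here, the argument above suffices.

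\medskip

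The main obstacle is the bookkeeping needed to pin down exactly which Seifert invariants describe the $S^1$-bundle with Euler number $e$ and to verify carefully that $h$ has infinite order rather than merely order $>2$; the cleanest route is to invoke the standard fact that for a circle bundle over an aspherical $2$-complex with non-zero Euler number the fibre is $\pi_1$-injective (so $\langle h\rangle\cong\Z$), and to handle $e=0$ separately as the product case. Everything else is a direct citation of \Cref{lem:seifert_ambivalence} together with the centrality criterion from \cite[Section 5.3]{orlik_1972}.
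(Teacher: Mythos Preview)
Your approach is essentially identical to the paper's: identify the Seifert invariants as $\varepsilon=o_1$, $g=k$, apply \Cref{lem:seifert_ambivalence}, and verify $\lvert h\rvert>2$. The paper's proof is extremely terse on this last point (``clear from the group presentation''), while you supply the standard argument via the long exact sequence of the fibration / central extension.

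One caveat worth flagging. Your argument that $\lvert h\rvert=\infty$ relies on the base surface being aspherical, which you note explicitly (``circle bundles over aspherical bases''). This fails for $k=0$: there $\pi_1(Y)\cong\Z/\lvert e\rvert$ with $h$ a generator, so $\lvert h\rvert=\lvert e\rvert$. In particular for $k=0$, $e=\pm 2$ the group is $\Z/2$, which \emph{is} ambivalent, so the proposition as literally stated is false in that case. The paper's one-line justification has the same gap. Both proofs are really proofs for $k\geq 1$ (which is all that is used downstream; see the paragraph after the proposition and \Cref{rem:goodness}). Your closing remark that the hypothesis $e\neq 1$ is unnecessary is likewise only true for $k\geq 1$; for $k=0$, $e=1$ gives $S^3$ with trivial (hence ambivalent) fundamental group.

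A minor point: you take $b=-e$ while the paper takes $b=e$; this is a harmless convention difference and does not affect the argument.
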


\begin{proof}
    The Seifert invariants of $Y$ are as follows: $b=e, \varepsilon =o_1$ and $g=k$.  By \Cref{lem:seifert_ambivalence} we only have to verify that $\lvert h \rvert >2$.  This is clear from the group presentation given above.
\end{proof}

Note that \Cref{prop:ambivalence_surfaces} includes the 3-torus, since this is the Euler number zero $S^1$ bundle over the 2-torus. Note that all of the examples in \Cref{prop:ambivalence_surfaces} have trivial $k_1$ since they all have trivial $\pi_2$ except for $S^1\times S^2$ (this can be deduced from the long exact sequence of homotopy groups coming from the fibration).  As a separate case, one can also see that $S^1\times S^2$ has trivial $k_1$, since $K(\Z,1)$ is 1-dimensional.

\begin{remark}\label{rem:goodness}
    For applications involving our realisation theorem \Cref{thm:realisation_theta}, we also need the fundamental group of the 3-manifold to be \emph{good}, in the sense of Freedman-Quinn \cite[Chapter 2.9]{freedman_quinn_1990} (c.f.\ \cite[Definition 12.12]{behrens_kalmar_kim_powell_ray_2021}).  Since all finite groups are good \cite[Section 5.1]{freedman_quinn_1990} (c.f.\ \cite[Theorem 19.2]{behrens_kalmar_kim_powell_ray_2021}), this does not provide any restrictions on the elliptic 3-manifolds considered above.  However, it does provide a restriction to the class of 3-manifolds considered in \Cref{prop:ambivalence_surfaces}; in particular, we have to restrict to $S^1$ bundles over surfaces of genus $k< 2$, since higher genus surface groups are not known to be good.  When $k< 2$, the goodness of $\pi_1(Y)$ follows since goodness is closed under extensions \cite[Exercise 2.9]{freedman_quinn_1990} (c.f.\ \cite[Proposition 19.5]{behrens_kalmar_kim_powell_ray_2021}, and $\Z$ is good \cite[Section 5.1]{freedman_quinn_1990} (c.f.\ \cite[Theorem 19.4]{behrens_kalmar_kim_powell_ray_2021}).
\end{remark}

\begin{remark}
    By \Cref{prop:ambivalence} and \Cref{rem:goodness}, \Cref{thm:interesting_homeomorphisms} applies to all lens spaces except $S^3$ and $\RP^3$, `most' prism manifolds, all tetrahedral manifolds, all but one octahedral manifold and all icosahedral manifolds except the Poincar\'{e} homology 3-sphere.  Since all of the finite fundamental group manifolds have vanishing $\pi_2$, this means that our topological second Hatcher-Wagoner invariant utterly fails to detect any interesting homeomorphisms in these cases.
    
    By \Cref{prop:ambivalence_surfaces} and \Cref{rem:ambivalence}, \Cref{thm:interesting_homeomorphisms} also applies to all $S^1$ bundles over tori, including the 3-torus.
\end{remark}

\subsection{Homeomorphisms of $Y^3\times S^1$}\label{sbs:YxS1}
In this subsection we will describe how to take the homeomorphisms constructed in \Cref{sbs:YxI} and produce interesting homeomorphisms of $Y^3\times S^1$.  It is clear that, given a homeomorphism $f\colon M\times I\to M\times I$ that restricts to the identity on the boundary, we can glue $M\times \{0\}$ to $M\times \{1\}$ via the identity map and produce a new homeomorphism $\ol{f}\colon M\times S^1\to M\times S^1$.  However, it is conceivable for $\ol{f}$ to be isotopic to the identity even if $f$ was not isotopic to the identity.  This is because $M\times \{\pt\}\subset M\times S^1$ may not be fixed throughout the isotopy.

Igusa \cite[Lemma 5.1]{igusa_2021} showed\footnote{At the beginning of \cite[Section 5]{igusa_2021_2}, Igusa says that the lemma is well-known, but we know of no other reference and hence attribute it to him.}, that any isotopy of $\ol{f}$ to the identity could be deformed such that it fixes $M\times \{\pt\}$ throughout.

\begin{lem}[{\cite[Lemma 5.1]{igusa_2021_2}}]\label{lem:clam}
    Let $M$ be a compact, topological $n$-manifold.  Then the map $\pi_0(\mathcal{P}^{\TOP}(M\times I,\partial) \to \pi_0(\mathcal{P}^{\TOP}(M\times S^1)$ given by gluing $M\times \{0\}$ to $M\times \{1\}$ is injective.
\end{lem}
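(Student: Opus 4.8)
\textbf{Proof plan for \Cref{lem:clam}.}

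The plan is to use the tubular-neighbourhood/suspension picture: a pseudo-isotopy $F$ of $M\times I$ fixing the boundary glues up to a self-homeomorphism $\ol F$ of $M\times S^1$ whose mapping torus structure over the $S^1$-direction records the data. The key point is that the inclusion $M\times\{\pt\}\hookrightarrow M\times S^1$ has a canonical product neighbourhood $M\times I$, and any isotopy $H_s\colon M\times S^1\to M\times S^1$ from $\ol f$ to $\Id$ can be modified so that it carries this neighbourhood to itself for all $s$. Concretely, one tracks the embeddings $H_s|_{M\times\{\pt\}}$; since $H_0=\ol f$ fixes $M\times\{\pt\}$ and $H_1=\Id$ fixes it, and since $M\times\{\pt\}$ is a codimension-one submanifold with trivial normal bundle, one uses the isotopy extension theorem in the topological category (Edwards--Kirby) to build a $1$-parameter family of ambient homeomorphisms $\Phi_s$ with $\Phi_0=\Phi_1=\Id$ correcting $H_s$ so that $\Phi_s\circ H_s$ preserves $M\times\{\pt\}$, and then also preserves a fixed product collar $M\times I$ of it. Here is where one must be slightly careful: one also needs $\Phi_s\circ H_s$ to restrict to the \emph{identity} on $M\times\{\pt\}$, not just preserve it, which requires absorbing the induced self-homeomorphism of $M\times\{\pt\}$ — but this self-homeomorphism is isotopic to the identity (it is the restriction of an isotopy), so it can be pushed off into the collar.

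First I would set up notation: identify $M\times S^1=(M\times I)/(M\times\{0\}\sim M\times\{1\})$, let $\iota\colon M\times I\to M\times S^1$ be the quotient and $N:=\iota(M\times[1/4,3/4])$ a closed product neighbourhood of the slice $L:=\iota(M\times\{1/2\})$. Given $F\in\mathcal P^{\TOP}(M\times I,\partial)$ one gets $\ol F$ with $\ol F(N)=N$ (in fact $\ol F|_N$ is a reparametrised copy of $F$) and $\ol F|_{M\times S^1\setminus \mathring N}=\Id$. Suppose $\ol F$ is topologically isotopic to $\Id$ via $H_s$. Step one: arrange $H_s$ to preserve $L$ for all $s$. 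Using topological isotopy extension applied to the track of $H_s|_L$ inside $M\times S^1$ (a proper isotopy of a compact, locally flat, codimension-one submanifold), produce ambient isotopies realising it, and compose to obtain $H'_s$ with $H'_s(L)=L$, $H'_0=\ol F$, $H'_1=\Id$. Step two: upgrade "preserves $L$" to "restricts to the identity on $L$": the family $H'_s|_L$ is an isotopy of $M$ from $\Id$ (as $\ol F|_L=\Id$ after choosing the collar so $F$ is the identity near the ends, cf.\ the $\mathcal P'$ reduction in \Cref{definition:suspension_map}) to $\Id$; use isotopy extension once more in a collar of $L$ to kill it, at the cost of modifying $H'_s$ only inside $N$, giving $H''_s$ with $H''_s|_L=\Id$. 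Step three: now $H''_s$ preserves $L$ and fixes it pointwise, so by a further collar isotopy we may assume $H''_s$ preserves a product neighbourhood $M\times[1/4,3/4]$ and is a pseudo-isotopy-like family there; cutting $M\times S^1$ along $L$ re-opens it to $M\times I$ and $H''_s$ descends to an isotopy of $F$ (relative to the boundary, after a final adjustment near $\partial$) to the identity in $\mathcal P^{\TOP}(M\times I,\partial)$. Hence $[F]=0$, proving injectivity.

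I expect the main obstacle to be Step two together with the bookkeeping relating "fixes $L$ pointwise and preserves a collar" to "descends to an honest element of $\pi_0\mathcal P^{\TOP}(M\times I,\partial)$ relative to the boundary". The subtlety is that after cutting along $L$ one obtains a self-homeomorphism of $M\times I$ that is the identity on $M\times\{0,1\}$ but need not yet be the identity on $\partial M\times I$; one has to run the same "push the boundary isotopy off into a collar" trick used in the proof of \Cref{lem:suspension_involution} (where one passes between $\mathcal P$ and the larger space $\mathcal I$ of automorphisms restricting to isotopies on $\partial X\times I$) to correct this without changing the isotopy class. All the tools needed — topological isotopy extension, the homotopy equivalence $\mathcal P^{\TOP}(X,\partial X)\simeq \mathcal I^{\TOP}(X)$ — are already available, so the argument is a matter of assembling them carefully; no new input from Cerf theory or the disc embedding theorem is required, which is why this lemma sits at the end as a clean-up.
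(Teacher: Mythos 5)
Your overall strategy—track a codimension-one slice, use topological isotopy extension to force the isotopy to preserve it, then cut open along the slice—is the right skeleton, and it is indeed the shape of Igusa's argument (the paper itself gives no proof, only a citation plus the observation that the smooth-category argument transfers). But there are two real problems with your write-up, and the second of them is not a bookkeeping issue but the heart of the lemma.

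First, a level confusion throughout. An element of $\mathcal{P}^{\TOP}(M\times I,\partial)$ is a homeomorphism of $(M\times I)\times I$, not of $M\times I$; gluing $M\times\{0\}\sim M\times\{1\}$ in the first $I$-factor produces an element of $\mathcal{P}^{\TOP}(M\times S^1)$, i.e.\ a homeomorphism of $(M\times S^1)\times I$, not a self-homeomorphism of $M\times S^1$. Consequently the slice you must track is $\mathcal{L}=(M\times\{\mathrm{seam}\})\times I\subset(M\times S^1)\times I$, not $M\times\{\pt\}\subset M\times S^1$. Your choice $L=\iota(M\times\{1/2\})$ compounds this: after the $\mathcal{P}'$ reduction, $\ol F$ is supported near the \emph{interior} of the old $I$-factor and fixes a neighbourhood of the seam $\iota(M\times\{0\})$, so $\ol F$ does \emph{not} fix $\iota(M\times\{1/2\})$, contradicting your claim ``$H_0=\ol f$ fixes $M\times\{\pt\}$''. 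The slice must be (a small push-off of) the seam, where $\ol F$ is actually the identity.

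Second, and decisively: in Step one you assert that topological isotopy extension yields an ambient family $\Phi_s$ with $\Phi_0=\Phi_1=\Id$ and $\Phi_s|_{\mathcal L}=H_s|_{\mathcal L}$. Isotopy extension gives only $\Phi_0=\Id$ and $\Phi_s$ realising the track; the endpoint $\Phi_1$ is constrained just to agree with $H_1|_{\mathcal L}=\iota_{\mathcal L}$, so $\Phi_1$ is some element of $\mathcal P^{\TOP}(M\times S^1)$ fixing $\mathcal L$ pointwise, not the identity. After setting $H'_s=\Phi_s^{-1}\circ H_s$ one obtains an isotopy from $\ol F$ to $\Phi_1^{-1}$, and cutting along $\mathcal L$ shows $[F]$ is the class of the cut of $\Phi_1^{-1}$ in $\pi_0\mathcal P^{\TOP}(M\times I,\partial)$ — which is a priori non-trivial. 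Whether $\Phi_1$ can be chosen to be the identity is equivalent to the loop $s\mapsto H_s|_{\mathcal L}$ (possibly after rechoosing $H$) being trivial in $\pi_1$ of the embedding space modulo the image of $\pi_1\mathcal P^{\TOP}(M\times S^1)$; by the long exact sequence of the restriction fibration, the image of this class under the connecting map to $\pi_0\mathcal P^{\TOP}(M\times I,\partial)$ is \emph{exactly} the potential kernel element the lemma claims does not exist. In other words, your Step one assumes the conclusion. The argument that kills this obstruction — an analysis of the relevant embedding/homeomorphism fibration, or a direct geometric homotopy of the track to the constant loop — is the actual content of \cite[Lemma~5.1]{igusa_2021_2}, and it needs to be supplied rather than asserted.
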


The above lemma is stated in the smooth category in \cite{igusa_2021_2} but the smooth category is not used essentially during the proof and all of the arguments follow through exactly the same in the topological category.

As a corollary, we obtain interesting homeomorphisms of $M\times S^1$ whenever we previously had interesting homeomorphisms of $M\times I$.  In particular, we have the following result which was \Cref{thm:homeos} from the introduction.

\begin{thm}\label{thm:lens_space_homeomorphisms_closed}
    Let $Y^3$ be a 3-manifold whose first $k$-invariant $k_1(Y)$ is trivial and with $\pi_1(Y)$ good and not ambivalent.  Then there exists a homeomorphism $f\colon Y\times S^1\to Y\times S^1$ which is pseudo-isotopic to the identity but not isotopic to the identity.  In particular, $f$ is homotopic but not isotopic to the identity.
\end{thm}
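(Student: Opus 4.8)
The plan is to reduce \Cref{thm:lens_space_homeomorphisms_closed} immediately to the already-proven \Cref{thm:interesting_homeomorphisms} together with the clam lemma (\Cref{lem:clam}). The essential observation is that everything hard has already been done: \Cref{thm:interesting_homeomorphisms} produces a homeomorphism $f\colon Y\times I\to Y\times I$ restricting to the identity on $\partial(Y\times I)$, which is pseudo-isotopic but not isotopic to the identity, and \Cref{lem:clam} says precisely that gluing $Y\times\{0\}$ to $Y\times\{1\}$ does not destroy this information at the level of $\pi_0$ of pseudo-isotopy spaces.

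First I would recall from \Cref{thm:interesting_homeomorphisms} that under the stated hypotheses (namely $k_1(Y)=0$ and $\pi_1(Y)$ good and not ambivalent) there is a pseudo-isotopy $F\colon Y\times I\times I \to Y\times I\times I$ with $\Sigma^{\TOP}(F)=0$ and $\Theta^{\TOP}(F)\notin Z_4\cap\ker\Sigma^{\TOP}$, whose time-$1$ restriction $f\colon Y\times I\to Y\times I$ is therefore pseudo-isotopic but not isotopic to the identity; note $f$ restricts to the identity on $\partial(Y\times I)$. Since $f$ is the identity on the boundary, gluing $Y\times\{0\}$ to $Y\times\{1\}$ by the identity yields a homeomorphism $\ol f\colon Y\times S^1\to Y\times S^1$, and likewise the pseudo-isotopy $F$ (which is the identity on $\partial(Y\times I)\times I$, hence in particular on $(Y\times\{0\}\cup Y\times\{1\})\times I$) glues up to a pseudo-isotopy $\ol F$ of $Y\times S^1$ from the identity to $\ol f$; so $\ol f$ is pseudo-isotopic to the identity.

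Next I would show $\ol f$ is not isotopic to the identity. Suppose it were. Restricting an isotopy to its endpoints and composing, one obtains that $\ol f$ represents the trivial class in $\pi_0(\mathcal{P}^{\TOP}(Y\times S^1))$ under the gluing map $\pi_0(\mathcal{P}^{\TOP}(Y\times I,\partial))\to \pi_0(\mathcal{P}^{\TOP}(Y\times S^1))$ applied to $[f]$; more precisely, the class of $f$ viewed as an inertial pseudo-isotopy of $Y\times I$ maps to the class of $\ol F$ (equivalently of $\ol f$) in $\pi_0(\mathcal{P}^{\TOP}(Y\times S^1))$, and an isotopy of $\ol f$ to the identity exhibits this latter class as trivial. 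By \Cref{lem:clam} the gluing map is injective, so $[f]=0$ in $\pi_0(\mathcal{P}^{\TOP}(Y\times I,\partial))$, i.e.\ $f$ is isotopic to the identity relative to the boundary, contradicting \Cref{thm:interesting_homeomorphisms}. Hence $\ol f$ is not isotopic to the identity. Finally, since $f$ is homotopic to the identity (being pseudo-isotopic to it, a pseudo-isotopy restricts on $Y\times I\times\{1\}$ to give a homotopy, as noted after \Cref{thm:interesting_homeomorphisms}), the glued-up $\ol f$ is homotopic to the identity of $Y\times S^1$ as well. Renaming $\ol f$ as $f$ gives the statement.

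The only point requiring a little care — and the main (mild) obstacle — is making precise the sentence ``an isotopy of $\ol f$ yields triviality of the glued class,'' i.e.\ that the gluing map on $\pi_0$ of pseudo-isotopy spaces is the correct receptacle and that \Cref{lem:clam} applies verbatim in the topological category; this is exactly the content of the remark following \Cref{lem:clam} that Igusa's argument is insensitive to the category, so no new work is needed. Everything else is formal gluing and unwinding definitions.
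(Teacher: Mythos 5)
Your proposal is correct and is essentially the same as the paper's proof, which simply applies \Cref{lem:clam} to the output of \Cref{thm:interesting_homeomorphisms}; you have just unwound the two applications more explicitly. No gap here.
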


\begin{proof}
    Apply \Cref{lem:clam} to the homeomorphisms produced by \Cref{thm:interesting_homeomorphisms}.
\end{proof}

\clearpage
	\bibliographystyle{alpha}
	\bibliography{bibliography.bib}
	
\end{document}